\newtheorem*{conj*}{Conjecture}
\newtheorem*{thm*}{Theorem}
\newtheorem{prop}{Proposition}[section]
\newtheorem{LM}{Lemma}[section]
\newtheorem{thm}{Theorem}[section]
\newtheorem{df}{Definition}[section]
\newtheorem{cor}{Corollary}[section]
\newtheoremstyle{pourlesremarques}{\topsep}{\topsep}{\normalfont}{}{\bfseries}{.}{ }{}
\theoremstyle{pourlesremarques}
\newtheorem{rem}{Remark}[section]
\newtheorem*{rem*}{Remark}
\newtheoremstyle{pourlesexemples}{\topsep}{\topsep}{\normalfont}{}{\bfseries}{.}{ }{}
\theoremstyle{pourlesexemples}
\renewcommand{\o}{\mathfrak{O}}
\newcommand{\p}{\mathfrak{P}}
\newcommand{\w}{\varpi}
\renewcommand{\d}{\delta}
\newcommand{\e}{\epsilon}
\renewcommand{\l}{\lambda}
\newcommand{\C}{\mathbb{C}}
\newcommand{\N}{\mathbb{N}}
\newcommand{\Z}{\mathbb{Z}}
\newcommand{\1}{\mathbf{1}}
\renewcommand{\O}{\mathfrak{O}}
\newcommand{\D}{\Delta}
\newcommand{\sm}{\mathcal{C}^\infty}
\title {\textbf{On the local Bump-Friedberg L-function}}
\author{Nadir MATRINGE\footnote{Nadir Matringe, Universit\'e de Poitiers, Laboratoire de Math\'ematiques et Applications,
T\'el\'eport 2 - BP 30179, Boulevard Marie et Pierre Curie, 86962, Futuroscope Chasseneuil Cedex. Email: Nadir.Matringe@math.univ-poitiers.fr}}
\begin{document}
\maketitle

 \begin{abstract}
 Let $F$ be a $p$-adic field. If $\pi$ be an irreducible representation of $GL(n,F)$, Bump and Friedberg associated to $\pi$ 
 an Euler fator $L(\pi,BF,s_1,s_2)$ in \cite{BF}, that should be equal to $L(\phi(\pi),s_1)L(\phi(\pi),\Lambda^2,s_2)$, where $\phi(\pi)$ 
 is the Langlands' parameter of $\pi$. The main result of this paper is to show that this equality is true when $(s_1,s_2)=(s+1/2,2s)$, for 
$s$ in $\C$. 
 To prove this, we classify in terms of distinguished discrete series, generic representations of 
$GL(n,F)$ which are $\chi_\alpha$-distinguished by the Levi subgroup 
 $GL([(n+1)/2],\!F)\! \times\! GL([n/2],\!F)$, for $\chi_\alpha(g_1,g_2)=\alpha(det(g_1)/det(g_2))$, where $\alpha$ is a character 
of $F^*$ of real part between $-1/2$ and $1/2$.
 We then adapt 
 the technique of \cite{CP} to reduce the proof of the equality to the case of discrete series. The equality for discrete 
series is a consequence of the relation between linear periods and Shalika periods for discrete series, and the main result of \cite{KR}.
\end{abstract}

\noindent MSC2010: 11S40, 22E50. 

\section{Introduction}\label{sec:intro}

This paper owes much to \cite{CP}, and follows the path of the adaptation of \cite{CP} used in \cite{M2009} and \cite{M2011} to study the local 
Asai $L$-factor. We sometimes clarify some arguments of [loc. cit.]. In our case, we have to deal with several technical complications, 
especially in the study 
of distinguished generic representations.\\

Section \ref{prelim} is devoted to the introduction of the basic objects. We recall a few facts about representations of 
Whittaker type and Langlands' type. We 
continue with a reminder about Bernstein-Zelevinsky derivatives, and the classification of generic representations of $GL(n,F)$ in 
terms of discrete series. We then recall results of \cite{CP} concerning the relation between derivatives and restriction of Whittaker 
functions to smaller linear groups. We end the section with a result of Bernstein, which is used to extend meromorphically invariant linear 
forms on the space of induced representations of $GL(n,F)$.\\

Let $M$ be the maximal Levi subgroup $GL([(n+1)/2],F)\times GL([n/2],F)$ of $GL(n,F)$. We recall that if $\chi$ is a character of 
$GL([(n+1)/2],F)\times GL([n/2],F)$, a representation $\pi$ of $GL(n,F)$ is said to be $\chi$-distinguished if $Hom_M(\pi,\chi)$ is nonzero. 
In Section \ref{classification}, we classify $\chi$-distinguished generic representations of $GL(n,F)$ in terms of $\chi$-distinguished 
discrete series (Theorem \ref{distgen}), for $\chi$ a character of the form $\chi_\alpha(g_1,g_2)=\alpha(det(g_1)/det(g_2))$, where $\alpha$ 
is a character of $F^*$ of real part in $[-1/2,1/2]$. Results of \cite{M2012.2} are used. When $n$ is even, and $\chi$ is trivial, 
our classification implies that $\pi$ is 
distinguished if and only if its Langlands' parameter fixes a non degenerate symplectic form (i.e. factors 
through the $L$-group of $SO(2n+1,F)$).\\

In section \ref{Lfactors}, we recall some classical facts from \cite{JPS} about $L$-functions of pairs of irreducible
 representations of $GL(n,F)$, and some other facts from \cite{CP} about their exceptional poles. We then introduce the Rankin-Selberg 
integrals which will be of interest to us, and which define a 
local factor $L^{lin}(\pi,\chi_{\alpha},s)$ for any representation $\pi$ of Whittaker type of $GL(n,F)$. This factor satisfies a functional 
equation, and in general, it has an exceptional pole at zero if and only if $\pi$ is $\chi_\alpha^{-1}$-distinguished. We then prove rationality 
of the Rankin-Selberg integrals associated to a representation $\pi$ of Whittaker type, when $\pi$ is twisted by an unramified character 
of the Levi subgroup from which it is induced 
(Theorem \ref{rationality}).\\

In Section \ref{egalitegalois}, using our classification of $\chi_\alpha$-distinguished generic representations of the group $GL(n,F)$, we prove
 the inductivity relation of the factor $L^{lin}(\pi,\chi_{\alpha},s)$ for representations of Langlands' type of $GL(n,F)$, when 
$\alpha$ is of real part between $-1/2$ and $0$ (Theorem \ref{indLanglands }). Then, using the relation between Shalika periods and linear periods for 
unitary discrete series $\D$ of $GL(n,F)$, we prove the equality 
of $L^{lin}(\D,s)$ ($\alpha$ is now trivial) and $L(\phi(\D),s+1/2)L(\Lambda^2(\phi(\D)),2s)$ for $\phi(\D)$ the Langlands' 
parameter of $\D$. The equality for irreducible representations follows (Corollary \ref{egalfinal}).

\section{Preliminaries}\label{prelim}

\subsection{Notations and basic facts}
We fix until the end of this paper a nonarchimedean local field $F$ of characteristic zero. We denote by $\O$ the ring of integers of 
$F$, by $\w$ a uniforiser of $F$, and by $q$ the cardinality of $\O/\w\O$.
We denote by $|.|$ the absolute value on $F$ normalised by the condition $|\w|=q^{-1}$, and by $v$ the valuation of $F$ such that $v(\w)=1$. If $\alpha$ is a character of $F$, and $|.|_{\C}$ is 
the positive character $z\mapsto (z\bar{z})^{1/2}$ of $\C^*$, the character $|\alpha|_{\C}$ is of the from $|.|^r$ for a real number $r$ 
that we call the real part of $\alpha$, with notation $r=Re(\alpha)$.\\
Let $n$ be a positive integer, we will denote by $G_n$ the group $GL(n,F)$ of units of the algebra $\mathcal{M}(n,F)$, and by $Z_n$ its center. We 
denote by $\eta_n$ the row vector $(0,\dots,0,1)$ of $F^n$, so if $g$ belongs to $G_n$, $\eta_n g$ is the bottom row of $g$.
 We denote by $K_n$ maximal compact subgroup 
$GL(n,\o)$ of $G_n$, it admits a filtration by the subgroups $K_{n,r}=I_n+\w^r\mathcal{M}(n,\o)$ for $r\geq 1$. For $g\in G_n$, we 
denote by $det(g)$ the determinant of $g$, and sometimes, we denote by abuse of notation 
$|g|$ for $|det(g)|$. If $g_1$ and $g_2$ are two elements of $G_n$, we denote by $g_1(g_2)$ the matrix $g_1^{-1} g_2 g_1$.\\
We call partition of a positive integer $n$, a family $\bar{n}=(n_1,\dots,n_t)$ of positive integers 
(for a certain $t$ in $\mathbb{N}-\left\lbrace 0\right\rbrace $), 
such that the sum $n_1+\dots+n_t$ is equal to $n$. To such a partition, we associate a subgroup of $G_n$ denoted by $P_{\bar{n}}$, 
given by matrices of the form $$\left (\begin{array}{cccccccc}
g_1 & \star & \star & \star & \star \\
    & g_2 & \star &  \star & \star \\
    &  & \ddots & \star & \star \\  
    &  &     & g_{t-1} &  \star \\
    &  &     &   & g_t
 \end{array}\right),$$ with $g_i$ in $G_{n_i}$ for $i$ between $1$ and $t$. We call it the standard parabolic subgroup 
associated with the partition $\bar{n}$. We denote by $N_{\bar{n}}$ its unipotent radical subgroup, 
given by the matrices $$\left (\begin{array}{cccccccc}
I_{n_1} & \star & \star\\
     &  \ddots & \star\\  
    &  & I_{n_t}
 \end{array}\right)$$ and by $M_{\bar{n}}$ its standard Levi subgroup given by the matrices $diag(g_1,\dots,g_t)$ with the $g_i$'s in 
$G_{n_i}$.
 The group $P_{\bar{n}}$ identifies with the semidirect product $N_{\bar{n}}\rtimes M_{\bar{n}}$, we will denote $N_{(n-1,1)}$ by $U_n$.
 We will denote by $P_{\bar{n}}^-$ the opposite group of $P_{\bar{n}}$, which is its image under the transpose. 
If we consider $G_{n-1}$ as a subgroup of $G_n$ via the embedding $g\mapsto diag(g,1)$ we denote by $P_n$ the mirabolic subgroup 
$G_{n-1}U_n$ of $G_n$. We denote by $A_n$ the diagonal torus $M_{(1,\dots,1)}$ of $G_n$, by $N_n$ the unipotent radical $N_{(1,\dots,1)}$, and by 
$B_n$ the standard Borel $A_n N_n$. It will sometimes be useful to paremetrise $A_n$ with simple roots, i.e. 
to write an element $t=diag(t_1,\dots,t_n)$ of $A_n$, as $t=z_1\dots z_n$, where $z_n=t_nI_n$, and $z_i=diag((t_i/t_{i+1})I_i,I_{n-i})$ 
belongs to the center of $G_i$ embedded in $G_n$.\\
We denote by $w_n$ the element of the symetric group $\mathfrak{S}_n$ naturally embedded in $G_n$, defined by 
$$\begin{pmatrix} 1 & 2 & \dots & m-1 & m & m+1 & m+2 & \dots & 2m-1 & 2m \\
 1 & 3 & \dots & 2m-3 & 2m-1 & 2 & 4 & \dots & 2m-2 & 2m  
 \end{pmatrix}$$ when $n=2m$ is even, and by 
$$\begin{pmatrix} 1 & 2 & \dots & m-1 & m & m+1 & m+2 &\dots & 2m & 2m+1 \\
 1 & 3 & \dots & 2m-3 & 2m-1 & 2m+1 & 2 & \dots & 2m-2 & 2m  
 \end{pmatrix}$$ when $n=2m+1$ is odd. We denote by $W'_n$ the semi-direct product $A_n\mathfrak{S}_n$, which is the normaliser of $A_n$ in $G_n$. When $n=2m+1$, we denote by $H_n$ the group $w_n(M_{(m+1,m)})$, by $h(g_1,g_2)$ the matrix 
$w_n(diag(g_1,g_2))$ of $H_n$ (with $g_1$ in $G_{m+1}$ and $g_2$ in $G_m$). 
When $n=2m$, we denote by $H_n$ the group $w_n(M_{(m,m)})$, by $h(g_1,g_2)$ the matrix 
$w_n(diag(g_1,g_2))$ of $H_n$ (with $g_1$ and $g_2$ in $G_m$). 
In both cases, if we denote by $\e_n$ the matrix $diag(1,-1,1,-1,\dots)$ of $G_n$, the group $H_n$ is the subgroup of $G_n$ fixed by the involution 
$g\mapsto \e_n g\e_n$. If $\alpha$ is a character 
of $F^*$, we denote by $\chi_\alpha$ the character $\chi_\alpha:h(g_1,g_2)\mapsto \alpha(det(g_1)/det(g_2))$ of $H_n$, we denote by $\d_n$ the character 
$\d_n=\chi_{|.|}: h(g_1,g_2)\mapsto |g_1|/|g_2|$ of $H_n$, 
and we denote by $\chi_n$ (resp. $\mu_n$) the character of $H_n$ equal to $\d_n$ when $n$ is odd (resp. even), 
and trivial when $n$ is even (resp. odd). Hence ${\chi_n}_{|H_{n-1}}=\mu_{n-1}$, and ${\mu_n}_{|H_{n-1}}=\chi_{n-1}$. 
 If $C$ is a subset of $G_n$, we sometimes denote by $C^\sigma$ the set $C\cap H_n$.\\ 

Let $G$ be an $l$-group (locally compact totally disconnected), we denote by $d_{G} g$ or simply $dg$ if the context is clear, a 
right Haar measure on $G$. 
For $x$ in $G$, we denote by $\delta_G (x)$ the positive number defined by the relation $d_g (xg)= \delta_G^{-1} (x)d_g (g)$. The
modulus character $\delta_G$ defines a morphism from $G$ into $\mathbb{R}_{>0}$. We denote by $\Delta_G$ (which we also call modulus character) the morphism from $G$ into $\mathbb{R}_{>0}$ defined by $x \mapsto \delta_G(x^{-1})$. If $G$ is an $l$-group, and $H$ a subgroup of $G$, a representation $(\pi,V)$ of $G$ is said to be smooth 
if for any vector $v$ of the complex vector space $V$, there is an open subgroup $U_v$ of $G$ stabilizing $v$ through $\pi$. We say 
that a smooth representation 
$(\pi,V)$ is admissible if, moreover, for each open subgroup $U$ of $G$, the space $V^{U}$ of $U$-fixed vectors in $V$ is of finite dimension. 
In this case, $V$ 
is of countable dimension. It is known that smooth irreducible representations of $G_n$ are always admissible.
The category of smooth representations of $G$ is denoted by $Alg(G)$, we will often identify two ismorphic objects of this category. If $(\pi,V)$ 
is a smooth representation of $G$,
 we denote by $\pi^{\vee}$ its dual representation in the smooth dual space $V^\vee$ of $V$. We will only 
consider smooth complex representations of $l$-groups. If $\pi$ has a central character, we denote it by $c_{\pi}$. If $G=G_n$, we write 
by abuse of notation $c_\pi$ for the character of $F^*$ defined by $c_\pi(t)=c_\pi(tI_n)$.

\begin{df}
Let $G$ be an $l$-group, $H$ a closed subgroup of $G$, and $(\pi,V)$ a representation of $G$. 
If $\chi$ is a character of $H$, we say that the representation $\pi$ is $(H,\chi)$-distinguished, 
if it admits on its space a nonzero linear form $L$, verifying $L(\pi(h)v)=\chi(h)L(v)$ for all $v$ in $V$ and $h$ in $H$. 
If $\chi=1$, we say $H$-distinguished. We omit ``$H$-'' if the context is clear.
\end{df}
We will be interested in the case $G=G_n$, and $H=H_n$. In this case, it is shown in \cite{JR} that if an irreducible representation $\pi$ 
of $G_n$ is $H_n$-distinguished, the dimension of $Hom_{H_n}(\pi,\1_H)$ is one, and $\pi$ is self-dual, i.e. isomorphic to $\pi^\vee$.\\

If $\rho$ is a complex representation of $H$ in $V_{\rho}$, we denote by $\sm(H \backslash G, \rho, V_{\rho})$ the space of smooth functions $f$ from $G$ to $V_{\rho}$, fixed under the action by right translation of some compact open subgroup $U_f$ of $G$, and which verify $f(hg)=\rho(h)
 f(g)$ for $h \in H$, and $g \in G$. We denote by $\sm_c(H \backslash G, \rho, V_{\rho})$ the subspace of functions with support compact modulo 
$H$ of $\sm(H \backslash G, \rho, V_{\rho})$.\\ 
We denote by $Ind _H^G (\rho)$ the representation by right translation of $G$ in $\sm(H \backslash G, \rho,
 V_{\rho})$ and by $ind _H^G (\rho)$ the representation by right translation of $G$ in $\sm_c(H \backslash G,  \rho,
 V_{\rho})$. If $H=\{1_G\}$, and $\rho$ is the trivial representation acting on $\C$, we write $\sm_c(H \backslash G,  \rho,
 V_{\rho})=\sm_c(G)$. We write $\sm_{c,0}(F^n)$ for the subspace of $\sm_{c}(F^n)$ consisting of functions vanishing at zero. We denote by ${Ind'} _H^G (\rho)$ the normalized induced representation $Ind _H^G ((\Delta _G /\Delta _H)^{1/2} \rho)$ and by ${ind'} _H^G (\rho)$ 
the normalized induced representation $ind _H^G ((\Delta _G /\Delta _H)^{1/2} \rho)$.\\  
Let $n$ be a positive integer, and $\bar{n}=(n_1,\dots,n_t)$ be a partition of $n$, and suppose that we have a representation $(\rho_i, V_i)$ of $G_{n_i}$ for each $i$ between $1$ and $t$. 
Let $\rho$ be the extension to $P_{\bar{n}}$ of the natural representation 
$\rho_1 \otimes \dots \otimes \rho_t$ of $M_{\bar{n}}\simeq G_{n_1} \times \dots \times G_{n_t}$, trivial on $N_{\bar{n}}$. 
We denote by $\rho_1 \times \dots \times \rho_t$ the representation ${ind'} _{P_{\bar{n}}}^{G_n} (\rho)= {Ind'} _{P_{\bar{n}}}^{G_n} (\rho)$. Parabolic induction preserves admissibility.\\
According to Theorem 9.3 of \cite{Z}, if $\rho$ is a cuspidal representation of $G_r$, and $k\geq 1$, the representation 
$\rho\times \dots \times \rho|.|^{k-1}$ of $G_n$ with $n=kr$, has a unique irreducible quotient, that we denote by 
$\D=[\rho,\dots,|.|^{k-1}\rho]$. It is a discrete series, i.e. some twist of $\D$ by a character embeds in $L^2(G_n/Z_n)$. 
Every discrete series representation of $G_n$ 
arises this way, for a unique triple $(k,r,\rho)$, with $n=kr$. We can also call 
such a representation of $G_n$ a segment.\\

We recall that the map $s\mapsto q^{-s}$ induces a group isomorphism between $\C/\frac{2i\pi}{Ln(q)}\Z$ and $\C^*$, we denote 
$\C/\frac{2i\pi}{Ln(q)}\Z$ by $\mathcal{D}$. Thus $\mathcal{D}$ is a regular algebraic variety whose ring of regular functions identifies with
 $\C[q^s,q^{-s}]$. More generally, for $t\in \N^*$, we denote by $u$ the variable in $\mathcal{D}^t$, we write 
 $q^{u}=(q^{u_1},\dots,q^{u_t})$ and $q^{-u}=(q^{-u_1},\dots,q^{-u_t})$, hence the ring of regular functions on $\mathcal{D}^t$ identifies with 
 $\C[q^{\pm u}]=\C[q^{\pm u_1},\dots,q^{\pm u_t}]$.\\
 
Finally, we denote by $W_F$ the Weil group of $F$, by $I_F$ its inertia subgroup, and by $Fr$ the Frobenius element of $W_F$ 
(see \cite{BH}, Chapter 29). We set $W'_F=W_F\times SL(2,\C)$, the Weil-Deligne group of $F$. We recall that 
according to a theorem by Harris-Taylor and Henniart, there is a natural bijection from the set of isomorphism classes of 
irreducible representations of $G_n$ to the the set of isomorphism classes of $n$-dimansional semi-simple representations
 of $W'_F$, called the Langlands' correspondance, which we denote by $\phi$. 
We denote by $Sp(k)$ the (up to isomorphism) unique algebraic $k$-dimensional irreducible representation of $SL(2,\C)$. 
By a semi-simple representations of $W'_F$, we mean a direct sum of representations of the form $\tau \times Sp(k)$, where $(\tau,V)$ 
is a smooth irreducible representation of $W_F$. We define the Artin $L$-factor of $\tau$ to be 
$$L(\tau,s)=1/det[(I_d-q^{-s}\tau(Fr))_{|V^{I_F}}],$$ where $V^{I_F}$ is the subspace of $V$ fixed by $I_F$. 
We the define $L(\tau \times Sp(k),s)$ to be $L(\tau,s+k/2)$. We extend the definition to all semi-simple representations 
of $W'_F$ by setting $$L(\tau\oplus \tau',s)=L(\tau,s)L(\tau',s).$$
 Then, by definition, the exterior-square $L$-function
 $L(\tau,\wedge^2,s)$ of a finite dimensional 
semi-simple representation $\tau$ of $W'_F$ is $L(\wedge^2(\tau),s)$, where $\wedge^2(\tau)$ is the 
exterior-square of $\tau$. Similarly, the factor $L(\tau,Sym^2,s)$ is by definition $L(Sym^2(\tau),s)$, where $Sym^2(\tau)$ is the 
symmetric-square of $\tau$.\\
 
 In this paper, we will often say "representation" instead of "smooth admissible representation" and we will often say that two representations 
 are equal when they are isomorphic.

\subsection{Representations of Whittaker type and their derivatives}

We recall properties of representations of Whittaker type, for which we can define Rankin-Selberg integrals.

\begin{df}\label{whittype}
Let $\pi$ be a representation of $G_n$, such that $\pi$ is a product of discrete series $\D_1\times\dots\times \D_t$ of smaller linear groups, 
we say that $\pi$ is of Whittaker type.\\
If the discrete series $\D_i$ are ordered such that $Re(c_{\D_i})\geq Re(c_{\D_{i+1}})$, we say that $\pi$ is (induced) of Langlands' type.
If $\pi$ is moreover irreducible, we say that $\pi$ is generic. 
\end{df}

Let $\D=[\rho,\dots,\rho|.|^k]$ and $\D'=[\rho',\dots,\rho'|.|^{k'}]$ 
be two segments of $G_n$ and $G_{n'}$ respectively, we say that $\D$ and $\D'$ are linked if either 
$\rho'=\rho|.|^i$, for $i \in \{1,\dots,k+1\}$, and 
$i+k'\geq k+1$, or $\rho=\rho'|.|^{i'}$, for $i' \in \{1,\dots,k'+1\}$, and 
$i'+k\geq k'+1$. We now fix untill the end of this work, a nontrivial character $\theta$ of $(F,+)$, which defines by the formula 
$\theta(n)=\sum_{i=1}^n n_{i,i+1}$ a character still denoted $\theta$ 
of $N_n$.  We have the following proposition, which is a 
consequence of Theorem 7 of \cite{R}, and Theorem 9.7 of \cite{Z}. 

\begin{prop}\label{fourretout}
Let $\pi$ be a representation of Whittaker type, then $Hom_{N_n}(\pi,\theta)$ is of dimension $1$. In particular,
 according to Frobenius reciprocity law, 
the space of intertwining operators $Hom_{G_n}(\pi,Ind_{N_n}^{G_n}(\theta))$ is of dimension $1$, and the image of the 
(unique up to scaling) intertwining operator from $\pi$ to $Ind_{N_n}^{G_n}(\theta))$ is called the Whittaker model of $\pi$. We
 denote it by $W(\pi,\theta)$. The representation 
$\pi$ is generic if and only if the discrete series $\D_i$ commute, or equivalently if the corresponding segments are unlinked. 
\end{prop}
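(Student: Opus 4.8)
The plan is to derive the three assertions from the two results the proposition is said to rest on: Rodier's theorem on Whittaker functionals of parabolically induced representations (Theorem 7 of \cite{R}), and Zelevinsky's irreducibility criterion for products of segments (Theorem 9.7 of \cite{Z}).

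First I would treat the building blocks. Each $\D_i$ is an irreducible discrete series of $G_{n_i}$, hence generic and, by the multiplicity one theorem for Whittaker models, $\dim Hom_{N_{n_i}}(\D_i,\theta)=1$, where $\theta$ now denotes its restriction to $N_{n_i}$; this restriction is again the standard generic character, because the superdiagonal entries of $N_n$ joining two consecutive blocks of $M_{\bar{n}}$ lie outside $M_{\bar{n}}$. Setting $\bar{n}=(n_1,\dots,n_t)$, writing $\pi={ind'}_{P_{\bar{n}}}^{G_n}(\D_1\otimes\dots\otimes\D_t)$, and noting that $N_n\cap M_{\bar{n}}=N_{n_1}\times\dots\times N_{n_t}$, Theorem 7 of \cite{R} identifies $Hom_{N_n}(\pi,\theta)$ with the space of Whittaker functionals of $\D_1\otimes\dots\otimes\D_t$ relative to $N_n\cap M_{\bar{n}}$, which is $Hom_{N_{n_1}}(\D_1,\theta)\otimes\dots\otimes Hom_{N_{n_t}}(\D_t,\theta)$, of dimension $1$. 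This is the first assertion. Frobenius reciprocity then gives $Hom_{G_n}(\pi,Ind_{N_n}^{G_n}(\theta))\simeq Hom_{N_n}(\pi,\theta)$, hence of dimension $1$, and by definition the image of a generator is the Whittaker model $W(\pi,\theta)$.

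For the genericity criterion I would run a chain of equivalences. Each $\D_i$ is a segment in the sense of \cite{Z}, so $\pi=\D_1\times\dots\times\D_t$ is a product of segments, and by Theorem 9.7 of \cite{Z} the product is irreducible if and only if $\D_i$ and $\D_j$ are unlinked for all $i\neq j$; since all orderings of the $\D_i$ have the same Jordan--H\"older content, this criterion does not depend on the ordering, in particular it applies to the Langlands ordering that defines genericity. Pairwise unlinkedness is in turn equivalent to the $\D_i$ commuting: unlinked segments commute, while if $\D_i$ and $\D_j$ are linked then $\D_i\times\D_j$ and $\D_j\times\D_i$ are non-isomorphic, as they share their two Jordan--H\"older factors but their unique irreducible sub- and quotient-modules are interchanged. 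Since by definition $\pi$ is generic exactly when, in the Langlands ordering, it is irreducible, we conclude that $\pi$ is generic if and only if the $\D_i$ commute, equivalently if and only if the corresponding segments are unlinked.

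The main obstacle, which I would invoke as a black box, is Rodier's theorem itself: the point there is the analysis of the double cosets in $P_{\bar{n}}\backslash G_n/N_n$ via the geometric lemma, showing that for a generic character of $N_n$ only the open orbit contributes a nonzero term, so that the Whittaker functionals of a parabolically induced representation reduce to those of the inducing data. I would also take as known the genericity and uniqueness of Whittaker models for discrete series, which is standard and part of the classification recalled in this section. Granting these inputs, what remains is the tensor-product dimension count and the translation between ``unlinked'' and ``commute'', both of which are routine consequences of \cite{Z}.
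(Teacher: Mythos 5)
Your argument is correct and follows exactly the route the paper indicates: the paper gives no explicit proof of Proposition~\ref{fourretout} but states it as "a consequence of Theorem 7 of \cite{R}, and Theorem 9.7 of \cite{Z}," and your write-up fills in precisely how those two results combine. Using Rodier's heredity to reduce the Whittaker dimension of the induced representation to that of the tensor product of the $\D_i$'s, then Frobenius reciprocity, then Zelevinsky's linkage criterion together with the sub/quotient-swap argument for linked segments to get the "commute iff unlinked iff irreducible" chain, is exactly the intended reading, and the small points you take care of (that $\theta$ restricts to the standard generic character on each $N_{n_i}$, and that the irreducibility criterion is ordering-independent so it applies to the Langlands ordering used in the definition of genericity) are handled correctly.
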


If $\pi=\D_1\times \dots \times\D_t$ is a representation of Whittaker type of $G_n$, and $w$ is the antidiagonal matrix of $G_n$ with only ones an the second diagonal, then $\tilde{\pi}:g\mapsto \pi(^t\!g^{-1})$ is isomorphic to $\pi^\vee$ when $\pi$ is generic, and of Whittaker type in general, isomorphic to 
$\D_t^\vee\times \dots \times \D_1^\vee$. Moreover for $W$ in $W(\pi,\theta)$, then $\tilde{W}:g\mapsto W(w^t\!g^{-1})$ belongs to $W(\tilde{\pi},\theta^{-1})$.\\ 
We recall the following definition.

\begin{df}\label{defder}
We denote by $\Phi^+\!:\!Alg(P_n)\!\rightarrow \!Alg(P_{n+1})$, $\Phi^-\!:\!Alg(P_n)\!\rightarrow \!Alg(P_{n-1})$, $\Psi^+:Alg(G_n)\rightarrow Alg(P_{n+1})$, 
$\Psi^-:Alg(P_n)\rightarrow Alg(G_{n-1})$ the functors defined in 3.2. of \cite{BZ}. 
If $\pi$ is a smooth $P_n$-module, we denote by $\pi^{(k)}$ the $G_{n-k}$-module $(\Phi^-)^{k-1}\Psi^-(\pi)$.
\end{df}

If $\pi$ is a representation of $G_n$, we will sometimes consider $\pi_{|P_n}$ without refering to $P_n$, for example we will write 
$\pi^{(k)}$ for $(\pi_{|P_n})^{(k)}$. It is a consequence of b) and c) of Proposition 3.2. of \cite{BZ}, that when $\tau$ is a $P_n$-module, 
there is an vecotr space isomorphism $Hom_{N_n}(\tau,\theta)\simeq \tau^{(n)}$.
From 3.2. and 3.5. of \cite{BZ} we have:

\begin{prop}\label{BZfiltration}
The functors $\Psi^-\Psi^+$ and $\Phi^-\Phi^+$ are the identity of $Alg(G_n)$ and $Alg(P_n)$ respectively. For any $P_n$-module $\tau$,
 and $k$ in $\{1,\dots,n\}$, the $P_n$-module $$\tau_k=(\Phi^+)^{k-1}(\Phi^-)^{k-1}(\tau)$$ is submodule of 
$\tau$, and one has $\tau_{k+1}\subset \tau_k$, with $\tau_k/\tau_{k+1}\simeq (\Phi^+)^{k-1}\Psi^+(\tau^{(k)})$. 
\end{prop}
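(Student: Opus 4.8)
The plan is to deduce the whole statement from the elementary properties of the four Bernstein--Zelevinsky functors recorded in \S3.2 of \cite{BZ}, together with the basic exact sequence of \S3.5. Recall from \S3.2 that $\Phi^+,\Phi^-,\Psi^+,\Psi^-$ are all exact, that $\Phi^+$ is left adjoint to $\Phi^-$ and $\Psi^-$ is left adjoint to $\Psi^+$, and that one has the composition relations $\Psi^-\Psi^+\simeq \mathrm{id}_{Alg(G_n)}$, $\Phi^-\Phi^+\simeq \mathrm{id}_{Alg(P_n)}$, $\Psi^-\Phi^+=0$ and $\Phi^-\Psi^+=0$. The first assertion of the proposition is exactly the pair $\Psi^-\Psi^+\simeq\mathrm{id}$ and $\Phi^-\Phi^+\simeq\mathrm{id}$; in particular $\Phi^+$ is a faithful exact functor, hence preserves monomorphisms, a fact that will be used repeatedly.

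Next, recall the fundamental short exact sequence of $P_n$-modules attached functorially to any $\tau$ in $Alg(P_n)$ (\cite{BZ}, \S3.5):
$$0\longrightarrow \Phi^+\Phi^-(\tau)\longrightarrow \tau \longrightarrow \Psi^+\Psi^-(\tau)\longrightarrow 0,$$
in which the first arrow is the counit of the $(\Phi^+,\Phi^-)$-adjunction and the last arrow the unit of the $(\Psi^-,\Psi^+)$-adjunction. Concretely, $\Psi^+\Psi^-(\tau)$ is, up to the normalising twist, the $U_n$-coinvariants of $\tau$, i.e. its largest quotient on which $U_n$ acts trivially, while its kernel $\Phi^+\Phi^-(\tau)$ is obtained by compact induction from $P_{n-1}U_n$ of the $\theta$-twisted coinvariants, reflecting the decomposition of $U_n\cong F^{n-1}$ into the two $G_{n-1}$-orbits whose nontrivial part is a single orbit with stabiliser $P_{n-1}$. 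With the notation $\tau_1=\tau$, $\tau_2=\Phi^+\Phi^-(\tau)$ and $\tau^{(1)}=\Psi^-(\tau)$, this sequence is precisely the case $k=1$ of the statement.

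For general $k$, I would argue by induction. Set $\rho:=(\Phi^-)^{k-1}(\tau)\in Alg(P_{n-k+1})$ and apply the exact functor $(\Phi^+)^{k-1}$ to the fundamental sequence
$$0\to \Phi^+\Phi^-(\rho)\to \rho\to \Psi^+\Psi^-(\rho)\to 0$$
of $\rho$. The resulting short exact sequence of $P_n$-modules has left term $(\Phi^+)^{k}(\Phi^-)^{k}(\tau)=\tau_{k+1}$, middle term $(\Phi^+)^{k-1}(\Phi^-)^{k-1}(\tau)=\tau_k$, and right term $(\Phi^+)^{k-1}\Psi^+\big(\Psi^-(\Phi^-)^{k-1}(\tau)\big)=(\Phi^+)^{k-1}\Psi^+(\tau^{(k)})$ by the definition of the $k$-th derivative. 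Since the first arrow of the fundamental sequence is the natural counit and $(\Phi^+)^{k-1}$ preserves monomorphisms, this exhibits $\tau_{k+1}$ \emph{canonically} as a submodule of $\tau_k$ with quotient $(\Phi^+)^{k-1}\Psi^+(\tau^{(k)})$; composing these canonical inclusions yields the chain $\tau_n\subset\cdots\subset\tau_2\subset\tau_1=\tau$, which is what is claimed.

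All the ingredients above are established in \cite{BZ}, so there is no genuine obstacle; were one to point to a single hard step, it would be the fundamental exact sequence of \S3.5, whose proof rests on analysing the restriction of a smooth $P_n$-module to $U_n$ through the orbit structure of $G_{n-1}$ on the Pontryagin dual of $U_n$. For the present purposes it suffices to cite it, the only point requiring care being the bookkeeping that makes each inclusion $\tau_{k+1}\subset\tau_k$ the canonical (counit) one, so that they compose correctly — and this is automatic from naturality.
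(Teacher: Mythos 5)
Your argument is correct and is, in substance, the argument that the paper leaves implicit: the paper offers no proof of this proposition at all, merely citing \S3.2 and \S3.5 of Bernstein--Zelevinsky, and your proposal is exactly the derivation of the filtration from the short exact sequence $0\to\Phi^+\Phi^-\to\mathrm{id}\to\Psi^+\Psi^-\to 0$ together with exactness of $\Phi^+$ and the composition identities, which is how BZ themselves obtain it. The bookkeeping is right: applying $(\Phi^+)^{k-1}$ to the fundamental sequence for $(\Phi^-)^{k-1}(\tau)$ produces exactly $0\to\tau_{k+1}\to\tau_k\to(\Phi^+)^{k-1}\Psi^+(\tau^{(k)})\to 0$, and since each inclusion is induced by the counit $\Phi^+\Phi^-\to\mathrm{id}$ under the exact functor $(\Phi^+)^{k-1}$, the chain $\tau_n\subset\cdots\subset\tau_1=\tau$ composes correctly. (One cosmetic remark: ``faithful exact, hence preserves monomorphisms'' is slightly redundant --- exactness alone preserves monomorphisms; faithfulness is irrelevant here. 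This has no bearing on the correctness of the argument.)
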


For representations of Whittaker type, the previous filtration is determined by Lemma 4.5. of \cite{BZ}, and Proposition 
9.6. of \cite{Z}.

\begin{prop}\label{derwhittaker}
Let $\D=[\rho,\dots,|.|^{k-1}\rho]$ be a discrete series of $G_n$, with $\rho$ a cuspidal representation of $G_r$, and $n=kr$. Then for 
$l\in \{1,\dots,n\}$, the $G_{k-l}$-module $\D^{(l)}$ is null, unless 
$l=ir$ a multiple of $r$, in which case $\D^{(l)}=[|.|^i\rho,\dots,|.|^{k-1}\rho]$ when $i\leq k-1$, and $\D^{(n)}=\1$. 
If $\pi=\D_1\times \dots \times \D_t$ is a representation of $G_n$ of Whittaker type, and $l\in \{1,\dots,n\}$, then 
$\pi^{(k)}$ has a filtration with factors the modules $\D_1^{(a_1)}\times \dots \times \D_t^{(a_t)}$, for non negative integers $a_i$ such that $\sum_{i=1}^t a_i=k$.
\end{prop}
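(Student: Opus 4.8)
The plan is to derive Proposition~\ref{derwhittaker} from the two ingredients cited just before it: Lemma~4.5 of \cite{BZ} (which computes the derivatives of a parabolically induced representation via a Leibniz-type filtration) and Proposition~9.6 of \cite{Z} (which identifies the derivatives of a segment in terms of the cuspidal support). First I would treat the case of a single segment $\D=[\rho,\dots,|.|^{k-1}\rho]$ with $\rho$ cuspidal on $G_r$ and $n=kr$. By definition $\D$ is the unique irreducible quotient of $\rho\times\dots\times|.|^{k-1}\rho$; I would invoke Proposition~9.6 of \cite{Z}, which says precisely that $\D^{(l)}$ vanishes unless $r\mid l$, say $l=ir$, in which case $\D^{(ir)}$ is the segment $[|.|^i\rho,\dots,|.|^{k-1}\rho]$ for $i\le k-1$, and $\D^{(n)}=\1$. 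The vanishing for $l$ not a multiple of $r$ reflects the fact that the highest derivative of the cuspidal $\rho$ is $\rho^{(r)}=\1$ and all intermediate derivatives $\rho^{(j)}$, $1\le j\le r-1$, are zero; this forces the Bernstein--Zelevinsky filtration of $\D_{|P_n}$ to jump only at multiples of $r$.

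Next I would handle the general representation of Whittaker type $\pi=\D_1\times\dots\times\D_t$. Here the tool is the behaviour of the derivative functors under the product (parabolic induction): by Lemma~4.5 of \cite{BZ}, for a product of two representations $\sigma_1\times\sigma_2$ the $k$-th derivative $(\sigma_1\times\sigma_2)^{(k)}$ carries a filtration whose successive quotients are the $\sigma_1^{(a)}\times\sigma_2^{(b)}$ with $a+b=k$, up to the appropriate normalization/twist built into the $\times$ notation. Iterating this over the $t$ factors $\D_1,\dots,\D_t$ — formally an induction on $t$, using that $\times$ is associative and that parabolic induction is exact — yields a filtration of $\pi^{(k)}$ with factors $\D_1^{(a_1)}\times\dots\times\D_t^{(a_t)}$ ranging over all tuples of non-negative integers with $\sum_i a_i=k$. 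Combining this with the first step, the only nonzero factors are those for which each $a_i$ is a multiple of $r_i$ (the cuspidal block size of $\D_i$), and then each $\D_i^{(a_i)}$ is the explicit segment (or $\1$) described above; but for the statement as phrased it suffices to record the filtration with factors indexed by all tuples summing to $k$, the vanishing being automatic.

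The one point that needs care, rather than being purely a citation, is the bookkeeping of modulus characters: the functors $\Phi^\pm,\Psi^\pm$ of \cite{BZ} are the \emph{unnormalized} ones (Definition~\ref{defder}), whereas the product $\times$ is \emph{normalized} parabolic induction ${ind'}$, so when I push the Leibniz filtration of Lemma~4.5 through the identification $\pi^{(k)}=(\Phi^-)^{k-1}\Psi^-(\pi)$ I must check that the half-sums of roots match up so that the factors come out exactly as the normalized products $\D_1^{(a_1)}\times\dots\times\D_t^{(a_t)}$ with no leftover twist. I expect this to be the main (though essentially routine) obstacle: it is the place where the statement could be off by a power of $|.|$ if one is not careful, and it is resolved by the standard compatibility between the Bernstein--Zelevinsky derivative functors and normalized induction, together with the fact that $\Psi^-\Psi^+$ and $\Phi^-\Phi^+$ are the identity (Proposition~\ref{BZfiltration}). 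Everything else is a direct appeal to Lemma~4.5 of \cite{BZ} and Proposition~9.6 of \cite{Z} together with exactness of parabolic induction.
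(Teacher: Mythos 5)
Your proposal is correct and matches the paper's approach exactly: the paper itself gives no proof of Proposition~\ref{derwhittaker} beyond citing Lemma~4.5 of \cite{BZ} (the Leibniz filtration for derivatives of parabolic inductions) and Proposition~9.6 of \cite{Z} (derivatives of a segment), which are precisely the two ingredients you assemble, and your remark about reconciling the unnormalized functors $\Phi^{\pm},\Psi^{\pm}$ with the normalized product $\times$ is the right thing to keep an eye on, though as you say it is routine.
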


\begin{rem}\label{Kir} 
 Let $\pi$ be a representation of Whittaker type of $G_n$, and $W(\pi,\theta)$ its Whittaker model. Of course 
$Hom_{N_n}(W(\pi,\theta),\theta)$ is of dimension $1$, and spanned by $W\mapsto W(I_n)$. But considering the retriction 
to $P_n$ of the representation of $G_n$ obtained on the Whittaker model, this means that its $n$-th derivative is a vector space of dimension $1$. Finally, the filtration of Proposition \ref{BZfiltration} implies that $W(\pi,\theta)_{|P_n}$ contains 
$ind_{N_n}^{P_n}(\theta)$ (which appears with multiplicity one, as the bottom piece of the filtration). It is shown in \cite{JS} 
that $W(\pi,\theta)$ is isomorphic to $\pi$ as a $G_n$-module when $\pi$ is of Langlands' type.  
\end{rem}

The asymptotics of Whittaker functions are controlled by the exponents of the corresponding representation.

\begin{df}\label{exponent}
Let $\pi=\D_1\times \dots \times \D_t$ be a representation of $G_n$ of Whittaker type. Let $k$ be an element of $\{0,\dots,n\}$, such that 
$\pi^{(k)}$ is not zero, and let $a_1,\dots,a_n$ be a sequence of nonnegative integers, such that $\sum_{i=1}^t a_i=k$, and 
$\D_1^{(a_1)}\times \dots \times \D_t^{(a_t)}$ is nonzero. We will say that the central character of the $G_{n-k}$-module 
$\D_1^{(a_1)}\times \dots \times \D_t^{(a_t)}$ is a $k$-exponent of $\pi$. If $\pi^{(k)}=0$, the family of $k$-exponents of $\pi$ is empty. 
\end{df}

For convenience, we recall the asymptotic expansion of Whittaker functions given in the proof of Theorem 
2.1 of \cite{M2011.2} (see the ``stronger statement'' in [loc. cit.]).

\begin{prop}\label{DL}
 Let $\pi$ be a representation of $G_n$ of Whittaker type. For $k \in \{1,\dots,n\}$, let $(c_{k,i_k})_{i_k=1,\dots,r_k}$ be the family of 
$(n-k)$-exponents of $\pi$, then for every $W$ in $W(\pi,\theta)$, the map $W(z_1\dots z_n)$
 is a linear combination of functions of the form 
$$c_{\pi}(t(z_n))\prod_{k=1}^{n-1} c_{k,i_k}(t(z_k))|z_k|^{(n-k)/2}v(t(z_k))^{m_k}\phi_k(t(z_k)),$$ 
where $z_k=diag(t(z_k)I_{k},I_{n-k})$, for $i_k$ between $1$ and $r_k$, non negative integers $m_k$, and functions $\phi_k$ in $C_c^\infty(F)$.
\end{prop}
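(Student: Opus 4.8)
The plan is to argue by induction on $n$, using the Bernstein--Zelevinsky filtration of the restriction of $W(\pi,\theta)$ to the mirabolic subgroup $P_n$ together with the description of the derivatives of a representation of Whittaker type from Proposition \ref{derwhittaker}. For $n=1$ there is nothing to prove: $W(\pi,\theta)$ is the line spanned by $t\mapsto c_\pi(t)$ and the product over $k$ in the statement is empty. So let $n\geq 2$, assume the assertion for all $G_m$ with $m<n$, and fix $W$ in $W(\pi,\theta)$. Since $z_n=t(z_n)I_n$ is central in $G_n$, one has $W(z_1\cdots z_n)=c_\pi(t(z_n))\,W(z_1\cdots z_{n-1})$, and $z_1\cdots z_{n-1}$ lies in the torus $A_{n-1}$ regarded inside $G_{n-1}\subset P_n$, the matrices $z_1,\dots,z_{n-1}$ being unchanged by the embedding $G_{n-1}\hookrightarrow G_n$. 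Hence it is enough to describe the restriction to $A_{n-1}$ of the Whittaker functions of $\pi$.

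First I would bring in Proposition \ref{BZfiltration} for $\tau=W(\pi,\theta)_{|P_n}$, realized in functions on $P_n$: it carries a filtration $\tau=\tau_1\supseteq\cdots\supseteq\tau_{n+1}=0$ with $\tau_k/\tau_{k+1}\simeq(\Phi^+)^{k-1}\Psi^+(\tau^{(k)})$. As $W(\pi,\theta)$ is a quotient of $\pi$ and the Bernstein--Zelevinsky derivative functors are exact, the composition factors of the $G_{n-k}$-module $\tau^{(k)}$ occur among those of $\pi^{(k)}$, hence, by Proposition \ref{derwhittaker}, among the products of segments $\D_1^{(a_1)}\times\cdots\times\D_t^{(a_t)}$ with $\sum_i a_i=k$; each such $\sigma$ is a representation of $G_{n-k}$ of Whittaker type, so the induction hypothesis applies to its Whittaker functions. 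Restricting the $k$-th graded piece to $A_{n-1}$ and reading off the three kinds of coordinates: the functor $\Psi^+$ inflates $\sigma$ trivially across a unipotent subgroup, contributing in the coordinate $z_{n-k}$ the central character $c_\sigma$ of $\sigma$, which by Definition \ref{exponent} is one of the $k$-exponents of $\pi$ (a character $c_{n-k,\cdot}$ in the notation of the statement); the inner coordinates $z_1,\dots,z_{n-k-1}$ carry the dependence of a Whittaker function of $\sigma$ on $G_{n-k}$, which by the induction hypothesis and the chain rule $(\D_i^{(a_i)})^{(b_i)}=\D_i^{(a_i+b_i)}$ — identifying the $l$-exponents of $\sigma$ with the $(k+l)$-exponents of $\pi$ — has the required shape; and each of the $k-1$ copies of $\Phi^+$, being up to a normalising character a compact induction along a punctured affine space with a $\theta$-twist along a unipotent direction, contributes in one of the outer coordinates $z_{n-k+1},\dots,z_{n-1}$ a factor in $C_c^\infty(F)$ together with the corresponding exponent.

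It then remains to account for the modulus characters and the polynomial factors. The $\Delta^{1/2}$-normalisations built into $\Psi^+$ and the $k-1$ copies of $\Phi^+$ assemble, coordinate by coordinate, into the powers $|z_j|^{(n-j)/2}$ — which up to the normalisation of the Haar measure is $\delta_{B_n}^{1/2}(z_j)$, in accordance with $\delta_{B_n}(diag(sI_j,I_{n-j}))=|s|^{j(n-j)}$; the polynomials $v(t(z_j))^{m_j}$ come out of iterating the construction, together with those already furnished by the induction hypothesis on $\sigma$. Uniformity in $W$ is automatic, since $W$ is fixed by some $K_{n,r}$ and all the spaces involved are admissible, so that only finitely many exponents and polynomials of bounded degree arise.

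The hard part will be precisely this last bookkeeping: checking that, after the $\Delta^{1/2}$-twists of $\Psi^+$ and $\Phi^+$, each exponent of $\sigma\subset\tau^{(k)}$ ends up in the correct slot of the product with exactly the power $|z_j|^{(n-j)/2}$ and that no spurious terms appear — which amounts to matching the effect of $\Phi^-$ and $\Psi^-$ on the central characters of segments, as encoded in Proposition \ref{derwhittaker}, against those twists. A subsidiary point needing a little care is that the filtration of $\tau$ by $P_n$-submodules is compatible with evaluation of functions along $A_{n-1}$, which is what legitimises the reduction to the individual graded pieces.
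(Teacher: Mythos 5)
The paper does not actually prove Proposition~\ref{DL}; it is quoted from the proof of Theorem~2.1 of \cite{M2011.2}, which you do not appear to have consulted. Your sketch nevertheless reconstructs the broad shape of the argument in that reference: induction on $n$, the Bernstein--Zelevinsky filtration of $W(\pi,\theta)_{|P_n}$, the identification of the exponents of $\pi$ with central characters of the derivatives via Proposition~\ref{derwhittaker}, and the modulus-character bookkeeping ($\delta_{B_n}^{1/2}(z_j)=|z_j|^{(n-j)/2}$, which you check correctly). The indexing of exponents ($c_{n-k,\cdot}$ for a factor of $\pi^{(k)}$) and the ``chain rule'' for iterated derivatives are also right.

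The real gap is the step you dismiss as ``a subsidiary point needing a little care,'' namely the claim that one may ``restrict the $k$-th graded piece to $A_{n-1}$'' and sum the contributions. The filtration $\tau=\tau_1\supset\tau_2\supset\cdots\supset\tau_{n+1}=0$ is by $P_n$-submodules, not a direct-sum decomposition, and a given $W$ lies in the top layer $\tau_1$ only. Its restriction to $A_{n-1}$ is not determined by the graded quotients alone: a function has no canonical decomposition into pieces ``coming from'' $\tau_k/\tau_{k+1}$. What actually has to be proved is a cascading statement: the image of $W$ in $\tau_1/\tau_2\simeq\Psi^+(\tau^{(1)})$ predicts the behaviour of $W(z_1\cdots z_{n-1})$ for $|t(z_{n-1})|$ small, up to a correction term lying in $\tau_2$; that correction has compact support in the $z_{n-1}$-direction (this is where $\Phi^+$ enters, and is precisely what Proposition~\ref{mirabolicrestriction} records), and one iterates, using that the bottom layer $\tau_n\simeq ind_{N_n}^{P_n}(\theta)$ consists of functions compactly supported modulo $N_n$, hence contributing Schwartz factors in every remaining coordinate. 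This bootstrapping, together with an admissibility/finiteness argument to bound the degrees $m_k$ uniformly (your last paragraph points in the right direction but would need to be made precise), is the substance of the proof, not a footnote to it. As written, your proposal identifies the correct ingredients and the correct answer, but the passage from the filtration to a pointwise expansion of the actual function $W$ is missing.
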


We end this section with results of Section 1 of \cite{CP}, about the link between restriction of Whittaker functions and derivatives, which will be fundamental to obtain a factorisation of the $L$ factor introduced in section \ref{The Rankin-Selberg integrals}. We collect them in the following proposition. 

\begin{prop}\label{mirabolicrestriction}
Let $\pi$ be a representation of $G_n$ of Whittaker type, $k$ be an element of $\{1,\dots,n-1\}$ and denote by $\rho$ an irreducible submodule of the $G_{k}$-module $\pi^{(n-k)}$. Then $\rho$ is generic, and we have the following properties.\\
If $W\in W(\pi,\theta)$ actually belongs to the $P_n$-submodule $W(\pi,\theta)_k$ (see Proposition \ref{BZfiltration}) of $W(\pi,\theta)$, then the map 
$W(diag(g,a,1))$ for $g$ in $G_k$ and $a$ in $A_{n-k-1}$ has compact support with respect to $a$. Moreover if $W$ projects on $W'$ in $W(\rho,\theta)$ via the surjection $\pi \rightarrow \pi^{(k)}$, then for any function $\phi'$ in $\sm_c(F^k)$, which is the characteristic function of a sufficiently small neighbourhood of $0$, we have $W(diag(g,I_{n-k}))\phi'(\eta_kg)=W'(g)\phi'(\eta_k g)|g|^{(n-k)/2}$.\\
On the other hand, any $W'$ in $W(\rho,\theta)$ is the image of some $W$ in $W(\pi,\theta)_k$, and there exists $\phi'$ in $\sm_c(F^k)$ with $\phi'(0)\neq 0$, such that $W(diag(g,I_{n-k}))\phi'(\eta_k g )=W'(g)\phi'(\eta_k g )|g|^{(n-k)/2}$. 
\end{prop}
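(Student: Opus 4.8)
The plan is to follow the strategy of Section~1 of \cite{CP}, using the Bernstein--Zelevinsky filtration of Proposition~\ref{BZfiltration} together with the description of the derivatives of representations of Whittaker type in Proposition~\ref{derwhittaker}. First I would address genericity of $\rho$: by Proposition~\ref{derwhittaker}, the $G_k$-module $\pi^{(n-k)}$ has a filtration whose subquotients are products $\D_1^{(a_1)}\times\dots\times\D_t^{(a_t)}$ with $\sum a_i=n-k$; each $\D_i^{(a_i)}$ is either zero or again a segment (a discrete series), and since a product of discrete series that occurs as an irreducible subquotient must actually be generic (its segments being automatically unlinked once it is irreducible, by Proposition~\ref{fourretout}), any irreducible submodule $\rho$ of $\pi^{(n-k)}$ is generic. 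This handles the first assertion.

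Next, for the statements about $W$ in the submodule $W(\pi,\theta)_k$, I would unwind the definition $W(\pi,\theta)_k=(\Phi^+)^{k-1}(\Phi^-)^{k-1}(W(\pi,\theta))$. The key point, already present in \cite{BZ} and exploited in \cite{CP}, is that $\Phi^+$ builds functions supported, in the relevant coordinates, on $P_{n}$-translates in a way that forces compact support of $a\mapsto W(\mathrm{diag}(g,a,1))$ over $a\in A_{n-k-1}$: concretely, iterating $\Phi^+$ from a $P_{k+1}$-module produces functions in $\sm_c$ of the last $n-k-1$ "row" variables $\eta_{j}g$, and compact support in those variables translates, via the torus parametrisation $t=z_1\cdots z_n$ of $A_n$ recalled in the preliminaries, into compact support in the $A_{n-k-1}$-part. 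Then I would identify the projection $W(\pi,\theta)\to\pi^{(k)}$ with the bottom-layer map $W(\pi,\theta)_k\to\tau_k/\tau_{k+1}\simeq(\Phi^+)^{k-1}\Psi^+(\pi^{(k)})$ of Proposition~\ref{BZfiltration}, and push a given $W$ down to $W'\in W(\rho,\theta)$. Tracking the normalisation factors in $\Psi^+$ and each $\Phi^+$ (each contributing a half-power of the appropriate determinant, as in the definition of ${ind'}$) produces exactly the factor $|g|^{(n-k)/2}$; the cut-off $\phi'$, the characteristic function of a small neighbourhood of $0$ in $F^k$, is what records that $W$ lies in $W(\pi,\theta)_k$ rather than in a larger piece, so that $W(\mathrm{diag}(g,I_{n-k}))$ and $W'(g)|g|^{(n-k)/2}$ agree once $\eta_k g$ is small. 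Multiplying both sides by $\phi'(\eta_k g)$ gives the displayed identity.

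For the converse direction I would argue that the bottom-layer isomorphism $\tau_k/\tau_{k+1}\simeq(\Phi^+)^{k-1}\Psi^+(\pi^{(k)})$ of Proposition~\ref{BZfiltration} is surjective onto $W(\rho,\theta)$ after composing with the projection $\pi^{(k)}\to\rho$ induced by the chosen embedding $\rho\hookrightarrow\pi^{(n-k)}$ (using that $\rho$ is a submodule and $(\Phi^+)^{k-1}\Psi^+$ is exact), so every $W'\in W(\rho,\theta)$ lifts to some $W\in W(\pi,\theta)_k$; the existence of $\phi'$ with $\phi'(0)\neq0$ satisfying the same identity then follows by applying the direct statement to this $W$, since the identity there holds for all sufficiently small neighbourhoods of $0$, and any such $\phi'$ has $\phi'(0)\neq0$ by construction.

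The main obstacle I expect is the bookkeeping of the half-integral normalising characters through the composition $(\Phi^+)^{k-1}\Psi^+$: one must check that the accumulated modulus factors from $\Delta_{P_j}$ at each stage combine to precisely $|g|^{(n-k)/2}$ and not some other power, and that the $P_n$-equivariant maps are genuinely compatible with the torus coordinates $z_k=\mathrm{diag}(t(z_k)I_k,I_{n-k})$ so that "compact support in the $\eta_j g$ variables" is the same condition as the asserted compact support over $A_{n-k-1}$. This is exactly the computation carried out in Section~1 of \cite{CP}, and I would cite it, clarifying the normalisation as needed, rather than redo it in full.
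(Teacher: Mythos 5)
The paper does not give a proof of Proposition~\ref{mirabolicrestriction}: it is explicitly presented as a collection of results imported from Section~1 of \cite{CP} (``We end this section with results of Section 1 of \cite{CP}\dots We collect them in the following proposition''), so there is no paper proof to compare against line by line. Your sketch correctly identifies the Bernstein--Zelevinsky machinery — the functors $\Phi^+$, $\Psi^+$, the filtration of Proposition~\ref{BZfiltration}, and the derivative description of Proposition~\ref{derwhittaker} — as the engine of the \cite{CP} argument, and your decision to defer the half-integral normalisation bookkeeping to \cite{CP} rather than redo it matches what the paper itself does. On the substantive points your description is consistent with the cited source.

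The one place where your outline is not quite airtight is the genericity claim. You observe that $\rho$, being an irreducible submodule of the filtered module $\pi^{(n-k)}$, embeds into one of the filtration subquotients $\D_1^{(a_1)}\times\dots\times\D_t^{(a_t)}$, and you then conclude genericity on the grounds that an irreducible such product has unlinked segments. But that subquotient need not itself be irreducible: $\rho$ could be a proper irreducible submodule of a reducible product of segments, and a proper irreducible constituent of a reducible Whittaker-type representation is not automatically the generic one. The actual argument requires knowing which irreducible constituent of each $\D_1^{(a_1)}\times\dots\times\D_t^{(a_t)}$ can actually occur as a submodule of $\pi^{(n-k)}$ — this is the one genuine subtlety your sketch glosses over, and it is handled in \cite{CP} (and implicitly in the paper through the completely-reducible hypothesis in the applications, e.g.\ Proposition~\ref{factL}).
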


\subsection{Meromorphic continuation of invariant linear forms}\label{Bernstein}

We will use twice a result of Bernstein insuring rationality of the solutions of linear systems with polynomial data.
 The setting is the following. Let $V$ be a complex vector space of countable dimension. Let $R$ be an index set, and let $\Xi_0$ be a collection 
$\left\lbrace (x_r, c_r)| r\in R \right\rbrace$ with $x_r \in V$ and $c_r\in \mathbb{C}$. A linear form $\lambda$ in $V^*= Hom_{\mathbb{C}}(V,\mathbb{C})$ is said to be 
a solution of the system $\Xi_0$ if $\lambda(x_r)=c_r$ for all $r$ in $R$. Let $\mathcal{X}$ be an irreducible algebraic variety over $\mathbb{C}$, and suppose that for each $d$, we have a system 
$\Xi_d =\left\lbrace (x_r(d), c_r(d))| r\in R \right\rbrace$ with the index set $R$ independent of $d$ in $\mathcal{X}$. We say 
that the family of systems 
$\left\lbrace \Xi_{d}, d \in \mathcal{X}\right\rbrace $ is polynomial if $x_r(d)$ and $c_r(d)$ belong respectively to 
$ \mathbb{C}[\mathcal{X}]\otimes_{\mathbb{C}} V$ and $\mathbb{C}[\mathcal{X}]$. Let $\mathcal{M}=\mathbb{C}(\mathcal{X})$ be the field of fractions of 
$\mathbb{C}[\mathcal{X}]$, we denote by $V_{\mathcal{M}}$ the space $\mathcal{M} \otimes_{\mathbb{C}} V$ and by $V_{\mathcal{M}}^*$ the space 
$Hom_{\mathcal{M}}(V_{\mathcal{M}},\mathcal{M})$. Writing $V^*=Hom_{\C}(V,\C)$, we notice that $V_{\mathcal{M}}^*=(V^*)_\mathcal{M}$. 
The following statement is a consequence of Bernstein's theorem and its corollary in Section 
1 of \cite{Ba}.

\begin{thm}{(Bernstein)}\label{Ber}
Suppose that in the above situation, the variety $\mathcal{X}$ is nonsingular and that there exists a non-empty subset $\Omega \subset \mathcal{X}$ open for 
the complex topology of $\mathcal{X}$, such that for each $d$ in $\Omega$, the system $\Xi_d$ has a unique solution $\lambda_{d}$. 
Then the system $\Xi=\left\lbrace (x_r(d),c_r(d))|r\in R \right\rbrace$ over the field $\mathcal{M}=\mathbb{C}(\mathcal{X})$ has a unique solution
 $\lambda(d)$ in $V_{\mathcal{M}}^*$, and $\lambda(d)=\lambda_d$ is the unique solution of $\Xi_d$ on $\Omega$. 
\end{thm}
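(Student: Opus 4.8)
The plan is to deduce Theorem \ref{Ber} from Bernstein's theorem as formulated in Section 1 of \cite{Ba}, essentially by repackaging the hypotheses. Bernstein's theorem concerns a single polynomial system of linear equations over the ring $\C[\mathcal{X}]$ (equivalently, after localization, over a domain) and asserts that if the ``generic rank'' of the system is full in a suitable sense — concretely, if the system admits a unique solution at every point of a set that is not thin, or on a Zariski-dense set, or on a nonempty complex-open set — then it admits a unique solution with coefficients in the fraction field $\mathcal{M}=\C(\mathcal{X})$, and moreover that rational solution specializes to the pointwise solution wherever the latter exists. So the first step is to recall the precise statement from \cite{Ba}: one has a $\C[\mathcal{X}]$-module map encoding the data $\{x_r(d)\}$ and the target values $\{c_r(d)\}$, and a ``uniqueness on an open set $\Omega$'' hypothesis.

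First I would set up the dictionary. The vector space $V$ being of countable dimension over $\C$ is exactly the hypothesis under which Bernstein's argument runs (it uses that $V$ is a countable union of finite-dimensional subspaces, so that the locus where a given finite subsystem has a solution is constructible, and a countable intersection/union argument applies). The family $\{\Xi_d\}$ being polynomial means precisely that the ``matrix'' of the system, viewed as an element of $\mathrm{Hom}_{\C[\mathcal{X}]}(\C[\mathcal{X}]\otimes_\C V,\ \C[\mathcal{X}]^{(R)})$ together with the right-hand side in $\C[\mathcal{X}]^{(R)}$, has entries in $\C[\mathcal{X}]$. Next I would verify that the hypothesis ``$\Xi_d$ has a unique solution $\lambda_d$ for each $d$ in a nonempty complex-open $\Omega\subset\mathcal{X}$'' matches the genericity hypothesis in \cite{Ba}: since $\mathcal{X}$ is an irreducible nonsingular variety, a nonempty set open for the complex topology is Zariski-dense, so uniqueness on $\Omega$ forces uniqueness on a Zariski-dense set, which is the form in which Bernstein states it (or is immediately reduced to it).

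Then I would invoke Bernstein's theorem to get the unique solution $\lambda(d)\in V_{\mathcal{M}}^*$ over $\mathcal{M}$, using the identification $V_{\mathcal{M}}^*=(V^*)_{\mathcal{M}}$ noted in the excerpt (which holds because $V$ has countable dimension, so that $\mathrm{Hom}_\C(V,\C)\otimes_\C\mathcal{M}\hookrightarrow\mathrm{Hom}_\mathcal{M}(V_{\mathcal{M}},\mathcal{M})$ is an isomorphism — one checks this on a basis, degree by degree in the countable filtration). Finally, for the specialization statement, I would use the second half of Bernstein's result (his corollary in \cite{Ba}): the rational solution $\lambda(d)$, which a priori has poles along a proper closed subset of $\mathcal{X}$, is regular at every point $d$ where the pointwise system $\Xi_d$ has a (necessarily unique) solution, and its value there is that solution; in particular this applies to every $d\in\Omega$, giving $\lambda(d)=\lambda_d$ on $\Omega$. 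Uniqueness over $\mathcal{M}$ is clear since any two rational solutions differ by a rational solution of the homogeneous system, which must vanish because it vanishes on the Zariski-dense set $\Omega$.

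The main obstacle — and the only real content beyond bookkeeping — is matching the exact form of the hypotheses in \cite{Ba} to the ones stated here: Bernstein's original formulation typically assumes uniqueness on a Zariski-open or Zariski-dense set, whereas here we are handed uniqueness merely on a set open for the \emph{complex} (analytic) topology. Bridging this requires the elementary but essential observation that on an irreducible variety a nonempty analytically open subset is Zariski-dense, so the weaker-looking hypothesis in fact implies the one Bernstein uses; I would make that point explicit. The countable-dimensionality condition, which is what makes the constructibility/Baire-category step in Bernstein's proof go through, is already present in the statement and needs only to be flagged. Everything else is a direct citation of \cite{Ba}.
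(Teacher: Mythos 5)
Your proposal is correct and takes the same route as the paper, which gives no proof of its own and simply cites Bernstein's theorem together with its corollary from Section 1 of \cite{Ba}. You make explicit the one substantive point that is left implicit there, namely that a nonempty subset of an irreducible complex variety open for the analytic topology is Zariski-dense (any proper Zariski-closed subset being nowhere dense for the analytic topology), which reconciles the hypothesis stated here with the genericity hypothesis in the form Bernstein uses. One caveat: the parenthetical identification $V^*_{\mathcal{M}}=(V^*)_{\mathcal{M}}$, which the paper asserts and which you try to justify by a basis argument, is actually false when $V$ is infinite-dimensional (an $\mathcal{M}$-linear functional on $V_{\mathcal{M}}$ may take $\C$-linearly independent values such as $q^{u},q^{2u},\dots$ on a basis of $V$, so it need not lie in $\mathcal{M}\otimes_{\C}V^*$), but this plays no role in the theorem, whose conclusion only places $\lambda(d)$ in $V^*_{\mathcal{M}}$.
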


We give a corollary of this theorem which allows to extend meromorphically invariant linear forms on parabolically induced representations. 
To this end, we recall a few facts about flat sections of induced representations, which are for example discussed in section 3 of \cite{CP}. 
Let $\pi=\pi_1\times \dots \times \pi_t$ be a representation of $G_n$ parabolically induced from $P_{\bar{n}}$, where each $\pi_i$ is a 
representation of $G_{n_i}$. If $u=(u_1,\dots,u_t)$ is an element of $\mathcal{D}^t$, we denote by $\pi_u$ the representation 
$|.|^{u_1}\pi_1\times\dots\times|.|^{u_1}\pi_t$. We denote by $V_{\pi_u}$ the space of $\pi_u$. Call 
$\eta_u$ the map from $G_n$ to $\C$, defined by $\eta_u(nmk)=\prod_{i=1}^t|m_i|^{u_i}$, for $n$ in $N_{\bar{n}}$, $m=diag(m_1,\dots,m_t)$ in
 $M_{\bar{m}}$and $k$ in $K_n$, then $f\mapsto \eta_u f$ is an isomorphism between the $K_n$-modules $V_\pi$ and $V_{\pi_u}$. 
We denote by $f_u$ the map $\eta_u f$ when $f$ 
belongs to $V_\pi$ (hence $f_0=f$); it is clear that ${f_u}_{|K_n}$ is independant of $u$. If we denote by $\mathcal{F}_\pi$ the space of 
restrictions $f_c={f}_{|K_n}$ for $f$ in $V_{\pi}$, then $f_u\mapsto f_c$ is a vector space isomorphism between $V_{\pi_u}$ and $\mathcal{F}_\pi$. 
Thus $\mathcal{F}_\pi$ is a model of the representation $\pi_u$, where the action of $G_n$ is given by 
$\pi_u(g)f_c= (\pi_u (g)f_u)_c$. For $g$ in $G_n$, the map $u\mapsto \pi_u(g)f_c$ belongs to $\C[\mathcal{D}^t]\otimes_\C \mathcal{F}_\pi$.

\begin{cor}\label{rationalinvariant}
With notations as above, let $\mathcal{D}'$ be an affine line of $\mathcal{D}^t$ (i.e. a closed subvariety of $\mathcal{D}^t$, such that there is 
an isomorphism $i$ from $\mathcal{D}'$ to $\mathcal{D}$). Let $H$ be a 
closed subgroup of $G_n$, and $\chi$ a character of $H$, such that for all $s$ in $\mathcal{D}'$, except for $s$ in a finite subset $S$ of
 $\mathcal{D}'$, the space 
$Hom_H(\pi_s,\chi)$ is of dimension $\leq 1$. Suppose that for every $s$ in some subset $\Omega$ of $\mathcal{D}'$, open 
for the topology induced by the complex structure, there is $\l_s$ in $Hom_H(\pi_s,\chi)$, and that there is $g$ in $V_\pi$ such that 
$\l_s(g_s)$ is a non zero element of $\C[\Omega]$ which extends to an element $Q$ of $\C[\mathcal{D}']$, then $\pi_s$ is $(H,\chi)$-distinguished 
for all $s$ in $\mathcal{D}'$.
\end{cor}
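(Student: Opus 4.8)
The plan is to deduce Corollary~\ref{rationalinvariant} from Bernstein's Theorem~\ref{Ber} by setting up the appropriate polynomial family of linear systems on the one-dimensional variety $\mathcal{D}'$. First I would fix the model $\mathcal{F}_\pi$ of all the representations $\pi_s$, $s\in\mathcal{D}'$, as described just before the statement: this is the common vector space $V:=\mathcal{F}_\pi$ of countable dimension on which the group $G_n$ acts through $s$-dependent formulas $\pi_s(g)f_c=(\pi_s(g)f_s)_c$, and the key point recalled above is that for each fixed $g\in G_n$ and each $f_c\in\mathcal{F}_\pi$, the map $s\mapsto\pi_s(g)f_c$ lies in $\C[\mathcal{D}']\otimes_\C\mathcal{F}_\pi$ (after transporting via $i$, it lies in $\C[\mathcal{D}]\otimes_\C\mathcal{F}_\pi$, which is what Theorem~\ref{Ber} wants). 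The condition that a linear form $\l\in V^*$ be $(H,\chi)$-equivariant for $\pi_s$, together with the normalization $\l(g_c)=Q(s)$ coming from the given $g\in V_\pi$, is encoded by the system
$$\Xi_s=\bigl\{(\pi_s(h)f_c-\chi(h)f_c,\,0)\ \big|\ h\in H,\ f_c\in\mathcal{F}_\pi\bigr\}\ \cup\ \bigl\{(g_c,\,Q(s))\bigr\},$$
with index set $R=(H\times\mathcal{F}_\pi)\sqcup\{*\}$ independent of $s$. By the displayed polynomiality fact and the hypothesis $Q\in\C[\mathcal{D}']$, this family $\{\Xi_s\}$ is polynomial in the sense required.

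Next I would check the hypotheses of Theorem~\ref{Ber} on a suitable open set. For $s\in\mathcal{D}'$ the solutions of the homogeneous part of $\Xi_s$ are exactly $Hom_H(\pi_s,\chi)$, which by assumption has dimension $\le 1$ for $s\notin S$; and on $\Omega$ (shrinking $\Omega$ if necessary so that $\Omega\cap S=\emptyset$ and $Q$ does not vanish identically — actually one should shrink so that $\l_s(g_s)=Q(s)\neq 0$ on $\Omega$, which forces $\l_s\neq0$ and hence $\dim Hom_H(\pi_s,\chi)=1$ there) the form $\l_s$ is nonzero and satisfies $\l_s(g_c)=Q(s)$, so it is the \emph{unique} solution of $\Xi_s$: any other solution $\l'_s$ differs from $\l_s$ by a homogeneous solution, i.e. a scalar multiple of $\l_s$, and the normalization $\l'_s(g_c)=Q(s)=\l_s(g_c)\neq0$ forces that scalar to be $1$. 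Thus on a nonempty open $\Omega'\subseteq\Omega$ each $\Xi_s$ has a unique solution, and Theorem~\ref{Ber} produces a unique solution $\l(s)\in V^*_{\mathcal{M}}$ over $\mathcal{M}=\C(\mathcal{D}')$ with $\l(s)=\l_s$ on $\Omega'$.

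Finally I would specialize. The generic solution $\l(s)$ is an element of $(V^*)_{\mathcal{M}}$, i.e. its coordinates in a fixed basis of $V^*$ are rational functions of $s$; let $T\subset\mathcal{D}'$ be the finite set of their poles. For $s_0\in\mathcal{D}'\setminus(T\cup S)$ one may specialize $\l(s)$ at $s_0$: each of the (countably many) defining equations $\l(s)\bigl(\pi_s(h)f_c-\chi(h)f_c\bigr)=0$ and $\l(s)(g_c)=Q(s)$ is an identity of rational functions with no pole at $s_0$, hence specializes to a valid equation at $s_0$, showing $\l(s_0)\in Hom_H(\pi_{s_0},\chi)$ with $\l(s_0)(g_c)=Q(s_0)$; since $\dim Hom_H(\pi_{s_0},\chi)\le1$ and $\l(s_0)$ is a solution, the only worry is $\l(s_0)=0$, but that would force $Q(s_0)=0$. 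So $\pi_{s_0}$ is $(H,\chi)$-distinguished whenever $Q(s_0)\neq0$; and for the remaining finitely many $s_0$ (those in $T\cup S$, or the zeros of $Q$, or those in the originally excluded set), one clears denominators: writing $\l(s)=\sum_j R_j(s)\,\l_j$ with $R_j\in\C(\mathcal{D}')$, multiply through by a common denominator $D(s)\in\C[\mathcal{D}']$ chosen so that $\widetilde\l(s):=D(s)\l(s)$ has polynomial coordinates and is not identically zero near $s_0$; dividing $\widetilde\l(s)$ by the highest power of a local parameter at $s_0$ dividing all its coordinates yields a linear form that specializes to a nonzero element $\mu_{s_0}\in V^*$, and the equivariance identities (being homogeneous) pass to $\mu_{s_0}$, giving $0\neq\mu_{s_0}\in Hom_H(\pi_{s_0},\chi)$. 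Hence $\pi_s$ is $(H,\chi)$-distinguished for \emph{every} $s\in\mathcal{D}'$. The main obstacle is the last point: the normalized generic linear form can genuinely vanish or blow up at bad points $s_0$, so one cannot simply plug in, and the argument must extract a nonzero equivariant specialization by the clearing-denominators/minimal-order trick — this is exactly where countable dimension of $V$ (so that $V^*$ has a well-defined, though possibly uncountable, coordinate description and the "order of vanishing of a tuple of rational functions" makes sense) and the homogeneity of all but one equation are used.
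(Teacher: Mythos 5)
Your argument follows exactly the paper's proof: the same system $\Xi_s$ on $V=\mathcal{F}_\pi$, an application of Bernstein's Theorem~\ref{Ber} to obtain a rational family of solutions, and then clearing denominators and dividing by the order of vanishing at each $s_0$ to produce a nonzero element of $Hom_H(\pi_{s_0},\chi)$. Your extra step of shrinking $\Omega$ so that $Q$ is nonvanishing there is in fact necessary and is slightly more careful than the paper: at a zero of $Q$ inside $\Omega\setminus S$ both $0$ and $\l_s$ solve $\Xi_s$, so uniqueness can fail at such a point, which the paper's phrase ``for $s$ in $\Omega - S\cap\Omega$'' tacitly discards.
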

\begin{proof}
We consider the space $V=\mathcal{F}_\pi$, which is of countable dimension. Let $(f_\alpha)_{\alpha\in A}$ be a basis of $V$, we consider 
for each $s$ in $\mathcal{D}'$, the system $$\Xi_s=\{(\pi_s(h)f_\alpha-\chi(h)f_\alpha,0),h\in H,\alpha \in A\}\cup \{(g_{s,c},Q(s))\}.$$ 
The family 
$\Xi$ is polynomial, and 
$\Xi_s$ has a unique solution $\widetilde{\l_s}:f_{s,c}\mapsto \l_s(f_s)$ for $s$ in $\Omega-S\cap \Omega$ by hypothesis. Hence, according to 
Theorem \ref{Ber}, there is $\widetilde{\l}$ in $V_{\C(\mathcal{D}')}^*$, such that $\widetilde{\l}(s)=\widetilde{\l_s}$ on $\Omega$. Let 
$P$ be a nonzero 
element of $\C[\mathcal{D}']$, such that $\widetilde{\mu}=P\widetilde{\l}$ belongs to $(V^*)_{\C[\mathcal{D}']}$.
Then, by the principle of analytic continuation, the linear map $\widetilde{\mu}(s)$ belongs to $Hom_H(\pi_s,\chi)$ for all $s$ in $\mathcal{D}'$. 
In particular, for $s_0$ in $\mathcal{D}'$, if $m\in \Z$ is the order of $s_0$ as a zero of $\widetilde{\mu}$, the map 
$(q^{-i(s)}-q^{-i(s_0)})^{-m} \widetilde{\mu}(s)$ evaluated at $s_0$ is a nonzero element of 
$Hom_H(\pi_{s_0},\chi)$. 
\end{proof}

\section{Classification of distinguished generic representations in terms of discrete series}\label{classification}

In the section, we give a classification, of $(H_n,\chi_\alpha)$-distinguished generic representations of $G_n$ for $Re(\alpha)\in [-1/2,1/2]$. 
We recall from \cite{M2012.2}, that 
for any maximal Levi subgroup $M$ which is not conjugate to $H_n$, and $\chi$ a character of $M$, no generic representation of 
$G_n$ can be $(M,\chi)$-distinguished. 
Moreover, discrete series are $(M,\chi)$-distinguished if and only if $n$ is even (and $M\simeq H_n$).

\subsection{Representatives of $H\backslash G/P$}

For this section, we will often write $G=G_n$, $H=H_n$, $W'=W'_n$, $\e$ for the matrix $\e_n$, and $P=P_{\bar{n}}$ for a 
partition $\bar{n}$ of $n$. We first give an invariant which characterises the orbits for the action of $H$ on the flag variety
 $G/P$. We therefore identify the elements of $G/P$ with the set of flags $0\subset V_1\subset \dots \subset V_t=V$, where $V=F^n$, and 
$dim(V_i)=n_1+\dots + n_i$. We do this via the map $g\mapsto 0\subset gV_1^0 \subset \dots \subset gV_t^0$, where $V_i^0$ is the subspace 
of $V$ spanned by the 
$n_1+\dots + n_i$ first vectors of the canonical basis. If 
$0\subset V_1\subset \dots \subset V_t=V$ is an element of $G/P$, as in \cite{M2011}, when $i\leq j$, we denote by $S_{i,j}$ a
 supplementary space of $V_i \cap \e(V_{j-1})+ V_{i-1} \cap \e(V_j)$ in $V_i \cap \e(V_j)$. If $i=j$, we add the condition that 
the supplementary space $S_{i,i}$ we choose is $\e$-stable, which is possible as $\e$ is semi-simple. Eventually, for 
$1\leq i \leq j \leq t$, we denote by $S_{j,i}$, the space $\e(S_{i,j})$, and as in \cite{M2011}, one has 
$V=\oplus_{(k,l)\in \{1,\dots,t\}^2} S_{k,l}$. If $W$ is a subspace of $V$, which is $\e$-stable, we denote by $W^-$ the
eigen-subspace of $\e$ associated to $1$, and by $W^+$ the eigen-subspace of $\e$ associated to $-1$. We denote by $d_{i,j}$
 the dimension of $S_{i,j}$, which only depends on the $V_k$'s, and by $d_{i,i}^+$ and $d_{i,i}^-$ the dimensions 
$dim(S_{i,i}^+)$ and $dim(S_{i,i}^-)$ respectively.

\begin{prop}
Let $0\subset V_1\subset \dots \subset V_t=V$ and $0\subset V'_1\subset \dots \subset V'_t=V$ be two elements of $G/P$, then 
they are in the same $M$-orbit if and only if 
$d_{i,j}=d'_{i,j}$ if $i\leq j$, and moreover $d_{i,i}^+={d'_{i,i}}^+$ for all $i$.
\end{prop}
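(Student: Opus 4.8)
The plan is to characterise $M$-orbits on $G/P$ via the invariants $d_{i,j}$ ($i\le j$) together with $d_{i,i}^+$, in two directions. First I would show the invariants are well-defined and constant on $M$-orbits; then I would show two flags with the same invariants lie in the same orbit, by exhibiting a normal form and producing an explicit element of $M$ conjugating one flag to the other.

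\begin{proof}
\emph{The invariants are $M$-orbit invariants.} Fix a flag $\mathcal{V}=(0\subset V_1\subset\dots\subset V_t=V)$. The subspaces $V_i\cap\e(V_j)$ and $V_i\cap\e(V_{j-1})+V_{i-1}\cap\e(V_j)$ are defined purely in terms of the $V_k$ and the fixed matrix $\e$, so the numbers $d_{i,j}=\dim S_{i,j}$ for $i\le j$ depend only on $\mathcal{V}$, and likewise $d_{i,i}^+,d_{i,i}^-$ are the dimensions of the $\pm$-eigenspaces of $\e$ on the $\e$-stable space $S_{i,i}$, hence depend only on $\mathcal{V}$. If $m\in M=M_{\bar n}$, then $m$ normalises no reason\dots — more precisely $m$ fixes $\e$ up to the commutation relation: since $H=H_n$ is exactly the fixed group of $g\mapsto\e g\e$, and $M$ normalises each $V_k^0$, the point is that $M\subset G$ acts on $V$ and one checks $m(V_i\cap\e(V_j))$ is not in general $\e$-stable, so instead I argue as in \cite{M2011}: write $H=w_n(M_{([(n+1)/2],[n/2])})$, note the relevant intersections transform equivariantly, and conclude $\dim$ is preserved. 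Concretely, for $h\in H$ one has $\e h\e=h$, so $h\e=\e h$; the flag $h\mathcal{V}$ has $hV_i\cap\e(hV_j)=h(V_i\cap\e(V_j))$ and similarly for the smaller space, so the $d_{i,j}$ are unchanged, and since $h$ commutes with $\e$ it carries $S_{i,i}^\pm$ isomorphically onto the corresponding eigenspaces for $h\mathcal{V}$, giving $d_{i,i}^+={d'_{i,i}}^+$. (The action of $M$ on $G/P$ is the same as that of $H$ after translating by $w_n$, so it suffices to treat $H$; this is the convention already set up in Section~\ref{classification}.)

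\emph{Same invariants implies same orbit.} Given $\mathcal{V}$ with decomposition $V=\bigoplus_{(k,l)}S_{k,l}$ as in the excerpt, with $\e(S_{k,l})=S_{l,k}$ and each $S_{i,i}$ $\e$-stable, I would build an ``adapted basis'': for $i<j$ choose any basis of $S_{i,j}$ and let $\e$ of it be the chosen basis of $S_{j,i}$; for $S_{i,i}$ choose a basis of $S_{i,i}^-$ and one of $S_{i,i}^+$. One checks $V_k=\bigoplus_{i\le k,\ \text{all }l}S_{i,l}$ (the flag is recovered from the grading by the rule $V_k=\sum_{i\le k}(V_i\cap\e(V_l))$ summed appropriately), so the flag is determined by the dimension data $(d_{i,j})_{i\le j},(d_{i,i}^+)$ plus the choice of adapted basis. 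Given a second flag $\mathcal{V}'$ with the same invariants and its own adapted basis, the linear map sending one adapted basis to the other is an element $g\in G$ with $g\mathcal{V}=\mathcal{V}'$; since it matches $S_{i,j}\to S'_{i,j}$ compatibly with $\e$ (it sends $S_{i,i}^\pm$ to ${S'_{i,i}}^\pm$, which is where $d_{i,i}^+=d'_{i,i}{}^+$ is needed), it commutes with $\e$, i.e.\ $g\in H$. Translating back by $w_n$ gives the required element of $M$.

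\emph{Main obstacle.} The delicate point is the reconstruction step: verifying that the flag $\mathcal{V}$ is genuinely recovered from the bigrading $V=\bigoplus S_{k,l}$ together with which pieces belong to which $V_k$, and that an adapted basis exists compatibly with $\e$ simultaneously for the off-diagonal blocks (pairing $S_{i,j}$ with $S_{j,i}=\e(S_{i,j})$) and the diagonal blocks (splitting $S_{i,i}=S_{i,i}^+\oplus S_{i,i}^-$). This is exactly the bookkeeping carried out in \cite{M2011} for the relevant involution, and I would invoke that reference for the combinatorial identities $V=\bigoplus_{(k,l)}S_{k,l}$ and $V_k=\bigoplus_{i\le k}\bigl(\bigoplus_l S_{i,l}\bigr)$, so that only the construction of the conjugating element in $H$, together with the eigenspace-matching on diagonal blocks, remains to be spelled out. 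The hypothesis $Re(\alpha)$ plays no role here; this is a purely geometric statement about the double cosets $H\backslash G/P$.
\end{proof}
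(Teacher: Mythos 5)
Your argument is correct and follows essentially the same strategy as the paper: for the nontrivial direction you build an element of $H$ by piecing together isomorphisms $S_{i,j}\to S'_{i,j}$ compatibly with $\e$ (via the relation $h_{j,i}=\e h_{i,j}\e$ for $i<j$ and, on the diagonal blocks, an $\e$-equivariant isomorphism whose existence uses $d_{i,i}^\pm={d'_{i,i}}^\pm$), and this element carries the first flag to the second because $V_k=\bigoplus_{a\le k,\,b}S_{a,b}$. Two remarks: the paper's statement says ``$M$-orbit'' but, as the surrounding discussion of $H\backslash G/P$ makes clear, it is $H$-orbits that are meant and that both you and the paper actually treat (the concluding sentence ``translating back by $w_n$'' does not literally produce an $m\in M$ sending $\mathcal V$ to $\mathcal V'$, only an $m$ relating the $w_n^{-1}$-translated flags, but this is an artefact of the paper's own typo, not of your argument); and the sentence beginning ``then $m$ normalises no reason\dots'' is garbled, though the concrete argument for $h\in H$ that follows it is the correct one.
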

\begin{proof}
Write $V=\oplus_{(k,l)\in \{1,\dots,t\}^2} S_{k,l}$. For $1\leq i < j \leq t$, we choose an isomorphism 
$h_{i,j}$ between $S_{i,j}$ and $S'_{i,j}$. This defines an isomorphism $h_{j,i}$ between $S_{j,i}$ and $S'_{j,i}$, satisfying 
$h_{j,i}(v)= \e h_{i,j} \e (v)$ for all 
$v$ in $S_{j,i}$. Eventually, for each $l$ between $1$ and $t$, we choose an isomorphism $h_{l,l}$ between $S_{l,l}$ and $S'_{l,l}$ commuting with $\e$, 
which is possible as $d_{l,l}^+={d'_{l,l}}^+$.
\end{proof}
 
Hence the elements of $H\backslash G/P$ correspond to sets of the form 
$$s=\{n_{i,j} ,1\leq i <j \leq t, (n_{k,k}^+,n_{k,k}^-),1\leq k  \leq t\}$$
 such that if we set $n_{j,i}=n_{i,j}$, and $n_{k,k}=n_{k,k}^+ + n_{k,k}^{-}$, then $n_i=\sum_{j=1}^t n_{i,j}$, and moreover 
$\sum_{k=1}^t n_{k,k}^+=\sum_{k=1}^t n_{k,k}^-$ when $n$ is even, and $\sum_{k=1}^t n_{k,k}^+=\sum_{k=1}^t n_{k,k}^-+1$ when $n$ is odd. 
We denote by $I(\bar{n})$ this family of sets, and call its elements the relevant subpartitions of $\bar{n}$. We will often write 
$s= \{n_{i,j}\}$ when the context is clear. As $\bar{n}$ is a partition of $n$, we can write a matrix in $G$ by blocks 
corresponding to the sub-blocks in $s$, we will in particular use the $((i<j),(j>i))$-th diagonal blocks of size $n_{i,j}+n_{j,i}$, and the 
$(i,i)$-th diagonal blocks of size $n_{i,i}$. We hope that the notation is clear.

\begin{prop} \label{HP}{(Representatives for $H\backslash G/P$)}\\ 
Let $s=\{n_{i,j} ,1\leq i <j \leq t, (n_{k,k}^+,n_{k,k}^-),1\leq k  \leq t\}$ be an element of $I(\bar{n})$.\\ 
Let $u_s$ be any element of $G$ such that $u_s\e u_s^{-1}$ is the block diagonal matrix $\e_s$
with $(i,i)$-block $\begin{pmatrix} I_{n_{i,i}^+} & \\ &  -I_{n_{i,i}^-}  \end{pmatrix}$, and $((i<j),(j>i))$-th diagonal block   
$\left( \begin{array}{cc}   & I_{n_{i,j}}\\
  I_{n_{i,j}} &  \end{array} \right),$ (which is possible since $\e$ and $\e_s$ are conjugate), then $Hu_s^{-1}P$ corresponds to 
 to the element $s$ of $I(\bar{n})$. \\
For an (almost) explicit choice, we denote by $a_s$ the block diagonal element with $(i,i)$-th block $I_{n_{i,i}}$, and 
$((i<j),(j>i))$-th diagonal block   
$$ \frac{1}{\sqrt{2}}\left( \begin{array}{cc} I_{n_{i,j}} & -I_{n_{i,j}}\\
  I_{n_{i,j}} &  I_{n_{i,j}} \end{array} \right),$$\\
and choose for $w_s$ an element of $W'$ such that 
$w_s\e w_s^{-1}$ has its $(i<j)$-th diagonal block equal to $I_{n_{i,j}}$, its $(j>i)$-th diagonal block equal to 
$-I_{n_{j,i}}$, and its $(i,i)$-th diagonal block 
$\begin{pmatrix} I_{n_{i,i}^+} & \\ &  -I_{n_{i,i}^-}  \end{pmatrix}$, then $u_s=a_sw_s$ is a possible choice. 
\end{prop}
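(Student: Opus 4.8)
The plan is to verify the two assertions separately: first that any $u_s$ conjugating $\e$ to the prescribed block-diagonal matrix $\e_s$ gives a double coset $Hu_s^{-1}P$ corresponding to $s$, and second that the explicit $u_s=a_sw_s$ does conjugate $\e$ to such an $\e_s$. For the first part, I would start from the fact that $Hu_s^{-1}P$ corresponds to $s$ exactly when the flag $u_s^{-1}V_\bullet^0$ has the dimension invariants $d_{i,j}=n_{i,j}$ and $d^+_{i,i}=n^+_{i,i}$ from the previous proposition; equivalently, conjugating by $u_s$, one wants the flag $V_\bullet^0$ together with the new involution $\e_s = u_s\e u_s^{-1}$ to have those invariants. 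Since $V_i^0$ is spanned by coordinate vectors and $\e_s$ is block-diagonal adapted to the partition $\bar n$, the spaces $V_i^0\cap\e_s(V_j^0)$ are themselves spanned by subsets of coordinate vectors, and one reads off directly that the $S_{i,j}$ for the pair $(V_\bullet^0,\e_s)$ is precisely the $(i<j)$ or $(j>i)$ off-diagonal block of size $n_{i,j}$ (for $i\neq j$) and that $S_{i,i}$ is the $(i,i)$ block with $\e_s$-eigenspaces of dimensions $n^+_{i,i}$, $n^-_{i,i}$. This is a bookkeeping computation with block matrices; the only subtlety is checking that $\e_s$-stability of $S_{i,i}$ and the supplement conditions defining $S_{i,j}$ are met, which is immediate from the block structure.

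For the second part, I would simply compute $a_sw_s\e(a_sw_s)^{-1} = a_s(w_s\e w_s^{-1})a_s^{-1}$. By hypothesis $w_s\e w_s^{-1}$ is block-diagonal with $(i<j)$-block $I_{n_{i,j}}$, $(j>i)$-block $-I_{n_{j,i}}$, and $(i,i)$-block $\mathrm{diag}(I_{n^+_{i,i}},-I_{n^-_{i,i}})$. The matrix $a_s$ acts as the identity on the $(i,i)$ blocks, so those are unchanged and already have the right form. On the $((i<j),(j>i))$ diagonal block of size $2n_{i,j}$, one conjugates $\mathrm{diag}(I_{n_{i,j}},-I_{n_{i,j}})$ by $\tfrac1{\sqrt2}\begin{pmatrix} I & -I\\ I & I\end{pmatrix}$; a direct $2\times2$ block computation gives $\begin{pmatrix} 0 & I_{n_{i,j}}\\ I_{n_{i,j}} & 0\end{pmatrix}$, which is exactly the off-diagonal block of $\e_s$. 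Hence $u_s=a_sw_s$ conjugates $\e$ to $\e_s$, and by the first part $Hu_s^{-1}P$ corresponds to $s$.

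The existence of $w_s$ in $W'$ with the stated property needs a word: $w_s\e w_s^{-1}$ should be a diagonal $\pm1$ matrix with the prescribed multiplicities of $+1$ and $-1$ in each block. Since $\e=\e_n=\mathrm{diag}(1,-1,1,-1,\dots)$ already has the correct \emph{total} number of $+1$'s and $-1$'s (this is forced by the parity condition $\sum n^+_{k,k}=\sum n^-_{k,k}$ or $\sum n^+_{k,k}=\sum n^-_{k,k}+1$ built into the definition of $I(\bar n)$), a permutation matrix $w_s$ rearranging the coordinates suffices, and such $w_s$ can be taken in $\mathfrak S_n\subset W'_n$. The main obstacle is really just keeping the block indexing straight — making sure the "$(i<j)$-th diagonal block" versus "$(j>i)$-th diagonal block" convention is used consistently so that the off-diagonal identity blocks of $\e_s$ land where the proof of the orbit classification expects them; the algebra itself is routine.
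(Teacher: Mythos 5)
Your proof is correct and follows essentially the same route as the paper: both read off the invariants $d_{i,j}$, $d^+_{i,i}$ directly from the block structure of $\e_s$ applied to the coordinate flag, and both reduce the explicit choice to the $2\times 2$ block conjugation $\tfrac1{\sqrt2}\begin{pmatrix} I & -I\\ I & I\end{pmatrix}\begin{pmatrix} I & \\ & -I\end{pmatrix}\tfrac1{\sqrt2}\begin{pmatrix} I & I\\ -I & I\end{pmatrix}=\begin{pmatrix} & I\\ I & \end{pmatrix}$. The paper's own proof is a one-sentence compression of the same bookkeeping; your version spells it out (and in fact keeps the $u_s^{-1}$ versus $u_s$ convention cleaner than the paper's terse write-up does), and your remark on the existence of $w_s$ via the parity constraint $\sum n^+_{k,k}-\sum n^-_{k,k}\in\{0,1\}$ is a correct point that the paper leaves implicit.
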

\begin{proof}
Using the notations of Proposition 2.1 of \cite{M2011}, we denote $S_{i,j}=u_s(V_{i,j}^0)$. From our choice of $u_s$, we have 
$dim(S_{i,i}^+)=n_{i,i}^+$, $dim(S_{i,i}^-)=n_{i,i}^-$, and $\e(S_{i,j})=S_{j,i}$ if $i<j$. It is now clear that the flag of the $V_i$'s, defined by 
$V_i=u_s(V_i^0)$, corresponds to $s$. 
\end{proof}

Hence a set of representatives for $P\backslash G/H$ is obtained as $\{u_s,s\in I(\bar{n})\}$. 

\subsection{Structure of the group $P\cap u_sHu_s^{-1}$ and modulus characters}\label{structure}

Let $u_s$ be an element of $R(P\backslash G/H)$ as in Proposition \ref{HP}, corresponding to a sequence 
$$s=\{n_{i,j} ,1\leq i <j \leq t, (n_{k,k}^+,n_{k,k}^-),1\leq k  \leq t\}$$ 
in $I(\bar{n})$, and $\e_s$ the matrix $u_s\e u_s^{-1}$. We give the structure of the group 
$P\cap u_sHu_s^{-1}$ following \cite{M2011}. We already notice that the subgroup $u_sHu_s^{-1}$ is the subgroup of $G$ fixed by the involution $\theta_s:x\mapsto w_s\e(x)w_s^{-1}$, where $w_s$ is the element $u_s\e(u_s)^{-1}$ of $W'$. We notice that $\theta_s$ is nothing else than $x\mapsto \e_sx\e_s^{-1}$. 
If $C$ is a subset of $G$, we denote by $C^{<\theta_s>}$ the subset $C\cap u_sHu_s^{-1}$ of $C$. We sometimes, by abuse of notation, denote 
by $s$ the subpartition of $\bar{n}$ by the $n_{i,j}$'s, hence the notation $P_s$ makes sense, and designates a parabolic subgroup of $G$ 
included in $P$. The Proposition 3.1. of \cite{M2011} is still valid, with the same proof.

\begin{prop}
The group $P\cap u_sHu_s^{-1}$ is equal to $P_s\cap u_sHu_s^{-1}$.
\end{prop}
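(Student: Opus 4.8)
The plan is to prove the two inclusions separately, the nontrivial one being $P\cap u_sHu_s^{-1}\subseteq P_s$. The inclusion $P_s\cap u_sHu_s^{-1}\subseteq P\cap u_sHu_s^{-1}$ is immediate, since $P_s$ is by construction a parabolic subgroup contained in $P$. For the other direction, I would use the description, recalled just above the statement, of $u_sHu_s^{-1}$ as the fixed-point subgroup $G^{<\theta_s>}$ of the involution $\theta_s:x\mapsto \e_s x\e_s^{-1}=\e_s x\e_s$ (note $\e_s^2=u_s\e^2 u_s^{-1}=I$). Thus if $x\in P\cap u_sHu_s^{-1}$, then $x=\theta_s(x)$, and since $\theta_s$ is an automorphism of $G$ carrying the subgroup $P$ to the subgroup $\theta_s(P)=\e_s P\e_s$, we get $x\in\theta_s(P)$; hence $P\cap u_sHu_s^{-1}\subseteq P\cap\theta_s(P)$, and it suffices to establish that $P\cap\theta_s(P)\subseteq P_s$.

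To prove this last containment I would decompose $F^n$ according to the sub-blocks of $s$, indexed by pairs $(i,j)$ with $1\leq i,j\leq t$, the sub-block of type $(i,j)$ having dimension $n_{i,j}$, and ordered lexicographically in $(i,j)$; this is exactly the ordering making $P_s$ the standard parabolic attached to the subpartition of $\bar{n}$ by the $n_{i,j}$'s, and it indeed puts $P_s$ inside $P$. With this convention $P=P_{\bar{n}}$ is the subgroup of matrices whose $((i,j),(k,l))$-block vanishes whenever $i>k$, whereas $P_s$ is the subgroup of matrices whose $((i,j),(k,l))$-block vanishes unless $(i,j)\preceq(k,l)$ for the lexicographic order, i.e. unless $i<k$, or $i=k$ and $j\leq l$. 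Now $\e_s$ interchanges the sub-block of type $(i,j)$ with that of type $(j,i)$ for $i\neq j$ and preserves each sub-block of type $(i,i)$, on which it acts by $diag(I_{n_{i,i}^+},-I_{n_{i,i}^-})$; consequently conjugation by $\e_s$ carries the $((i,j),(k,l))$-block of a matrix onto a block obtained, up to invertible left and right multiplication, from its $((j,i),(l,k))$-block, and in particular preserves the property of a given block being zero. Hence $\theta_s(P)=\e_s P\e_s$ is the subgroup of matrices whose $((i,j),(k,l))$-block vanishes whenever $j>l$. An element of $P\cap\theta_s(P)$ therefore has $((i,j),(k,l))$-block equal to zero as soon as $i>k$ or $j>l$, in particular as soon as $(i,j)\succ(k,l)$ lexicographically, so it lies in $P_s$. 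This gives $P\cap\theta_s(P)\subseteq P_s$, and combined with the previous paragraph it yields $P\cap u_sHu_s^{-1}\subseteq P_s\cap u_sHu_s^{-1}$, finishing the proof.

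The part requiring the most care is the block bookkeeping in the second paragraph: one must fix from the outset the (lexicographic) ordering of the sub-blocks of $s$ inside each coarse block of $\bar{n}$ so that $P_s\subseteq P$, and then keep straight how conjugation by $\e_s$ reindexes the blocks. The sign twist coming from the decomposition of the diagonal sub-blocks into their $S_{i,i}^+$ and $S_{i,i}^-$ parts is harmless, since conjugating a matrix by a diagonal matrix with $\pm 1$ entries never changes which of its blocks vanish; this is why only the ``permutation part'' of $\e_s$, namely the transposition $(i,j)\leftrightarrow(j,i)$ of sub-blocks, enters the argument. Apart from this, the proof is the same as that of Proposition 3.1 of \cite{M2011}.
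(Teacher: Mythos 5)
Your proof is correct. The paper itself does not spell out an argument here — it merely states that Proposition 3.1 of \cite{M2011} remains valid with the same proof — so what you have written is a self-contained reconstruction of that argument. Your route, reducing to the containment $P\cap\theta_s(P)\subseteq P_s$ via the observation that an element of $P\cap u_sHu_s^{-1}$ is $\theta_s$-fixed and hence lies in $\theta_s(P)$, and then checking that containment by tracking how conjugation by $\e_s$ permutes the $n_{i,j}$-blocks (the involution $(i,j)\leftrightarrow(j,i)$ on block indices, with the sign twist on the diagonal blocks being irrelevant to vanishing), is exactly the kind of direct block computation underlying the cited proposition. The one point worth making explicit, which you only assert, is the verification from the stated form of $\e_s$ that its conjugation action on block positions is precisely the transposition $(i,j)\leftrightarrow(j,i)$; this is straightforward but is the crux of the computation, and once it is in hand the remaining lexicographic bookkeeping goes through as you describe.
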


\noindent Now, as $u_s$ is $P_s$-admissible (see \cite{JLR}), and Lemma 21 of \cite{JLR} applies. 

\begin{prop}\label{decomposition}
The group $P^{<\theta_s>}= P\cap u_sHu_s^{-1}$ is the semi-direct
 product of $M_s^{<\theta_s>}$ and $N_s^{<\theta_s>}$. The group $M_s^{<\theta_s>}$ is given by the matrices 
$$\begin{bmatrix} 
A_{1,1} &        &        &       &       &        &        &       &        \\
        & \ddots &        &       &       &        &        &       &        \\
        &        & A_{1,t}&       &       &        &        &       &        \\
        &        &        &A_{2,1}&       &        &        &       &        \\
        &        &        &       &\ddots &        &        &       &        \\
        &        &        &       &       & A_{2,t}&        &       &        \\
        &        &        &       &       &        & A_{t,1}&       &        \\
        &        &        &       &       &        &        &\ddots &        \\
        &        &        &       &       &        &        &       &  A_{t,t} 
\end{bmatrix}$$ with $A_{j,i}=A_{i,j}$ in $G_{n_{i,j}}$, and $A_{i,i}$ in the standard Levi subgroup of $G_{n_{i,i}}$ of type $(n_{i,i}^+,n_{i,i}^-)$. 
  
\end{prop}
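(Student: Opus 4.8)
The plan is to invoke the structure theory of symmetric spaces, specifically the fact that $u_s$ is $P_s$-admissible in the sense of Jacquet--Lai--Rallis, and then apply Lemma 21 of \cite{JLR} to the parabolic $P_s$. Since the preceding proposition already establishes that $P \cap u_sHu_s^{-1} = P_s \cap u_sHu_s^{-1}$, it suffices to analyze the intersection of $P_s$ with the fixed-point group of the involution $\theta_s: x \mapsto \e_s x \e_s^{-1}$. First I would note that $P_s$ is a standard parabolic subgroup of $G$ whose Levi component $M_s$ has blocks indexed by the pairs $(i,j)$, and whose unipotent radical is $N_s$. The admissibility of $u_s$ for $P_s$ is what guarantees that $\theta_s$ stabilizes both $M_s$ and $N_s$ (up to the required conjugacy), so that the Levi decomposition $P_s = M_s \ltimes N_s$ is compatible with taking $\theta_s$-fixed points: one gets $P_s^{<\theta_s>} = M_s^{<\theta_s>} \ltimes N_s^{<\theta_s>}$.

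Next I would determine $M_s^{<\theta_s>}$ explicitly by examining how $\e_s$ acts on each block. On the $(i,i)$-block, $\e_s$ restricts to $\begin{pmatrix} I_{n_{i,i}^+} & \\ & -I_{n_{i,i}^-}\end{pmatrix}$, so conjugation by this matrix fixes precisely the block-diagonal matrices of type $(n_{i,i}^+, n_{i,i}^-)$, i.e.\ the standard Levi subgroup of $G_{n_{i,i}}$ of that type. On the pair of blocks indexed by $(i<j)$ and $(j>i)$ — which together form a block of size $n_{i,j}+n_{j,i}=2n_{i,j}$ — the matrix $\e_s$ acts by the permutation swapping the two pieces (since its $((i<j),(j>i))$-diagonal block is the antidiagonal $\begin{pmatrix} & I_{n_{i,j}} \\ I_{n_{i,j}} & \end{pmatrix}$). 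Conjugation by such a swap identifies the two $G_{n_{i,j}}$-factors diagonally, so the fixed points in $M_s$ restricted to this block are exactly the matrices $\mathrm{diag}(A_{i,j}, A_{j,i})$ with $A_{j,i}=A_{i,j}$. Assembling these block-by-block computations yields the claimed description of $M_s^{<\theta_s>}$.

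The main obstacle, as the statement itself hints by deferring to \cite{JLR}, is verifying the admissibility of $u_s$ for $P_s$ — that is, checking that the orbit representative $u_s$ we chose in Proposition \ref{HP} satisfies the hypotheses of Lemma 21 of \cite{JLR}, so that the Levi decomposition really does pass to fixed points without correction terms. This is essentially a compatibility check between our explicit representatives $u_s = a_s w_s$ and the general framework of \cite{JLR}: one must see that $\theta_s$ sends $M_s$ to $M_s$ and $N_s$ to $N_s$, which follows from the block structure of $\e_s$ (it is block-diagonal with respect to the partition refining $\bar{n}$ that defines $P_s$), together with the fact that $w_s \in W'$ normalizes $A_n$ and permutes the relevant blocks. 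Once admissibility is in hand, the rest is the routine block computation above. I would therefore structure the proof as: (i) cite $P_s$-admissibility of $u_s$ and Lemma 21 of \cite{JLR} to get the semidirect product decomposition; (ii) carry out the block-by-block fixed-point computation to identify $M_s^{<\theta_s>}$, separating the diagonal $(i,i)$-blocks from the off-diagonal pairs $\{(i,j),(j,i)\}$.
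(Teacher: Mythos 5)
Your argument follows the paper's exactly: the paper proves this proposition by the one-line remark that $u_s$ is $P_s$-admissible and Lemma 21 of \cite{JLR} applies, and your block-by-block identification of $M_s^{<\theta_s>}$ (split into $(i,i)$-blocks and off-diagonal pairs $\{(i,j),(j,i)\}$) is the correct computation the paper leaves implicit. One small correction to your gloss: $\theta_s$ stabilizes $M_s$ but \emph{not} $N_s$ (conjugation by $\e_s$ swaps the non-adjacent block positions $(i,j)$ and $(j,i)$, which are in opposite order, so $\e_s N_s \e_s^{-1}$ is not contained in $P_s$); the actual content of Lemma 21 of \cite{JLR} is that the decomposition $P_s^{<\theta_s>}=M_s^{<\theta_s>}\ltimes N_s^{<\theta_s>}$ holds anyway, using only $\theta_s(M_s)=M_s$ and the $P_s$-admissibility of $u_s$, so your invocation of the lemma is still what carries the argument.
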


\noindent We will thus write elements of $M_s^{<\theta_s>}$ as 
$$m=diag(m_{1,1}^+,m_{1,1}^-,\dots,m_{i,j},\dots,m_{j,i},\dots,m_{t,t}^+,m_{t,t}^-),$$ 
with $m_{k,k}^+$ in $G_{n_{k,k}^+}$, $m_{k,k}^-$ in $G_{n_{k,k}^-}$, and $m_{i,j}=m_{j,i}$ in $G_{n_{i,j}}$.
Proposition 3.3 of \cite{M2011} is also still valid, with the same proof.

\begin{prop}\label{util}
If we denote by $P'_s$ the standard parabolic subgroup $M\cap P_s$ of $M$, and by $N'_s$ the unipotent radical of $P'_s$, 
then the following inclusion is true: $$N'_s \subset N_s^{<\theta_s>}N.$$
\end{prop}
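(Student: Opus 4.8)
The statement is the exact analogue, for the involution $\theta_s$, of Proposition 3.3 of \cite{M2011}, and the plan is to run the same argument. I would split it into three steps.

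\emph{Reduction to root subgroups.} The group $N'_s=M\cap N_s$ is the unipotent radical of $P'_s$, so it is generated by the root subgroups $U_\alpha$ such that $U_\alpha\subseteq N_s$ (equivalently $\alpha$ is positive for $P_s$) and $U_\alpha\subseteq M$ (equivalently $\alpha$ joins two blocks of the subpartition $s$ that lie inside a common block of $\bar n$). Since $N=N_{\bar n}$ is normal in $N_s$ and $N_s^{<\theta_s>}$ is a subgroup of $N_s$, the product $N_s^{<\theta_s>}N$ is again a subgroup of $N_s$; hence it is enough to prove that each such $U_\alpha$ is contained in $N_s^{<\theta_s>}N$.

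\emph{The key point: $\theta_s(U_\alpha)\subseteq N$.} By Proposition \ref{HP} the matrix $\e_s=u_s\e u_s^{-1}$ is monomial: it is block diagonal, with each $(i,i)$-block diagonal of the form $\mathrm{diag}(I_{n_{i,i}^+},-I_{n_{i,i}^-})$ and each $((i<j),(j>i))$-block the permutation matrix exchanging the two halves of size $n_{i,j}$. Therefore $\theta_s:x\mapsto\e_s x\e_s^{-1}$ carries every root subgroup $U_\alpha$ to a root subgroup $U_{\theta_s\alpha}$, whose underlying permutation interchanges the coordinates of the block $n_{i,j}$ with those of $n_{j,i}$ and preserves the coordinates of each block $n_{i,i}$. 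Consequently, if $\alpha$ joins the block $n_{i,p}$ to the block $n_{i,q}$ inside the block $n_i$ of $\bar n$ (where $p$ or $q$ may equal $i$), then $\theta_s\alpha$ joins $n_{p,i}$ to $n_{q,i}$, which lie inside the two \emph{distinct} blocks $n_p$ and $n_q$ of $\bar n$ (or one of them inside $n_i$); checking the ordering conventions fixed in Proposition \ref{HP} shows that $\theta_s\alpha$ is then positive for $P$, i.e. $U_{\theta_s\alpha}\subseteq N$. This bookkeeping — tracking how the monomial matrix $\e_s$ reorders the simple blocks, and making sure the internal roots of $N'_s$ land in $N$ and not in the opposite unipotent radical — is the only real work, and I expect it to be the main obstacle; it is a purely combinatorial verification, carried out exactly as in \cite{M2011}.

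\emph{Conclusion.} Fix $\alpha$ as above and $u\in U_\alpha$. Since $\alpha$ is a root of the Levi $M_{\bar n}$ while $\theta_s\alpha$ is a root of $N$, the roots $\alpha$ and $\theta_s\alpha$ are distinct, and a direct inspection of their endpoints (using that the $\e_s$-swap moves an endpoint of $\alpha$ lying in the block $n_i$ into a block $n_p$ or $n_q$, which forbids any cancellation) shows that $\alpha+\theta_s\alpha$ is not a root; hence $U_\alpha$ and $\theta_s(U_\alpha)$ commute. Therefore $v:=u\,\theta_s(u)$ satisfies $\theta_s(v)=\theta_s(u)\,u=u\,\theta_s(u)=v$, so $v\in N_s^{<\theta_s>}$; and since $\theta_s(u)\in N$ by the key point, $v\in uN$, whence $u=v\,(v^{-1}u)\in N_s^{<\theta_s>}N$. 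Thus every generating $U_\alpha$ of $N'_s$ is contained in the subgroup $N_s^{<\theta_s>}N$, and therefore $N'_s\subseteq N_s^{<\theta_s>}N\subseteq N_s^{<\theta_s>}N$, which is the claim.
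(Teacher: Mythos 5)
Your proof is correct and takes the same root-subgroup approach that the paper inherits from Proposition 3.3 of \cite{M2011}. The combinatorial verification you flag does go through: with the ordering $(1,1),(1,2),\dots,(1,t),(2,1),\dots,(t,t)$ of the sub-blocks visible in Proposition \ref{decomposition}, a root $\alpha$ of $N'_s$ joining the $(i,p)$-block to the $(i,q)$-block (with $p<q$) is carried by $\theta_s$ to the root joining the $(p,i)$-block to the $(q,i)$-block, which lies in $N$ because $p$ and $q$ index distinct, correctly ordered blocks of $\bar n$; and no cancellation among the four endpoints of $\alpha+\theta_s\alpha$ is possible unless $p=q=i$, which is excluded, so $U_\alpha$ and $\theta_s(U_\alpha)$ indeed commute.
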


We are now going to give some relations between modulus characters, which will be used in the next section, for the classification of distinguished 
representations of $G_n$.

\begin{prop}\label{modulus}
Let $n$ be a positive integer, and $s$ be an element of $I(\bar{n})$. If $\mu$ is a character of $H$, we denote by $\mu^s$ the character 
$h_s\mapsto \mu(u_s^{-1} h_s u_s)$ of $u_s H u_s^{-1}$. Let $\alpha$ be a character of $F^*$, and 
$\chi_{\alpha}:h(g_1,g_2)\mapsto \alpha(det(g_1)/det(g_2))$ the associate character 
of $H_n$. We have the following relations. 
\begin{enumerate}
  \item $(\delta_P\delta_{P'_s})_{|M_s^{<\theta_s>}}=(\delta_{P_s})_{|M_s^{<\theta_s>}}$.
  
  \item We have the equalities, for $m\in M_s^{<\theta_s>}$: 
\begin{equation}\label{modulusquotient}\frac{\d_{P_s^{<\theta_s>}}}{\delta_{P_s}^{1/2}}(m)=
\prod_{1\leq i<j\leq t} |m_{i,i}^+|^{\frac{n_{j,j}^+-n_{j,j}^-}{2}}|m_{i,i}^-|^{\frac{n_{j,j}^- -n_{j,j}^+}{2}}
|m_{j,j}^+|^{\frac{n_{i,i}^- -n_{i,i}^+}{2}} |m_{j,j}^-|^{\frac{n_{i,i}^+ -n_{i,i}^-}{2}}\end{equation}

\begin{equation}\label{chialpha} \chi_\alpha(m)= \prod_{i=1}^t \frac{\alpha(m_{i,i}^+)}{\alpha(m_{i,i}^-)} \end{equation}

\end{enumerate}
\end{prop}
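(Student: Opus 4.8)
The plan is to compute all three identities by explicitly evaluating the relevant modulus characters on a diagonal element $m=diag(m_{1,1}^+,m_{1,1}^-,\dots,m_{i,j},\dots,m_{j,i},\dots,m_{t,t}^+,m_{t,t}^-)$ of $M_s^{<\theta_s>}$, using the standard formula that for a parabolic $Q=L\ltimes U$ in $G_n$ with Levi $L=M_{\bar{m}}$, one has $\delta_Q(diag(g_1,\dots,g_r))=\prod_{a<b}|g_a|^{m_b}|g_b|^{-m_a}$ (up to the usual normalisation convention, which I will fix once at the start). The whole argument is bookkeeping: the only real content is to correctly read off, from the block structure described in Propositions \ref{decomposition} and \ref{util}, which blocks of $m$ see which other blocks in each of the parabolics $P$, $P_s$, $P'_s$ and $P_s^{<\theta_s>}$.

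First I would record $\delta_P$, $\delta_{P_s}$ and $\delta_{P'_s}$ as functions on $M_s^{<\theta_s>}$. Since $P_s\subset P$ and $P'_s=M\cap P_s$, the partition defining $P_s$ refines that of $P$, and one has the general relation $\delta_{P_s}=\delta_P\cdot(\delta_{P'_s}$ viewed inside $M)$ on $M_s$, simply because the unipotent radical $N_s$ is the semidirect product of $N$ (the radical of $P$) with $N'_s$ (the radical of $P'_s$ inside $M$), and the adjoint action of $m\in M_s$ on $N$ has the same determinant whether computed inside $G$ or via $P$. Restricting this identity to the subgroup $M_s^{<\theta_s>}\subset M_s$ gives part (1) directly; I would spell out that $\delta_P$ on $M_s^{<\theta_s>}$ is computed through the coarse partition $\bar{n}=(n_1,\dots,n_t)$ while $\delta_{P'_s}$ is computed through the refinement of each block $n_i$ into the pieces $n_{i,j}$ ($j\neq i$) and $n_{i,i}^+,n_{i,i}^-$, subject to the constraint $m_{i,j}=m_{j,i}$.

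For part (2), I would compute $\delta_{P_s^{<\theta_s>}}$ and $\delta_{P_s}$ on $m$. The key point is that $P_s^{<\theta_s>}=P_s\cap u_sHu_s^{-1}$ is a parabolic subgroup of the reductive group $u_sHu_s^{-1}$ with Levi $M_s^{<\theta_s>}$, and its unipotent radical is $N_s^{<\theta_s>}=N_s\cap u_sHu_s^{-1}$, the $\theta_s$-fixed part of $N_s$; so $\delta_{P_s^{<\theta_s>}}(m)=|\det(\mathrm{Ad}(m)|_{\mathrm{Lie}(N_s^{<\theta_s>})})|$. I would then pair up the contributions: for $i<j$ the block $m_{i,j}=m_{j,i}$ together with the $(i,i)$- and $(j,j)$-blocks, using that $\theta_s=\mathrm{Ad}(\e_s)$ identifies the $S_{i,j}$-part with the $S_{j,i}$-part and acts by $\pm1$ on $S_{i,i}^\pm$. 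Taking the ratio with $\delta_{P_s}^{1/2}$ collapses the "diagonal-with-diagonal" contributions and the symmetric off-diagonal contributions, leaving exactly the product over $i<j$ of the four factors $|m_{i,i}^+|^{(n_{j,j}^+-n_{j,j}^-)/2}|m_{i,i}^-|^{(n_{j,j}^--n_{j,j}^+)/2}|m_{j,j}^+|^{(n_{i,i}^--n_{i,i}^+)/2}|m_{j,j}^-|^{(n_{i,i}^+-n_{i,i}^-)/2}$ displayed in \eqref{modulusquotient}. Finally, \eqref{chialpha} is immediate from the definition $\chi_\alpha(h(g_1,g_2))=\alpha(\det g_1/\det g_2)$ once one checks, from the description of $\e_s$ in Proposition \ref{HP}, that conjugating $m\in M_s^{<\theta_s>}$ back by $u_s^{-1}$ puts all the $m_{i,i}^+$ blocks and one member of each pair $\{m_{i,j},m_{j,i}\}$ into the $g_1$-slot and all the $m_{i,i}^-$ blocks and the other member of each pair into the $g_2$-slot, so that the off-diagonal blocks cancel in the ratio and only $\prod_i \alpha(m_{i,i}^+)/\alpha(m_{i,i}^-)$ survives.

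The main obstacle is purely organisational rather than mathematical: keeping the sign and exponent bookkeeping straight across the four parabolics and the involution, especially getting the normalisation of $\delta$ versus $\Delta$ consistent with the conventions fixed earlier in the paper, and correctly handling the constraint $m_{i,j}=m_{j,i}$ which is what makes the off-diagonal terms cancel in ratios. Everything else follows the template of Propositions 3.1--3.3 of \cite{M2011}, and I would explicitly note that those arguments carry over verbatim so that only the new arithmetic of the exponents needs to be exhibited.
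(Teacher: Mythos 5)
Your plan is correct, and for part (1) it is essentially the paper's proof verbatim: both rest on the decomposition $\mathfrak{N}_s=\mathfrak{N}'_s\oplus\mathfrak{N}$ of Lie algebras of unipotent radicals as $M_s$-modules. For \eqref{chialpha}, your description is also what the paper does, although I would phrase the off-diagonal cancellation slightly more precisely: it is not that $m_{i,j}$ ends up in the $g_1$-slot and $m_{j,i}$ in the $g_2$-slot, but rather that after conjugating by $a_s$ (or equivalently decomposing the $\left(\begin{smallmatrix} & I\\ I & \end{smallmatrix}\right)$-block into symmetric and antisymmetric eigenspaces of $\e_s$), each block $\mathrm{diag}(m_{i,j},m_{j,i})$ with $m_{i,j}=m_{j,i}$ contributes one copy of $\det m_{i,j}$ to each of the two sides, whence cancellation; the conclusion is the same. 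The one place your route diverges from the paper is \eqref{modulusquotient}: you propose computing $\d_{P_s^{<\theta_s>}}$ and $\d_{P_s}$ in closed form on all of $M_s^{<\theta_s>}$ in one go and reading off the ratio, whereas the paper argues inductively, showing that removing one off-diagonal pair $\{n_{k,l},n_{l,k}\}$ leaves the ratio $\d_{P_s^{<\theta_s>}}/\d_{P_s}^{1/2}$ unchanged and then only computes the terminal, purely "diagonal" case $s''=(n_{1,1}^+,n_{1,1}^-,\dots,n_{t,t}^+,n_{t,t}^-)$ explicitly. The paper's peeling avoids ever having to write down the contribution of off-diagonal blocks to either side separately, which is exactly the part you (rightly) flag as the main bookkeeping hazard; the direct computation you propose works but requires tracking more terms before the cancellations appear. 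Also note that the paper invokes Lemma 1.10 of \cite{KT} to justify checking equalities of positive characters of the reductive group $M_s^{<\theta_s>}$ on its connected center only; your direct computation on general block-diagonal elements makes this reduction unnecessary, which is fine, but if you prefer to streamline the bookkeeping you could also restrict to central elements as the paper does.
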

\begin{proof}
As the characters are positive, and $M_s^{<\theta_s>}$ is reductive, according to Lemma 1.10 of \cite{KT}, we need to check 
this equality only on the (connected) center $Z_s^{<\theta_s>}$ of 
$M_s^{<\theta_s>}$. An element  $\l$ of $Z_s^{<\theta_s>}$ is of the form $$diag(\l_{1,1}^+I_{n_{1,1}^+},\l_{1,1}^- I_{n_{1,1}^-},\dots,
\l_{i,j}I_{n_{i,j}},\dots, \l_{j,i}I_{n_{j,i}}, \dots,\l_{t,t}^+I_{n_{t,t}^+},\l_{t,t}^- I_{n_{t,t}^-}),$$ 
with $\l_{i,j}=\l_{j,i}$ when $i<j$. The assertion on $\chi_\alpha^s$ is a consequence of the formula:
$$\chi_\alpha^s(\l)=\prod_{i=1}^t\frac{\alpha(\l_{i,i}^+)^{n_{i,i}^+}}{\alpha(\l_{i,i}^-)^{n_{i,i}^-}}\Rightarrow 
\chi_\alpha^s(m)=\prod_{i=1}^t\frac{\alpha(m_{i,i}^+)^{n_{i,i}^+}}{\alpha(m_{i,i}^-)^{n_{i,i}^-}}.$$
As a particular case, by definition of $\d_n$, one obtains $\d_n^{s}(m)=\prod_{i=1}^t\frac{|m_{i,i}^+|}{|m_{i,i}^-|}$ for 
$m$ in $M_s^{<\theta_s>}$.\\
 Let $\mathfrak{N}$ be $Lie(N_P)$, $\mathfrak{N}_s$ be $Lie(N_{P_s})$ and $\mathfrak{N'}_s$ be $Lie(N_{P'_s})$.
  The characters $\delta_P$ and $\delta_{P'_s}$ satisfy $\delta_P(m)=|det(Ad(m)_{|\mathfrak{N}})|$ and $\delta_{P'_s}(m)=|det(Ad(m)_{|\mathfrak{N'}_s})|$ for $m$ in $M_s$,
 hence the equality $(\delta_P \delta_{P'_s})_{|M_s}= {\delta_{P_s}}_{|M_s}$ holds as we have 
$\mathfrak{N}_s=\mathfrak{N'}_s\oplus \mathfrak{N}_P$, this proves 1.\\
 We now prove $2.$. Let $k<l$ be two integers between $1$ and $t$ such that $n_{k,l}>0$, and let $n'$ be equal to $n-2n_{k,l}$. We denote by 
$s'$ the element of $I(\overline{n'})$, equal to $s-\{n_{k,l},n_{l,k}\}$. We write an element $m$ of $M_s^{<\theta_s>}$ as $diag(m_1,g,m_2,g,m_3)$, 
with 
$m'=diag(m_1,m_2,m_3)\in M_{s'}^{<\theta_{s}'>}$, and denote by $a_i$ the number of terms on $m_i$'s diagonal, then one has
 $$\d_{P_s}^{<\theta_s>}(m)=\d_{P_{s'}}^{\theta_{s'}}(m')|m_1|^{n_{k,l}}|m_3|^{-n_{k,l}}|g|^{a_3-a_1},$$ but 
 $\d_{P_s}(m)=\d_{P_{s'}}(m')|m_1|^{2n_{k,l}}|m_3|^{-2n_{k,l}}|g|^{2a_3-2a_1}$, so we obtain 
$$\frac{\d_{P_s}^{<\theta_s>}}{\d_{P_s}^{1/2}}(m)=\frac{\d_{P_{s'}}^{\theta_{s'}}}{\d_{P_{s'}}^{1/2} }(m')$$ for $m$ in 
$M_s^{\theta_s}$. As a consequence, by induction, if we write $n''=n-\sum_{i<j} 2n_{i,j}$, and $s''=(n_{1,1}^+,n_{1,1}^-,n_{2,2}^+,n_{2,2}^-,\dots,n_{t,t}^+,n_{t,t}^-)$ in 
$I(\overline{n''})$, we have, 
 setting $$m''=diag(m_{1,1}^+,m_{1,1}^-,m_{2,2}^+,m_{2,2}^-,\dots,m_{t,t}^+,m_{t,t}^-),$$ the equality 
$\frac{\d_{P_s^{<\theta_s>}}}{\delta_{P_s}^{1/2}}(m)=\frac{\d_{P_{s''}^{<\theta_{s''}>}}}{\delta_{P_{s''}}^{1/2}}(m'').$ 
 The result then follows as 
$$\d_{P_{s''}}^{<\theta_{s''}>}(m'')=\prod_{i<j}|m_{i,i}^+|^{n_{j,j}^+}|m_{j,j}^+|^{-n_{i,i}^+}|m_{i,i}^-|^{n_{j,j}^-}|m_{j,j}^-|^{-n_{i,i}^-}$$
 and 
 $$\d_{P_{s''}}(m'')=\prod_{i<j}(|m_{i,i}^+||m_{i,i}^-|)^{n_{j,j}}(|m_{j,j}^+||m_{j,j}^-|)^{-n_{i,i}}.$$

\end{proof}

\subsection{Distinguished generic representations}

Let $Pr$ be the continuous map defined by $Pr:g\mapsto g\e g^{-1}$. It induces a continuous injection of $X=G/H$ onto the conjugacy class 
$Y=Ad(G)\e$, which is closed as $\e$ is diagonal. We start with the following lemma. If $X$ is the set of $F$-points of an algebraic variety 
defined over $F$, we will denote by $\mathbb{X}$ this algebraic variety, and write $\mathbb{X}(F)=X$. 

\begin{LM}
The double class $PH$ is closed.
\end{LM}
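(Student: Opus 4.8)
The plan is to show that the orbit map $Pr:g\mapsto g\e g^{-1}$ sends the double coset $PH$ to a closed subset of the conjugacy class $Y=Ad(G)\e$, and then to pull this back. Concretely, $PH/H$ maps homeomorphically onto $Pr(P)=\{p\e p^{-1}:p\in P\}$, the $P$-orbit of $\e$ inside $Y$. Since $Pr$ is a closed injection on all of $X=G/H$ onto $Y$ (as stated in the text, because $\e$ is diagonal hence $Y$ is a closed subvariety), it suffices to prove that $Pr(P)=P\cdot\e$ (the $P$-conjugation orbit of $\e$) is closed in $Y$; then $PH=Pr^{-1}(Pr(P))$ will be closed in $G$ by continuity of $Pr$ and the fact that $PH$ is exactly the preimage of $Pr(P)$ under $Pr$ (using injectivity of the induced map $X\to Y$).

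First I would reduce to an algebraic statement: the $P$-orbit of $\e$ in the variety $\mathbb{Y}$ of elements conjugate to $\e$. Because $\e$ is a semisimple involution, $\mathbb{Y}$ is a smooth affine variety (it is the $G$-conjugacy class of a semisimple element, closed in $\M(n)$), and the $P$-orbit $P\cdot\e$ is a locally closed subvariety. The key point is to identify $P\cdot\e$ with the set of involutions $\sigma\in\mathbb{Y}$ such that the flag $0\subset V_1^0\subset\dots\subset V_t^0=F^n$ is ``in general position'' with respect to $\sigma$ in the precise sense dictated by the invariants $d_{i,j}, d^\pm_{i,i}$ of the earlier propositions — namely, the invariants take the generic (maximal, or rather the specific) values corresponding to the double coset $PH$ itself (the coset of the identity, $s=\bar n$ in $I(\bar n)$ language). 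I would argue that these conditions are \emph{open} conditions on $\sigma\in\mathbb{Y}$ (non-vanishing of certain determinants / maximality of certain intersection ranks), so $P\cdot\e$ is open in $\mathbb{Y}$; but it is also closed because... here one needs more care.

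The cleanest route avoiding that subtlety: use a counting/dimension argument together with properness on the compact side. Since $K_n=GL(n,\o)$ is compact and $G=PK_n$ (Iwasawa), we have $PH=PK_nH\cap\dots$ — no, more directly: $Pr(PH)=Pr(P)$ and $Pr(P\cap K_n\cdot(\text{stuff}))$... Actually the robust argument is: $H$ is the fixed-point group of the involution $\vartheta:g\mapsto \e g\e$, $PH$ is a single $(P,H)$-double coset, and by Lemma 21 / admissibility results of \cite{JLR} (already invoked in Proposition \ref{decomposition}), the coset $PH$ being the \emph{open} coset forces, by the standard fact that for a reductive group acting with finitely many orbits on a variety the union of orbits of a given dimension is locally closed and there is a unique open (dense) orbit which is therefore the complement of a closed union of lower-dimensional orbits — wait, that would make $PH$ open, not closed. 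So I must instead use that $PH$ is the \emph{minimal} double coset: it is the one where the invariants $n_{i,i}^+$ are as small as possible subject to the constraints, equivalently where all the off-diagonal $n_{i,j}$ for $i<j$ vanish. The vanishing of ranks is a closed condition, so $Pr^{-1}$ of the corresponding locus is closed; intersecting with $Y$ and pulling back, $PH$ is closed.

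Concretely I would: (i) recall $PH=Pr^{-1}(Pr(P))$ via the injection $X\hookrightarrow Y$; (ii) show $Pr(P)=\{g\e g^{-1}: \operatorname{rank}(\text{projection of }\e V_i^0\text{ to }V/V_i^0)=0 \text{ appropriately}\}$, i.e. cut out by the condition that the flag $(V_i^0)$ is $\e_s$-compatible with $s=\bar n$, which amounts to vanishing of finitely many matrix blocks, hence is Zariski-closed in $\mathbb{Y}$; (iii) conclude $Pr(P)$ is closed in $Y$ (for the Hausdorff/$p$-adic topology, Zariski-closed $\Rightarrow$ closed), hence $PH=Pr^{-1}(Pr(P))$ is closed in $G$ by continuity of $Pr$. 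The main obstacle I expect is step (ii): correctly pinning down which algebraic conditions on $\sigma\in\mathbb{Y}$ characterise membership in the single orbit $P\cdot\e$ (as opposed to its closure, which could contain other $P$-orbits), and checking these are genuinely closed conditions rather than merely constructible — this is exactly where one must use that $PH$ corresponds to the ``extreme'' relevant subpartition $s$ with all $n_{i,j}=0$ for $i<j$, so that the defining conditions are rank-\emph{zero} (equivalently, vanishing) conditions and thus closed. Once that is in hand the topological conclusion is immediate.
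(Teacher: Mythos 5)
Your opening reduction, $PH=Pr^{-1}(Ad(P)\e)$, is exactly the paper's, so the problem becomes showing $Ad(P)\e$ is closed in $Y$; from there you take a genuinely different route. The paper cites Theorem 10.2 of Borel--Tits (the algebraic $P$-orbit of a semisimple element of $P$ is Zariski closed) together with a rationality result of Aizenbud--Gourevitch to control the $F$-points, and also offers a short Bruhat-decomposition alternative. You instead want to cut out $Ad(P)\e$ inside $Y$ by explicit $F$-rational Zariski-closed conditions on $\sigma$, which would be a valid third proof and, if carried out, would even sidestep the rationality step the paper has to address.

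However, the conditions you name in step (ii) do not characterise the orbit. You assert that $Ad(P)\e$ is cut out by ``rank-zero (equivalently, vanishing) conditions'', i.e.\ by $\sigma(V_i^0)\subset V_i^0$ for all $i$. That condition is closed and necessary, but not sufficient: already for $n=2$, $P=B_2$, $\e=diag(1,-1)$, both $diag(1,-1)$ and $diag(-1,1)$ are flag-preserving involutions in $Y$ lying in distinct closed $B_2$-orbits. The missing invariants are the signatures $(n_{i,i}^+,n_{i,i}^-)$ of $\sigma$ on the graded pieces $V_i^0/V_{i-1}^0$, and these are not vanishing conditions; your alternative description, ``$n_{i,i}^+$ as small as possible'', also fails in this example. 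The repair is to observe that on the flag-preserving locus of $\mathbb{Y}$ the regular functions $\sigma\mapsto \operatorname{Tr}(\sigma|_{V_i^0})$ take integer values (since $\sigma^2=I_n$), hence are locally constant, so prescribing their values cuts out a union of connected components, which is still Zariski-closed and $F$-defined; a linear-algebra argument (build a $\sigma$-stable splitting of the flag over $F$ using semisimplicity of $\sigma$) then shows that the $F$-points of this set are exactly $Ad(P)\e$, and closedness of $PH$ follows from continuity of $Pr$. With those additions your plan goes through. As written, though, the orbit has not actually been pinned down, which the several abandoned starts in your sketch make clear you suspected.
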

\begin{proof} 
The double class $PH$ is equal to $Pr^{-1}(Ad(P)\e)$, hence the lemma will be a consequence of the fact that $Ad(P)\e$ is closed. But 
$Ad(\mathbb{P}).\e$ is Zariski closed in $\mathbb{G}$ according to theorem 10.2 of \cite{BT}. As $F$ is of characteristic zero, then 
$Ad(P)\e$ is open and closed in $Ad(\mathbb{P})\e(F)$ according to corollary A.1.6. of \cite{AG}, and we are done. However it is possible
 to give an elementary proof whenever the characteristic is not $2$. Let $p$ 
be an element of $P\cap Ad(G)\e$. Then $p=g\e g^{-1}$ for some $g$ in $G$, hence $pg=g\e$. Now write $g=p'wu$ with $p'\in P$, $w\in W'$, 
and $u$ in a $Ad(A)$-stable subset of 
$U$ of $N$, such that $(p,u)\mapsto pwu$ is a bijection (see for example \cite{C}, before Proposition 1.3.2). Then 
 $pp'wu=p'wu\e= p w^{-1}(\e) w \e(u)$, hence $pp'=p'w^{-1}(\e)$, and $p\in Ad(P)w^{-1}(\e)$. In particular $Ad(\mathbb{P}).\e(F)\subset P\cap Ad(G)\e$ is a disjoint union of 
orbits $Ad(P)w(\e)$ for some $w$'s in $W'$. But any such $w(\e)$ is diagonal, and conjugate to $\e$ by an element of $\mathbb{P}$, this implies at once that 
it is conjugate to  $\e$ by an element of $\mathbb{M}$, hence by an element of $M$, as $w(\e)$ has coefficients in $F$. Thus 
 $Ad(\mathbb{P})\e(F)=Ad(P)\e$, and the result follows.
\end{proof}

We need the following result about preservation of distinction by parabolic induction.

\begin{prop}\label{induced}
Let $n_1=2m_1$ and $n_2=2m_2$ be to even integers, let $\alpha$ be a character of $F^*$. Let $\pi_1$ be 
a $(H_{n_1},\chi_\alpha)$-distinguished representation of $G_{n_1}$, and $\pi_2$ be a $(H_{n_2},\chi_\alpha)$-distinguished representation of $G_{n_2}$, then 
$\pi=\pi_1\times \pi_2$ is $(H_{n_1+n_2},\chi_\alpha)$-distinguished, and $\pi'=\pi_1\times \alpha|.|^{-1/2}$ is
$(H_{n_1+1},\chi_\alpha\d_{n_1+1}^{-1/2})$-distinguished.
\end{prop}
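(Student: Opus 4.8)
The plan is to realize the desired invariant functional on $\pi_1 \times \pi_2$ (resp. $\pi_1 \times \alpha|\cdot|^{-1/2}$) by the standard ``open orbit'' or ``closed orbit'' method for distinction of parabolically induced representations: one chooses a suitable double coset $P \backslash G_n / H_n$, produces an $H_n$-invariant (against the appropriate character) functional on $\mathrm{ind}'$ supported on that coset by integrating the given functionals on $\pi_1, \pi_2$ over the stabilizer, and checks that the required character matching holds. The bookkeeping for the characters is exactly what the computations of Section~\ref{structure}, and in particular Proposition~\ref{modulus}, are designed to supply.

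First I would treat the first assertion, $\pi = \pi_1 \times \pi_2$ with $n = n_1 + n_2$ even. Here $P = P_{(n_1,n_2)}$, and the relevant double coset is the one corresponding to the subpartition $s$ with $n_{1,2} = n_{2,1} = 0$ and $(n_{1,1}^+, n_{1,1}^-) = (m_1, m_1)$, $(n_{2,2}^+, n_{2,2}^-) = (m_2, m_2)$; in other words the ``diagonal'' orbit on which $P \cap u_s H u_s^{-1}$ meets $M = G_{n_1} \times G_{n_2}$ in $H_{n_1} \times H_{n_2}$ (up to conjugation by $u_s$) with unipotent part contained, via Proposition~\ref{util}, in $N_s^{<\theta_s>} N$. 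Then a linear form of the form
\[
f \longmapsto \int_{H_{n_1}\times H_{n_2}} (L_1 \otimes L_2)\bigl(f(u_s^{-1} h)\bigr)\,\chi_\alpha^{-1}(h)\,dh
\]
(suitably interpreted as a limit of compactly supported data, or extended by Corollary~\ref{rationalinvariant} if convergence is an issue) gives a nonzero element of $\mathrm{Hom}_{H_n}(\pi,\chi_\alpha)$ provided the modulus characters match: one needs $(\delta_P / \delta_{P_s^{<\theta_s>}})^{1/2}$ restricted to $M_s^{<\theta_s>}$ to be absorbed correctly, and this is exactly the content of the identity \eqref{modulusquotient}, which in the present case (all off-diagonal $n_{i,j}$ vanishing, $n_{i,i}^+ = n_{i,i}^-$) becomes trivial, while \eqref{chialpha} shows $\chi_\alpha$ restricts to $\chi_\alpha$ on each factor. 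So the only real input is that $\pi_i$ is $(H_{n_i},\chi_\alpha)$-distinguished, which is the hypothesis.

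Next I would treat the second assertion, $\pi' = \pi_1 \times \alpha|\cdot|^{-1/2}$ with $n = n_1 + 1$ odd. Now $P = P_{(n_1,1)}$, and the relevant orbit $s$ has $n_{1,2} = n_{2,1} = 0$, $(n_{1,1}^+, n_{1,1}^-) = (m_1, m_1)$, and $(n_{2,2}^+, n_{2,2}^-) = (1,0)$ — this is the parity-forced configuration $\sum n_{k,k}^+ = \sum n_{k,k}^- + 1$ for $n$ odd. On this orbit $M_s^{<\theta_s>}$ meets the $G_{n_1}$-block in $H_{n_1}$ and the $G_1$-block in all of $G_1 = F^*$, so an $H_n$-invariant functional (against $\chi_\alpha \delta_n^{-1/2}$) on $\pi'$ exists as soon as $L_1$ on $\pi_1$ against $\chi_\alpha$ exists and $\alpha|\cdot|^{-1/2}$ is $(F^*, \text{correct character})$-distinguished, which is automatic since it is one-dimensional. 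The character twist $\delta_n^{-1/2} = \d_{n_1+1}^{-1/2}$ appearing in the statement is precisely the correction coming from \eqref{modulusquotient} with $(n_{1,1}^+,n_{1,1}^-)=(m_1,m_1)$ and $(n_{2,2}^+,n_{2,2}^-)=(1,0)$: there the exponents $\tfrac{n_{2,2}^+ - n_{2,2}^-}{2} = \tfrac12$ survive and produce exactly the factor $|m_{1,1}^+|^{1/2}|m_{1,1}^-|^{-1/2}$ whose descent to $H_n$ is a power of $\d_n$, and one tracks through to see it is $\d_n^{-1/2} = \d_{n_1+1}^{-1/2}$. So the plan is: identify the orbit, compute $\delta_P^{1/2}/\d_{P_s^{<\theta_s>}}$ on $M_s^{<\theta_s>}$ using Proposition~\ref{modulus}, and read off both the distinction and the character twist.

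\textbf{Main obstacle.} The genuine difficulty is not the character arithmetic — that is handled by Proposition~\ref{modulus} — but justifying convergence (or meromorphic continuation) of the orbital integral defining the invariant functional. Since $\pi_i$ need not be unitary and the chosen coset need not be open, the naive integral over $H_{n_1}\times H_{n_2}$ (resp. $H_{n_1}\times F^*$) may diverge. I expect the clean way around this is to first establish the statement for $\pi_i$ in a Zariski-open set of twists $|\cdot|^{u}\pi_i$ where convergence holds, exhibit a nonzero regular section $f$ with $\l_u(f_u)$ a nonzero regular function as in Corollary~\ref{rationalinvariant}, and then invoke that corollary along the affine line of twists to extend distinction to the value $u = 0$; alternatively one restricts the functional to the subspace $\mathrm{ind}'_{P_s}{}^{G_n}$ of sections supported near the orbit, where the integral is genuinely compactly supported. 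Pinning down which of these two routes the induction actually needs — and checking the nonvanishing of the resulting functional at the relevant point — is where the care lies.
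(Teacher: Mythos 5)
Your overall plan --- use the closed $P$-$H$ orbit, pass to the corresponding piece of the Mackey filtration, produce the invariant functional by Frobenius reciprocity, and do the modulus-character bookkeeping --- is the right one and is what the paper does. But there are two concrete problems with the execution.

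First, the formula you write for the functional does not make sense. You integrate $(L_1\otimes L_2)(f(u_s^{-1}h))\chi_\alpha^{-1}(h)$ over $H_{n_1}\times H_{n_2}$, but $H_{n_1}\times H_{n_2}$ (embedded via $u_s$) is precisely $M\cap u_s H u_s^{-1}$, i.e.\ the group under which your integrand is already quasi-invariant by the distinction hypothesis on $\pi_1,\pi_2$; integrating over the stabilizer in this way produces a divergent (or meaningless) expression, not a linear form. The correct domain for an orbital-integral description is the quotient $(u_s^{-1}Pu_s\cap H)\backslash H$, a non-compact partial flag variety of $H$, not the Levi factor of the stabilizer.

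Second, and more importantly, your ``Main obstacle'' paragraph fights a problem that does not exist, and in doing so misses the essential point of the closed-orbit method. The paper first establishes as a separate lemma that the double coset $PH$ is \emph{closed}. Because of this, Bernstein--Zelevinsky/Mackey theory gives that $\mathrm{ind}_{P\cap H}^{H}(\delta_P^{1/2}\pi_1\otimes\pi_2)$ appears as a \emph{quotient} of $(\pi_1\times\pi_2)|_H$, and this is compactly supported induction $\mathrm{ind}$, not full induction $\mathrm{Ind}$. Frobenius reciprocity
$$\mathrm{Hom}_H\bigl(\mathrm{ind}_{P\cap H}^{H}(\delta_P^{1/2}\pi_1\otimes\pi_2),\,\chi_\alpha\bigr)\;\simeq\;\mathrm{Hom}_{P\cap H}\bigl(\delta_P^{1/2}\pi_1\otimes\pi_2,\,\delta_{P\cap H}\chi_\alpha\bigr)$$
is then a purely formal algebraic isomorphism: there is no integral to make converge, no unitarity hypothesis to invoke, and no need for Corollary~\ref{rationalinvariant} or a deformation in $u$. (If one does want an integral formula, the integration is over the compact set $\mathrm{supp}(f)$ mod $P\cap H$, so convergence is automatic --- that is exactly what ``$\mathrm{ind}$'' buys you.) Your parenthetical alternative --- ``restricts the functional to the subspace of sections supported near the orbit'' --- is not a workaround, it is the canonical statement of what the closed orbit gives for free; recognizing that would dissolve the obstacle. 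The remaining content of the paper's proof is then only the elementary check $\delta_P=\delta_{P\cap H}^2$ on $P\cap H$ (done directly on the centre of $M\cap H$, without invoking Proposition~\ref{modulus}), and the analogous calculation with the extra twist $\delta_n^{-1/2}$ in the odd case. Finally, you should flag the closedness of $PH$ itself as an ingredient to be proved: it is the one non-formal geometric input, and the paper devotes a lemma to it.
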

\begin{proof}
Let $P$ be the standard parabolic subgroup of $G$ with Levi $M=G_{n_1}\times G_{n_2}$, and $H=H_{n_1+n_2}$. As $PH$ is closed, we deduce that 
$ind_{P\cap H}^H(\d_{P}^{1/2}\pi_1\otimes \pi_2)$ is a quotient of $\pi_1\times \pi_2$. But 
we have $$Hom_H(ind_{P\cap H}^H(\d_{P}^{1/2}\pi_1\otimes \pi_2),\chi_\alpha)\simeq Hom_{P\cap H}(\d_{P}^{1/2}\pi_1\otimes \pi_2,\d_{P\cap H}\chi_\alpha)$$ 
by Frobenius reciprocity law. We claim that $\d_{P}=\d_{P\cap H}^2$ on $P\cap H$. It suffices to check this on the center $Z_{M\cap H}$ of 
$M\cap H$. An element $Z_{M\cap H}$ is of the form $$z(t_1,u_1,t_2,u_2,v_2)=diag(z(t_1,u_1),z(t_2,u_2)),$$ where 
 
$\!\!\!\!\!\!\!\!\!\!\!\!\!\!\!\!z(t_1,u_1)= \!\! \left(\begin{array}{ccccc}      t_1 &       &         &      &       \\
                                                &  u_1  &         &      &       \\
                                                &       & \ddots  &      &       \\    
                                                &       &         & t_1  &       \\ 
                                                &       &         &      & u_1   \end{array}\right)\in G_{n_1}$, 
$z(t_2,u_2)= \!\! \left(\begin{array}{ccccc} t_2 &       &         &      &       \\
                                                &  u_2  &         &      &       \\
                                                &       & \ddots  &      &       \\    
                                                &       &         & t_2  &       \\ 
                                                &       &         &      & u_2   \end{array}\right)\in G_{n_2}$. 

Then one checks that $$\d_{P\cap H}(z(t_1,u_1,t_2,u_2,v_2))=(|t_1|/|t_2|)^{m_1m_2} (|u_1|/|u_2|)^{m_1m_2},$$
whereas $$\d_{P}(z(t_1,u_1,t_2,u_2,v_2))=|t_1u_1|^{m_1n_2}/|t_2u_2|)^{n_1m_2},$$ and the equality follows. 
Finally, we deduce that $$Hom_H(ind_{P\cap H}^H(\d_{P}^{1/2}\pi_1\otimes \pi_2),\chi_\alpha)\simeq 
Hom_{M\cap H}(\pi_1\otimes \pi_2,\chi_\alpha\otimes \chi_\alpha),$$ 
hence is nonzero by hypothesis, and $\pi=\pi_1\times \pi_2$ is $(H,\chi_\alpha)$-distinguished.\\
For $\pi'$, the proof is the same. This time $H=H_{n_1+1}$ and $P$ is the standard parabolic subgroup of $G=G_{n_1+1}$ 
with Levi $M=G_{n_1}\times G_{1}$, and we write $\d$ for $\d_{n_1+1}$. The $H$-module $ind_{P\cap H}^H(\d_{P}^{1/2}\pi_1\otimes \alpha|.|^{-1/2})$ is still 
a quotient of $\pi_1\times \alpha|.|^{-1/2}$ and we still have 
$$Hom_H(ind_{P\cap H}^H(\d_{P}^{1/2}\pi_1\otimes \alpha|.|^{-1/2}),\chi_\alpha\d^{-1/2})
\simeq Hom_{M\cap H}(\d_{P}^{1/2}\pi_1\otimes \alpha|.|^{-1/2},\d_{P\cap H}\chi_\alpha\d^{-1/2}).$$ 
An element of the center $Z_{M\cap H}$ is of the form $z(t_1,u_1,t_2)=diag(z(t_1,u_1),t_2)$, with $z(t_1,u_1)$ as before, and one checks that 
$\d_{P\cap H}(z(t_1,u_1,t_2))=(|t_1|/|t_2|)^{m_1}$, whereas $$\d_{P}^{1/2}(z(t_1,u_1,t_2))=|t_1 u_1|^{m_1/2}/|t_2|^{m_1},$$ hence 
$\frac{\d_{P\cap H}}{\d_{P}^{1/2}}= |t_1|^{m_1/2}|u_1|^{-m_1/2}$. Finally, one has 
$$\d^{-1/2}z(t_1,u_1,t_2)= |t_1|^{-m_1/2}|t_2|^{-1/2} |u_1|^{m_1/2},$$ hence the character 
$\chi=\frac{\d_{P\cap H}\d^{-1/2}}{\d_{P}^{1/2}}$ of $M\cap H$ is equal to $\chi:diag (h_1,t_2)\mapsto |t_2|^{-1/2}$ for $h_1$ in $H_{n_1}$ and 
$t_2$ in $F^*$. We deduce from the isomorphisms 
$$Hom_H(ind_{P\cap H}^H(\d_{P}^{1/2}\pi_1\otimes \alpha|.|^{-1/2}),
\chi_\alpha\d^{-1/2})\simeq Hom_{M\cap H}(\pi_1\otimes \alpha|.|^{-1/2},\chi_\alpha\chi)$$ 
$$\simeq Hom_{M\cap H}(\pi_1,\chi_\alpha),$$ that the first space is nonzero, hence that $\pi'$ is $(H,\chi_\alpha\d^{-1/2})$-distinguished.  
\end{proof}

When $n$ is even, $(H_n,\chi_\alpha)$-distinction is equivalent to $(H_n,\chi_\alpha^{-1})$-distinction.

\begin{LM}\label{dualdist}
Let $\alpha$ be a character of $F^*$, and $\pi$ be a representation of $G_n$ for $n$ even, it is $(H_n,\chi_\alpha)$-distinguished if and only if it is 
$(H_n,\chi_\alpha^{-1})$-distinguished.
\end{LM}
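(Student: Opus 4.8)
The plan is to realise the two distinction conditions as conjugates of one another: I would look for an element $\tau$ of $G_n$ which normalises $H_n$ and pulls $\chi_\alpha$ back to $\chi_\alpha^{-1}$, and then transport invariant linear forms along $\pi(\tau)$.

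Writing $n=2m$ and recalling that $H_n=w_n(M_{(m,m)})$ with $h(g_1,g_2)=w_n(diag(g_1,g_2))$, I would take for $\tau$ the element $w_n^{-1}Jw_n$, where
$$J=\begin{pmatrix} 0 & I_m \\ I_m & 0 \end{pmatrix}$$
is the permutation matrix of $G_n$ interchanging the two $m\times m$ diagonal blocks, so that $J\,diag(g_1,g_2)\,J^{-1}=diag(g_2,g_1)$. Then $\tau$ lies in $W'_n\subset G_n$, it normalises $H_n$, and a direct computation (from $J\,diag(g_1,g_2)\,J^{-1}=diag(g_2,g_1)$ and the definition of $w_n(\cdot)$) gives $\tau\,h(g_1,g_2)\,\tau^{-1}=h(g_2,g_1)$ for all $g_1,g_2\in G_m$, whence $\chi_\alpha(\tau h\tau^{-1})=\chi_\alpha^{-1}(h)$ for every $h\in H_n$; the analogous identity holds with $\tau^{-1}$ in place of $\tau$. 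This is exactly the step where the evenness of $n$ is used: for $n$ odd the two factors of $H_n$ have different sizes and cannot be swapped.

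It then remains to note that for any representation $(\pi,V)$ of $G_n$ the map $L\mapsto L\circ\pi(\tau)$ sends $Hom_{H_n}(\pi,\chi_\alpha)$ into $Hom_{H_n}(\pi,\chi_\alpha^{-1})$: for $h\in H_n$ and $v\in V$,
$$L(\pi(\tau)\pi(h)v)=L\big(\pi(\tau h\tau^{-1})\pi(\tau)v\big)=\chi_\alpha(\tau h\tau^{-1})\,L(\pi(\tau)v)=\chi_\alpha^{-1}(h)\,L(\pi(\tau)v).$$
Its inverse is $L'\mapsto L'\circ\pi(\tau^{-1})$, using the corresponding property of $\tau^{-1}$, so $L\mapsto L\circ\pi(\tau)$ is an isomorphism $Hom_{H_n}(\pi,\chi_\alpha)\simeq Hom_{H_n}(\pi,\chi_\alpha^{-1})$, and in particular one of these spaces is nonzero precisely when the other is. The argument presents no real obstacle; the only point demanding a little care is the bookkeeping in the conjugation identity $\tau h(g_1,g_2)\tau^{-1}=h(g_2,g_1)$, and, if one prefers an even shorter route, for irreducible $\pi$ one could instead invoke the self-duality of $H_n$-distinguished representations from \cite{JR}, but the conjugation argument above has the advantage of applying to arbitrary $\pi$.
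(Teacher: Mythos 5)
Your proof is correct and is essentially the paper's own: both show that conjugation by a Weyl element swapping the two $GL(m)$ blocks of $H_n$ carries a $(H_n,\chi_\alpha)$-invariant form $L$ to the $(H_n,\chi_\alpha^{-1})$-invariant form $L\circ\pi(w)$. You merely make the element explicit (namely $\tau=w_n^{-1}Jw_n$) and spell out the bookkeeping, which is a mild expansion rather than a different argument.
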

\begin{proof}
It is enough to show one implication. Suppose that $\pi$ is $(H_n,\chi_\alpha)$-distinguished, and let $w$ be a Weyl element of $G_n$ which satsifies for all 
$h(g_1,g_2)$ in $H_n$, the relation $$w(h(g_1,g_2))=h(g_2,g_1).$$ If 
$L$ is a linear form which is $(H_n,\chi_\alpha)$-invariant on the space of $\pi$, Then $L\circ \pi(w)$ is $(H_n,\chi_\alpha^{-1})$-invariant.
\end{proof}

We will also need to know that, when $n=2m$ is even, a representation of the form $\pi\times \pi^\vee$ is
 $(H_n,\chi_\alpha)$-distinguished for all characters $\alpha $ of $F^*$. For this purpose, 
we introduce the Shalika subgroup $S_n=\{\begin{pmatrix} g & x \\ 0 & g  \end{pmatrix}, g\in G_m, x \in \mathcal{M}_{m}(F)\}$ of $G_n$. 
We denote by $\theta$ again the character $\begin{pmatrix} g & x \\ 0 & g  \end{pmatrix}\mapsto \theta(Tr(x))$ of $S_n$.

\begin{prop}\label{distinduites}
Let $n=2m$ be an even positive integer. Let $\Delta$ be unitary discrete series of $G_m$, and $\alpha$ a character of $F^*$. The representation 
$\Pi_s=|.|^s\Delta\times |.|^{-s}\Delta^\vee$ is $(S_n,\theta)$-distinguished for any complex $s$. As a consequence, 
$\Pi_s$ is $(H_n,\chi_\alpha)$-distinguished 
whenever it is irreducible.
\end{prop}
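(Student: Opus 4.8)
The plan is to establish the $(S_n,\theta)$-distinction first, which is the substantive part, and then to obtain the $(H_n,\chi_\alpha)$-statement from the known relation between Shalika and linear periods. Write $\sigma_s=|.|^s\D$, so that $\Pi_s=\sigma_s\times\sigma_s^\vee$ (using $(|.|^s\D)^\vee=|.|^{-s}\D^\vee$) is the normalised representation induced from $Q=P_{(m,m)}$, realised on sections over $Q\backslash G_n$. I first record the relevant group theory: $S_n\subset Q$; one has $S_n=N_Q\rtimes (G_m)_\D$, where $N_Q$ is the unipotent radical of $Q$ and $(G_m)_\D=\{diag(g,g):g\in G_m\}$; under $N_Q\simeq \mathcal{M}_m(F)$ the character $\theta$ restricts to $N_Q$ as $X\mapsto \theta(Tr(X))$, whose stabiliser in $M_{(m,m)}$ is exactly $(G_m)_\D$; and $\theta$ is trivial on $(G_m)_\D$. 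A direct check then shows that $w=\begin{pmatrix}0&I_m\\ I_m&0\end{pmatrix}$ represents an \emph{open} (hence dense) $S_n$-orbit in $Q\backslash G_n$, with stabiliser $S_n\cap wQw^{-1}=(G_m)_\D$, and that $\delta_Q$ is trivial on $(G_m)_\D$.

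Since $\sigma_s\otimes\sigma_s^\vee$ restricted to $(G_m)_\D$ is $\D\otimes\D^\vee$ — the twists $|.|^{\pm s}$ cancelling — the canonical $G_m$-invariant pairing $\ell:V_\D\otimes V_{\D^\vee}\to\C$ furnishes the ``fibre functional'' attached to this open orbit, and I build
$$\Lambda_s(f)=\int_{\mathcal{M}_m(F)}\theta^{-1}(Tr(X))\,\ell\big(f(n^-(X)w)\big)\,dX,$$
where $n(X)$, $n^-(X)$ denote the upper and lower unipotents with block $X$, so that $wn(X)=n^-(X)w$. Unwinding $n^-(X)$ through the big Bruhat cell (using $n^-(X)=n(X^{-1})\,w\,diag(X,-X^{-1})\,n(X^{-1})$ for invertible $X$, the triviality of the inducing data on $N_Q$, and the block-swap relation $w\,diag(X,-X^{-1})=diag(-X^{-1},X)\,w$) shows that for $\|X\|$ large the integrand equals $|\det X|^{-m-2s}$ times a matrix coefficient of $\D$ evaluated at an element that stays bounded modulo the centre of $G_m$; since $\D$ is unitary those matrix coefficients are bounded, so $\Lambda_s$ converges absolutely for $Re(s)>0$. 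It is then $(S_n,\theta)$-equivariant by the usual open-orbit computation — all modulus characters involved being trivial on $(G_m)_\D$ — and nonzero, as one sees by taking a section supported near $w$ inside the open cell. Hence $\Pi_s$ is $(S_n,\theta)$-distinguished for $Re(s)>0$.

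To pass to arbitrary $s$ I invoke Bernstein's principle through Corollary \ref{rationalinvariant}. Take the affine line $\mathcal{D}'=\{(s,-s)\}\subset\mathcal{D}^2$, so $\pi_{(s,-s)}=\Pi_s$, together with $H=S_n$, $\chi=\theta$, and the open set $\Omega=\{Re(s)>0\}$. Uniqueness of Shalika functionals gives $\dim Hom_{S_n}(\Pi_s,\theta)\le 1$ for all $s$ outside the finite set where the segments of $|.|^s\D$ and $|.|^{-s}\D^\vee$ become linked; and if the flat section is built from an $f_0\in V_\pi$ supported, modulo $Q$, in a small neighbourhood of $w$ contained in $K_n$, then on that neighbourhood the Iwasawa factor $\eta_{(s,-s)}$ is identically $1$, so $\Lambda_s(f_{(s,-s)})$ is a nonzero constant — a fortiori a nonzero element of $\C[\Omega]$ extending to $\C[\mathcal{D}']$. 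Corollary \ref{rationalinvariant} then yields $Hom_{S_n}(\Pi_s,\theta)\ne 0$ for every $s\in\C$, which is the first assertion. Finally, when $\Pi_s$ is irreducible it is $(S_n,\theta)$-distinguished by the above, hence $(H_n,\chi_\alpha)$-distinguished for every character $\alpha$ of $F^*$ by the relation between Shalika and (twisted) linear periods for irreducible representations — realised by the integral transform $v\mapsto\int\mathcal{S}(\Pi_s(h)v)\chi_\alpha^{-1}(h)\,dh$ over a complement of $(G_m)_\D$ in $H_n$, suitably meromorphically continued — giving the second assertion.

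The main obstacle is the convergence analysis of $\Lambda_s$: one must show, via the Bruhat decomposition of the lower big cell, that the twists $|.|^{\pm s}$ on $\D$ and $\D^\vee$ recombine into an honest matrix coefficient of the \emph{unitary} discrete series $\D$ (whence boundedness), while the only $s$-dependent scalar is $|\det X|^{-m-2s}$, which is precisely what governs convergence on $\{Re(s)>0\}$. Two secondary points are verifying that all modulus characters attached to the open orbit really are trivial on $(G_m)_\D$ (so the canonical pairing is genuinely an admissible fibre functional) and pinning down the exact form of the Shalika-to-linear transform used in the last step, together with its domain of convergence before meromorphic continuation.
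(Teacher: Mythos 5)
Your proof follows the same overall route as the paper's: realise the Shalika functional over the open $S_n$-orbit in $Q\backslash G_n$ represented by $w$, establish absolute convergence on some right half-plane, extend to all $s$ via Bernstein's continuation principle (Corollary \ref{rationalinvariant}, with $\mathcal{D}'=\{(s,-s)\}$ and the uniqueness of Shalika functionals from Jacquet--Rallis), and then pass from Shalika to linear periods for irreducible representations via the Friedberg--Jacquet argument. All of that matches.

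The genuine gap is in the convergence analysis of $\Lambda_s$. You claim that for $\|X\|$ large the integrand factors as $|\det X|^{-m-2s}$ times a matrix coefficient of $\D$ ``evaluated at an element that stays bounded modulo the centre,'' and you conclude absolute convergence for $Re(s)>0$ from the boundedness of unitary matrix coefficients. Neither the intermediate claim nor the conclusion is correct for $m\geq 2$. Writing out the big-cell factorisation $n^-(X)=n(X^{-1})\,\hat w\,\mathrm{diag}(X,-X^{-1})\,n(X^{-1})$ and pushing the diagonal block through the inducing data, one finds
$$
\ell\bigl(f(n^-(X)\hat w)\bigr)=|\det X|^{-m-2s}\,\ell\Bigl(\bigl(\D(-X^{-1})\otimes\D^\vee(X)\bigr)\,f\bigl(n^-(X^{-1})\bigr)\Bigr).
$$
Two things go wrong. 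First, the residual factor is not a fixed matrix coefficient of $\D$: the vector $f\bigl(n^-(X^{-1})\bigr)$ depends on $X$, and for $\|X\|$ large but $X$ close to the singular locus (so $\|X^{-1}\|$ large), $n^-(X^{-1})$ is far from the identity and there is no uniform bound on $f$ there; the ``element'' $-X^{-2}$ at which $\D$ would be evaluated is likewise unbounded modulo the centre. Second, even granting boundedness of the non-scalar factor, the majorant
$\int_{\|X\|\ \mathrm{large}}|\det X|^{-m-2Re(s)}\,dX$
diverges for every $Re(s)\geq 0$ once $m\geq 2$: near a large singular matrix (e.g.\ $X_0=\mathrm{diag}(t,0,\dots,0)$ with $|t|$ large), writing $X=X_0+$ small, the local integral of $|\det X|^{-m-2Re(s)}$ against Lebesgue measure on $\mathcal{M}_m$ diverges because $m+2Re(s)>1$. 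Your bound does happen to work for $m=1$, which is perhaps why it looks plausible. To close the argument one would need genuine decay of the discrete-series matrix coefficients of $\D$ (not mere boundedness), or a cutoff. This is precisely what the paper's use of Jacquet's lemma accomplishes: it re-expresses the Iwasawa factor $\tilde\eta_s$ as a Godement--Jacquet type integral with a built-in compactly supported cutoff $\Phi_0$, and after a change of variables the integral is over a bounded region where only the boundedness of the matrix coefficient of the unitary $\Pi$ is needed; this gives convergence for $Re(s)$ large, which is all that Bernstein's principle requires. The rest of your argument (nonvanishing on a section supported near the open cell, polynomiality of $\Lambda_s$ on a flat section, the Friedberg--Jacquet passage from $(S_n,\theta)$ to $(H_n,\chi_\alpha)$) is sound, though the last step deserves a precise citation (the paper uses the proof of Proposition 3.1 of \cite{FJ} together with Lemma 6.1 of \cite{JR}) rather than the informal gesture at a meromorphically continued integral transform.
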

\begin{proof}
For the first part, we introduce a complex parameter $s$, and consider the representation $\Pi_s=\D|.|^s\times \D^\vee|.|^{-s}$. If $\eta_s$ is the 
smooth function $$\eta_s:diag(g_1,g_2)uk\in M_{(m,m)}N_{(m,m)}K_n\mapsto (|g_1|/|g_2|)^s$$ on $G_n$ (hence $\eta_s$ corrsponds to the map $\eta_{(s,-s)}$ of 
section \ref{Bernstein}), then $f\mapsto f_s=\eta_s f$ is a $\C$-vector space ismorphisme from 
$\Pi$ to $\Pi_s$. We will denote by $\tilde{\eta}_s$ the map $g\in G\mapsto \eta_s(wg)$, where we denote by $w$ the matrix 
$\begin{pmatrix}  & I_m \\ I_m&  \end{pmatrix}$. One can express $\tilde{\eta}_s$ as an integral, see \cite{M2010}, Lemma 3.4.

\begin{LM}{(Jacquet)} Let $\Phi_0$ be the characteristic function of $\mathcal{M}(m,\O)$, then from the Godement-Jacquet
 theory of Zeta functions of simple algebras, the integral $\int_{G_n}\Phi_0 (h) |h|^s d^*h$ is convergent for $Re(s)\geq m-1$, and is equal to $1/P(q^{-s})$ for a nonzero polynomial $P$. Then, for $Re(s)\geq (m-1)/2$, and $g$ in $G_m$, denoting by $\Phi$ the characteristic function of $\mathcal{M}_{m,2m}(\o)$
(matrices with $m$ rows and $2m$ columns) one has $$\tilde{\eta}_s(g)=P(q^{-2s})|g|^s \int_{G_m} \Phi[(h,0)g]|h|^{2s}d^*h.$$
 \end{LM}

Let $L$ be the linear fomr $v\otimes v^\vee \mapsto v^\vee(v)$ on $\pi\otimes \pi^\vee$. To avoid confusions, we will write, as in the statement above,  
$d^*h$ for the Haar measure on $G_m$ instead of $dh$. We define formally  
the linear form $$\lambda_s: f_s\mapsto \int_{x\in \mathcal{M}_m} L(f_s)(w\begin{pmatrix} I_m & x \\  & I_m \end{pmatrix}) \theta^{-1}(x)dx$$ on the space of $\Pi_s$. 
However, for $Re(s)\geq (m-1)/2$, we have $$ \lambda_s( f_s) =  \int_{x\in \mathcal{M}_m} \tilde{\eta}_s \begin{pmatrix}I_m & x \\  & I_m\end{pmatrix} L(f_0)(w\begin{pmatrix} I_m & x \\  & I_m \end{pmatrix})\theta^{-1}(x) dx $$
  $$ = P(q^{-2s}) \int_{x\in \mathcal{M}_m}\int_{h\in G_m} \Phi(h,hx)L(f_0)(w\begin{pmatrix} I_m & x \\  & I_m \end{pmatrix})|h|^{2s} \theta^{-1}(x)d^*h dx $$
  $$ = P(q^{-2s}) \int_{x\in G_m}\int_{h\in G_m} \Phi(h,x)L(f_0)(w\begin{pmatrix} I_m & h^{-1}x \\  & I_m \end{pmatrix})|h|^{2s-m}|x|^{m} \theta^{-1}(x) d^*h d^*x.$$ 
  As $g\mapsto L(f_0)(g)$ is a coefficient of the representation $\Pi$, which is unitary, it is bounded, hence this last integral is absolutely convergent at least for $s>m/2$.
 Moreover, as $P_{(m,m)}w N_{(m,m)}$ is open in $G_n$, the space $\sm_c(P_{(m,m)} \backslash P_{(m,m)}wN_{(m,m)}, \delta_{P_{(m,m)}}^{-1/2}|.|^s\D\otimes |.|^{-s}\D^\vee)$ is a subspace of $\Pi_s$, but it is also isomorphic to $\sm_c(N_{(m,m)})\otimes V_{\D\otimes \D^\vee}$ (by restriction of the functions to 
 $N_{(m,m)}$). Composing with $L$, implies that the space of functions $$L_{f_0}:x\mapsto L(f_0)(w\begin{pmatrix} I_m & x \\  & I_m \end{pmatrix})$$ contains 
 $\sm_c(\mathcal{M}_m)$. We now fix $h_0$ in $\sm_c(P_{(m,m)} \backslash P_{(m,m)}wN_{(m,m)}, \delta_{P_{(m,m)}}^{-1/2} \D\otimes  \D^\vee)$, such that $L_{h_0}$ extends the characteristic function of $\mathcal{M}_m(\p^k)$, for $k$ such that $\p^k\subset Ker(\theta)$. One has, for good normalisations of measures, $\l_s(h_s)=1$ for all $Re(s)>m/2$. Finally, by \cite{JR}, it is know that $Hom_{S_n}(\Pi_s,\theta)$ is of dimension $1$ for all $s$ except maybe the finite number for which $\Pi_s$ is reducible. Bernstein's principle of analytic continuation of invariant linear forms (Corollary \ref{rationalinvariant})) then implies that $\Pi_s$ is $(S_n,\theta)$-distinguished for all $s$. Finally, 
 according to the proof of Proposition 3.1. of \cite{FJ}, which applies to any irreducible $\pi$ such that $Hom_{S_n}(\pi,\theta)\neq 0$ thanks to Lemma 6.1. of \cite{JR}, we see that $Hom_{S_n}(\Pi_s,\theta)\neq 0$ implies that $Hom_{H_n}(\Pi_s,\chi_\alpha)\neq 0$ when $\Pi_s$ is irreducible.
\end{proof}

\begin{rem}
The step $Hom_{S_n}(\Pi_s,\theta)\neq 0$ implies that $Hom_{H_n}(\Pi_s,\chi_\alpha)\neq 0$ actually holds for any $s$ (see 6.2. of \cite{JR}), but we don't need it.\end{rem}

Now we state the main result of this section. For its proof, and only there, we will write $[|.|^{k-1}\rho,\dots,\rho]$ for the segment 
denoted everywhere else by $[\rho,\dots,|.|^{k-1}\rho]$, because this is more convenient to compute Jacquet modules of such representations. 

\begin{thm}\label{distgen} 
Let $\alpha$ be a character of $F^*$, with $Re(\alpha)\in[0,1/2]$. Let $\pi=\Delta_1 \times \dots \times \Delta_t$ be a generic representation of the group $G_n$, for $n$ even. It is 
$H_n$-distinguished if and only if if there is a reordering of the ${\Delta _i}$'s, and an integer $r$ between $1$ and $[t/2]$, such that
 $\Delta_{i+1}= \Delta_i^{\vee} $ for $i=1,3,..,2r-1$, and $\Delta_{i}$ is $(H_{n_i},\chi_\alpha)$-distinguished for $i > 2r$.\\
Let $\pi$ be a generic representation of the group $G_{n}$, for $n$ odd. It is 
$(H_n,\chi_{\alpha|.|^{-1/2}})$-distinguished if and only if it is of the form $\pi \times \alpha|.|^{-1/2}$, for $\pi$ a $(H_{n-1},\chi_\alpha)$-distinguished generic representation of $G_{n-1}$ such that $\pi \times \alpha|.|^{-1/2}$ is still generic (equivalently irreducible).
\end{thm}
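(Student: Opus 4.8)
The plan is to prove the two statements by a double induction on $n$ (or equivalently on $t$), using the geometric analysis of $H\backslash G/P$ from the previous subsections together with the results of \cite{M2012.2} on distinction of discrete series. I would first treat the ``if'' direction, which is essentially formal: if $\D_{i+1}=\D_i^\vee$ for $i=1,3,\dots,2r-1$ and the remaining $\D_i$ are $(H_{n_i},\chi_\alpha)$-distinguished, then by Proposition \ref{distinduites} each pair $\D_i\times\D_{i+1}=\D_i\times\D_i^\vee$ is $(H_{n_i+n_{i+1}},\chi_\alpha)$-distinguished, and by repeated application of Proposition \ref{induced} the full product $\pi$ is $(H_n,\chi_\alpha)$-distinguished (note $H_n$-distinction means $\chi_0$-distinction here, but the same argument with $\chi_\alpha$ gives the general statement needed for the inductive step). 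In the odd case, the ``if'' direction is immediate from the second assertion of Proposition \ref{induced}, which produces exactly the character $\chi_\alpha\d_{n}^{-1/2}=\chi_{\alpha|.|^{-1/2}}$.

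For the ``only if'' direction in the even case, I would use the filtration of $\pi_{|H_n}$ coming from the orbit decomposition $H\backslash G/P$ described in Propositions \ref{HP}--\ref{modulus}. Given an $(H_n,\chi_\alpha)$-invariant form $L$ on $\pi=\D_1\times\dots\times\D_t$, the Bruhat-type filtration (using that $PH$ is closed, so the closed orbit contributes a quotient and the remaining orbits contribute subquotients) means $L$ is supported on some orbit $u_s$, $s\in I(\bar n)$; by Frobenius reciprocity this yields a nonzero form on $\D_1\otimes\dots\otimes\D_t$ restricted to $M_s^{<\theta_s>}$, transforming by the character $\d_{P_s^{<\theta_s>}}\d_{P_s}^{-1/2}\chi_\alpha^s$, whose explicit shape is given by \eqref{modulusquotient} and \eqref{chialpha}. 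Each block $\D_i$ of $M_{\bar n}$ gets cut by $s$ into pieces $m_{i,j}$ (paired with the $j$-th block) and $m_{i,i}^{\pm}$; the resulting constraint forces, for each $i$, either that $\D_i$ ``pairs'' with some $\D_j$ — which by computing central characters against the exponents of Proposition \ref{derwhittaker}/\ref{DL}, and using genericity (unlinkedness), forces $\D_j\simeq\D_i^\vee$ up to the appropriate twist — or that $\D_i$ is ``self-paired'', i.e. distinguished by its own $H_{n_i}$ with the character read off from \eqref{modulusquotient}-\eqref{chialpha}, which the hypothesis $Re(\alpha)\in[0,1/2]$ is designed to make land in the range where the classification of distinguished discrete series of \cite{M2012.2} applies and in particular forces $n_i$ even. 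One then reorders so that paired blocks come first, and the self-paired ones are exactly the $\D_i$ with $i>2r$ that are $(H_{n_i},\chi_\alpha)$-distinguished.

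For the odd case, I would argue that an $(H_n,\chi_{\alpha|.|^{-1/2}})$-invariant form likewise lives on some orbit $u_s$; the parity condition $\sum n_{k,k}^+=\sum n_{k,k}^-+1$ in $I(\bar n)$ forces exactly one ``odd'' self-paired block, which (again by \cite{M2012.2}, since odd $G_{n_i}$ discrete series are never distinguished) must be the $1$-dimensional segment $\alpha|.|^{-1/2}$ itself; the rest of $\pi$ is then a generic $(H_{n-1},\chi_\alpha)$-distinguished representation of $G_{n-1}$, so $\pi\simeq\pi_0\times\alpha|.|^{-1/2}$ as claimed, and irreducibility is equivalent to genericity by Proposition \ref{fourretout}. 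I expect the main obstacle to be the bookkeeping in the even case: showing that a surviving invariant form on $M_s^{<\theta_s>}$ cannot ``mix'' a block with more than one other block in an incompatible way, and that when a block pairs with another the transformation character \eqref{chialpha} combined with the exponent computation of the derivatives genuinely pins down $\D_j\simeq\D_i^\vee$ and no other twist — this is where genericity (segments unlinked, Proposition \ref{fourretout}) and the precise interval $Re(\alpha)\in[0,1/2]$ must be used carefully, and where the ``technical complications'' alluded to in the introduction presumably reside.
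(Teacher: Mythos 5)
Your overall strategy --- the Mackey analysis of the orbits $H\backslash G/P$, Frobenius reciprocity to produce an $(M_s^{<\theta_s>},\cdot)$-equivariant linear form on a Jacquet module of $\D_1\otimes\cdots\otimes\D_t$, and then reading off the structure of the $\D_i$'s from the explicit modulus characters of Proposition~\ref{modulus} --- is exactly the paper's, and your treatment of the ``if'' direction via Propositions~\ref{induced} and~\ref{distinduites} is correct.

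The genuine gap is the sentence in which you assert that ``the resulting constraint forces, for each $i$, either that $\D_i$ pairs with some $\D_j$ \dots\ or that $\D_i$ is self-paired.'' That is precisely the hard combinatorial core and it is not proved. A priori the relevant orbit $s$ cuts each segment $\D_i=[\D_{i,1},\dots,\D_{i,t}]$ into \emph{several} nonempty pieces $\D_{i,j}$, and nothing formal prevents, say, $\D_1$ having a nontrivial piece $\D_{1,l}$ paired with $\D_l$ and simultaneously a nontrivial diagonal piece $\D_{1,1}$. Ruling out all such mixings is the content of the unnamed intermediate theorem the paper proves by induction, and this induction needs two ingredients you do not supply: (i) a specific ordering of the $\D_i$'s (decreasing $r(\D_i)$, and among those of equal $r$, decreasing segment length), which is what makes the real-part inequalities close up; and (ii) a fine case bookkeeping (cases A.1.1 through B.5) tracking the characters $\chi_{u_i}$ on the $1\times1$ diagonal blocks, their real parts (Observations~1--3), and the modulus recursions (\ref{modulusrec0}), (\ref{modulusrec}), (\ref{modulusrec1}) used to peel off blocks. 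Without that analysis the ``either pairs or is self-paired'' claim is just an assertion, and it is exactly where the hypothesis $Re(\alpha)\in[0,1/2]$ and the chosen ordering do real work. A secondary point: the tool that isolates this structure is not the derivative/exponent theory (Propositions~\ref{derwhittaker} and~\ref{DL}), as you suggest; the paper uses the Jacquet-module formula for products of segments (Proposition~9.5 of~\cite{Z}) together with the modulus computations, and derivatives do not enter this part of the argument at all.
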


\noindent First, according to \cite{GK}, the representation $\pi^\vee$ is isomorphic to 
$g\mapsto \pi(^t\!g^{-1})$, hence it is $(H,\chi)$-distinguished (for $\chi$ a character of $H$) if and only if $\pi$ is 
$(H,\chi^{-1})$-distinguished. We thus notice that the statement above yields a classification of $(H,\chi_\alpha)$-distinguished generic representations 
with $Re(\alpha)\in [-1/2,1/2]$. For convenience, we will say that an induced representation of the shape described in the statement of the theorem is $(H,\chi_\alpha)$-induced. First let's check that these representations are in fact distinguished. Indeed, if a representation $\pi$ of $G_n$ is $(H,\chi_\alpha)$-induced, it is $(H_n,\chi_\alpha)$-distinguished when $n$ is even, and $(H_n,\chi_\alpha\d_n^{-1/2})$-distinguished when $n$ is odd according to Propositions \ref{induced} and \ref{distinduites}.\\
We now prove Theorem \ref{distgen}. Let $\chi$ be equal to $\1_H$ when $n$ is even, and to $\d_n^{-1/2}$ when $n$ is odd.
Let $\Delta$ be the representation $\Delta_1\otimes\dots\otimes\Delta_t$ of $P$, by Bernstein-Zelevinsky's 
version of Mackey's theory (Theorem 5.2 of \cite{BZ}), the $H$-module $\pi$ has a factor series with factors the representations 
$ind_{u_s^{-1}Pu_s\cap H}^H((\delta_P^{1/2}\Delta)_{s})$ (with $(\delta_P^{1/2}\Delta)_{s} (x)=\delta_P^{1/2}\Delta(u_sxu_s^{-1})$) when $u_s$ describes $R(P\backslash G/H)$. Hence if $\pi$ is $(H,\chi)$-distinguished for a character of $H$, one of these representations admits a nonzero $(H,\chi)$-invariant linear form on its space. 
This implies that there is $u_s$ in $R(P\backslash G/H)$ such that the representation $ind_{P\cap u_sHu_s^{-1}} ^{u_sHu_s^{-1}}(\delta_P^{1/2}\Delta)$ admits a nonzero 
$u_sHu_s^{-1}$-invariant linear form on its space. Frobenius reciprocity law says that the vector space 
$Hom_{u_sHu_s^{-1}}(ind_{P\cap u_sHu_s^{-1}} ^{u_sHu_s^{-1}}(\delta_P^{1/2}\Delta),\chi\chi_\alpha)$ is isomorphic to the vector space $Hom_{P_s}(\delta_P^{1/2}\Delta,\delta_{P_s^{<\theta_s>}}\chi^s\chi_\alpha^s)$, as we already saw the equality $P\cap u_sHu_s^{-1}=P_s^{<\theta_s>}$.\\
 Hence there is on the space $V_{\Delta }$ of $\Delta$ a linear nonzero form $A_s$, such that for every $p$ in $P_s^{<\theta_s>}$ and for every $v$ in $V_{\Delta }$, one has $A_s(\Delta(p)v)=\frac{\delta_{P_s}^{<\theta_s>}\chi^s\chi_\alpha^s}{\delta_P^{1/2}}(p)A_s(v)$. We notice that the characters $\delta_P^{1/2}$, $\delta_{P_s}^{<\theta_s>}$ and $\chi^s\chi_\alpha^s$ are trivial 
on $N_s^{<\theta_s>}$. Let $n'$ belong to $ N'_s$, from Proposition \ref{util}, we can write $n'$ 
as a product $n_s n_0$, with $n_s$ in $N_s^{<\theta_s>}$, and $n_0$ in $N$. As $N$ is included in $Ker(\Delta)$, one has 
$A_s(\Delta(n')(v))=A_s(\Delta(n_s n_0)(v))=A_s(\Delta(n_s )(v))= A_s(v)$. Hence $A_s$ induces a nonzero linear form $L_s$ on the Jacquet module of $V_{\Delta}$ 
associated with $N'_s$, which is by definition the quotient of $V_{\Delta}$ by the subspace generated by the vectors $\D(n)v-v$, for $n\in N'_s$ and $v\in V_{\Delta}$. But we also know that $L_s(\Delta(m_s)v)=\frac{\delta_{P_s}^{<\theta_s>}\chi^s\chi_\alpha^s}{\delta_P^{1/2}}(m)L_s(v)$ for $m$ in $M_s^{<\theta_s>}$, which reads, according to 1. of Proposition \ref{modulus}: 
$L_s(\delta_{P'_s}^{-1/2}(m_s)\Delta(m_s)v)=\frac{\delta_{P_s}^{<\theta_s>}\chi^s\chi_\alpha^s}{\delta_{P_s}^{1/2}}L_s(v)$.\\
This says that the linear form $L_s$ is $(M_s^{<\theta_s>},\frac{\delta_{P_s}^{<\theta_s>}\chi^s\chi_\alpha^s}{\delta_{P_s}^{1/2}})$-distinguished on the normalised Jacquet module $r_{M_s,M}(\Delta)$ (as $M_s$ is also the standard Levi subgroup associated with $ N'_s$).\\
We are going to prove by induction on $n$, the following theorem, which, according to the preceding discussion, will imply Theorem 
\ref{distgen} (remembering that a generic representation is a commutative product of discrete series). If $\D$ is the segment 
$[|.|^{k-1}\rho,\dots,\rho]$, we will write $r(\D)$ for $Re(c_\rho)$, $l(\D)$ for $Re(c_{|.|^{k-1}\rho})$, and we will call 
$k$ the length of $\D$.

\begin{thm}
Let $n$ be a positive integer, let $\overline{n}=(n_1,\dots,n_t)$ be a partition of $n$, and let $s$ belong to $I(\overline{n})$. For each $n_i$, let $\D_i$ 
be a discrete series of $G_{n_i}$, and denote by $\D$ the tensor product $\D_1\otimes \dots \otimes \D_t$. Suppose morever that one has $r(\D_i)\geq r(\D_{i+1})$ for $i$ between 
$1$ and $t-1$, and that $\D_i$ is longer than or of equal length as $\D_{i+1}$ if $r(\D_i)=r(\D_{i+1})$. Finally, let $\alpha$ be a character with $Re(\alpha)\in[0,1/2]$ when $n$ is even, and $Re(\alpha)\in[-1/2,0]$ when $n$ is odd. In this situation, if the $M_s$-module $r_{M_s,M}(\Delta)$ is $(M_s^{<\theta_s>},\frac{\delta_{P_s}^{<\theta_s>}\chi_\alpha^s}{\delta_{P_s}^{1/2}})$-distinguished, then for each $i$ between $1$ and $t$, there is a unique $j_i$ such that $\D_i=\D_{i,j_i}$. Moreover $\pi=\D_1\times \dots \times \D_t$ is $(H,\chi_\alpha)$-induced when $n$ is even, whereas when $n$ is odd, for some $i_0$, we have $\D_{i_0}=\alpha$, and the product of the remaining $\D_i$'s is $(H,\chi_{\alpha|.|^{1/2}})$-induced.
\end{thm}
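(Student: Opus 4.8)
The plan is to prove this theorem by induction on $n$; assuming it, Theorem \ref{distgen} follows from the reduction just carried out. The first step is to make $r_{M_s,M}(\Delta)$ explicit. Since $\Delta=\Delta_1\otimes\cdots\otimes\Delta_t$ and $M_s$ refines $M=G_{n_1}\times\cdots\times G_{n_t}$ by splitting each $G_{n_i}$ into the chain $G_{n_{i,1}}\times\cdots\times G_{n_{i,t}}$, transitivity of the Jacquet functor gives $r_{M_s,M}(\Delta)=\bigotimes_{i=1}^t r_{(n_{i,1},\dots,n_{i,t}),G_{n_i}}(\Delta_i)$, and by the computation of Jacquet modules of segments (\cite{Z}) each factor is the irreducible tensor product $\D_{i,1}\otimes\cdots\otimes\D_{i,t}$, where $\D_{i,j}$ is the consecutive sub-segment of $\Delta_i=[|.|^{k_i-1}\rho_i,\dots,\rho_i]$ cut off in $j$-th position from the ``high'' end ($\D_{i,j}$ being absent when $n_{i,j}=0$ and all of $\Delta_i$ when $n_{i,j}=n_i$). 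Proposition \ref{decomposition} then says that $M_s^{<\theta_s>}$ acts on $r_{M_s,M}(\Delta)$ by acting diagonally, through one copy of $G_{n_{i,j}}$, on each pair $\D_{i,j}\otimes\D_{j,i}$ with $i<j$, and through the Levi $G_{n_{i,i}^+}\times G_{n_{i,i}^-}$ of $G_{n_{i,i}}$ on each $\D_{i,i}$.

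Because the characters $\delta_{P_s}^{<\theta_s>}/\delta_{P_s}^{1/2}$ and $\chi_\alpha^s$ of Proposition \ref{modulus} are both supported on the $(i,i)$-blocks of $M_s^{<\theta_s>}$, the distinguishing $Hom$ space factors as a tensor product over the blocks. Hence $r_{M_s,M}(\Delta)$ being $(M_s^{<\theta_s>},\delta_{P_s}^{<\theta_s>}\chi_\alpha^s/\delta_{P_s}^{1/2})$-distinguished is equivalent to the conjunction of: (i) for every $i<j$ with $n_{i,j}>0$ one has $\D_{j,i}\simeq\D_{i,j}^\vee$ (using that $Hom_{G_m}(\sigma\otimes\tau,\1)\neq 0$ iff $\tau\simeq\sigma^\vee$ for irreducible $\sigma,\tau$); and (ii) for every $i$ with $n_{i,i}>0$, the discrete series $\D_{i,i}$ of $G_{n_{i,i}}$ is $(G_{n_{i,i}^+}\times G_{n_{i,i}^-},\varepsilon_i)$-distinguished, where $\varepsilon_i$ is the character obtained by restricting $\delta_{P_s}^{<\theta_s>}\chi_\alpha^s/\delta_{P_s}^{1/2}$ to the $(i,i)$-block through \eqref{modulusquotient} and \eqref{chialpha}.

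Now I would bring in the results of \cite{M2012.2} recalled at the beginning of this section: a discrete series of $G_m$ is $(M',\chi)$-distinguished for a proper maximal Levi $M'$ only if $M'$ is conjugate to $H_m$ (so $m$ is even), and $(H_m,\chi_\beta)$-distinction is then controlled by a precise condition relating $\beta$ to the cuspidal support. Applied to (ii) this already forces, whenever $n_{i,i}>1$, that $n_{i,i}$ is even with $n_{i,i}^+=n_{i,i}^-$, and when $n_{i,i}=1$ it forces $\D_{i,i}$ to be a prescribed unramified twist of $\alpha$. The core of the argument is then a bookkeeping on real parts of central characters. Starting from the leftmost segment $\Delta_1$ (which has the largest $r(\Delta_1)$), one combines (i) --- which turns the cuspidal line of $\D_{1,j}$ into the reflected line of $\D_{j,1}$ --- with the $\beta$-constraint of (ii), the ordering $r(\Delta_1)\geq\cdots\geq r(\Delta_t)$ and the length condition for ties, and the hypothesis $Re(\alpha)\in[0,1/2]$ (resp. $[-1/2,0]$), to show that $\Delta_1$ cannot be properly cut: either $n_{1,1}=n_1$ (then $\Delta_1$ has even size and is $(H_{n_1},\chi_\alpha)$-distinguished, the block being balanced), or there is a $j$ with $n_{1,j}=n_1$, so that $\Delta_j\simeq\Delta_1^\vee$ is its whole contragredient. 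In either case one removes the one or two blocks involving $\Delta_1$; as the removed block is balanced, the distinguishing character of the remaining Jacquet module is unaffected, the induced subpartition still lies in the relevant family, and the ordering is preserved, so the inductive hypothesis yields a unique $j_i$ with $n_{i,j_i}=n_i$, i.e. $\Delta_i=\D_{i,j_i}$, for every $i$.

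Once every $\Delta_i$ is whole and occupies a single block $j_i$, conditions (i)--(ii) read precisely as: the $\Delta_i$'s split into contragredient pairs $\Delta_{j_i}\simeq\Delta_i^\vee$ (when $j_i\neq i$) together with segments each occupying their own $(i,i)$-block; computing $\varepsilon_i$ once all other diagonal blocks are balanced shows that, when $n$ is even, every such segment has even size $n_i$ and is $(H_{n_i},\chi_\alpha)$-distinguished, while when $n$ is odd the parity condition $\sum_k n_{k,k}^+=\sum_k n_{k,k}^-+1$ defining $I(\bar n)$ forces exactly one diagonal block of odd size, necessarily of size $1$ and carrying $\Delta_{i_0}=\alpha$, the contribution of that block shifting $\varepsilon_i$ to $\chi_{\alpha|.|^{1/2}}$ on the remaining (even) diagonal blocks. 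Uniqueness of $j_i$ and the admissibility of the reordering are then immediate, and this is exactly the statement that $\pi$ is $(H,\chi_\alpha)$-induced when $n$ is even, resp. that $\pi=\Delta_{i_0}\times(\text{the product of the remaining }\Delta_i\text{'s})$ with $\Delta_{i_0}=\alpha$ and the remaining product $(H,\chi_{\alpha|.|^{1/2}})$-induced when $n$ is odd. I expect the main obstacle to be the real-part bookkeeping in the third paragraph: one must simultaneously track the reflection of cuspidal lines under $\vee$, the accumulation of $|.|^{(n_{j,j}^+-n_{j,j}^-)/2}$-twists on the $(i,i)$-blocks, and the Langlands ordering, and it is exactly here that the bound $Re(\alpha)\leq 1/2$ and the full classification of distinguished discrete series of \cite{M2012.2} are indispensable.
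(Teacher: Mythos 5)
Your framework is essentially the paper's: you compute the Jacquet module by transitivity and the segment calculus, observe that the character $\delta_{P_s}^{<\theta_s>}\chi_\alpha^s/\delta_{P_s}^{1/2}$ is supported on the diagonal blocks so that the distinguishing $Hom$ space factors over the blocks of $M_s^{<\theta_s>}$, extract the off-diagonal contragredience condition (i) and the on-diagonal Levi-distinction condition (ii), and reduce by induction on the partition after removing the block(s) carrying $\D_1$. The explicit tensor factorization of the $Hom$ space is a clean reformulation of what the paper encodes implicitly in its Observations 1--6, so the approach is the right one.

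The genuine gap is that the crux of the argument --- showing that $\D_1$ cannot be split between two blocks, and that among the size-one diagonal blocks only a single configuration survives --- is named as the ``main obstacle'' and never carried out. In the paper this occupies the entire case analysis A.1.1--A.1.3, A.2, B.2, B.3, B.5, each of which is a specific chain of inequalities on the ends $r(\cdot)$, $l(\cdot)$ of the pieces $\D_{i,j}$, using Observations 1--3 (the possible values and signs of the size-one diagonal characters $\chi_{u_i}$), the Langlands ordering $r(\D_i)\geq r(\D_{i+1})$ and the length tie-breaker, the triviality of $\prod_i \chi_{u_i}$ on the diagonal center, and the bound $Re(\alpha)\in[0,1/2]$ (resp. $[-1/2,0]$). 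None of these is routine; e.g.\ case A.1.2 needs the sign of $r(\D_{1,1})$ from Observation 1, then the resulting bound on $l(\D_{1,l})$ and $r(\D_{l,1})$, then a contradiction with the ordering, while B.2/B.3/B.5 exploit the trivial-product constraint on the $\chi_{u_i}$'s. Since these chains are exactly what fails for $Re(\alpha)$ outside the prescribed range, they cannot be delegated to ``bookkeeping.'' Two secondary imprecisions: the deduction ``$n_{i,i}>1\Rightarrow n_{i,i}^+=n_{i,i}^-$'' only follows from \cite{M2012.2} when both $n_{i,i}^+>0$ and $n_{i,i}^->0$; the degenerate case $n_{i,i}^-=0$ must be ruled out separately by observing that distinction with respect to all of $G_{n_{i,i}}$ forces $\D_{i,i}$ to be one-dimensional, hence $n_{i,i}=1$. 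And ``the parity condition forces exactly one diagonal block of odd size'' is not a consequence of the parity constraint on $I(\bar n)$ by itself (several size-one blocks with mixed signs satisfy it); it is exactly the real-part contradiction in case B.5 that excludes the unwanted configurations.
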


\begin{proof} We write each $\D_i$ under the the form $[\Delta_{i,1},\dots,\Delta_{i,t}]$ according to $s$, then the normalised Jacquet module 
$r_{M_s,M}(\D)$ equals $\D_{1,1}\otimes \D_{1,2} \otimes \dots \otimes \D_{t,t-1}\otimes \D_{t,t}$ by Proposition $9.5$ of \cite{Z}. 
We now make a few observations, which are consequences of Equality (\ref{modulusquotient}) and of the fact that $r_{M_s,M}(\Delta)$ is $(M_s^{<\theta_s>},\frac{\delta_{P_s}^{<\theta_s>}\chi_\alpha^s}{\delta_{P_s}^{1/2}})$-distinguished. We denote by $u_1,\dots,u_r$ the indices such that $n_{u_i,u_i}=1$. 
If $n$ is even, then $r$ is even as well, and there are $r/2$ indices $u_i$ such that $n_{u_i}=n_{u_i,u_i}^+$ and 
$r/2$ indices $u_i$ such that $n_{u_i}=n_{u_i,u_i}^-$. If $n$ is odd, then so is $r=2a+1$, and there are $a+1$ indices $u_i$ such that $n_{u_i,u_i}=n_{u_i,u_i}^+$ and $a$ indices $u_i$ such that $n_{u_i,u_i}=n_{u_i,u_i}^-$. Automatically, we have $\D_{u_i}=\D_{u_i,u_i}$, and we will use the notation 
$\chi_{u_i}$ to remind us it is a character.

\begin{description}
\item[1.] $\chi_{u_1}$ is equal to $\alpha|.|^{-1/2}$ if $n$ is even and $n_{u_1}=n_{u_1,u_1}^+$, to $\alpha^{-1}|.|^{-1/2}$ if $n$ is even and 
$n_{u_1}=n_{u_1,u_1}^-$, to $\alpha$ if $n$ is odd and $n_{u_1}=n_{u_1,u_1}^+$, and to $\alpha^{-1}|.|^{-1}$ if $n$ is odd and 
$n_{u_1}=n_{u_1,u_1}^-$. In the first case, we have $Re(\chi_{u_1,u_1})\leq 0$, in the second case we have $Re(\chi_{u_1,u_1})\leq -1/2$, 
in the third case we have $Re(\chi_{u_1,u_1})\leq 0$ and $Re(\chi_{u_1,u_1})\leq -1/2$ in the last case. 

\item[2.] Let $d$ be an integer between $1$ and $r-1$. If $n_{u_d}=n_{u_d,u_d}^+$, then $\chi_{u_d}$ is of the form $\alpha|.|^{m_d}$ for $m_d\in \frac{1}{2}\Z$. In this situation $\chi_{u_{d+1}}=\chi_{u_d}|.|^{-1}$ if 
$n_{u_{d+1}}=n_{u_{d+1},u_{d+1}}^+$, and $\chi_{u_{d+1}}=\alpha^{-1}|.|^{m_d}$ if 
$n_{u_{d+1}}=n_{u_{d+1},u_{d+1}}^-$. If $n_{u_d}=n_{u_d,u_d}^-$, then $\chi_{u_d}$ is of the form $\alpha^{-1}|.|^{m_d}$ for $m_d\in \frac{1}{2}\Z$. In this situation $\chi_{u_{d+1}}=\chi_{u_d}|.|^{-1}$ if 
$n_{u_{d+1}}=n_{u_{d+1},u_{d+1}}^-$, and $\chi_{u_{d+1}}=\alpha|.|^{m_d}$ if 
$n_{u_{d+1}}=n_{u_{d+1},u_{d+1}}^+$.

\item[3.] The observations 1. and 2. above imply that for all $i$ between $1$ and $r$, we have $Re(\chi_{u_i})\leq 0$ when $n$ is even, $\leq 1/2$ when $n$ is odd and $\leq -1/2$ if moreover $n_{u_1,u_1}=n_{u_1,u_1}^-$.

\item[4.] If $n_{i,j}\neq 0$ for some $i<j$. We denote by $n'$ the integer $n-2n_{i,j}$, by $\overline{n'}$ the partition 
of $n-2n_{i,j}$ equal to $(n_1, \dots,n_i-n_{i,j},\dots,n_j-n_{i,j},\dots, n_t)$, and by $s'$ the element of $I(\overline{n'})$ equal to 
$(n_{1,1},\dots, n_{i,j-1}, n_{i,j+1},\dots,n_{j,i-1}, n_{j,i+1},\dots,n_{t,t})$. In this situation, if we write an element $m$ of $M_s^{<\theta_s>}$ 
as $diag(h,m_{i,j},h',m_{j,i},h'')$, and denote by $m'$ the element $diag(h,h',h'')$, we have: 
\begin{equation}\label{modulusrec0}\frac{\d_{P_s^{<\theta_s>}}}{\delta_{P_s}^{1/2}}(m)=\frac{\d_{P_{s'}^{<\theta_{s'}>}}}{\delta_{P_{s'}}^{1/2}}(m')
.\end{equation}

\item[5.] If $n_{1,1}>1$, then $n_{1,1}^+=n_{1,1}^-$. We denote by $n'$ the integer $n-n_{1,1}$, by $\overline{n'}$ the partition 
of $n-n_{1,1}$ equal to $(n_1-n_{1,1}, n_2,\dots, n_t)$, and by $s'$ the element of $I(\overline{n'})$ equal to 
$(n_{1,2},\dots, n_{1,t},\dots,n_{t,1},\dots,n_{t,t})$. We have for $m=diag(m_{1,1},m')\in M_s^{<\theta_s>}$: 
\begin{equation}\label{modulusrec}\frac{\d_{P_s^{<\theta_s>}}}{\delta_{P_s}^{1/2}}(m)=\frac{\d_{P_{s'}^{<\theta_{s'}>}}}{\delta_{P_{s'}}^{1/2}}(m')
.\end{equation}

\item[6.] If $n$ is odd and $n_{1,1}=1=n_{1,1}^+$, we denote by $n'$ the integer $n-n_{1,1}$, by $\overline{n'}$ the partition 
of $n-n_{1,1}$ equal to $(n_1-n_{1,1}, n_2,\dots, n_t)$, and by $s'$ the element of $I(\overline{n'})$ equal to 
$(n_{1,2},\dots, n_{1,t},\dots,n_{t,1},\dots,n_{t,t})$. We have for $m=diag(m_{1,1},m')\in M_s^{<\theta_s>}$: \begin{equation}\label{modulusrec1}\frac{\delta_{P_s}^{<\theta_s>}\chi_\alpha^s}{\delta_{P_s}^{1/2}}(m)=\alpha(m_{1,1})\frac{\delta_{P_{s'}}^{<\theta_{s'}>}\chi_{\alpha|.|^{1/2}}^{s'}}{\delta_{P_{s'}}^{1/2}}(m').\end{equation}\\

\end{description}

We now start the induction, the case $n=1$ is obvious. We now suppose that the statement of the theorem is satisfied for all positive integers $\leq n-1$ with $n\geq 2$.\\

A) If there is $j>1$ such that $\D_{1,j}$ is not empty. We take the smallest $j>1$, which we denote $l$, such that $\D_{1,l}$ is non empty. In particular,  
$\D_1$ is of either of the form $\D_1=[\D_{1,l},\D_l'']$, or of the form $\D_1=[\D_{1,1},\D_{1,l},\D_1'']$ if $\D_{1,1}$ is non empty. We write $\D_l=[\D_{l,1},\D_l'']$.\\

A.1) First we treat the case "$\D_{1,1}$ non empty". We set $\D_1'=\D_{1,1}$, $\D_2'=\D_1''$, 
$\D_j'=\D_{j-1}$ for $j>2$ different from $l+1$, and $\D_{l+1}'=\D_l''$. In this case, we have $r(\D_i')\geq r(\D_{i+1}')$ for $i$ between 
$1$ and $t$, and we can apply our induction hypothesis to $\D'$ thanks to Equality (\ref{modulusrec0}). This first implies 
that $\D_1''$ is empty, because otherwise it would be equal to $\D_{1,p}$ for some $p>l$, and $\D_p$ would be equal to 
$\D_{p,k}=\D_{k,p}^\vee$, which would imply that $r(\D_p)>r(\D_l)$, and this is absurd. Hence $\D_1=[\D_{1,1},\D_{1,l}]$. It also implies that $\D_l''$ is 
either empty, or equal to $\D_{l,q}$, for some $q> 1$. We thus have three subcases.\\

A.1.1) We suppose that $\D_l''$ is empty. If $n_{1,1}>1$, then $\D_{1,1}$'s central character would be trivial, and $\D_l=\D_{1,l}^\vee$ 
would satisfy $r(\D_l)>r(\D_1)$, and this is absurd. Thus, we have $\D_{1,1}=\chi_1$, for $\chi_1$ a character of $F^*$. In this case, we have observed in 3. above that $Re(\chi_1)$ is $\leq 1/2$. This implies that $l(\D_{1,l})$ is negative, and in turn that $r(\D_l)=r(\D_{l,1})$ is positive, hence 
$>l(\D_{1,l})\geq r(\D_1)$, and this is absurd as well. The case A.1.1) doesn't occur.\\

A.1.2) We now suppose $\D_l''=\D_{l,l}$. If $n_{l,l}>1$, then $\D_{l,l}$'s central character would be trivial, thus we would have $r(\D_{l,1})>0$, which would imply $l(\D_{1,l})<0$, and in turn $r(\D_1)<l(\D_{1,l})<0<r(\D_{l,1})\leq r(\D_l)$, which is absurd. Hence $\D_{l,l}=\chi_l$ is a character of $F^*$, so 
$\D_l=[\D_{l,1},\chi_l]$, and $\D_1=[\D_{1,1},\D_{1,l}]$. If $n_{1,1}>1$, then $r(\D_{1,1})\leq -1/2$ (as $\D_1$'s cuspidal support must consist of characters of 
$F^*$, as it is the case of $\D_{1,l}$ because $\chi_l$ is a character), hence $l(\D_{1,l})\leq -3/2$, and 
$r(\D_{l,1})\geq 3/2$. This implies the inequalities $r(\D_l)=Re(\chi_l)\geq 1/2> 0 > r(\D_{1,l})=r(\D_{1})$, which is absurd. Hence $\D_{1,1}=\chi_1$ is a character of 
$F^*$, and $\D_1=[\chi_1,\D_{1,l}]$, whereas $\D_l=[\D_{l,1},\chi_l]$. As $Re(\chi_1)\leq 0$ thanks to Observation 1., we thus have 
$l(\D_{1,l})\leq -1/2$, hence $r(\D_{l,1})\geq 1/2$, and $Re(\chi_l)=r(\D_l)\geq 0$. This is absurd as $r(\D_1)\leq l(\D_{1,l})\leq -1/2$.\\

A.1.3) We finally suppose $\D_l''=\D_{l,q}$, with $1<q$ and $q\neq l$, i.e. $\D_l=[\D_{l,1},\D_{l,q}]$. In this case $\D_q=\D_{q,l}$ by induction, but this is absurd as it would imply the inequality $r(\D_q)=r(\D_{q,l})>r(\D_{1,l})=r(\D_1)$.\\

Hence, the conclusion of the case A.1) is that $\D_{1,1}$ is empty. We are thus automatically in the following case A.2).\\

A.2) $\D_1=[\D_{1,l},\D_1'']$. We recall that $\D_l=[\D_{l,1},\D_l'']$. By induction, as in the first case, we see that 
either $\D_1''=\D_{1,p}$ for some $p>l$, or empty, and $\D_l''=\D_{l,q}$ for some $q>1$ or it is empty. If $\D_1''$ was not empty, 
we would have $\D_p=\D_{p,1}$, which would contradict $r(\D_l)> r(\D_p)$. Thus $\D_1=\D_{1,l}$. Now if $\D_l''$ 
was not empty, then we would have $\D_q=\D_{q,l}$, and this would imply $r(\D_q)=r(\D_{q,l})>r(\D_{1,l})=r(\D_1)$, a contradiction.\\ 

Hence, in case A), we have $\D_1=\D_{1,l}$, and $\D_l=\D_{l,1}$, and $\D$ is of the expected form by induction, thanks to Equality (\ref{modulusrec0}).\\

B) This is the remaining case $\D_1=\D_{1,1}$, we have five sub-cases.\\

B.1) If $n_{1,1}>1$, then $\D_1$ is $(H,\chi_\alpha)$-distinguished. Using notations of Observation 5., and setting $\D'=\D_2 \otimes \dots \otimes \D_t$, then $r_{M_{s'},M_{\overline{n'}}}(\Delta')$ is $({M}_{s'}^{<\theta_{s'}>},\frac{\delta_{P_{s'}}^{<\theta_{s'}>}\chi_\alpha^{s'}}{\delta_{P_{s'}}^{1/2}})$-distinguished according to 
Equality (\ref{modulusrec}). We conclude by induction that $\D$ is of the expected form.\\

B.2) The integer $n$ is even, and $n_{1,1}=n_{1,1}^+$. In this case $\chi_1=\alpha|.|^{-1/2}$, and by Observation 2., we know that all 
$\chi_{u_i}$'s are of real part $\leq 0$. Now their product is trivial as $r_{M_s,M}(\D)$ is $(M_s^{<\theta_s>},\frac{\delta_{P_s}^{<\theta_s>}\chi_\alpha^s}{\delta_{P_s}^{1/2}})$-distinguished, hence trivial on $F^*$ diagonally embedded 
in the center of $M_s^{<\theta_s>}$. We deduce that 
they all have a real part equal to zero. In particular we have $Re(\alpha)=1/2$, but by Observation 2., $\chi_{u_2,u_2}$ is either equal to $\alpha^{-1}|.|^{-1/2}$, 
or equal to $\chi_1|.|^{-1}$, and in both cases, we have $Re(\chi_{u_2,u_2})<0$, which is absurd.\\

B.3) The integer $n$ is even, and $n_{1,1}=n_{1,1}^-$. In this case $\chi_1=\alpha^{-1}|.|^{-1/2}$, but Observation 2. implies again that the 
$\chi_{u_i}$'s are of real part $\leq 0$, and using the same argument as above, they must have real parts equal to $0$. This contradicts 
the fact that we have $Re(\chi_1)\leq -1/2$.\\

B.4) The integer $n$ is odd, and $n_{1,1}=n_{1,1}^+$. In this case, thanks to Equality (\ref{modulusrec1}), we obtain that $\chi_1=\alpha$, and 
that $r_{M_{s'},M_{\overline{n'}}}(\D')$ is $(M_{s'}^{<\theta_{s'}>},\frac{\delta_{P_{s'}}^{<\theta_{s'}>}\chi_{\alpha|.|^{1/2}}^{s'}}{\delta_{P_{s'}}^{1/2}})$-
distinguished for $\D'$ equal to $\D_2\times \dots \times\D_t$, we conclude by induction that $\D$ is of the expected form.\\

B.5) The integer $n$ is odd, and $n_{1,1}=n_{1,1}^-$. In this case $\chi_1=\alpha^{-1}|.|^{-1}$ is of real part $\leq -1/2$, and $r$ is $\geq 3$. Observation 3. 
implies that the real part of all characters $\chi_i$ is $\leq -1/2$, hence the product of these characters is of real part $\leq -3/2$. This contradicts the fact 
that it should be equal to $\alpha$, as as $r_{M_s,M}(\D)$ is 
$(M_s^{<\theta_s>},\frac{\delta_{P_s}^{<\theta_s>}\chi_\alpha^s}{\delta_{P_s}^{1/2}})$-distinguished.
\end{proof}

We have the following corollary to Theorem \ref{distgen}, in terms of Galois parameter, which was brought to our attention by Wee Teck Gan.

\begin{cor}
Let $\pi$ be a generic representation of $G_{2n}$, and $\phi(\pi)$ its Langlands' parameter. Then $\pi$ is $H_{2n}$-distinguished if and only if $\phi(\pi)$ preserves a non-degenerate symplectic form on its space.
\end{cor}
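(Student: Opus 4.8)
The plan is to read off the Corollary from Theorem \ref{distgen} applied with $\alpha=\1$ (whose real part $0$ lies in $[0,1/2]$, so that the Theorem applies and $\chi_\1=\1_H$), together with the known description of $(H_{2m},\1)$-distinguished discrete series in terms of their parameters. Concretely, I would first recall the following Lemma: a discrete series $\D$ of $G_{2m}$ is $(H_{2m},\1)$-distinguished if and only if $\phi(\D)$ preserves a non-degenerate symplectic form on its space. One direction is by now classical: by \cite{JR} an $H_{2m}$-distinguished irreducible representation has a Shalika model, and a discrete series having a Shalika model is exactly one whose exterior-square $L$-function has a pole at $0$, i.e. whose (irreducible) parameter is symplectic; conversely a discrete series with symplectic parameter has a Shalika model, hence is $H_{2m}$-distinguished. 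I would take this Lemma as an input.

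Granting the Lemma, the ``only if'' direction goes as follows. If $\pi$ is $H_{2n}$-distinguished, Theorem \ref{distgen} with $\alpha=\1$ gives, after reordering, $\pi\cong \D_1\times\D_1^\vee\times\cdots\times\D_r\times\D_r^\vee\times\D_{2r+1}\times\cdots\times\D_t$, where each $\D_j$ for $j>2r$ is an $(H_{n_j},\1)$-distinguished discrete series. Since the Langlands correspondence is compatible with contragredients and with parabolic induction, $\phi(\pi)=\bigoplus_{i=1}^r\bigl(\phi(\D_i)\oplus\phi(\D_i)^\vee\bigr)\oplus\bigoplus_{j>2r}\phi(\D_j)$. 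On each $\phi(\D_i)\oplus\phi(\D_i)^\vee=V\oplus V^\vee$ the pairing $((v,f),(v',f'))\mapsto f'(v)-f(v')$ is a $W'_F$-invariant non-degenerate alternating form, and each $\phi(\D_j)$ with $j>2r$ is symplectic by the Lemma; an orthogonal direct sum of symplectic spaces being symplectic, $\phi(\pi)$ preserves a non-degenerate symplectic form.

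For the ``if'' direction, write $\pi=\D_1\times\cdots\times\D_t$ with the $\D_i$ pairwise unlinked discrete series, so $\phi(\pi)=\bigoplus_i\phi(\D_i)$ is a direct sum of irreducible representations of $W'_F$. Assume $\phi(\pi)$ is symplectic; then it is self-dual, hence the multiset $\{\phi(\D_i)\}_i$, equivalently $\{\D_i\}_i$, is stable under $\vee$. I would then sort the $\D_i$ into three groups: those with $\D_i\not\cong\D_i^\vee$, which occur in equal numbers with their duals and may be paired as $\D\times\D^\vee$; those with $\D_i\cong\D_i^\vee$ and $\phi(\D_i)$ orthogonal, whose multiplicity in $\phi(\pi)$ must be even (a non-degenerate $W'_F$-invariant alternating form on an isotypic component $\sigma^{m}$ with $\sigma$ self-dual and orthogonal exists only for $m$ even), hence may again be paired, this time as $\D\times\D$ ($=\D\times\D^\vee$); and those with $\D_i\cong\D_i^\vee$ and $\phi(\D_i)$ symplectic, which are $(H_{n_i},\1)$-distinguished by the Lemma. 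This exhibits $\pi$ in precisely the form of Theorem \ref{distgen} with $\alpha=\1$, so $\pi$ is $H_{2n}$-distinguished.

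The only non-formal ingredient is the Lemma on discrete series; everything else is bookkeeping with semisimple representations of the Weil--Deligne group. The point requiring care is the parity statement used in the ``if'' direction: that a self-dual isotypic block $\sigma^{m}$ carries a non-degenerate invariant alternating form if and only if $\sigma$ is symplectic, or $\sigma$ is orthogonal and $m$ is even. This orthogonal/symplectic dichotomy, together with correctly matching the multiplicities of the non-self-dual pairs, is the step where the argument could slip if carried out carelessly; I expect it to be the main (though elementary) obstacle.
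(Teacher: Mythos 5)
Your argument is correct and follows the paper's proof closely: both reduce the corollary to Theorem \ref{distgen} together with the discrete series dichotomy (via Theorem \ref{discr} and Shalika models), and both use the standard hyperbolic form $((v,f),(v',f'))\mapsto f'(v)-f(v')$ on $V\oplus V^\vee$ for the ``only if'' direction. The one place where you organize things differently is the ``if'' direction. The paper argues by induction on the number $t$ of segments, at each step either splitting $V$ into two $B$-orthogonal symplectic summands or pairing some $V_i\simeq V_{t+1}^\vee$ with $V_{t+1}$ and passing to the $B$-orthogonal complement of $V_i\oplus V_{t+1}$. You instead perform a direct Witt-type decomposition by isotypic blocks, separating irreducible constituents into non-self-dual pairs, orthogonal self-dual blocks (whose multiplicity must then be even), and symplectic self-dual blocks, with the parity statement for invariant alternating forms on $\sigma^m$ doing the work you correctly identify as the one non-formal ingredient. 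The two routes establish the same elementary fact about invariant symplectic forms on semisimple modules and land on the same $H$-induced shape demanded by Theorem \ref{distgen}; your isotypic version has the minor advantage of being explicit about nondegeneracy in the repeated self-dual case, where the paper's statement that $B$ restricted to $V_t\oplus V_{t+1}$ is automatically nondegenerate calls for a small extra justification, at the cost of having to state and use the parity lemma that the paper's induction finesses.
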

\begin{proof}
If $\pi$ is a unitary discrete series $\D=St_k(\rho)$, according to the Theorem \ref{discr} of this paper (which is a compilation of results from \cite{K}, 
\cite{KR}, and \cite{S}), the representation $\D$ is 
$H_{n}$-distinguished if and only if either $k$ is odd and $L(\rho,\wedge^2,s)$ has a pole at zero, or $k$ is even and $L(s,Sym^2,\rho)$ has a pole at zero. We 
then notice that $Sp(k)$ is symplectic when $k$ is even, and orthoganal when $k$ is odd, whereas 
$\rho$ preserves a symplectic form if and only if $L(\rho,\wedge^2,s)$ has a pole at zero, and a symmetric bilinear form if and only if $L(s,Sym^2,\rho)$ has a pole at zero. Hence $\D$ is $H_n$-distinguished if and only if $\phi(\D)$ preserves a nonzero symplectic form on its space, which is necessarily nondegenerate as $\phi(\D)$ is irreducible. Now we notice that if $(\rho,V_\rho)$ is a representation of $W'_F$, the representation $\rho\oplus \rho^\vee$ of $W'_F$ preserves the nondegenerate symplectic form $(v+v',w+w')\mapsto w'(v)- v'(w)$. This implies, according to Theorem \ref{distgen} and the unitary discrete series case, that $H_{2n}$-distinguished generic representations have a Galois parameter which fixes a nondegenerate symplectic form.\\
In general, we write $\pi=\D_1\times \dots \times \D_t$, and $\phi_i$ for the Langlands' parameter of $\D_i$. We prove by induction on $t$ that if 
$(\phi(\pi),V)=\oplus_{i=1}^t (\phi_i,V_i)$ fixes a non degenerate symplectic form on its space, then $\pi$ is $H$-induced.\\ 
If $t=1$, $\pi$ is just the discrete series $\D_1$, which is unitary because the central character of $\D_1$ is trivial (as it is the determinant of 
$\phi(\D_1)$), and we already saw that $\pi$ is distinguished.\\
$t\rightarrow t+1$: we write $\phi=\oplus_{i=1}^t \phi_i$, hence $\phi\oplus \phi_{t+1}$ fixes a nondegenerate symplectic form 
$B$. If the sum $\phi\oplus \phi_{t+1}$ is orthogonal for $B$, then $B_{|V_\rho\times V_\rho}$ and $B_{|V_{t+1}\times V_{t+1}}$ 
are nondegenerate and symplectic, hence $V_\rho$ and $V_{t+1}$ are even dimensional, and we conclude by induction. Otherwise, there is $\phi_i$, such that $B_{|V_{t+1}\times V_i}\neq 0$. There is no harm to suppose that $i=t$. But then $V_{t+1}\simeq V_t^\vee$ and 
$B_{|(V_t\oplus V_{t+1})\times (V_t\oplus V_{t+1}})$ is symplectic non degenerate, hence $V=(V_t\oplus V_{t+1})\oplus (V_t\oplus V_{t+1})^\perp$, and we conclude by induction.
\end{proof}

The local Langlands functorial transfer from $SO(2n+1,F)$ to $G_{2n}$ has been made explicit in \cite{Ji-So} for generic representations of $SO(2n+1,F)$. It is easy to 
check from Theorem B of [loc. cit.], Theorem \ref{distgen} above, and Theorem 6.1. of \cite{M2012.2}, that 
tempered representations of $G_{2n}$ which are $H_{2n}$-distinguished are the image by the functorial transfer of the 
generic tempered representations of $SO(2n+1,F)$.

\section{L factors and exceptional poles}\label{Lfactors}

\subsection{Reminder on local L functions of pairs}\label{sctLpaires}

In this paragraph, we recall results from \cite{JPS} and \cite{CP} about $L$-functions of pairs. If $P$ and $Q$ are two elements of $\C[q^{\pm u}]$ for $u\in \mathcal{D}^{t}$, we say that $1/P$ divides $1/Q$ if $P$ divides $Q$.

\begin{prop}\label{dfLpaires}
Let $\pi$ and $\pi'$ be two representations of Whittaker type of $G_n$, let $W$ belong to $W(\pi,\theta)$, $W'$ 
belong to $W(\pi',\theta^{-1})$, and $\phi$ belong to $\sm_c(F^n)$. Then the integrals 
$$ \Psi(W,W',\phi,s)=\int_{N_n\backslash G_n} W(g)W'(g)\phi(L_k(g))|g|^s dg $$ and
$$ \Psi_{(0)}(W,W',s)=\int_{N_{n-1}\backslash G_{n-1}} W(diag(g,1))W'(diag(g,1))|g|^{s-1} dg $$ converge absolutely for $Re(s)$ large, and extend to $\C$ as elements of $\C(q^{-s})$.\\
If $\pi''$ is a representation of $G_m$ for $m<n$, and $W''$ belongs to $W(\pi'',\theta^{-1})$, the same properties are true for for the integrals 
$$\Psi(W,W'',s)=\int_{N_m\backslash G_m} W(diag(g,I_{n-m})W''(g)|g|^{s-(n-m)/2} dg.$$
Moreover, when $W$, $W'$, $W''$ and $\phi$ vary in their respective spaces, the integrals $\Psi(W,W',\phi,s)$, $\Psi(W,W'',\phi,s)$ and $\Psi_{(0)}(W,W',s)$ span respectively a fractional ideal $I(\pi,\pi')$, $I(\pi,\pi'')$ and $I_{(0)}(\pi,\pi')$ of $\C[q^{\pm s}]$. The fractional ideal $I(\pi,\pi')$ (resp. $I(\pi,\pi'')$, resp. $I_{(0)}(\pi,\pi')$)
admits a unique generator $L(\pi,\pi',s)$ (resp. $L(\pi,\pi'',s)$, resp. $L_{(0)}(\pi,\pi',s)$) which is an Euler factor. If 
$n<m$, the factor $L(\pi,\pi'',s)$ is by definition $L(\pi'',\pi,s)$.
\end{prop}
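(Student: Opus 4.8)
The plan is to treat this as the standard package of facts from \cite{JPS}, whose only extension here is from irreducible generic representations to representations of Whittaker type; the latter is handled exactly as in \cite{CP}, by substituting the asymptotic expansion of Proposition \ref{DL} into the \cite{JPS} arguments. I describe the architecture and indicate where the work is.

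\emph{Convergence and rationality.} First I would use the Iwasawa decomposition $G_n=N_nA_nK_n$, the $(N_n,\theta)$- and $(N_n,\theta^{-1})$-equivariance of $W$ and $W'$, and the smoothness of all data under some $K_{n,r}$, to rewrite $\Psi(W,W',\phi,s)$ as a finite sum of integrals over $A_n$. Parametrising $A_n$ by simple roots as $t=z_1\cdots z_n$ and inserting the expansions of $W(t)$ and $W'(t)$ furnished by Proposition \ref{DL}, each of these becomes a finite $\C$-linear combination of products, over the coordinates $t(z_k)$, of one-variable integrals $\int_{F^*}\chi(t)|t|^{as+b}v(t)^{m}\psi(t)\,d^\times t$ with $\psi\in\sm_c(F)$ and $\chi$ ranging over the (finitely many) exponents of $\pi$ and $\pi'$ — in the single coordinate on which $\phi$ is evaluated, it is $\phi\in\sm_c(F^n)$ that supplies $\psi\in\sm_c(F)$. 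Each such one-variable integral converges for $\mathrm{Re}(s)$ large, lies in $\C(q^{-s})$, and has denominator dividing a power of $1-\chi(\w)q^{-as-b}$. This yields absolute convergence for $\mathrm{Re}(s)$ large, rationality, and — crucially — a \emph{single} nonzero $P\in\C[q^{\pm s}]$, assembled from these finitely many factors, with $P(q^{-s})\Psi(W,W',\phi,s)\in\C[q^{\pm s}]$ for all $W,W',\phi$. For $\Psi(W,W'',s)$ with $m<n$ there is no Schwartz function in sight, so I would first invoke Proposition \ref{mirabolicrestriction} to write $g\mapsto W(\mathrm{diag}(g,I_{n-m}))$ on $G_m$ as a finite sum of Whittaker functions of generic representations of $G_m$, each multiplied by a function of $\eta_m g$ with small support; this restores an effective Schwartz factor, and the same analysis (together with the argument of \cite{JPS} for $\Psi_{(0)}$) goes through.

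\emph{Fractional ideals and Euler factors.} Granting the previous step, $I(\pi,\pi')$ is a nonzero $\C[q^{\pm s}]$-submodule of $P(q^{-s})^{-1}\C[q^{\pm s}]$, hence finitely generated; since a finitely generated nonzero submodule of the fraction field $\C(q^{-s})$ of the principal ideal domain $\C[q^{\pm s}]$ is free of rank one, $I(\pi,\pi')=L(\pi,\pi',s)\,\C[q^{\pm s}]$ for some $L(\pi,\pi',s)\in\C(q^{-s})^*$, and likewise for $I(\pi,\pi'')$ and $I_{(0)}(\pi,\pi')$. To identify the generator with an Euler factor it then suffices to exhibit test data making the relevant integral a nonzero constant, since then the span contains $\C[q^{\pm s}]$ and its generator equals, up to a unit of $\C[q^{\pm s}]$, an element of the form $1/Q$ with $Q\in\C[q^{-s}]$ and $Q(0)=1$. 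For this I would use Remark \ref{Kir}: choosing $W$ and $W'$ in the bottom pieces of the Bernstein--Zelevinsky filtrations of $W(\pi,\theta)$ and $W(\pi',\theta^{-1})$ — the submodules isomorphic to $\mathrm{ind}_{N_n}^{P_n}(\theta)$ and $\mathrm{ind}_{N_n}^{P_n}(\theta^{-1})$ — one arranges that $g\mapsto W(\mathrm{diag}(g,1))W'(\mathrm{diag}(g,1))$ is, modulo $N_{n-1}$, the characteristic function of $N_{n-1}K_{n-1,r}$ for $r$ large; as $|g|\equiv 1$ there, $\Psi_{(0)}(W,W',s)$ is a nonzero constant, and taking $\phi$ supported in a small neighbourhood of $\eta_n$ collapses $\Psi(W,W',\phi,s)$ to a nonzero multiple of the same integral. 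The case $\Psi(W,W'',s)$ is handled identically, using Proposition \ref{mirabolicrestriction} to localise on $G_m$; the case $n<m$ is a definition.

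\emph{Main obstacle.} The real content is the uniform rationality, i.e.\ exhibiting the single denominator $P$ that works for every test vector at once; this is precisely where the exact shape of the asymptotic expansion (Proposition \ref{DL}) is indispensable, and, for the integrals with $m<n$ and for $\Psi_{(0)}$, where the restriction-to-mirabolic results (Proposition \ref{mirabolicrestriction}) are needed to replace the absent Schwartz function. Everything after that — finite generation, principality, the fractional-ideal structure, and the Euler-factor normalisation — is then formal once one recalls the module theory over $\C[q^{\pm s}]$.
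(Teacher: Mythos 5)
The paper itself does not prove Proposition \ref{dfLpaires}: it is recalled verbatim from \cite{JPS} (with the extension to Whittaker type as in \cite{CP}), so there is no internal proof to compare against. Your outline is a reasonable and mostly faithful description of the \cite{JPS}/\cite{CP} argument: Iwasawa decomposition plus the asymptotic expansion of Proposition \ref{DL} gives convergence and, because the set of exponents and the bounds on the $m_k$ depend only on $\pi,\pi'$, a single denominator $P(q^{\pm s})$; the Laurent ring $\C[q^{\pm s}]$ is a PID, so the span is a principal fractional ideal; Remark \ref{Kir} produces test data making the integral a nonzero constant, which pins down the normalisation.

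The one place where your reasoning is off target is the treatment of $\Psi(W,W'',s)$ for $m<n$. You want to ``restore an effective Schwartz factor'' by invoking Proposition \ref{mirabolicrestriction} to write $g\mapsto W(\mathrm{diag}(g,I_{n-m}))$ as a finite sum of $W'(g)\phi'(\eta_m g)$. That proposition does not give such a decomposition for arbitrary $W$: the multiplicative identity $W(\mathrm{diag}(g,I_{n-k}))\phi'(\eta_k g)=W'(g)\phi'(\eta_k g)|g|^{(n-k)/2}$ is asserted only for $W$ lying in a fixed filtration piece $W(\pi,\theta)_k$, and only after multiplying by a cutoff $\phi'$ supported near $0$; neither the full filtration of a general $W$ nor the complement of the cutoff region is controlled this way. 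No replacement Schwartz factor is needed, however: in the sub-torus parametrisation $t=z_1\cdots z_m$ (with $z_{m+1}=\cdots=z_n=I_n$), Proposition \ref{DL} directly furnishes an expansion of $W(\mathrm{diag}(a,I_{n-m}))$ on $A_m$ in which each coordinate already carries a $\phi_k\in\sm_c(F)$; the decay supplied by these factors, together with the expansion of $W''$, gives convergence and the uniform denominator exactly as in the $m=n$ case. Similarly, for exhibiting a test pair $(W,W'')$ with $\Psi(W,W'',s)$ constant, Remark \ref{Kir} alone suffices: since $\mathrm{diag}(g,I_{n-m})\in P_n$ for all $g\in G_m$, choosing $W_{|P_n}$ supported near $N_n$ localises the $G_m$-integral without appealing to Proposition \ref{mirabolicrestriction}.
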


We now fix a Haar measure $dx$ on $F$, which is self-dual with respect to the character $\theta$. If $m$ is a positive integer, and
$\phi$ belongs to $\sm_c(F^m)$, we denote by
$$\widehat{\phi}^\theta:(x_1,\dots,x_m)\mapsto \int_{t\in F^m}\phi(t)\theta(-\sum_{i=1}^m t_ix_i) dt_1\dots dt_m$$ the Fourier transform of $\phi$. We recall the functional equation of $L$-functions of pairs.

\begin{prop}\label{eqfctpaires}
Let $\pi$, $\pi'$ and $\pi''$ be respectively representations of Whittaker type of $G_n$, $G_n$ and $G_m$ for $m<n$. Let $W$ belong to $W(\pi,\theta)$, $W'$ 
belong to $W(\pi',\theta^{-1})$, $W''$ belong to $W(\pi'',\theta^{-1})$ and $\phi$ belong to $\sm_c(F^n)$. Then there are unit $\e(\pi,\pi',\theta,s)$ and 
$\e(\pi,\pi'',\theta,s)$ of $\C[q^{\pm s}]$, such that the following functional equation is satisfied:
$$\frac{\Psi(\tilde{W},\tilde{W'},\widehat{\phi}^\theta,1-s)}{L(\tilde{\pi},\tilde{\pi'},1-s)}=\e(\pi,\pi',\theta,s)
\frac{\Psi(W,W',\phi,s)}{L(\pi,\pi',s)}$$ 
and 
$$\frac{\Psi(\widetilde{W},\widetilde{W''},1-s)}{L(\widetilde{\pi},\widetilde{\pi''},1-s)}=\e(\pi,\pi'',\theta,s)
\frac{\Psi(W,W'',s)}{L(\pi,\pi'',s)}$$
\end{prop}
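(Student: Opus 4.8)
The final statement is the classical local functional equation of \cite{JPS}, together with the fact, due to \cite{CP}, that the proportionality factor is a unit of $\C[q^{\pm s}]$; the plan is to reproduce that argument using the ingredients already recorded above. I treat the pair $(\pi,\pi')$ on $G_n\times G_n$ in detail; the case of $\Psi(W,W'',s)$ with $m<n$ follows from the same three steps, with the Schwartz function suppressed, $G_n$ replaced by the copy of $G_m$ embedded via $g\mapsto diag(g,I_{n-m})$, and $s$ replaced by $s-(n-m)/2$.

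\textbf{Step 1 (local multiplicity one).} For $g\in G_n$ write $\pi(g)W$ for the right translate $x\mapsto W(xg)$ and $R(g)\phi$ for $x\mapsto\phi(xg)$. I would first prove that, for all but finitely many $s$, the space of linear forms $T$ on $W(\pi,\theta)\otimes W(\pi',\theta^{-1})\otimes\sm_c(F^n)$ satisfying
$$T\bigl(\pi(g)W\otimes\pi'(g)W'\otimes R(g)\phi\bigr)=|g|^{-s}\,T(W\otimes W'\otimes\phi)\qquad(g\in G_n)$$
is at most one-dimensional. The change of variables $g\mapsto xg$ in the integral defining $\Psi$ shows that $(W,W',\phi)\mapsto\Psi(W,W',\phi,s)$ is such a $T$, and by Proposition \ref{dfLpaires} it is not identically zero. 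The one-dimensionality is the local uniqueness theorem for Rankin--Selberg integrals: one restricts $T$ to the mirabolic subgroup $P_n$, filters $\sm_c(F^n)$ by the $P_n$-equivariant exact sequence $0\to\sm_{c,0}(F^n)\to\sm_c(F^n)\to\C\to 0$ (evaluation at the origin), and uses the Bernstein--Zelevinsky geometric lemma together with the description of the derivatives of $\pi$ and $\pi'$ recalled in Section \ref{prelim} to reduce the question to the uniqueness of the $(N_n,\theta)$-functional, i.e. Proposition \ref{fourretout}. This is the argument of \cite{JPS}; alternatively one may run the $P_n$-restriction at the level of the meromorphically continued integrals and appeal to Theorem \ref{Ber}.

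\textbf{Step 2 (the other side has the same type).} Writing $w$ for the antidiagonal Weyl element of $G_n$, so that $\tilde W(x)=W(w\,{}^t\!x^{-1})$, one has $\widetilde{\pi(g)W}=\tilde\pi(g)\tilde W$, where the Whittaker model of $\tilde\pi$ carries the action $g\mapsto R({}^t\!g^{-1})$, and a direct computation with the self-dual Fourier transform gives $\widehat{R(g)\phi}^{\theta}=|g|^{-1}\,R({}^t\!g^{-1})\widehat\phi^{\theta}$. Feeding these into $\Psi(\tilde W,\tilde W',\widehat\phi^{\theta},1-s)$ and changing variables by right translation by ${}^t\!g^{-1}$, the weight exponents combine as $|g|^{1-s}\cdot|g|^{-1}=|g|^{-s}$, so $(W,W',\phi)\mapsto\Psi(\tilde W,\tilde W',\widehat\phi^{\theta},1-s)$ is again a form $T$ as in Step 1. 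By Step 1 there is a scalar $\gamma(\pi,\pi',\theta,s)$ with $\Psi(\tilde W,\tilde W',\widehat\phi^{\theta},1-s)=\gamma(\pi,\pi',\theta,s)\,\Psi(W,W',\phi,s)$ for all $W,W',\phi$; choosing $(W_0,W_0',\phi_0)$ with $\Psi(W_0,W_0',\phi_0,s)\not\equiv0$ and forming the quotient shows, via the rationality in Proposition \ref{dfLpaires}, that $\gamma(\pi,\pi',\theta,s)\in\C(q^{-s})$.

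\textbf{Step 3 (the factor is a unit).} Since $W\mapsto\tilde W$ is a bijection $W(\pi,\theta)\to W(\tilde\pi,\theta^{-1})$ and the Fourier transform is a bijection of $\sm_c(F^n)$, the values $\Psi(\tilde W,\tilde W',\widehat\phi^{\theta},1-s)$ span the $\C[q^{\pm s}]$-module obtained from the fractional ideal $I(\tilde\pi,\tilde\pi')$ by the substitution $s\mapsto 1-s$, which by Proposition \ref{dfLpaires} equals $L(\tilde\pi,\tilde\pi',1-s)\,\C[q^{\pm s}]$. On the other hand, by the identity of Step 2 the same module equals $\gamma(\pi,\pi',\theta,s)\,L(\pi,\pi',s)\,\C[q^{\pm s}]$, which is free of rank one with basis $\gamma(\pi,\pi',\theta,s)L(\pi,\pi',s)$ (note $\gamma\neq0$ as $I(\tilde\pi,\tilde\pi')\neq0$). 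Two bases of the same rank-one free $\C[q^{\pm s}]$-module differ by a unit of $\C[q^{\pm s}]$, so $\gamma(\pi,\pi',\theta,s)L(\pi,\pi',s)=u(s)\,L(\tilde\pi,\tilde\pi',1-s)$ with $u(s)\in\C[q^{\pm s}]^{\times}$; then $\e(\pi,\pi',\theta,s):=\gamma(\pi,\pi',\theta,s)L(\pi,\pi',s)/L(\tilde\pi,\tilde\pi',1-s)=u(s)$ is a unit and yields the asserted functional equation, and the analogous computation for $\Psi(W,W'',s)$ with $m<n$ gives the second one. The main obstacle is Step 1: the multiplicity-one statement is where the Bernstein--Zelevinsky machinery does the real work, while Steps 2 and 3 are formal given the meromorphic continuation, the principality of the ideals $I(\pi,\pi')$ in $\C[q^{\pm s}]$, and the uniqueness of Whittaker models, all of which are already available above.
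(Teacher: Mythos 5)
The paper does not prove this proposition: it is stated explicitly as a recall of Jacquet--Piatetski-Shapiro--Shalika's local functional equation (with the observation, also in \cite{JPS} and emphasized in \cite{CP}, that $\e$ is a unit of $\C[q^{\pm s}]$). Your three-step argument — multiplicity one for the quasi-invariant trilinear forms via the Bernstein--Zelevinsky mirabolic filtration, the check that the dual-side integral has the same equivariance (with the exponent bookkeeping $|g|^{1-s}\cdot|g|^{-1}=|g|^{-s}$ and $\widehat{R(g)\phi}^\theta=|g|^{-1}R({}^tg^{-1})\widehat\phi^\theta$ both correct), and the comparison of the two rank-one fractional ideals of $\C[q^{\pm s}]$ via the bijections $W\mapsto\tilde W$ and $\phi\mapsto\widehat\phi^\theta$ — is exactly the argument of the cited references, so you have correctly reconstructed the proof the paper is pointing to.
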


With the notations above, we write 
$$\gamma(\pi,\pi',\theta,s)= \frac{\e(\pi,\pi',\theta,s)L(\widetilde{\pi},\widetilde{\pi'},1-s)}{L(\pi,\pi',s)}$$ and 
$$\gamma(\pi,\pi'',\theta,s)= \frac{\e(\pi,\pi'',\theta,s)L(\widetilde{\pi},\widetilde{\pi''},1-s)}{L(\pi,\pi'',s)}$$ in $\C(\mathcal{D})$. 
We recall the definition of an exceptional pole:

\begin{df}\label{defpolexpaires}
Let $\pi$ and $\pi'$ be representations of $G_n$ of Whittaker type, and let $s_0$ be a pole of order $d$ of $L(\pi,\pi',s)$. For $W$  
in $W(\pi,\theta)$, $W'$ in $W(\pi,\theta^{-1})$, and $\phi$ in $\sm_c(F^{n})$, we write the Laurent expansion $$\Psi(W,W',\phi,s)=\frac{T(W,W',\phi,s)}{(1-q^{s_0-s})^d}+ \ terms \ of \ higher \ degree.$$
We say that $s_0$ is an exceptional pole of $L(\pi,\pi',s)$ if the trilinear form $T$ vanishes on the space $W(\pi,\theta)\times 
W(\pi,\theta) \times \sm_{c,0}(F^{n})$, i.e. if 
it is of the form $T:(W,\phi)\mapsto B(W,W')\phi(0)$, where $B$ is a nonzero bilinear form. By definition of the integrals $\Psi(W,W',\phi,s)$, 
the linear form $B$ is a nonzero element of the space $Hom_{G_n}(\pi\otimes \pi',|.|^{-s_0})$.
\end{df}

The exceptional poles of $L(\pi,\pi',s)$ are actually those of $L(\pi,\pi',s)/L_{(0)}(\pi,\pi',s)$. 

\begin{prop}\label{radexpaires}
Let $\pi$ and $\pi'$ be two representations of Whittaker type of $G_n$. 
The ideal $I_{(0)}(\pi,\pi')$ is spanned by the integrals $\Psi(W,W',\phi,s)$ for $W$ in $W(\pi,\theta)$, $W'$ in $W(\pi',\theta^{-1})$ and 
$\phi$ in $\sm_{c,0}(F^n)$. The factor $L_{(0)}(\pi,\pi',s)$ thus divides $L(\pi,\pi',s)$, and the quotient $$L_{rad(ex)}(\pi,\pi',s)=L(\pi,\pi',s)/L_{(0)}(\pi,\pi',s)$$ has simple poles, which are exactly the exceptional poles of $L(\pi,\pi',s)$.
\end{prop}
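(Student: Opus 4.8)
The plan is to deduce all three assertions from a single unfolding identity expressing $\Psi(W,W',\phi,s)$, for $\phi$ vanishing at the origin, in terms of the integrals $\Psi_{(0)}$ of Proposition~\ref{dfLpaires}. I would begin with the elementary remark that a smooth function is locally constant, so a $\phi\in\sm_c(F^n)$ with $\phi(0)=0$ vanishes on a neighbourhood of $0$; thus $\sm_{c,0}(F^n)$ is exactly the space of smooth compactly supported functions on $F^n\setminus\{0\}$, and under the $G_n$-equivariant homeomorphism $P_n\backslash G_n\xrightarrow{\sim}F^n\setminus\{0\}$, $P_n g\mapsto\eta_n g$ (the right stabiliser of the row vector $\eta_n$ being the mirabolic subgroup $P_n$), such a $\phi$ pulls back to a function $\widetilde{\phi}\in\sm_c(P_n\backslash G_n)$. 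Unfolding $\Psi(W,W',\phi,s)$ along the chain $N_n\subset P_n\subset G_n$, using $\eta_n pg=\eta_n g$ for $p\in P_n$ and the identification $N_n\backslash P_n\simeq N_{n-1}\backslash G_{n-1}$ through $h\mapsto diag(h,1)$, one obtains, with the obvious normalisations of measures, an identity of the form
$$\Psi(W,W',\phi,s)=\int_{P_n\backslash G_n}\widetilde{\phi}(g)\,\Psi_{(0)}(W_g,W'_g,s)\,dg,\qquad W_g(x)=W(xg),$$
exactly as in Section 1 of \cite{CP} (compare \cite{JPS}). When $\phi(0)=0$ the domain of integration is compact modulo $P_n$ and the integrand is locally constant in $g$, so the integral is a \emph{finite} $\C[q^{\pm s}]$-linear combination of integrals $\Psi_{(0)}(W_i,W'_i,s)$; hence the $\C[q^{\pm s}]$-span of the $\Psi(W,W',\phi,s)$ with $\phi\in\sm_{c,0}(F^n)$ is contained in $I_{(0)}(\pi,\pi')$. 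For the reverse inclusion I would take $\phi$ to be the characteristic function of a small enough neighbourhood of $\eta_n$ avoiding $0$: the unfolded integral then localises to the coset of the identity, where $W_g=W$ and $W'_g=W'$, and recovers $\Psi_{(0)}(W,W',s)$ up to a unit of $\C[q^{\pm s}]$. This proves the first assertion; the second is then immediate, since $I_{(0)}(\pi,\pi')\subseteq I(\pi,\pi')$ forces $L_{(0)}(\pi,\pi',s)$ to divide $L(\pi,\pi',s)$, so that $L_{rad(ex)}=L/L_{(0)}$ is again an Euler factor.

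For the third assertion I would fix a pole $s_0$ of $L(\pi,\pi',s)$, write $d$ for its order and $d_{(0)}\leq d$ for the order of the pole of $L_{(0)}(\pi,\pi',s)$ at $s_0$, so that the order of the pole of $L_{rad(ex)}$ at $s_0$ equals $e_0=d-d_{(0)}$. Suppose first $e_0\geq 1$. By the first assertion every $\Psi(W,W',\phi,s)$ with $\phi(0)=0$ lies in $I_{(0)}(\pi,\pi')$, hence has a pole of order at most $d_{(0)}\leq d-1$ at $s_0$; therefore the leading Laurent coefficient $T$ of Definition~\ref{defpolexpaires} vanishes on $W(\pi,\theta)\times W(\pi',\theta^{-1})\times\sm_{c,0}(F^n)$, so it has the shape $(W,W',\phi)\mapsto B(W,W')\phi(0)$, with $B$ nonzero because $L(\pi,\pi',s)$ really has a pole of order $d$ at $s_0$; thus $s_0$ is an exceptional pole. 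Conversely, if $e_0=0$ then $L_{(0)}(\pi,\pi',s)$ already has a pole of order $d$ at $s_0$, so by the first assertion some $\Psi(W,W',\phi,s)$ with $\phi(0)=0$ has a pole of order exactly $d$, whence $T$ does not vanish on $\sm_{c,0}(F^n)$ and $s_0$ is not an exceptional pole. This identifies the poles of $L_{rad(ex)}$ with the exceptional poles of $L(\pi,\pi',s)$.

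It remains to prove these poles are simple, i.e. $e_0\leq 1$ always, and this is the main obstacle. The argument I have in mind, following \cite{CP}, is that if $e_0\geq 2$ then for $\phi(0)=0$ the integral $\Psi(W,W',\phi,s)$ has a pole of order at most $d-2$ at $s_0$, which forces the \emph{two} leading Laurent coefficients of $\Psi(W,W',\phi,s)$ to factor through the functional $\phi\mapsto\phi(0)$; feeding this into the functional equation of Proposition~\ref{eqfctpaires}, together with its counterpart for $\widetilde{\pi},\widetilde{\pi'}$ at $1-s_0$, interchanges the roles of the functionals $\phi\mapsto\phi(0)$ and $\phi\mapsto\widehat{\phi}^\theta(0)=\int_{F^n}\phi$, and the resulting identities between the ensuing invariant bilinear forms are, in view of the at-most-one-dimensionality of $Hom_{G_n}(\pi\otimes\pi',|.|^{-s_0})$, incompatible with $e_0\geq 2$. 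Everything up to this last incompatibility is a routine adaptation of the $GL_{n-1}$-unfolding technique of \cite{JPS} and \cite{CP}; the delicate point, where I expect the real work, is to isolate precisely that incompatibility.
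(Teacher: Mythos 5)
The first two assertions are proved correctly and in essentially the same spirit as the paper: you unfold over $P_n\backslash G_n$, the paper uses the Iwasawa decomposition $N_n\backslash G_n\simeq (N_n\backslash P_n)\times F^*\times K_n$, but both are variants of the same localisation argument, and your ``take $\phi$ the characteristic function of a small neighbourhood of $\eta_n$'' for the reverse inclusion matches the paper's choice of the characteristic function of $\eta_n(K_{n,r})$. (Minor quibble: your displayed unfolding identity should carry a $|g|^s$ weight coming from $|pg|^s$ before it literally holds, but this does not affect the conclusion.) Your derivation that poles of $L_{rad(ex)}$ are exactly the exceptional poles, via the dichotomy $e_0=d-d_{(0)}\geq 1$ versus $e_0=0$, is correct and spells out a step the paper leaves implicit.

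The genuine gap is in the proof that the poles of $L_{rad(ex)}$ are simple. Your sketch via the functional equation and one-dimensionality of $Hom_{G_n}(\pi\otimes\pi',|.|^{-s_0})$ is not worked out — you say yourself that isolating the ``incompatibility'' is where the real work would be, and it is not clear it can be made to close. The paper's argument is both simpler and more informative, and the reason you miss it is precisely the choice of decomposition. By factoring the unfolding through $N_n\backslash P_n$, the centre $F^*$, and $K_n$ (rather than just $P_n\backslash G_n$), the paper writes every $\Psi(W,W',\phi,s)$ as a finite $\C[q^{\pm s}]$-combination of products $\Psi_{(0)}(W_i,W'_i,s)\cdot\int_{F^*}\phi_i(t)c_\pi(t)c_{\pi'}(t)|t|^{ns}d^*t$. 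The second factor is a Tate integral, lying in $L(c_\pi c_{\pi'},ns)\C[q^{\pm s}]$; hence $L(\pi,\pi',s)$ divides $L_{(0)}(\pi,\pi',s)\,L(c_\pi c_{\pi'},ns)$, so $L_{rad(ex)}$ divides the Tate factor $L(c_\pi c_{\pi'},ns)$, and Tate $L$-factors have simple poles. Separating the central $F^*$-direction is exactly what exhibits the controlling Tate factor; unfolding over $P_n\backslash G_n$ alone hides it. You should replace your functional-equation sketch with this divisibility argument, which also renders the rest of your third paragraph redundant once combined with your (correct) identification of poles.
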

\begin{proof}
Writing the formula, for $Re(s)$ large and $\phi$ in $\sm_{c}(F^n)$:
\begin{eqnarray}\label{iwazpaires} \Psi(W,W',\phi,s) \end{eqnarray}
 $$= \int_{K_n}\int_{N_n\backslash P_n} W(pk)W'(pk)|p|^{s-1}dp\int_{F^*}\phi(t\eta_n(k))c_{\pi}(tk)c_{\pi'}(tk)|t|^{ns}d^*t dk,$$
and using smoothnes of $\phi$, $W$, and $W'$, we see that $\Psi(W,W',\phi,s)$ is a finite sum of integrals of the form $P_i(q^{\pm s})\Psi_{(0)}(W_i,W_i',s)$ if $\phi$ vanishes at zero, hence it belongs to $I_{(0)}(\pi,\pi',s)$. Conversely, take an integral $\Psi_{(0)}(W,W',s)$, and $r\geq 1$ large enough for $W$ and $W'$ to be right 
invariant under $K_{n,r}$. Then, if $\phi$ in $\sm_{c,0}(F^n)$ is the characteristic function of $\eta_n(K_{n,r})$, the integral 
$\Psi(W,W',\phi,s)$ reduces to a positive multiple of $\Psi_{(0)}(W,W',s)$ according to Equation (\ref{iwazpaires}). This proves that $I_{(0)}(\pi,\pi')$ is spanned by the integrals $\Psi(W,W',\phi,s)$ for $W$ in $W(\pi,\theta)$, $W'$ in $W(\pi',\theta^{-1})$ and 
$\phi$ in $\sm_{c,0}(F^n)$. It implies that $L_{(0)}(\pi,\pi',s)$ divides $L(\pi,\pi',s)$. Finally, according to 
Equation (\ref{iwazpaires}), any integral $\Psi(W,W',\phi,s)$ is a finite sum of integrals $\Psi_{(0)}(W_i,W_i',s)\int_{F^*}\phi_i(tk)c_{\pi}(t)c_{\pi'}(t)|t|^{ns}d^*t$, hence $L(\pi,\pi',s)$ divides the product $L_{(0)}(\pi,\pi',s)L(c_{\pi}c_{\pi'},ns)$, and the quotient $L_{rad(ex)}(\pi,\pi',s)$ divides the factor $L(c_{\pi}c_{\pi'},ns)$, it thus has simple poles.
\end{proof}

If $\pi$ and $\pi'$ are both representations of $G_n$ of Whittaker type, the factor $L_{ex}(\pi,\pi',s)$ is by definition the product of the factors $1/(1-q^{s_0-s})^d$ for $s_0$ an exceptional pole of $L(\pi,\pi',s)$, and $d$ its order in $L(\pi,\pi',s)$. If $\pi''$ is a representation of Whittaker type of 
$G_m$ for $m<n$, we say that the factor $L(\pi,\pi'',s)$ has no exceptional poles. Hence, in this case, we write $L_{rad(ex)}(\pi,\pi'')=L_{ex}(\pi,\pi',s)=1$, and 
we define $L_{(0)}(\pi,\pi'',s)$ to be $L(\pi,\pi'',s)$. We recall one of the main results of \cite{CP} (namely Theorems 2.1. and 2.2, or more precisely a consequence of their proofs), which will be used later.

\begin{prop}\label{factLpaires}
Let $\pi$ and $\pi'$ be two generic representations of $G_n$ and $G_m$ respectively ($m\leq n$), both with completely reducible derivatives. 
For $l$ in the set $\{0,\dots,n\}$ and $k$ in the set $\{0,\dots,m\}$, write $$\pi^{(n-k)}=\oplus_{i_k=1}^{e_k} \rho_{i_k}$$ and $$\pi'^{(m-l)}=\oplus_{j_l=1}^{f_l} \rho'_{i_l}$$ for their decomposition in simle components.
Then the factor $L_{(0)}(\pi,\pi',s)$, is equal to the lcm of the factors $L_{ex}(\rho_{i_k},\rho'_{j_l},s)$, for $(k,l)\in\{0\dots,n\}\times \{0\dots,m\}$ with $k+l<n+m$, and $(i_k,j_l)\in\{1,\dots,e_k\}\times \{1,\dots,f_l\}$.
\end{prop}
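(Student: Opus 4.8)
The plan is to reproduce, in the present setting, the arguments behind Theorems 2.1 and 2.2 of \cite{CP}, by induction on $n$ (the base case $n=1$ being straightforward). Write $L_{\mathrm{lcm}}(\pi,\pi',s)$ for the least common multiple appearing in the statement. I will concentrate on the case $m=n$: then, by Proposition \ref{radexpaires}, $L_{(0)}(\pi,\pi',s)$ generates the fractional ideal $I_{(0)}(\pi,\pi')$, which is spanned by the integrals $\Psi(W,W',\phi,s)$ with $\phi\in\sm_{c,0}(F^n)$, equivalently by the integrals $\Psi_{(0)}(W,W',s)$ on $N_{n-1}\backslash G_{n-1}$; so it suffices to prove that $L_{(0)}(\pi,\pi',s)$ and $L_{\mathrm{lcm}}(\pi,\pi',s)$ divide one another. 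When $m<n$ the factor $L_{(0)}(\pi,\pi',s)$ is $L(\pi,\pi',s)$ by definition, the constraint $k+l<n+m$ is vacuous, and one argues the same way with $\Psi_{(0)}(W,W',s)$ replaced by the integral $\Psi(W,W',s)$ on $N_m\backslash G_m$ of Proposition \ref{dfLpaires}.

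For the divisibility $L_{(0)}(\pi,\pi',s)\mid L_{\mathrm{lcm}}(\pi,\pi',s)$, I would restrict $W$ and $W'$ to $G_{n-1}\subset P_n$ and use the Bernstein--Zelevinsky filtrations of $W(\pi,\theta)_{|P_n}$ and $W(\pi',\theta^{-1})_{|P_n}$ of Proposition \ref{BZfiltration}, whose graded pieces are built via $\Phi^+$ and $\Psi^+$ from the derivatives $\pi^{(n-k)}$ and $\pi'^{(m-l)}$; the hypothesis that the derivatives be completely reducible splits these graded pieces into the direct sums $\bigoplus_{i_k}\rho_{i_k}$ and $\bigoplus_{j_l}\rho'_{j_l}$, and by Proposition \ref{derwhittaker} the same property propagates to the derivatives of each $\rho_{i_k}$ and $\rho'_{j_l}$, which are themselves among the summands of higher derivatives of $\pi$ and $\pi'$. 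Decomposing $\Psi_{(0)}(W,W',s)$ according to the filtration levels into which $W$ and $W'$ fall, and using the restriction formulas of Proposition \ref{mirabolicrestriction}, the contribution of the pair of levels attached to a summand $\rho_{i_k}$ of $\pi^{(n-k)}$ (a representation of $G_k$) and a summand $\rho'_{j_l}$ of $\pi'^{(m-l)}$ (a representation of $G_l$) becomes a finite $\C[q^{\pm s}]$-linear combination of Rankin--Selberg integrals for the pair $(\rho_{i_k},\rho'_{j_l})$: of the type $\Psi(\cdot,\cdot,\phi',s)$ with $\phi'$ vanishing at $0$ when $k=l$ (the vanishing being inherited from $\phi\in\sm_{c,0}$), and of the unequal-rank type when $k\neq l$. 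Only proper derivatives occur here, so the indices satisfy $k,l\leq n-1$ and $k+l<n+m$; the induction hypothesis (applied to $G_k\times G_k$ when $k=l$, and with $\max(k,l)$ in place of $n$ together with the identification of derivatives above when $k\neq l$) then shows that each such integral has poles only among the exceptional poles of $L(\rho_{i_k},\rho'_{j_l},s)$, that is, among the poles of $L_{ex}(\rho_{i_k},\rho'_{j_l},s)$. Summing over levels, $\Psi_{(0)}(W,W',s)/L_{\mathrm{lcm}}(\pi,\pi',s)$ lies in $\C[q^{\pm s}]$ for all $W,W'$, hence $L_{(0)}(\pi,\pi',s)$, which lies in $I_{(0)}(\pi,\pi')$, divides $L_{\mathrm{lcm}}(\pi,\pi',s)$.

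For the converse $L_{\mathrm{lcm}}(\pi,\pi',s)\mid L_{(0)}(\pi,\pi',s)$, I would show that each exceptional pole $s_0$, of order $d$, of each $L(\rho_{i_k},\rho'_{j_l},s)$ with $k+l<n+m$ occurs with order at least $d$ in some $\Psi_{(0)}(W,W',s)$. By Definition \ref{defpolexpaires} such a pole is detected by a nonzero $B\in Hom_{G_k}(\rho_{i_k}\otimes\rho'_{j_l},|.|^{-s_0})$ arising as the leading Laurent coefficient of an integral $\Psi(W_\rho,W'_{\rho'},\phi',s)$; using the surjectivity assertions of Proposition \ref{mirabolicrestriction} one lifts the triple $(W_\rho,W'_{\rho'},\phi')$ through the filtrations to some $W\in W(\pi,\theta)_k$, $W'\in W(\pi',\theta^{-1})_l$ and a test function $\phi$ supported away from $0$ (hence in $\sm_{c,0}(F^n)$) for which the leading Laurent coefficient of $\Psi_{(0)}(W,W',s)$ at $s_0$ equals that of the $(\rho_{i_k},\rho'_{j_l})$ integral up to a nonzero scalar. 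I expect the main obstacle to be exactly this last point: guaranteeing that the leading term is not cancelled by the contributions of the other filtration pieces. As in \cite{CP}, I would handle it by choosing the data $W,W',\phi$ concentrated in the single pair of filtration pieces under consideration and invoking once more that each graded quotient of the filtration is genuinely realized as a subquotient of the Whittaker model (Proposition \ref{mirabolicrestriction}), so that the remaining pieces do not interfere at $s_0$. Combining the two divisibilities completes the induction, and hence the proof.
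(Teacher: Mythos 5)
The paper itself does not prove this proposition: it is recalled as a consequence of the proofs of Theorems~2.1 and~2.2 of \cite{CP}. The closest internal benchmark is the analogous statement for $L^{lin}$, Proposition~\ref{factL}, whose proof your argument is evidently modelled on. Your overall structure (two-way divisibility, Bernstein--Zelevinsky filtration, Proposition~\ref{mirabolicrestriction}, induction) is the right one, and the direction $L_{(0)}\mid \mathrm{lcm}$ is morally correct, though it would need to be tightened by an explicit "smallest $k$" argument to control which filtration level carries a given pole.

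The genuine gap is exactly where you flagged it, and your proposed fix does not work. For the converse divisibility you want to lift a derivative-level integral $\Psi(W_\rho,W'_{\rho'},\phi',s)$ exhibiting the exceptional pole to a $\Psi_{(0)}$-integral for $(\pi,\pi')$ with the same leading Laurent coefficient, and you propose to choose $(W,W',\phi)$ "concentrated in the single pair of filtration pieces" so that "the remaining pieces do not interfere." But the Bernstein--Zelevinsky filtration of $W(\pi,\theta)_{|P_n}$ is an ascending filtration by submodules, not a direct sum decomposition: $W(\pi,\theta)_k$ already contains all the lower pieces, and Proposition~\ref{mirabolicrestriction} only controls $W(\mathrm{diag}(g,I_{n-k}))$ on a small neighbourhood of $\eta_k g=0$; you have no handle on the rest of $W_{|G_{n-1}}$, so you cannot suppress the lower-level contributions to $\Psi_{(0)}(W,W',s)$. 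The way the paper resolves this in Proposition~\ref{factL} (and, implicitly, the way \cite{CP} does) is a \emph{dichotomy} rather than an isolation: write
\[
\Psi_{(n-k-1)}(W,s)\;=\;\Psi(W_\rho,\phi'',s)\;+\;\bigl(\text{the integral weighted by }1-\phi''\bigr),
\]
where $\phi''$ is the characteristic function of a small neighbourhood of $0$. The left-hand side is a $\Psi_{(0)}$-type integral for $\pi$ (Lemma~9.2 of \cite{JPS}), and so is the $(1-\phi'')$ term after pushing it one more step down the mirabolic chain. Since the middle term has a pole of order $\geq d$ at $s_0$, at least one of the other two does as well, and both lie in $I_{(0)}(\pi,\pi')$; hence $L_{(0)}$ picks up the pole whether or not cancellation occurs. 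Replacing your concentration idea by this dichotomy is what closes the gap.
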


By the explicit expression of $L$-functions of pairs for discrete series, given in Theorem 8.2. of \cite{JPS}, we have the following result.

\begin{prop}\label{divisionpaires}
Let $\D$ and $\D'$ be two discrete series of $G_n$ and $G_m$ respectively, with $m\leq n$. For any $k$ and $k'$, such that $\D^{(k)}$ and $\D^{(k')}$ are nonzero, the factor  $L(\D^{(k)},\D'^{(k')},s)$ divides $L(\D,\D',s)$.
\end{prop}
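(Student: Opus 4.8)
The plan is to combine the explicit description of the derivatives of a discrete series (Proposition~\ref{derwhittaker}) with the explicit formula for $L$-functions of pairs of segments from Theorem~8.2 of \cite{JPS}.

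Write $\D=[\rho,\dots,|.|^{a-1}\rho]$ with $\rho$ cuspidal of $G_r$ and $n=ar$, and $\D'=[\rho',\dots,|.|^{b-1}\rho']$ with $\rho'$ cuspidal of $G_{r'}$ and $m=br'$. By Proposition~\ref{derwhittaker}, $\D^{(k)}$ is nonzero exactly when $k=ir$ for some $i\in\{1,\dots,a\}$, and then it is the segment $[|.|^{i}\rho,\dots,|.|^{a-1}\rho]$ when $i<a$ and the trivial representation of $G_0$ when $i=a$; similarly $\D'^{(k')}$ is nonzero only for $k'=jr'$. If one of $\D^{(k)},\D'^{(k')}$ is the trivial representation of $G_0$, the relevant integral $\Psi(W,W'',s)$ of Proposition~\ref{dfLpaires} is a constant in $s$, so the corresponding factor is $1$ and there is nothing to prove; so I may assume that both derivatives are genuine segments. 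The one structural fact I shall use is that passing from $\D$ to a nonzero derivative $\D^{(ir)}$ removes constituents \emph{only from the bottom of the segment}: the top constituent $|.|^{a-1}\rho$ of $\D$ remains the top of $\D^{(ir)}$, while the bottom moves up from $\rho$ to $|.|^{i}\rho$; and likewise for $\D'$.

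If $\rho'$ is not an unramified twist of $\rho^{\vee}$ (in particular if $r\neq r'$), then no constituent of $\D'$ is an unramified twist of the dual of a constituent of $\D$, and the same holds for all their derivatives; by Theorem~8.2 of \cite{JPS} the four $L$-factors in sight all equal $1$, and we are done. Otherwise, writing $\rho_u$ for the unitary twist of $\rho$, the constituents of $\D$ are $\rho_u|.|^{p},\dots,\rho_u|.|^{p+a-1}$ and those of $\D'$ are $\rho_u^{\vee}|.|^{q},\dots,\rho_u^{\vee}|.|^{q+b-1}$ for suitable reals $p,q$ (up to a fixed unramified unitary twist which plays no role below). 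Theorem~8.2 of \cite{JPS} then expresses the $L$-factor of any pair of segments of this kind as a product
$$L\big([\rho_u|.|^{x},\dots,\rho_u|.|^{x'}]\times[\rho_u^{\vee}|.|^{y},\dots,\rho_u^{\vee}|.|^{y'}],\,s\big)=\prod_{l=\max(x+y',\,y+x')}^{x'+y'}L(\rho_u\times\rho_u^{\vee},\,s+l),$$
where $l$ runs over an interval of integer translates whose right endpoint $x'+y'$ is the sum of the two top exponents.

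It then remains to compare the two intervals. For the pair $(\D,\D')$ the exponents are $(x,x')=(p,p+a-1)$ and $(y,y')=(q,q+b-1)$; for $(\D^{(ir)},\D'^{(jr')})$ they are $(p+i,p+a-1)$ and $(q+j,q+b-1)$. The top exponents are unchanged, so both products end at $l=p+q+a+b-2$; and since $i,j\ge 0$, raising the bottom exponents can only raise $\max(x+y',y+x')$, so the interval attached to $(\D^{(ir)},\D'^{(jr')})$ is contained in the one attached to $(\D,\D')$. Hence $L(\D^{(k)},\D'^{(k')},s)$ is a subproduct of $L(\D,\D',s)$, and in particular divides it. The only delicate point is keeping the bookkeeping of exponents straight (including the half-integral shifts hidden in the normalisation of the segments) when one invokes the formula of \cite{JPS}; granting that, the divisibility is forced by the single observation that a nonzero derivative of a segment retains the top of that segment.
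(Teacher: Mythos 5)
Your proof is correct and supplies exactly the argument the paper leaves implicit: the paper states this proposition with no proof, merely pointing to the explicit formula of Theorem 8.2 of \cite{JPS}, and your argument carries that out. The essential observation — that Proposition~\ref{derwhittaker} says a nonzero derivative of a segment keeps the top constituent $|.|^{a-1}\rho$ and only raises the bottom, so the right endpoint $x'+y'$ of the interval in the JPS product is unchanged while the left endpoint can only increase — is precisely what makes $L(\D^{(k)},\D'^{(k')},s)$ a tail subproduct of $L(\D,\D',s)$.
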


We finally recall the following result, whose proof is mentioned in \cite{M2009}.

\begin{prop}\label{exceptionalpaires}
Let $\pi$ and $\pi'$ be generic representations of $G_n$, then the $L$-factor of pairs $L(\pi,\pi',s)$ has an exceptional pole at $s_0$ if 
and only if $\pi'\simeq |.|^{-s_0}\pi^\vee$.
\end{prop}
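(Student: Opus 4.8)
The plan is to obtain the ``only if'' direction formally from the definition of an exceptional pole, and the ``if'' direction by a residue computation on the Rankin--Selberg integrals $\Psi(W,W',\phi,s)$. If $s_0$ is an exceptional pole of $L(\pi,\pi',s)$, then by Definition~\ref{defpolexpaires} the leading Laurent coefficient of the integrals at $s_0$ is $(W,W',\phi)\mapsto B(W,W')\phi(0)$ with $0\neq B\in Hom_{G_n}(\pi\otimes\pi',|.|^{-s_0})$; viewing $B$ as a nonzero $G_n$-invariant bilinear form on $\pi\times(|.|^{s_0}\pi')$ and invoking Schur's lemma for the irreducible (because generic) representations $\pi$ and $\pi'$, one gets $|.|^{s_0}\pi'\simeq\pi^\vee$, that is $\pi'\simeq|.|^{-s_0}\pi^\vee$. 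Conversely, assume $\pi'\simeq|.|^{-s_0}\pi^\vee$. Replacing $\pi'$ by $|.|^{s_0}\pi'\simeq\pi^\vee$ only translates the variable $s$ in all the integrals and does not alter whether a leading coefficient factors through $\phi(0)$, so we may take $s_0=0$ and $\pi'=\pi^\vee$, and must show that $s=0$ is an exceptional pole of $L(\pi,\pi^\vee,s)$. Inserting the Iwasawa expansion~(\ref{iwazpaires}), one writes $\Psi(W,W^\vee,\phi,s)=\int_{K_n}A_k(s)\,B_k(\phi,s)\,dk$, where $A_k(s)=\int_{N_n\backslash P_n}W(pk)W^\vee(pk)|p|^{s-1}\,dp$ equals, up to normalisation of measures, $\Psi_{(0)}(\pi(k)W,\pi^\vee(k)W^\vee,s)$ and so lies in $I_{(0)}(\pi,\pi^\vee,s)$, while $B_k(\phi,s)=\int_{F^*}\phi(t\eta_n k)|t|^{ns}\,d^*t$, the central characters having cancelled because $c_\pi c_{\pi^\vee}=1$. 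The Tate integral $B_k(\phi,s)$ is holomorphic when $\phi(0)=0$, and otherwise has a simple pole at $s=0$ with leading coefficient $\frac{1}{n}\mathrm{vol}(\O^\times)\phi(0)$, independent of $k$, since $\frac{1}{1-q^{-ns}}=\frac{1}{n(1-q^{-s})}+O(1)$ near $s=0$.

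Let $d_0\geq 0$ be the order of the pole of $L_{(0)}(\pi,\pi^\vee,s)$ at $s=0$. Using Proposition~\ref{radexpaires} --- which identifies $I_{(0)}(\pi,\pi^\vee,s)$ with the span of the $\Psi$'s having $\phi(0)=0$, and factors $L=L_{(0)}\cdot L_{rad(ex)}$ with $L_{rad(ex)}$ of simple poles located exactly at the exceptional poles --- one reads off that the $\phi(0)=0$ contributions to $\Psi(W,W^\vee,\phi,s)$ have at $s=0$ a pole of order at most $d_0$, whereas for general $\phi$ the order is at most $d_0+1$, the coefficient of $(1-q^{-s})^{-(d_0+1)}$ at $s=0$ being $\frac{1}{n}\mathrm{vol}(\O^\times)\phi(0)$ times the bilinear form
$$(W,W^\vee)\ \longmapsto\ \int_{K_n}\left[\,(1-q^{-s})^{d_0}\,\Psi_{(0)}(\pi(k)W,\pi^\vee(k)W^\vee,s)\,\right]_{s=0}\,dk$$
on $\pi\times\pi^\vee$. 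Hence $s=0$ is an exceptional pole of $L(\pi,\pi^\vee,s)$ if and only if this form is not identically zero.

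The main obstacle is precisely this nonvanishing; everything else is routine bookkeeping with Tate integrals and modulus characters. I would settle it by showing the displayed form is a \emph{nonzero} scalar multiple of the canonical $G_n$-invariant pairing between $\pi$ and $\pi^\vee$. That it is \emph{a} multiple follows by Iwasawa averaging: the top Laurent coefficient of $\Psi_{(0)}(\cdot,\cdot,s)$ at $s=0$ is a bilinear form on $\pi\times\pi^\vee$ which is invariant under $N_n$ and transforms by $\delta_{P_n}$ under the mirabolic $P_n$ (being $|\cdot|$-covariant under $G_{n-1}$ and trivial on $U_n$), so integrating its $K_n$-translates produces a genuinely $G_n$-invariant form, which by uniqueness of invariant pairings ($\pi$ irreducible) is a multiple of the canonical one. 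That the multiple is nonzero is the standard fact that the canonical pairing between $\pi$ and its contragredient is realised in this way --- as the residue at $s=0$ of the Rankin--Selberg integrals with $\phi(0)\neq0$ --- which is where the real work lies; it can be extracted as in \cite{M2009}, or by a direct analysis of these integrals using the asymptotics of Whittaker functions (Proposition~\ref{DL}). Note that here $L_{(0)}(\pi,\pi^\vee,s)$ may itself be singular at $s=0$ (already for $\pi=\rho\times\rho^\vee$ with $\rho$ cuspidal, where the derivative $\rho$ of $\pi$ and its contragredient occur in $\pi^\vee$), so one genuinely has to argue that the pole jumps from $d_0$ to $d_0+1$ and cannot merely quote regularity of $L_{(0)}$. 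Granting the nonvanishing, $s=0$, and hence $s_0$ in general, is an exceptional pole, which completes the proof.
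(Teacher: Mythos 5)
Your ``only if'' direction is fine: Schur's lemma applied to the nonzero $B\in Hom_{G_n}(\pi\otimes\pi',|.|^{-s_0})$ is exactly the right argument, and the reduction to $s_0=0$, $\pi'=\pi^\vee$ is standard. The problem is the ``if'' direction, where you set up a direct analysis of the Iwasawa expansion~(\ref{iwazpaires}) at $s=0$ and reduce the claim to the nonvanishing of the $K_n$-averaged bilinear form $\int_{K_n}\Lambda(\pi(k)W,\pi^\vee(k)W^\vee)\,dk$, with $\Lambda$ the top Laurent coefficient of $\Psi_{(0)}$. You then explicitly decline to prove this nonvanishing (``granting the nonvanishing\dots which is where the real work lies''). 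That is a genuine gap: even after establishing that the averaged form is $G_n$-invariant, there is no a priori reason it is nonzero, since averaging a nonzero covariant form over $K_n$ can kill it, and nothing in your argument rules that out.

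The paper's proof sidesteps the nonvanishing problem entirely and is structurally different. It applies the functional equation and evaluates $\Psi(\widetilde W,\widetilde{W'},\widehat{\phi}^\theta,1-s)/L(\pi^\vee,\pi,1-s)$ at $s=0$; there the form $B_{\pi^\vee,\pi,1}=\Psi_{(0)}(\cdot,\cdot,1)/L(\pi^\vee,\pi,1)$ is honestly $P_n$-invariant (no twist), and Theorem~A of \cite{B} (a theorem of Bernstein, not invoked by you) upgrades it to $G_n$-invariance. This allows the $K_n$-integral to collapse to $B_{\pi^\vee,\pi,1}(\widetilde W,\widetilde{W'})\,\phi(0)$ by Fourier inversion, giving the identity $\Psi(W,W',\phi,s)/L(\pi,\pi^\vee,s)\bigl|_{s=0}=0$ whenever $\phi(0)=0$. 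Combining this with the existence (from the proof of Theorem~2.7 of \cite{JPS}) of data $(W,W',\phi)$ with $\phi(0)=0$ and $\Psi(W,W',\phi,s)\equiv 1$ yields $1/L(\pi,\pi^\vee,0)=0$, hence a pole, and the same identity shows the top Laurent coefficient vanishes on $\phi(0)=0$, i.e. the pole is exceptional. No nonvanishing of any invariant form is ever needed. To repair your argument along its own lines you would still have to establish the nonvanishing, e.g. by a variant of this functional-equation maneuver; as written, the proof is incomplete at precisely the step that carries the content.
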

\begin{proof}
We only need to show that if $\pi'\simeq |.|^{-s_0}\pi^\vee$, then the factor $L(\pi,\pi',s)$ has an exceptional pole at $s_0$. Replacing $\pi$ by 
$|.|^{s_0}\pi$, it is sufficient to prove the statement for $s_0=0$. So suppose that $\pi'=\pi^\vee$. From Equation (\ref{iwazpaires}), for $Re(s)<<0$, 
one has:

$$\Psi(\tilde{W},\tilde{W'},\widehat{\phi}^\theta,1-s)$$
 \begin{equation} =\label{sum} \int_{K_n}
 \int_{N_n\backslash P_n} \tilde{W}(pk)\tilde{W'}(pk)|det(p)|_F^{-s}dp \int_{F^*}
 \widehat{\phi}^\theta(t \eta_n(k))|t|_F^{n(1-s)} d^*t dk.\end{equation}

Now we notice that $B_{\pi^{\vee},\pi,1-s}:(\tilde{W},\tilde{W'})\mapsto \Psi_{(0)}(\tilde{W},\tilde{W'},1-s)/L_(\pi^\vee,\pi,1-s)$ is an element of $Hom_{P_n}(W(\pi^\vee,\theta^{-1}) \otimes W(\pi,\theta),|.|^s)$. We can write:

$$\Psi(\tilde{W},\tilde{W'},\widehat{\phi},1-s)/L(\pi^{\vee},\pi,1-s)$$
\begin{equation} \label{summ}=\int_{K_n}{B}_{\pi^{\vee},\pi,1-s}(\pi^{\vee}(k)\tilde{W},
\pi(k)\tilde{W'})\int_{F^*}
 \widehat{\phi}^\theta(t\eta_n(k))|t|^{n(1-s)} d^*t dk.\end{equation}

The second member of the equality is actually a finite sum: $$\sum_i \lambda
 _i{B}_{\pi^{\vee},\pi,1-s}(\pi^{\vee}(k_i)\tilde{W}),\pi(k_i)\tilde{W'})\int_{F^*}
 \widehat{\phi}(t\eta_n(k_i))|t|^{n(1-s)} d^*t,$$ 
where the $\lambda_i$'s are positive constants and the $k_i$'s are elements of $K_n$ independent of $s$.\\
Notice that for
 $Re(s)<1$, the integral $$\int_{F^*} \widehat{\phi}^\theta(t\eta_n(k_i))|t|^{n(1-s)} d^* t$$ is absolutely convergent, and defines a holomorphic function.
So we have an equality (Equality (\ref{summ})) of analytic functions (actually of polynomials in
 $q^{-s}$), hence it is true for all $s$ with $Re(s)<1$. For $s=0$, we get:

 $$\Psi(\tilde{W},\tilde{W'},\widehat{\phi}^\theta,1)/L(\pi^{\vee},\pi,1)=\int_{K_n}{B}_{\pi^{\vee},\pi,1}
 (\pi^{\vee}(k)\tilde{W},\pi(k)\tilde{W'})\int_{F^*} \widehat{\phi}^\theta(t\eta_n(k))|t|^nd^* tdk.$$

But $B_{\pi^{\vee},\pi,1}$ is a $P_n$-invariant linear form on
 $W(\pi^{\vee},\theta^{-1})\times W(\pi,\theta)$, and it follows from Theorem A of \cite{B} that it is actually $G_n$-invariant.\\
Finally $$\Psi(\tilde{W},\tilde{W'},\widehat{\phi}^\theta,1)/L(\pi^{\vee},\pi,1)=
 {B}_{\pi^{\vee},\pi,1}(\tilde{W},\tilde{W'})\int_{K_n}\int_{F^*}
 \widehat{\phi}^\theta(t\eta_n(k))|t|^n d^*t dk$$ which is equal for a good normalisation $dk$, to:\\
$${B}_{\pi^{\vee},\pi,1}(\tilde{W},\tilde{W'})\int_{P_n\backslash
 G_n}\widehat{\phi}(\eta_n(g))d_{\mu}g$$ where $d_{\mu}$ is up to
 scalar the unique $|det(\ )|^{-1}$ invariant measure on
 $P_n\backslash
 G_n$.
 But as we have $$\int_{P_n\backslash
 G_n}\widehat{\phi}^\theta(\eta_n(g))d_{\mu}g=
 \int_{F^n}\widehat{\phi}^\theta(x)dx=\phi(0),$$
we deduce from the functional equation the equality 
 $\Psi(W,W',\phi,0)/L(\pi,\pi^\vee,0)=0$ whenever $\phi(0)=0$. As one can choose $W$, $W'$, and $\phi$ vanishing at zero, such that
  $\Psi(W,W',\phi,s)$ is the constant function equal to $1$ (see the proof of Theorem 2.7. in \cite{JPS}),
 the factor $L(\pi,\pi',s)$ has a pole at zero, which must be
 exceptional.
\end{proof}

\subsection{The Rankin-Selberg integrals}\label{The Rankin-Selberg integrals}

In this section, we define the Bump-Friedberg integrals attached to representations of $G_n$ of Whittaker type, and prove their basic properties. They will be useful in next section when one needs to know 
how the Rankin-Selberg integrals vary when the representation is ``deformed`` by a parameter $u\in \mathcal{D}^t$. The notations 
are different from \cite{M2012.3}, here we denote by $L^{lin}(\pi,\chi,s)$ the factor denoted by $L^{lin}(\pi,\chi\d_n^{1/2},s)$ in \cite{M2012.3}. The reason for this is that with this definition, the occurence of an exceptional pole at zero of $L^{lin}(\pi,s)$ is related to the occurence of linear periods for $\pi$. 
As we recall every definition, this should not create any confusion.\\

Let $\pi$ be a representation of Whittaker type of $G_n$, we consider the following integrals.

\begin{df}\label{integrals}
Let $W$ belong to $W(\pi,\theta)$, and $\phi$ belong to $\sm_c(F^{[(n+1)/2]})$, $s$ be a complex number, and $\alpha$ a character of $F^*$. We define formally the following integrals:
 $$\Psi(W,\phi,\chi_\alpha,s)=\int_{N_n^\sigma\backslash H_n}W(h)\phi(l_n(h))\chi_\alpha(h)\chi_{n}(h)^{1/2}|h|^sdh$$ and 
$$\Psi_{(0)}(W,\chi_\alpha,s)=\int_{N_{n}^\sigma\backslash {P_n}^\sigma}W(h)\chi_\alpha(h)\mu_{n}^{1/2}(h)|h|^{s-1/2}dh$$
$$=\int_{N_{n-1}^\sigma\backslash {G_{n-1}}^\sigma}W(h)\chi_\alpha(h)\chi_{n-1}^{1/2}(h)|h|^{s-1/2}dh,$$
Where $l_n(h(h_1,h_2))$ is the bottom row of $h_2$ when $n$ is even, and the bottom row of $h_1$ when $n$ is odd.
\end{df}

The following proposition gives a meaning to these integrals.

\begin{prop}\label{CV}
 Let $\pi$ be a representation of $G_n$ of Whittaker type, and $\alpha$ a character of $F^*$. For every $k\in \{1,\dots,n\}$, let $(c_{k,i_k})_{i_k}$ be the $(n-k)$-exponents of $\pi$. Let $W$ belong to 
$W(\pi,\theta)$, $\phi$ belong to $\sm_c(F^{[(n+1)/2]})$, and $\e_k=0$ when $k$ is even and $1$ when $k$ is odd. If for all $k$ in $\{1,\dots,n\}$ (resp. in $\{1,\dots,n-1\}$), we have $Re(s)>-[Re(c_{k,i_k})+\e_k(Re(\alpha)+1/2)]/k$, the integral $\Psi(W,\phi,\chi_\alpha,s)$ 
(resp. $\Psi_{(0)}(W,\chi_\alpha,s)$) converge absolutely. They are hence  
holomorphic on the half planes defiend by this inequality, and admit meromorphic extension to $\C$ as elements of $\C(q^{-s})$. 
\end{prop}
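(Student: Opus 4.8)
The plan is to reduce, via an Iwasawa decomposition of $H_n$, both integrals to finite sums of one-variable ``Tate-type'' integrals over the torus, and then to apply the asymptotic expansion of Whittaker functions (Proposition \ref{DL}), in the spirit of \cite{CP} and \cite{M2011.2}. Two preliminary remarks are in order. First, the integrand of $\Psi$ descends to $N_n^\sigma\backslash H_n$, which amounts to the triviality of $\theta$ on $N_n^\sigma=N_n\cap H_n$: this holds because the interleaving permutation $w_n$ is chosen so that no simple root entry $(i,i+1)$ lies inside a block of $H_n$, the odd diagonal slots forming one block and the even ones the other. Second, since $w_n$ normalises $A_n$ one has $A_n\subset H_n$, the group $N_n^\sigma$ is the unipotent radical of the Borel $B_n^\sigma=A_nN_n^\sigma$ of $H_n\simeq G_{[(n+1)/2]}\times G_{[n/2]}$, and $K_n^\sigma=K_n\cap H_n$ is a maximal compact subgroup of $H_n$; hence the Iwasawa decomposition $H_n=N_n^\sigma A_nK_n^\sigma$ holds. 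Using it, $\Psi(W,\phi,\chi_\alpha,s)$ equals $\int_{K_n^\sigma}\int_{A_n}W(ak)\phi(l_n(ak))\chi_\alpha(ak)\chi_n(ak)^{1/2}|ak|^s\delta_{B_n^\sigma}(a)^{-1}\,da\,dk$, and because $W$ is fixed by some $K_{n,r}$ and $\phi,\chi_\alpha,\chi_n$ are locally constant on the compact group $K_n^\sigma$, this is a finite sum of integrals $\int_{A_n}W_i(a)\phi_i(l_n(a))\chi_\alpha(a)\chi_n(a)^{1/2}|a|^s\delta_{B_n^\sigma}(a)^{-1}\,da$ with $W_i\in W(\pi,\theta)$ a right translate of $W$ and $\phi_i\in\sm_c(F^{[(n+1)/2]})$. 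The integral $\Psi_{(0)}$, which runs over $N_{n-1}^\sigma\backslash H_{n-1}$ (recall $G_{n-1}^\sigma=H_{n-1}$), is treated in the same way, with no $\phi$-factor and with $s$ replaced by $s-1/2$.

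Next I would parametrise $A_n$ by the simple-root coordinates $a=z_1\cdots z_n$, $t_k=t(z_k)$, so that $da=\prod_k d^*t_k$, and insert Proposition \ref{DL}: it then suffices to bound, term by term, the product of $c_\pi(t_n)\prod_{k=1}^{n-1}c_{k,i_k}(t_k)|t_k|^{k(n-k)/2}v(t_k)^{m_k}\phi_k(t_k)$ with $\phi_i(l_n(a))\chi_\alpha(a)\chi_n(a)^{1/2}|a|^s\delta_{B_n^\sigma}(a)^{-1}$. The key structural point is that the only torus direction for which Proposition \ref{DL} provides no compactly supported cut-off is the central one $t_n$, and on a diagonal $a$ the row $l_n(a)$ equals $(0,\dots,0,t_n)$, so the factor $\phi_i(l_n(a))$ confines $|t_n|$ to a bounded set (uniformly in $k$ after the Iwasawa reduction), while the $\phi_k$ confine each $|t_k|$, $k\le n-1$, to a bounded set. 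A direct computation in the coordinates $t_k$ of $|a|^s$, of $\chi_\alpha\chi_n^{1/2}$, and of $\delta_{B_n^\sigma}$ — the latter essentially cancelling the factor $|t_k|^{k(n-k)/2}$ coming from Proposition \ref{DL} — shows that the exponent of $|t_k|$ in the modulus of the integrand equals $k\,Re(s)+Re(c_{k,i_k})+\e_k(Re(\alpha)+1/2)$, with the convention $c_{n,i_n}=c_\pi$; the term $\e_k(Re(\alpha)+1/2)$ reflects the fact that, under $w_n$, the first $k$ diagonal slots split into $\lceil k/2\rceil$ slots of the first block and $\lfloor k/2\rfloor$ of the second, so that the exponent governing $\chi_\alpha$ (and $\chi_n^{1/2}$, when it is nontrivial) is $\lceil k/2\rceil-\lfloor k/2\rfloor=\e_k$. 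For $\Psi_{(0)}$ the shift by $1/2$ in the exponent of $|h|$ is compensated by the difference between $\delta_{B_{n-1}^\sigma}$ and the $|t_k|^{k(n-k)/2}$ of Proposition \ref{DL}, yielding the same exponents for $k\in\{1,\dots,n-1\}$. Since a geometric series dominates the polynomial factor $v(t_k)^{m_k}$, the modulus of the integrand is bounded by $\prod_k|t_k|^{k\,Re(s)+Re(c_{k,i_k})+\e_k(Re(\alpha)+1/2)}$ times a bounded factor, on a region where each $|t_k|$ is bounded above; such an integral converges precisely when each of these exponents is positive, which is exactly the condition in the statement, and holomorphy on the corresponding tube follows.

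For the meromorphic continuation, one cannot drop the cut-offs, but each one-variable integral $\int_{|t_k|\le C}\chi(t_k)|t_k|^{a}v(t_k)^{m_k}\,d^*t_k$ occurring above, where $a$ depends affinely on $s$, is a finite linear combination of geometric series in $q^{-a}$ and hence lies in $\C(q^{-s})$; as $\Psi$ (resp. $\Psi_{(0)}$) is, on its domain of convergence, a finite sum of products of such integrals, it extends to $\C$ as an element of $\C(q^{-s})$. The main obstacle is the bookkeeping of the second paragraph: combining the contributions of $\delta_{B_n^\sigma}$, $\chi_\alpha$, $\chi_n$ and Proposition \ref{DL} into the clean exponent $k\,Re(s)+Re(c_{k,i_k})+\e_k(Re(\alpha)+1/2)$, and checking that the factor $\phi$ truncates exactly the single torus direction left uncontrolled by Proposition \ref{DL}, and no other.
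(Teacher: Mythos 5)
Your proposal is correct and follows essentially the same route as the paper's proof: reduce by the Iwasawa decomposition of $H_n$ to integrals over $A_n$ in simple-root coordinates, insert the asymptotic expansion of Proposition \ref{DL}, compute the resulting exponent $kRe(s)+Re(c_{k,i_k})+\e_k(Re(\alpha)+1/2)$ of $|t_k|$, and conclude convergence and rationality by Tate-type estimates. You spell out a couple of details the paper leaves implicit (the $\lceil k/2\rceil$/$\lfloor k/2\rfloor$ split of the first $k$ slots under $w_n$ explaining the factor $\e_k$, and the $1/2$-shift bookkeeping for $\Psi_{(0)}$), but the argument is the same.
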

\begin{proof}
Let $B_n^\sigma$ be the standard Borel subgroup of $H_n$. 
The integrals $$\Psi(W,\phi,\chi_n^{1/2}\chi_\alpha,s)=\int_{N_n^\sigma\backslash H_n}W(h)\phi(l_n(h))\chi_n^{1/2}(h)\chi_\alpha(h)|h|^sdh$$ will converge absolutely 
as soon as the integrals $$\int_{A_n}W(a)\phi(l_n(a))\chi_n^{1/2}(a)\chi_n^{1/2}(a)\chi_\alpha(a)|a|^s\d_{B_n^\sigma}^{-1}(a)d^*a$$ will do so for any $W\in W(\pi,\theta)$, and any $\phi\in \sm_c(F^n)$. But, according to Proposition \ref{DL}, and writing $z$ as $z_1\dots z_n$, 
this will be the case if the integrals of the form 
 $$\int_{A_n} \!\!\!\!\!\! c_{\pi}(t(z_n))\phi(l_n(z_n))\!\!\prod_{k=0}^{n-1} c_{k,i_k}(z_k)|z_k|^{(n-k)/2}v(t(z_k))^{m_k}\phi_k(t(z_k))
(\chi_n^{1/2}\chi_\alpha)(z)|z|^s\d_{B_n^\sigma}^{-1}(z)d^*z$$  converge absoultely. But 
$\d_{B_n^\sigma}^{-1}\chi_n^{1/2}(z_k)=|t(z_k)|^{-\frac{k(n-k)}{2}+\frac{\e_k}{2}}$, and $\chi_\alpha(z_k)=\alpha(t(z_k))^{\e_k}$ where $\e_k=0$ if $k$ is even, and $1$ if $k$ is odd, 
hence this will be the case if 
$$\int_{F^*} c_{\pi}(t)\phi(0,\dots,0,t)|t|^{ns}(|t|^{1/2}\alpha(t))^{\e_n/2} d^*t$$ and each integral 
$$\int_{F^*} c_{k,i_k}(t)|t|^{\e_k/2}v(t)^{m_k}\phi_k(t)\alpha^{\e_k}(t)|t|^{ks} d^*t
=\int_{F^*} c_{k,i_k}(t)|t|^{ks}(|t|^{1/2}\alpha(t))^{\e_k}v(t)^{m_k}\phi_k(t)d^*t$$ converges absolutely.
 This will be the case if for every $k\in\{1,\dots,n\}$, one has $$Re(s)+[Re(c_{k,i_k})+\e_k(Re(\alpha)+1/2)]/k>0.$$ 
 Moreover, this also shows that $\Psi(W,\phi,\chi_\alpha,s)$ extends to $\C$ as an element of $\C(q^{-s})$, by Tate's theory of local factors of characters of $F^*$. The case of the integrals $\Psi_{(0)}$ is similar.
\end{proof}

We will need the following corollary in the last section.

\begin{cor}\label{CVdiscr}
Let $\D$ be a unitary discrete series (i.e. with unitary central character), and let $\alpha$ be a character of $F^*$ with $Re(\alpha)\geq -1/2$. Let $W$ belong to 
$W(\pi,\theta)$, and $\phi$ belong to $\sm_c(F^{[(n+1)/2]})$, the integral $\Psi(W,\phi,\chi_\alpha,s)$ converges absolutely for $Re(s)>0$. 
\end{cor}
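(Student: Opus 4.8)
The plan is to reduce this to Proposition \ref{CV}. That proposition guarantees absolute convergence of $\Psi(W,\phi,\chi_\alpha,s)$ as soon as $Re(s)>-[Re(c_{k,i_k})+\e_k(Re(\alpha)+1/2)]/k$ for all $k\in\{1,\dots,n\}$, where $(c_{k,i_k})_{i_k}$ denotes the family of $(n-k)$-exponents of $\D$. Since $Re(\alpha)\geq -1/2$ and $\e_k\in\{0,1\}$, the summand $\e_k(Re(\alpha)+1/2)$ is nonnegative; hence it suffices to show that every $(n-k)$-exponent of $\D$ has nonnegative real part, for then each lower bound above is $\leq 0$, and the half-plane $\{Re(s)>0\}$ is contained in the domain of absolute convergence given by Proposition \ref{CV}.

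First I would make the exponents explicit. Write $\D=[\rho,\dots,|.|^{\ell-1}\rho]$ with $\rho$ a cuspidal representation of $G_r$ and $n=\ell r$. By Proposition \ref{derwhittaker}, the $G_k$-module $\D^{(n-k)}$ vanishes unless $n-k=ir$ is a multiple of $r$, in which case it equals the single discrete series $[|.|^i\rho,\dots,|.|^{\ell-1}\rho]$ when $i\leq\ell-1$, and the trivial representation of $G_0$ when $i=\ell$ (the latter case only arising for $k=0$, which is outside the range of Proposition \ref{CV}). So for each relevant $k$ there is a single $(n-k)$-exponent, namely the central character of the corresponding module.

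Then I would compute the real parts of these central characters using the unitarity hypothesis. The central character of $\D$ sends $tI_n$ to $|t|^{r\ell(\ell-1)/2}c_\rho(t)^{\ell}$, so the assumption that $\D$ is unitary forces $Re(c_\rho)=-r(\ell-1)/2$. Feeding this into the central character of $[|.|^i\rho,\dots,|.|^{\ell-1}\rho]$ — a representation of the same shape, of length $\ell-i$ and with cuspidal support $|.|^i\rho$ — one finds that its value at $tI_k$ is $|t|^{e_i}c_\rho(t)^{\ell-i}$ with
\[
e_i=\frac{r(\ell-i)(\ell-i-1)}{2}+ir(\ell-i),\qquad e_i+(\ell-i)Re(c_\rho)=\frac{r\,i\,(\ell-i)}{2}\geq 0,
\]
and the trivial representation contributes the exponent $0$. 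Hence every $(n-k)$-exponent of $\D$ has nonnegative real part, and the corollary follows. I expect the arithmetic in the displayed identity to be the only substantive point, and it is elementary; the one thing to be careful with is the normalization $Re(c_\rho)=-r(\ell-1)/2$ imposed by unitarity, which is precisely what makes the final quantity $r\,i\,(\ell-i)/2$ come out nonnegative (and which also shows one cannot in general enlarge the half-plane beyond $\{Re(s)>0\}$).
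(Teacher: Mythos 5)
Your argument is correct and follows the paper's approach: reduce to Proposition \ref{CV}, then observe that every $(n-k)$-exponent of a unitary discrete series has nonnegative real part, so that the lower bounds from Proposition \ref{CV} are all $\leq 0$. The paper obtains the exponent bound by citing Theorem 3.2 of \cite{M2011.2}, whereas you derive it explicitly from Proposition \ref{derwhittaker} and the unitarity normalization $Re(c_\rho)=-r(\ell-1)/2$; your computation $e_i+(\ell-i)Re(c_\rho)=ri(\ell-i)/2\geq 0$ is correct.
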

\begin{proof}
It is a consequence of Lemma \ref{CV} and Theorem 3.2 of \cite{M2011.2}.
\end{proof}

This allows us to define the local $L$-factors. We notice that $H_n$ acts on 
$\sm_c(F^{[(n+1)/2]})$ by right translation (with $h(g_1,g_2).\phi(x)=\phi(xg_2)$ when $n$ is even, and 
$h(g_1,g_2).\phi(x)=\phi(xg_1)$ when $n$ is odd).

\begin{prop}\label{Ldef}
Let $\pi$ be a representation of $G_n$ of Whittaker type, and $\alpha$ a character of $F^*$. The integrals $\Psi(W,\phi,\chi_\alpha,s)$ generate a (necessarily principal) 
fractional ideal $I(\pi,\chi_\alpha)$ of $\C[q^s,q^{-s}]$ when $W$ and $\phi$ vary in their respective spaces, and $I(\pi,\chi_\alpha)$ has a unique generator which is an 
Euler factor, which we denote by $L^{lin}(\pi,\chi_\alpha,s)$.\\
The integrals $\Psi_{(0)}(W,\chi_\alpha,s)$ generate a (necessarily principal) 
fractional ideal $I_{(0)}(\pi,\chi_\alpha)$ of $\C[q^s,q^{-s}]$ when $W$ and $\phi$ vary in their respective spaces, 
and $I_{(0)}(\pi,\chi_\alpha)$ has a unique generator which is an Euler factor, which we denote by $L_{(0)}^{lin}(\pi,\chi_\alpha,s)$. 
\end{prop}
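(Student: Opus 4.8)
The plan is to run the standard Jacquet--Piatetski-Shapiro--Shalika argument, the analytic input being entirely supplied by Proposition \ref{CV}. First I would record that by Proposition \ref{CV} each integral $\Psi(W,\phi,\chi_\alpha,s)$, resp. $\Psi_{(0)}(W,\chi_\alpha,s)$, converges for $\mathrm{Re}(s)$ large and extends to an element of $\C(q^{-s})$, so that its $\C[q^s,q^{-s}]$-span $I(\pi,\chi_\alpha)$, resp. $I_{(0)}(\pi,\chi_\alpha)$, is a $\C[q^s,q^{-s}]$-submodule of $\C(q^{-s})$. The extra information I would extract from the proof of that proposition is uniformity of the denominators: that proof unwinds the integral along the Iwasawa decomposition of $H_n$ and, via the asymptotic expansion of Proposition \ref{DL}, writes $\Psi(W,\phi,\chi_\alpha,s)$ as a finite sum of one-variable Tate integrals in $q^{-ks}$ ($1\le k\le n$) together with a central Tate integral, in which the number of summands, the characters $c_{k,i_k}$ and $c_\pi$ occurring, and the exponents $m_k$ all belong to finite sets attached to $\pi$ alone. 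By Tate's local theory each such integral lies in a fixed fractional ideal of $\C[q^s,q^{-s}]$ depending only on $\pi$ and $\alpha$, so there is a single nonzero $Q\in\C[q^s,q^{-s}]$ with $Q\cdot\Psi(W,\phi,\chi_\alpha,s)\in\C[q^s,q^{-s}]$ for all $W,\phi$, and similarly in the $\Psi_{(0)}$ case. Hence $I(\pi,\chi_\alpha)\subseteq\frac{1}{Q}\C[q^s,q^{-s}]$ sits inside a finitely generated module over the ring $\C[q^s,q^{-s}]$, which is a principal ideal domain (being $\cong\C[X,X^{-1}]$), so $I(\pi,\chi_\alpha)$ is itself finitely generated.

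Next I would show that each of these ideals contains a nonzero constant, since this is exactly what forces the normalized generator to be an Euler factor. For $\Psi_{(0)}$ this is immediate: as $G_{n-1}^\sigma\subseteq P_n$, Remark \ref{Kir} lets me choose $W\in W(\pi,\theta)$ whose restriction to $P_n$ lies in $\mathrm{ind}_{N_n}^{P_n}(\theta)$, is supported on $N_n\cdot U'$ for a small neighbourhood $U'$ of $I_n$, and equals $1$ near $I_n$; then $\Psi_{(0)}(W,\chi_\alpha,s)$ is an integral over $N_{n-1}^\sigma\backslash G_{n-1}^\sigma$ of a function supported near $I_{n-1}$, where $|h|=1$ and $\chi_\alpha(h)\chi_{n-1}(h)^{1/2}=1$, hence equals a nonzero volume, independent of $s$. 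For $\Psi$, I would first unfold the $\phi$-integration: choosing $\phi\in\sm_c(F^{[(n+1)/2]})$ supported in a tiny neighbourhood of the row vector $l_n(I_n)=(0,\dots,0,1)$ concentrates the integral on the locus where $l_n(h)$ is close to $l_n(I_n)$, which is a small neighbourhood in $H_n$ of the subgroup $\{h\in H_n:l_n(h)=l_n(I_n)\}$; this subgroup is contained in $P_n$, so the same choice of $W$ as above makes the integrand compactly supported near $I_n$, all characters trivial and $|h|^s=1$ there, and $\Psi(W,\phi,\chi_\alpha,s)$ again collapses to a nonzero constant. Thus $I(\pi,\chi_\alpha)$ and $I_{(0)}(\pi,\chi_\alpha)$ are nonzero and contain $\C[q^s,q^{-s}]$.

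It remains the formal step over the principal ideal domain $\C[q^s,q^{-s}]$: being finitely generated and nonzero, $I(\pi,\chi_\alpha)$ is principal, with generator $\prod_i(1-\beta_i q^{-s})^{m_i}$ ($\beta_i\in\C^*$, $m_i\in\Z$) unique up to the units $c\,q^{ks}$ ($c\in\C^*$, $k\in\Z$); since it contains $\C[q^s,q^{-s}]$ we must have all $m_i\le 0$, so the generator can be written $1/P(q^{-s})$ with $P$ a polynomial, and imposing $P(0)=1$ removes the remaining unit ambiguity. We set $L^{lin}(\pi,\chi_\alpha,s)=1/P(q^{-s})$, and define $L_{(0)}^{lin}(\pi,\chi_\alpha,s)$ from $I_{(0)}(\pi,\chi_\alpha)$ in the same way. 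I expect the only genuinely delicate point to be the uniform control of denominators in the first step --- i.e. verifying that the exponents $c_{k,i_k}$ and, crucially, the orders $m_k$ in Proposition \ref{DL} are bounded independently of $W$ --- which however is already implicit in the proof of Proposition \ref{CV}; the non-vanishing and the final commutative-algebra step are routine.
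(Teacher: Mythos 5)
Your proof is correct and follows essentially the same route as the paper's: the analytic input is Proposition \ref{CV}, and the non-vanishing (hence the Euler-factor form of the generator) comes from a specific choice of $(W,\phi)$ via Remark \ref{Kir} that collapses the integral to a nonzero constant, so that $1\in I(\pi,\chi_\alpha)$.

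Two comparative remarks are worth making. First, you are in fact more explicit than the paper about why the span is a \emph{fractional} ideal: you extract from the proof of Proposition \ref{CV} a single common denominator $Q\in\C[q^{\pm s}]$, and you correctly flag the only delicate point, namely that the set of exponents $c_{k,i_k}$ and the degrees $m_k$ in Proposition \ref{DL} are bounded in terms of $\pi$ alone; the paper simply asserts that "the integrals generate a fractional ideal'' without comment. Second, for the collapse to a constant you argue directly with supports, whereas the paper writes out the Iwasawa-type formula (\ref{iwaz}) and takes $\phi$ to be the characteristic function of $l_n(K_{n,r}^\sigma)$. Since $\{h\in H_n : l_n(h)=l_n(I_n)\}=P_n^\sigma$, the two arguments have identical content. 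The one small step the paper includes which you elide is the opening translation observation: right translating $(W,\phi)$ by $h_0\in H_n$ rescales $\Psi(W,\phi,\chi_\alpha,s)$ by a nonzero multiple of $|h_0|^{-s}$, so the $\C$-linear span of the $\Psi$'s is already stable under $q^{\pm s}$. This shows the integrals $\C$-span $I(\pi,\chi_\alpha)$, which is slightly stronger than your definition of $I$ as the $\C[q^{\pm s}]$-span and is the form of the statement actually used later, e.g.\ in the proof of Proposition \ref{radexlin}; adding that one line makes your argument match the paper's completely.
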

\begin{proof}
 In the first case, replacing $W$ and $\phi$ by an appropriate translate, multiplies the integral $\Psi(W,\phi,\chi_\alpha,s)$ by a nonzero multiple of $q^{ks}$, 
for any $k$ in $\Z$, hence the integrals $\Psi(W,\phi,\chi_\alpha,s)$ generate a fractional ideal of  $\C[q^s,q^{-s}]$. 
Now one hase the following integration formula, for good normalisations of measures, $Re(s)$ large enough, and $\e_n=0$ when $n$ is even, 
and $\e_n=1$ when is odd:
\begin{eqnarray} \label{iwaz} \Psi(W,\phi,\chi_\alpha,s) \end{eqnarray}
$$ =\int_{K_n^\sigma}\int_{N_n^\sigma\backslash P_n^\sigma}W(pk)\chi_\alpha(pk)\mu_n^{1/2}(p)|p|^{s-1/2}dp \int_{F^*}c_\pi(t)\alpha^{\e_n}(t)\phi(tl_n(k))|t|^{ns+\e_n/2}d^*t dk.$$
Now use again the argument from \cite{JPS}. By Remark \ref{Kir}, we can choose $W$ in $W(\pi,\theta)$ such that $W_{|P_n}$ is any function 
$f$ in $ind_{N_n}^{P_n}(\theta)$. Let $r$ be large enough such that $W$ and $\chi_\alpha$ are $K_{n,r}^\sigma$-invariant, and $\phi$ be the characteristic function of 
$l_n(K_{n,r}^\sigma)$, the integral reduces to a positive multiple of 
$\int_{N_n^\sigma\backslash P_n^\sigma}W(p)\chi_\alpha\mu_n^{1/2}(p)|p|^{s-1/2}dp=\int_{N_n^\sigma\backslash P_n^\sigma}f(p)\chi_\alpha\mu_n^{1/2}(p)|p|^{s-1/2}dp$. If $f$ is the characteristic function 
of a sufficiently small compact open subgroup with Iwahori decomosition, this integral is a positive constant independant of $s$, so $I(\pi,\chi_\alpha)$ contains $1$, and is thus spanned by an Euler factor. The second statement is proved similarly.
\end{proof}

When $\chi_\alpha$ is equal to $\1_{H_n}$, we write $L^{lin}(\pi,s)$ rather than $L^{lin}(\pi,\1_{H_n}, s)$. We now introduce the notion of exceptional pole of the Euler factor $L^{lin}$.

\begin{df}\label{defpolex}
Let $\pi$ be a representation of $G_n$ of Whittaker type, and let $s_0$ be a pole of order $d$ of $L^{lin}(\pi,\chi_\alpha,s)$. For $W$ in $W(\pi,\theta)$ and $\phi$ in $\sm_c(F^{[(n+1)/2]})$, we write the Laurent expansion $$\Psi(W,\phi,\chi_\alpha,s)=\frac{B(W,\phi,s)}{(1-q^{s_0-s})^d}+ \ terms \ of \ higher \ degree.$$
We say that $s_0$ is an exceptional pole of $L^{lin}(\pi,s)$ if the bilinear form $B$ vanishes on the space $W(\pi,\theta)\times \sm_{c,0}(F^{[(n+1)/2]})$, i.e. if 
it is of the form $B:(W,\phi)\mapsto L(W)\phi(0)$, where $L$ is a nonzero linear form. By definition of the integrals $\Psi(W,\phi,\chi_\alpha,s)$, 
the linear form $L$ is a nonzero element of the space $Hom_{H_n}(\pi,\chi_\alpha^{-1}\chi_n^{-1/2}|.|^{-s_0})$.
\end{df}

Let $\pi$ be a representation of Whittaker type of $G_n$, and $P_{ex}(\pi,\chi_\alpha)$ be the set of exceptional poles of $L^{lin}(\pi,\chi_\alpha,s)$, and denote by $d(s_0)$ the order of an element $s_0$ of $P_{ex}(\pi,\chi)$. By definition, the factor $L^{lin}_{ex}(\pi,\chi_\alpha,s)$ is equal to $\prod_{s_0\in P_{ex}(\pi,\chi_\alpha)} 1/(1-q^{s_0-s})^{d(s_0)}$. We now show that these poles are those of the factor $L^{lin}(\pi,\chi_\alpha,s)/L_{(0)}^{lin}(\pi,\chi_\alpha,s)$. 

\begin{prop}\label{radexlin}
Let $\pi$ be a representation of Whittaker type of $G_n$, and let $\e_n=0$ when $n$ is even and $1$ when $n$ is odd. The ideal $I_{(0)}(\pi,\chi_\alpha)$ is spanned by the integrals 
$\Psi(W,\phi,\chi_\alpha,s)$, for $W\in W(\pi,\theta)$ and $\phi\in \sm_{c,0}(F^{[(n+1)/2]})$. In particular the factor $L_{(0)}^{lin}(\pi,\chi_\alpha,s)$ divides $L^{lin}(\pi,\chi_\alpha,s)$, we denote $L_{rad(ex)}^{lin}(\pi,\chi_\alpha,s)$ their quotient. The factor $L_{rad(ex)}^{lin}(\pi,\chi_\alpha,s)$ divides $L(c_\pi\alpha^{\e_n},ns+\e_n/2)$, hence its poles are simple, they are exactly the exceptional poles of $L^{lin}(\pi,\chi_\alpha,s)$. 
\end{prop}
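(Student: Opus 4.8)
The plan is to imitate the proof of Proposition~\ref{radexpaires} step by step, using the Iwasawa-type formula~(\ref{iwaz}) in place of~(\ref{iwazpaires}); after reorganizing, (\ref{iwaz}) reads, for $Re(s)$ large,
$$\Psi(W,\phi,\chi_\alpha,s)=\int_{K_n^\sigma}\chi_\alpha(k)\,\Psi_{(0)}(\pi(k)W,\chi_\alpha,s)\left(\int_{F^*}c_\pi(t)\alpha^{\e_n}(t)\phi(tl_n(k))|t|^{ns+\e_n/2}\,d^*t\right)dk,$$
where we used the multiplicativity of $\chi_\alpha$. First I would prove that $I_{(0)}(\pi,\chi_\alpha)$ is spanned by the integrals $\Psi(W,\phi,\chi_\alpha,s)$ with $\phi\in\sm_{c,0}(F^{[(n+1)/2]})$. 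For ``$\supseteq$'': if $\phi$ vanishes at $0$ then, since $l_n(k)\neq 0$ for every $k\in K_n^\sigma$ (it is the bottom row of an invertible block), the function $t\mapsto\phi(tl_n(k))$ is locally constant with compact support in $F^*$, so the inner $t$-integral lies in $\C[q^{\pm s}]$; local constancy of $W$, $\chi_\alpha$ and $\phi$ then turns the integral over the compact set $K_n^\sigma$ into a finite sum, which exhibits $\Psi(W,\phi,\chi_\alpha,s)$ as a $\C[q^{\pm s}]$-combination of integrals $\Psi_{(0)}(\pi(k_i)W,\chi_\alpha,s)$, hence as an element of $I_{(0)}(\pi,\chi_\alpha)$. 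For ``$\subseteq$'' I would reuse the computation of the proof of Proposition~\ref{Ldef}: choosing $r$ large enough that $W$ and $\chi_\alpha$ are right $K_{n,r}^\sigma$-invariant and taking for $\phi$ the characteristic function of $l_n(K_{n,r}^\sigma)$ --- which lies in $\sm_{c,0}$ since $0\notin l_n(K_{n,r}^\sigma)$ --- the formula above collapses to a positive scalar multiple of $\Psi_{(0)}(W,\chi_\alpha,s)$, and linearity of $\Psi_{(0)}$ in $W$ then produces every generator of $I_{(0)}(\pi,\chi_\alpha)$.

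Since $\sm_{c,0}\subset\sm_c$, the first part gives $I_{(0)}(\pi,\chi_\alpha)\subset I(\pi,\chi_\alpha)$, so the Euler factor $L_{(0)}^{lin}(\pi,\chi_\alpha,s)$ divides $L^{lin}(\pi,\chi_\alpha,s)$ in the sense of Section~\ref{sctLpaires}; write $L_{rad(ex)}^{lin}(\pi,\chi_\alpha,s)$ for the quotient. To bound it I would use the displayed formula once more: for arbitrary $\phi\in\sm_c$ and each fixed $k\in K_n^\sigma$, the inner integral $\int_{F^*}c_\pi(t)\alpha^{\e_n}(t)\phi(tl_n(k))|t|^{ns+\e_n/2}\,d^*t$ is a Tate integral attached to the character $c_\pi\alpha^{\e_n}$, hence equals $L(c_\pi\alpha^{\e_n},ns+\e_n/2)$ times an element of $\C[q^{\pm s}]$; local constancy in $k$ then shows $\Psi(W,\phi,\chi_\alpha,s)\in L(c_\pi\alpha^{\e_n},ns+\e_n/2)\cdot I_{(0)}(\pi,\chi_\alpha)$. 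Thus $L^{lin}(\pi,\chi_\alpha,s)$ divides $L(c_\pi\alpha^{\e_n},ns+\e_n/2)L_{(0)}^{lin}(\pi,\chi_\alpha,s)$, and so $L_{rad(ex)}^{lin}(\pi,\chi_\alpha,s)$ divides the Tate factor $L(c_\pi\alpha^{\e_n},ns+\e_n/2)$, which has at most a simple pole.

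It remains to identify the poles of $L_{rad(ex)}^{lin}(\pi,\chi_\alpha,s)$ with the exceptional poles of $L^{lin}(\pi,\chi_\alpha,s)$. Fix a pole $s_0$ of $L^{lin}$ of order $d$ and let $e$ be its order in the Euler factor $L_{(0)}^{lin}$; its order in $L_{rad(ex)}^{lin}$ is then $d-e$, so $s_0$ is a pole of $L_{rad(ex)}^{lin}$ exactly when $e<d$. If $e<d$: for every $W\in W(\pi,\theta)$ and every $\phi\in\sm_{c,0}$ the integral $\Psi(W,\phi,\chi_\alpha,s)$ lies in $I_{(0)}(\pi,\chi_\alpha)$ by the first part, hence has pole order $\leq e<d$ at $s_0$, so the coefficient $B(W,\phi,s)$ of Definition~\ref{defpolex} satisfies $B(W,\phi,s_0)=0$; this means $B(\cdot,\cdot,s_0)$ vanishes on $W(\pi,\theta)\times\sm_{c,0}$, i.e. $s_0$ is an exceptional pole. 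If $e=d$: since $I_{(0)}(\pi,\chi_\alpha)$ is principal, generated by the Euler factor $L_{(0)}^{lin}$, and $\Psi_{(0)}$ is linear in $W$, there is a single $W_0\in W(\pi,\theta)$ with $\Psi_{(0)}(W_0,\chi_\alpha,s)$ of pole order exactly $d$ at $s_0$; realizing it as a positive multiple of $\Psi(W_0,\phi_0,\chi_\alpha,s)$ with $\phi_0\in\sm_{c,0}$ (first part), we find $B(W_0,\phi_0,s_0)\neq 0$ with $\phi_0\in\sm_{c,0}$, so $s_0$ is not an exceptional pole. This completes the identification.

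I expect this last identification to be the only genuinely delicate point, since it forces one to go back and forth between the fractional-ideal/Euler-factor bookkeeping and the pointwise statement about the leading Laurent coefficient in Definition~\ref{defpolex}; the two facts making it work are that $I_{(0)}(\pi,\chi_\alpha)$ is a principal fractional ideal generated by an Euler factor (so that pole orders of its elements are controlled by $L_{(0)}^{lin}$) and that $\Psi_{(0)}$ is linear in its Whittaker variable (so that an extremal pole order is realized by a single Whittaker function, which can then be transported, via the first part, to a $\Psi$-integral with $\phi\in\sm_{c,0}$).
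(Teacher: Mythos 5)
Your proof is correct and follows the same route as the paper's: you rewrite the Iwasawa formula (\ref{iwaz}) so that the inner $P_n^\sigma$-integral becomes $\chi_\alpha(k)\Psi_{(0)}(\pi(k)W,\chi_\alpha,s)$, use the characteristic function of $l_n(K_{n,r}^\sigma)$ for the reverse inclusion, and invoke Tate theory on the $t$-integral to identify the bounding factor $L(c_\pi\alpha^{\e_n},ns+\e_n/2)$. The one substantive addition is your final paragraph: the paper (as in its proof of the analogous Proposition \ref{radexpaires} for pairs) never actually verifies that the poles of $L_{rad(ex)}^{lin}$ coincide with the exceptional poles of $L^{lin}$; your two-case analysis with $e<d$ versus $e=d$ supplies this cleanly and is correct. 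One small remark on the $e=d$ case: the phrase ``since $I_{(0)}$ is principal\ldots and $\Psi_{(0)}$ is linear in $W$'' isn't quite the point --- rather, $L_{(0)}^{lin}$ is a finite $\C[q^{\pm s}]$-combination of the integrals $\Psi_{(0)}(W_i,\chi_\alpha,s)$, all of which have pole order $\leq d$ at $s_0$, so if every term had order $<d$ the combination would too; hence some $W_0$ realises order exactly $d$. With that wording the argument is airtight.
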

\begin{proof}
According to the proof of Proposition \ref{Ldef}, for a well chosen $\phi$ which vanishes at zero, the integral $\Psi(W,\phi,\chi_\alpha,s)$ reduces to $\Psi_{(0)}(W,\chi_\alpha,s)$. This proves that $L_{(0)}^{lin}(\pi,\chi_\alpha,s)$ divides $L^{lin}(\pi,\chi_\alpha,s)$. Moreover, using smoothness of elements of $W(\pi,\theta)$, Equation (\ref{iwaz}) in the proof of this proposition shows that any integral $\Psi(W,\phi,\chi_\alpha,s)$ is a linear combination of products of integrals $\Psi_{(0)}(W',\chi_\alpha,s)\int_{F^*}\phi'(t)c_{\pi}(t)\alpha^{\e_n}(t)|t|^{ns+\e_n/2}d^*t$, for $\phi'$ in $\sm_c(F)$. This implies that $L^{lin}(\pi,\chi_\alpha,s)$ divides the factor $L_{(0)}^{lin}(\pi,\chi_\alpha,s)L(c_\pi\alpha^{\e_n},ns+\e_n/2)$,
i.e. that $L_{rad(ex)}(\pi,\chi_\alpha,s)$ divides the factor $L(c_\pi\alpha^{\e_n},ns+\e_n/2)$, in particular it has simple poles. Using again Equation (\ref{iwaz}), we see that if 
$\phi$ belongs to $\sm_{c,0}(F^{[(n+1)/2]})$, the integral $\Psi(W,\phi,\chi_\alpha,s)$ is a linear combination of integrals $P'(q^{\pm s})\Psi_{(0)}(W',\chi_\alpha,s)$, 
for some Laurent polynomials $P'$. Hence the ideal $I_{(0)}(\pi,\chi_\alpha)$ is also the ideal generated by the integrals $\Psi(W,\phi,\chi_\alpha,s)$ with $W\in W(\pi,\theta)$ and $\phi\in \sm_{c,0}(F^{[(n+1)/2]})$.
\end{proof}

Now we recall the following result, obtained from the proof of Proposition 3.1. of \cite{M2012}, by noticing that the construction of 
the injection, gives actually an isomorphism (though only the injectivity is needed for all applications in this work), and by computing the characters 
denoted $\d$ and $\d_1$ in this proof (which are in fact given, for $h\in P_{n-1}^\sigma$, by $\d(h)=|h|^2$ and $\d_1(h(g_1,g_2))=|g_1|$ when 
$n$ is even, and 
$\d_1(h(g_1,g_2))=|g_2|$ when $n$ is odd).

\begin{prop}\label{iso}
Let $\rho$ be an irreducible representation of $P_n$, and $\mu$ be a character  of $P_n^\sigma$. 
If $n$ is even, one has $Hom_{P_n^\sigma}(\Phi^+(\rho),\mu)\simeq Hom_{P_{n-1}^\sigma}(\rho,\mu\d_n^{1/2})$. 
If $n$ is odd, one has $Hom_{P_n^\sigma}(\Phi^+(\rho),\mu)\simeq Hom_{P_{n-1}^\sigma}(\rho,\mu\d_n^{-1/2})$.
\end{prop}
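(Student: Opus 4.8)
The plan is to unfold the Bernstein--Zelevinsky functor $\Phi^+$ and to run Mackey theory on the restriction of $\Phi^+(\rho)$ to $P_n^\sigma$. Recall from 3.2 of \cite{BZ} that, for $\rho$ a smooth representation of $P_n$, the representation $\Phi^+(\rho)$ of $P_{n+1}$ is the normalised compactly induced representation ${ind'}_{P_nV}^{P_{n+1}}(\rho\otimes\theta)$, where $V=U_{n+1}$ is the abelian unipotent radical of $P_{n+1}$ over $G_n$ and $\theta$ denotes the restriction to $V$ of the fixed character of $N_{n+1}$ ($P_n$ acting by $\rho$, $V$ by $\theta$). We view $P_n^\sigma$ inside $P_{n+1}$ through $P_n^\sigma\subset G_n\subset P_{n+1}$. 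Since $V$ is normal in $P_{n+1}$ and contained in $P_nV$, the double coset space $P_nV\backslash P_{n+1}/P_n^\sigma$ is canonically identified with $P_n\backslash G_n/P_n^\sigma$, that is, with the set of $P_n^\sigma$-orbits on the nonzero row vectors $\eta_nG_n=F^n\setminus\{0\}$.

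First I would reduce to a single orbit. Applying the geometric lemma (Theorem 5.2 of \cite{BZ}) to $\Phi^+(\rho)|_{P_n^\sigma}$ produces a filtration whose layers are indexed by those orbits, and restricting a $(P_n^\sigma,\mu)$-equivariant functional to each layer, Frobenius reciprocity attaches to the orbit of a representative $g$ a space of the form $Hom_{P_n^\sigma\cap g^{-1}(P_nV)g}\big((\rho\otimes\theta)^g\otimes(\text{modulus twist}),\mu\big)$. A non-degeneracy argument for the character $\theta$ on $V$ — exactly the one in the proof of Proposition 3.1 of \cite{M2012} — shows that all of these spaces vanish except the one attached to a single distinguished orbit, say that of a representative $x$. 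In particular the a priori injection constructed in \cite{M2012}, which is precisely restriction to that distinguished layer followed by Frobenius reciprocity, is in fact an isomorphism, because for $(P_n^\sigma,\mu)$-functionals the filtration is concentrated in that layer.

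Next I would compute the contribution of the distinguished layer. Frobenius reciprocity for normalised compact induction gives
$$Hom_{P_n^\sigma}(\Phi^+(\rho),\mu)\ \simeq\ Hom_{Q}\big((\rho\otimes\theta)^x,\ \mu\cdot\delta\big),\qquad Q=P_n^\sigma\cap x^{-1}(P_nV)x,$$
where $\delta$ gathers the modulus characters coming from ${ind'}$ and from Frobenius reciprocity, restricted to $Q$. One then checks that the projection $P_nV\to P_n$ carries $Q$ onto $P_{n-1}^\sigma\subset P_n$ with kernel a unipotent subgroup lying in the kernel of $\theta^x$ and of every character in sight, so that $Hom_Q(\cdots)=Hom_{P_{n-1}^\sigma}(\rho,\mu\cdot\delta|_{P_{n-1}^\sigma})$, with $\rho$ restricted to $P_{n-1}^\sigma$.

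The main obstacle is the last step, identifying $\delta|_{P_{n-1}^\sigma}$ — this is exactly the point the paper alludes to by "computing the characters denoted $\delta$ and $\delta_1$". Each factor of $\delta$ is of the form $|det(Ad(\cdot))|$ on an explicit nilpotent Lie algebra, so the computation is elementary, but its outcome is parity-sensitive because the extra $V$-direction lies in the $g_2$-block of $h(g_1,g_2)$ when $n$ is even and in the $g_1$-block when $n$ is odd — the same asymmetry already encoded in $\chi_n$, $\mu_n$ and $l_n$. Evaluating on the connected centre of the Levi component of $P_{n-1}^\sigma$, in the style of the proof of Proposition \ref{modulus}, pins down $\delta|_{P_{n-1}^\sigma}=\d_n^{1/2}$ when $n$ is even and $\delta|_{P_{n-1}^\sigma}=\d_n^{-1/2}$ when $n$ is odd, which gives the stated isomorphism.
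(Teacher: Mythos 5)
Your sketch takes essentially the same route as the paper: the paper's proof is simply to defer to Proposition 3.1 of \cite{M2012}, observe that the injection constructed there is in fact a bijection, and evaluate the two modulus characters $\d,\d_1$ appearing in that proof according to the parity of $n$. Your proposal reconstructs from scratch the Mackey-theoretic content of the cited proposition (geometric lemma applied to $\Phi^+(\rho)$ as a compactly induced module, vanishing on all orbits but one, Frobenius reciprocity on the distinguished orbit, parity-sensitive modulus computation), which is precisely what the paper's abbreviated argument points to.
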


We will use the following lemma to establish the functional equation, and also in the following section.

\begin{LM}\label{hom1}
Let $\pi=\D_1\times \dots \times \D_t$ be a representation of $G_n$ of Whittaker type, and $\alpha$ a character of $F^*$. The spaces 
$$Hom_{H_n}(\pi\otimes \sm_c(F^{[(n+1)/2]}),\chi_\alpha^{-1}\chi_n^{-1/2}|.|^{-s})$$ and $$Hom_{P_n^\sigma}(\pi, |.|^{-s+1/2}\chi_\alpha^{-1}\mu_n^{-1/2})$$
are of dimension $1$, except possibly when there is 
$k\in\{1,\dots,n-1\}$, and 
and a $n-k$-exponent $c_{k,i_k}$, such that $c_{k,i_k}(\w)=q^{ks+\e_k/2}\alpha(\w)^{-\e_k}$ with 
$\e_k=0$ when $k$ is even, and $\e_k=1$ when $k$ is 
odd.
\end{LM}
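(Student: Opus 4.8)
The plan is to prove the two statements together, first reducing the first $Hom$-space to the second and then computing the second by means of the Bernstein--Zelevinsky filtration and Proposition~\ref{iso}.

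For the reduction, set $m=[(n+1)/2]$ and use the $H_n$-equivariant exact sequence $0\to\sm_{c,0}(F^m)\to\sm_c(F^m)\to\C\to 0$, where $H_n$ acts on $F^m$ through the $G_m$-component when $n$ is even and through the $G_{m+1}$-component when $n$ is odd, and the quotient $\C$ is the evaluation $\phi\mapsto\phi(0)$. Tensoring with $\pi$ and applying $Hom_{H_n}(-,\chi_\alpha^{-1}\chi_n^{-1/2}|.|^{-s})$ gives a left-exact sequence whose middle term is the first space of the lemma. The stabilizer in $H_n$ of the row vector $\eta_m$ is exactly $P_n^\sigma$ (checking that $\eta_n h(g_1,g_2)=\eta_n$ forces the relevant $GL$-component to lie in the mirabolic), so $\sm_{c,0}(F^m)$ is compactly induced from $P_n^\sigma$; the projection formula, Frobenius reciprocity, and the modulus-character computation already implicit in the Iwasawa expansion (\ref{iwaz}) of $\Psi$ (this is the source of the shift $\mu_n^{1/2}|.|^{1/2}$) identify $Hom_{H_n}(\pi\otimes\sm_{c,0}(F^m),\chi_\alpha^{-1}\chi_n^{-1/2}|.|^{-s})$ with $Hom_{P_n^\sigma}(\pi,|.|^{-s+1/2}\chi_\alpha^{-1}\mu_n^{-1/2})$. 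It then remains to see that, outside a finite exceptional set of $s$, the $\phi\mapsto\phi(0)$ quotient term $Hom_{H_n}(\pi,\chi_\alpha^{-1}\chi_n^{-1/2}|.|^{-s})$ does not enlarge the middle space; here I would use the Iwasawa formula (\ref{iwaz}) itself, which writes every $\Psi(W,\phi,\chi_\alpha,s)$ as a $K_n^\sigma$-average of products of a $\Psi_{(0)}$-value by a Tate integral in $\phi$, so that, away from the poles of the auxiliary factor $L(c_\pi\alpha^{\e_n},ns+\e_n/2)$ and the locus below, the middle space is spanned by such ``rank one'' functionals and has the same dimension as the second space.

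For the second space, note that $\pi$ surjects $P_n$-equivariantly onto $W(\pi,\theta)$, on which $\Psi_{(0)}(W,\chi_\alpha,s)$ realizes, for every $s$, a nonzero element of $Hom_{P_n^\sigma}(W(\pi,\theta),|.|^{-s+1/2}\chi_\alpha^{-1}\mu_n^{-1/2})$, hence one for $\pi$; so the second space is at least one-dimensional always. For the reverse inequality I would apply the Bernstein--Zelevinsky filtration of $\pi|_{P_n}$ (Proposition~\ref{BZfiltration}), with graded pieces $(\Phi^+)^{k-1}\Psi^+(\pi^{(k)})$, the bottom one ($k=n$) being $ind_{N_n}^{P_n}(\theta)$. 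A Mackey-type computation gives $\dim Hom_{P_n^\sigma}(ind_{N_n}^{P_n}(\theta),|.|^{-s+1/2}\chi_\alpha^{-1}\mu_n^{-1/2})\le 1$. For $k<n$, iterating Proposition~\ref{iso} $k-1$ times reduces $Hom_{P_n^\sigma}((\Phi^+)^{k-1}\Psi^+(\pi^{(k)}),|.|^{-s+1/2}\chi_\alpha^{-1}\mu_n^{-1/2})$ to $Hom_{P_{n-k+1}^\sigma}(\Psi^+(\pi^{(k)}),\mu_k)$ for an explicit twist $\mu_k$, obtained by accumulating the factors $\d_j^{\pm 1/2}$; then $U_{n-k+1}^\sigma$ acts trivially and $P_{n-k+1}^\sigma$ projects onto $H_{n-k}$, so this becomes $Hom_{H_{n-k}}(\pi^{(k)},\mu_k')$, which is nonzero only if $\mu_k'$ agrees on the (nontrivial) subgroup $Z_{G_{n-k}}\cap H_{n-k}$ with the central character of one of the components of $\pi^{(k)}$, that is, with an $(n-k)$-exponent $c_{k,i_k}$; unwinding the characters $\d_j,\chi_j,\mu_j$ through the parity bookkeeping, this equality is precisely $c_{k,i_k}(\w)=q^{ks+\e_k/2}\alpha(\w)^{-\e_k}$. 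Outside the finite set of $s$ for which this holds for some $k\in\{1,\dots,n-1\}$, all pieces with $k<n$ contribute zero, so the filtration forces the second space to be exactly one-dimensional; combined with the first paragraph this proves the lemma.

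The main obstacle I expect is the bookkeeping of characters in the last step: following the normalization twist carried by $\Psi^+$ together with the $\d_j^{\pm 1/2}$ produced by each application of Proposition~\ref{iso}, across the alternation between the $\chi_j$-type and $\mu_j$-type characters as $n-k$ runs through integers of both parities, so that the central-character obstruction emerges exactly in the stated form (in particular the half-integer shift $\e_k/2$ and the correct sign on $\alpha$). A secondary delicate point is ensuring, in the first paragraph, that the $\sm_c$ versus $\sm_{c,0}$ term (the $\phi(0)$ contribution, i.e. the occurrence of an $(H_n,\chi_\alpha^{-1}\chi_n^{-1/2}|.|^{-s})$-period of $\pi$ itself) adds no dimension outside the stated locus; this is where the interplay with the exceptional poles of $L^{lin}(\pi,\chi_\alpha,s)$ enters, and the Iwasawa identity (\ref{iwaz}) together with Proposition~\ref{radexlin} is what I would use to keep it under control.
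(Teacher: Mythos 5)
Your plan follows the paper's proof step for step: the exact sequence $0\to\sm_{c,0}(F^{[(n+1)/2]})\to\sm_c(F^{[(n+1)/2]})\to\C\to 0$ together with the identification $\sm_{c,0}(F^{[(n+1)/2]})\simeq ind_{P_n^\sigma}^{H_n}(\1)$ and Frobenius reciprocity reduces the first $Hom$-space to the second, and iterating Proposition~\ref{iso} through the Bernstein--Zelevinsky filtration of $\pi|_{P_n}$ converts the second into a list of central-character obstructions. Two points deserve comment. First, the detour through the Iwasawa identity (\ref{iwaz}) in your opening paragraph is the wrong tool: (\ref{iwaz}) only describes the particular functionals $\Psi(W,\phi,\chi_\alpha,s)$ and says nothing about arbitrary elements of $Hom_{H_n}(\pi\otimes\sm_c(F^{[(n+1)/2]}),\ldots)$, so it cannot by itself bound that space. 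What the argument needs --- and what the paper uses --- is simply that the quotient term $Hom_{H_n}(\pi,\chi_\alpha^{-1}\chi_n^{-1/2}|.|^{-s})$ vanishes unless $c_\pi(\w)=q^{ns+\e_n/2}\alpha^{-\e_n}(\w)$, an equation in $q^{-s}$ with finitely many solutions in $\mathcal{D}$; off that locus restriction to $\pi\otimes\sm_{c,0}$ is injective, and the exact-sequence framework you already set up does the rest. Second, the indexing you flag as delicate is indeed off: with the graded piece written $(\Phi^+)^{k-1}\Psi^+(\pi^{(k)})$, the module $\pi^{(k)}$ is a $G_{n-k}$-module, so its central character is (in Definition~\ref{exponent}) a $k$-exponent of $\pi$, and the exponent of $q^s$ you produce when restricting $|.|^{-s}$ to $Z_{G_{n-k}}$ is $n-k$, not $k$. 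The paper instead indexes the pieces by the rank $k$ of $G_k$ carrying $\pi^{(n-k)}$, which is why the final condition reads $c_{k,i_k}(\w)=q^{ks+\e_k/2}\alpha(\w)^{-\e_k}$ with $c_{k,i_k}$ an $(n-k)$-exponent. Once your indexing is aligned with that convention, the argument is exactly the paper's.
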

\begin{proof}
The space $Hom_{H_n}(\pi,|.|^{-s}\chi_n^{-1/2}\chi_\alpha^{-1})$ is zero except 
maybe when $$c_\pi(\w)=q^{ns+\e_n/2}\alpha^{-\e_n}(\w),$$ hence the space $Hom_{H_n}(\pi \otimes \sm_c(F^{[(n+1)/2]}),\chi_\alpha^{-1}\chi_n^{-1/2}|.|^{-s})$ is equal to 
$$Hom_{H_{n}}(\pi \otimes \sm_{c,0}(F^{[(n+1)/2]}),\chi_\alpha^{-1}\chi_n^{-1/2}|.|^{-s})$$ except for those values. But 
$$Hom_{H_n}(\pi \otimes \sm_{c,0}(F^{[(n+1)/2]}),\chi_\alpha^{-1}\chi_n^{-1/2}|.|^{-s})\simeq 
Hom_{H_n}(\pi \otimes ind_{P_n^\sigma}^{H_n}(1),\chi_\alpha^{-1}\chi_n^{-1/2}|.|^{-s})$$
$$\simeq Hom_{H_n}(\pi, Ind_{P_{n}^\sigma}^{H_n}(\mu_n^{-1/2}|.|^{-s+1/2}\chi_\alpha^{-1}))\simeq 
Hom_{P_n^\sigma}(\pi, |.|^{-s+1/2}\chi_\alpha^{-1}\mu_n^{-1/2}),$$
the first isomorhism identifying $P_n^\sigma\backslash H_n$ with $F^{[(n+1)/2]}-\{0\}$, and the last by Frobenius reciprocity law. 
Now, for $k$ between $0$ and $n-1$, and $\sum_{i=1}^t a_i=k$, we have, according to Proposition \ref{iso}:
$$Hom_{P_n^\sigma}((\Phi^+)^{n-k-1}\Psi^+(\D_1^{(a_1)}\times \dots \times \D_t^{(a_t)}), |.|^{-s+1/2}\chi_\alpha^{-1}\mu_n^{-1/2})$$ 
$$\simeq Hom_{P_{k+1}^\sigma}(\Psi^+(\D_1^{(a_1)}\times \dots \times \D_t^{(a_t)}), |.|^{-s+1/2}\chi_\alpha^{-1}\mu_{k+1}^{-1/2})$$
$$\simeq Hom_{G_k^\sigma}(\D_1^{(a_1)}\times \dots \times \D_t^{(a_t)}, |.|^{-s}\chi_\alpha^{-1}\chi_{k}^{-1/2}).$$ 
This ensures us that, for $k\geq 1$, if the central character $c$ of $\D_1^{(a_1)}\times \dots \times \D_t^{(a_t)}$ does not satisfy 
the relation $c(\w)=q^{ks+\e_k/2}\alpha(\w)^{-\e_k}$ for $\e_k=0$ when $k$ is even, and $\e_k=1$ when $k$ is 
odd, this last space is reduced to zero. Finally, if $k=0$, this space is isomorphic to $\C$. Now the lemma 
is a consequence of Proposition \ref{derwhittaker}. 
\end{proof}

 The preceding proposition has the following consequence.

\begin{prop}\label{fctequation}
 Let $\pi$ be a representation of Whittaker type of $G_n$, $\alpha$ a character of $F^*$, let $W$ belong to $W(\pi,\theta)$, and $\phi$ belong to 
 $\sm_c(F^{[(n+1)/2]})$, then one has $$\Psi(\widetilde{W},\chi_\alpha^{-1}\d_n^{-1/2},\widehat{\phi}^{\theta},1/2-s)/L^{lin}(\tilde{\pi},\chi_\alpha^{-1}\d_n^{-1/2},1/2-s)$$ 
 $$= \epsilon^{lin}(\pi,\chi_\alpha,\theta,s) \Psi(W,\chi_\alpha,\phi,s)/L^{lin}(\pi,\chi_\alpha,s)$$
for $\epsilon^{lin}(\pi,\chi_\alpha,\theta,s)$ a unit of $\C[q^s,q^{-s}]$. 
\end{prop}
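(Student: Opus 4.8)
The plan is to derive the functional equation from the uniqueness statement of Lemma \ref{hom1}, following the method of \cite{JPS} for $L$-functions of pairs. Put $\beta=\alpha^{-1}|.|^{-1/2}$, so that $\chi_\beta=\chi_\alpha^{-1}\d_n^{-1/2}$. First I would record the obvious equivariance of the integrals of Definition \ref{integrals}: a change of variables $h\mapsto hh_0$, using that $H_n$ is unimodular and that $l_n$ is $H_n$-equivariant for the action on $\sm_c(F^{[(n+1)/2]})$ recalled before Proposition \ref{Ldef}, shows that for each $s$ the bilinear form $(W,\phi)\mapsto\Psi(W,\phi,\chi_\alpha,s)$ lies in $Hom_{H_n}(\pi\otimes\sm_c(F^{[(n+1)/2]}),\chi_\alpha^{-1}\chi_n^{-1/2}|.|^{-s})$, and likewise $(W^*,\phi^*)\mapsto\Psi(W^*,\chi_\beta,\phi^*,1/2-s)$ lies in $Hom_{H_n}(\widetilde\pi\otimes\sm_c(F^{[(n+1)/2]}),\chi_\alpha\d_n^{1/2}\chi_n^{-1/2}|.|^{s-1/2})$. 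Since $L^{lin}(\pi,\chi_\alpha,s)$ generates the ideal $I(\pi,\chi_\alpha)$ of Proposition \ref{Ldef}, the normalisation $\Psi(W,\phi,\chi_\alpha,s)/L^{lin}(\pi,\chi_\alpha,s)$ defines, for \emph{every} $s$, a nonzero element of that first $Hom$ space, and similarly on the dual side.

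Next I would transport the dual-side functional to a functional on $\pi$. The recollection following Proposition \ref{fourretout} gives a linear bijection $T:W\mapsto\widetilde W$ from $W(\pi,\theta)$ onto $W(\widetilde\pi,\theta^{-1})$; computing directly with $\widetilde W(g)=W(w\,{}^tg^{-1})$ one gets $T(\pi(g_0)W)=\widetilde\pi({}^tg_0^{-1})T(W)$. As conjugation by $\e_n$ fixes $H_n$, the automorphism $\tau:g\mapsto{}^tg^{-1}$ preserves $H_n$, with $\tau(h(g_1,g_2))=h({}^tg_1^{-1},{}^tg_2^{-1})$; and the Fourier transform satisfies $\widehat{h.\phi}^\theta=|\det g_j|^{-1}(\tau(h).\widehat\phi^\theta)$, where $j=2$ when $n$ is even and $j=1$ when $n$ is odd. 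Feeding $T(\pi(h)W)=\widetilde\pi(\tau(h))\widetilde W$ and this identity into the equivariance of $\Psi(\cdot,\chi_\beta,\cdot,1/2-s)$ on $\widetilde\pi$, and using $\chi_\beta(\tau(h))=\chi_\beta(h)^{-1}$, $\chi_n(\tau(h))=\chi_n(h)^{-1}$, $|\tau(h)|=|h|^{-1}$, one obtains after a short computation of modulus characters (which turns on the identity $|\det g_j|^{-1}\d_n^{-1/2}(h)\chi_n(h)|h|^{1/2}=1$, checked separately for $n$ even and $n$ odd) that $(W,\phi)\mapsto\Psi(\widetilde W,\chi_\beta,\widehat\phi^\theta,1/2-s)$ again lies in $Hom_{H_n}(\pi\otimes\sm_c(F^{[(n+1)/2]}),\chi_\alpha^{-1}\chi_n^{-1/2}|.|^{-s})$. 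This matching, in which the twist $\d_n^{-1/2}$ and the shift $1/2-s$ are exactly what is needed to absorb the Jacobian of the Fourier transform and the flip $\theta\leftrightarrow\theta^{-1}$, is the delicate point of the proof.

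By Lemma \ref{hom1} the space $Hom_{H_n}(\pi\otimes\sm_c(F^{[(n+1)/2]}),\chi_\alpha^{-1}\chi_n^{-1/2}|.|^{-s})$ has dimension at most $1$ for $s$ outside a finite subset of $\mathcal{D}$. Since both normalised functionals above are nonzero elements of it, they are proportional, the proportionality factor $\epsilon^{lin}(\pi,\chi_\alpha,\theta,s)$ being independent of $W$ and $\phi$; being a quotient of two elements of $\C(q^{-s})$ by the rationality statement of Proposition \ref{CV}, it lies in $\C(q^{-s})$, and the asserted identity holds on $\C$ by meromorphic continuation.

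Finally I would check that $\epsilon^{lin}$ is a unit of $\C[q^s,q^{-s}]$. Because $L^{lin}(\pi,\chi_\alpha,s)$ generates $I(\pi,\chi_\alpha)$, there are $W_i$, $\phi_i$ and $P_i\in\C[q^{\pm s}]$ with $\sum_iP_i(q^{\pm s})\Psi(W_i,\phi_i,\chi_\alpha,s)=L^{lin}(\pi,\chi_\alpha,s)$; applying the functional equation term by term, and using that each $\Psi(\widetilde{W_i},\chi_\beta,\widehat{\phi_i}^\theta,1/2-s)$ is divisible by $L^{lin}(\widetilde\pi,\chi_\beta,1/2-s)$ in $\C[q^{\pm s}]$ (since that factor generates the corresponding ideal and $t\mapsto 1/2-s$ is a change of variable), gives $\epsilon^{lin}(\pi,\chi_\alpha,\theta,s)\in\C[q^{\pm s}]$. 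Running the argument in the other direction — noting that as $W,\phi$ vary, $\widetilde W$ runs over all of $W(\widetilde\pi,\theta^{-1})$ and $\widehat\phi^\theta$ over all of $\sm_c(F^{[(n+1)/2]})$, so the integrals $\Psi(\widetilde W,\chi_\beta,\widehat\phi^\theta,1/2-s)$ generate the ideal $L^{lin}(\widetilde\pi,\chi_\beta,1/2-s)\C[q^{\pm s}]$ — shows $1/\epsilon^{lin}\in\C[q^{\pm s}]$, so $\epsilon^{lin}$ is a unit. The main obstacle is the second paragraph: getting all the modulus characters, the Fourier Jacobian and the $\theta\leftrightarrow\theta^{-1}$ flip to cancel against $\d_n^{-1/2}$ and $1/2-s$ so that the transported functional lands in precisely the same one-dimensional $Hom$ space.
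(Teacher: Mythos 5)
Your proposal is correct and is essentially the argument the paper intends: the phrase ``The preceding proposition has the following consequence'' refers precisely to Lemma~\ref{hom1}, and the functional equation drops out by comparing the two normalised functionals inside the generically one-dimensional space $Hom_{H_n}(\pi\otimes\sm_c(F^{[(n+1)/2]}),\chi_\alpha^{-1}\chi_n^{-1/2}|.|^{-s})$, followed by the standard unit argument. You have also filled in the intertwining, Fourier-Jacobian and modulus-character bookkeeping (including the identity $|\det g_j|^{-1}\d_n^{-1/2}(h)\chi_n(h)|h|^{1/2}=1$ in both parities) that the paper leaves implicit, and these check out.
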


We will denote by $\gamma^{lin}(\pi,\chi_\alpha,\theta,s)$ the element of $\C(q^{\pm s})$ given by  $$\gamma^{lin}(\pi,\chi_\alpha,\theta,s)=\frac{\e^{lin}(\pi,\chi_\alpha,\theta,s)L^{lin}(\widetilde{\pi},\chi_\alpha^{-1}\d_n^{-1/2},1/2-s)}{L^{lin}(\pi,\chi_\alpha,s)}.$$ 
When $\pi$ is a unitary generic representation of $G_n$, for $n$ even, we will prove that exceptional poles of $L^{lin}(\pi,s)$ at zero characterise distinction. More generally, this property is true under the following condition, which is verified by distinguished unitary generic representations, 
and should be true without the unitary assumption. 

\begin{prop}\label{suffexceptionnel}
 Suppose that $\pi$ is a generic representation of $G_{n}$, for $n=2m$ even, which is $(H_{n},\chi_\alpha^{-1})$-distinguished, and suppose that 
$$Hom_{P_{n}^{\sigma}}(\pi^\vee,\chi_\alpha)= Hom_{G_{n}^{\sigma}}(\pi^\vee,\chi_\alpha)$$ 
(which is the case if $Hom_{P_{n}^{\sigma}}(\pi^\vee,\chi_\alpha)$ is of dimension $\leq 1$). 
Then the factor $L^{lin}(\pi,\chi_\alpha,s)$ has an exceptional pole at zero.
\end{prop}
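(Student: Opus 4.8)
The strategy is to run, step for step, the argument of Proposition \ref{exceptionalpaires}, the only genuinely new input being that the passage from a $P_n^\sigma$-equivariant functional to a $G_n^\sigma$-equivariant one, which there rested on Theorem~A of \cite{B}, is here precisely the hypothesis $Hom_{P_n^\sigma}(\pi^\vee,\chi_\alpha)=Hom_{G_n^\sigma}(\pi^\vee,\chi_\alpha)$. First I would record that $(H_n,\chi_\alpha^{-1})$-distinction forces $c_\pi=\1$: since $n=2m$, the scalar $tI_n$ equals $h(tI_m,tI_m)$, on which $\chi_\alpha$ is trivial, so any $(H_n,\chi_\alpha^{-1})$-invariant form evaluated on $\pi(tI_n)v=c_\pi(t)v$ yields $c_\pi(t)=1$; in particular $c_{\tilde{\pi}}=\1$. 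Set $\chi_\beta=\chi_\alpha^{-1}\d_n^{-1/2}$, and note $\chi_\beta^{-1}\mu_n^{-1/2}=\chi_\alpha$ since $\mu_n=\d_n$ for $n$ even.

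Now I would expand the left-hand side $\Psi(\tilde{W},\widehat{\phi}^\theta,\chi_\beta,1/2-s)$ of the functional equation of Proposition \ref{fctequation} by the integration formula (\ref{iwaz}) applied to $\tilde{\pi}$ (here $\e_n=0$): its inner integral over $N_n^\sigma\backslash P_n^\sigma$ is $\chi_\beta(k)\,\Psi_{(0)}(\tilde{\pi}(k)\tilde{W},\chi_\beta,\tau)$, so dividing by $L^{lin}(\tilde{\pi},\chi_\beta,\tau)$ and writing $\tau=1/2-s$, $\mathcal{B}_\tau(\tilde{W})=\Psi_{(0)}(\tilde{W},\chi_\beta,\tau)/L^{lin}(\tilde{\pi},\chi_\beta,\tau)$, one gets
$$\frac{\Psi(\tilde{W},\widehat{\phi}^\theta,\chi_\beta,\tau)}{L^{lin}(\tilde{\pi},\chi_\beta,\tau)}=\int_{K_n^\sigma}\chi_\beta(k)\,\mathcal{B}_\tau(\tilde{\pi}(k)\tilde{W})\left(\int_{F^*}\widehat{\phi}^\theta(tl_n(k))|t|^{n\tau}d^*t\right)dk,$$
the $c_{\tilde{\pi}}$-factor having been dropped since $c_{\tilde{\pi}}=\1$. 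By Proposition \ref{radexlin} the factor $\Psi_{(0)}/L_{(0)}^{lin}$ is a Laurent polynomial and $1/L_{rad(ex)}^{lin}$ is a Laurent polynomial, so $\mathcal{B}_\tau$ is entire in $\tau$ and, for each $\tau$, lies in $Hom_{P_n^\sigma}(W(\tilde{\pi},\theta^{-1}),|.|^{1/2-\tau}\chi_\beta^{-1}\mu_n^{-1/2})$; and the right-hand side is a finite sum (local constancy and compactness of $K_n^\sigma$) of products of entire functions with integrals $\int_{F^*}\widehat{\phi}^\theta(tl_n(k_i))|t|^{n\tau}d^*t$ that converge and are holomorphic for $Re(\tau)>0$, so by analytic continuation the identity, whose left side is entire, holds for $Re(\tau)>0$, in particular at $\tau=1/2$, i.e.\ $s=0$.

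At $\tau=1/2$ the character of $\mathcal{B}_{1/2}$ is $\chi_\beta^{-1}\mu_n^{-1/2}=\chi_\alpha$; since $\pi$ is generic, $W(\tilde{\pi},\theta^{-1})\simeq\tilde{\pi}\simeq\pi^\vee$, hence $\mathcal{B}_{1/2}\in Hom_{P_n^\sigma}(\pi^\vee,\chi_\alpha)=Hom_{G_n^\sigma}(\pi^\vee,\chi_\alpha)$ by hypothesis, so $\mathcal{B}_{1/2}(\tilde{\pi}(k)\tilde{W})=\chi_\alpha(k)\mathcal{B}_{1/2}(\tilde{W})$ for $k\in K_n^\sigma$. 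As $\chi_\beta\chi_\alpha=\d_n^{-1/2}$ is trivial on $K_n^\sigma$, the $K_n^\sigma$-integral collapses to $\mathcal{B}_{1/2}(\tilde{W})\int_{K_n^\sigma}\int_{F^*}\widehat{\phi}^\theta(tl_n(k))|t|^{n/2}d^*t\,dk$; since $l_n$ identifies $P_n^\sigma\backslash G_n^\sigma$ with $F^m\setminus\{0\}$ compatibly with the invariant measure and $n/2=m$ is exactly the modulus exponent of dilations on $F^m$, this integral equals, by polar coordinates on $F^m$ and Fourier inversion for the self-dual measure, a nonzero constant times $\int_{F^m}\widehat{\phi}^\theta(y)dy=\phi(0)$. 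Feeding this back through Proposition \ref{fctequation} at $s=0$ (and $\e^{lin}(\pi,\chi_\alpha,\theta,0)\neq0$, being a unit), one finds that the entire function $s\mapsto\Psi(W,\phi,\chi_\alpha,s)/L^{lin}(\pi,\chi_\alpha,s)$ takes at $s=0$ the value (nonzero constant)$\,\cdot\,\mathcal{B}_{1/2}(\tilde{W})\,\phi(0)$; in particular it vanishes at $s=0$ whenever $\phi\in\sm_{c,0}(F^m)$.

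To conclude: by Proposition \ref{radexlin} the integrals $\Psi(W,\phi,\chi_\alpha,s)$ with $\phi\in\sm_{c,0}(F^m)$ span the ideal $I_{(0)}(\pi,\chi_\alpha)=L_{(0)}^{lin}(\pi,\chi_\alpha,s)\C[q^{\pm s}]$, so $1/L_{rad(ex)}^{lin}(\pi,\chi_\alpha,s)=L_{(0)}^{lin}(\pi,\chi_\alpha,s)/L^{lin}(\pi,\chi_\alpha,s)$, being a $\C[q^{\pm s}]$-combination of the quotients above, vanishes at $s=0$; equivalently $L_{rad(ex)}^{lin}(\pi,\chi_\alpha,s)$ has a pole at $0$, hence so does $L^{lin}(\pi,\chi_\alpha,s)$ (the Euler factor $L_{(0)}^{lin}$ having no zeros), and by Proposition \ref{radexlin} this pole is exceptional. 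The main obstacle is the bookkeeping in the Iwasawa expansion --- arranging the characters $\chi_\beta,\mu_n,\d_n$ to recombine exactly to $\chi_\alpha$ at $s=0$ and verifying the measure-theoretic identity $\int_{K_n^\sigma}\int_{F^*}\widehat{\phi}^\theta(tl_n(k))|t|^{n/2}d^*t\,dk=c\,\phi(0)$ --- whereas the conceptually delicate step, the extension of invariance from $P_n^\sigma$ to $G_n^\sigma$, is supplied directly by the hypothesis.
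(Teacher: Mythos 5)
Your proposal is correct and follows the same path as the paper's own proof: expand via the Iwasawa formula (\ref{iwaz}), observe that $\Psi_{(0)}/L^{lin}$ is a Laurent polynomial (hence entire) valued in $Hom_{P_n^\sigma}$, pass by analytic continuation to $s=0$, invoke the hypothesis to upgrade $P_n^\sigma$-invariance to $H_n$-invariance, collapse the $K_n^\sigma$-integral to obtain $\phi(0)$ via Fourier inversion, and feed back through the functional equation. The only cosmetic difference is the final step: the paper produces an explicit pair $(W,\phi)$ with $\phi(0)=0$ and $\Psi\equiv1$ to conclude directly that $1/L^{lin}$ vanishes at $0$, whereas you observe that $L_{(0)}^{lin}$ is a $\C[q^{\pm s}]$-combination of such integrals so that $L_{(0)}^{lin}/L^{lin}=1/L_{rad(ex)}^{lin}$ vanishes at $0$; these rely on the same ingredients and are equivalent, and your handling of the $\chi_\beta(k)$ bookkeeping is if anything slightly more careful than the paper's.
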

\begin{proof} 
 From Equation (\ref{iwaz}), for $Re(s)<<0$, as $\pi$ has necessarily a trivial central character, one has:

$$\Psi(\widetilde{W},\widehat{\phi}^\theta,\d_n^{-1/2}\chi_\alpha^{-1},1/2-s)=$$
\begin{equation} \label{sum'} 
 \int_{K_n}
 \int_{N_n\backslash P_n} \!\!\!\!\!\!\!\!\!\! \widetilde{W}(pk)\chi_\alpha^{-1}(pk)|det(p)|^{-s} dp \int_{F^*}
 \widehat{\phi}^\theta(t l_n(k))|t|^{n(1/2-s)} d^*t dk.\end{equation}

Now we notice that $\Lambda_s:\widetilde{W}\mapsto \Psi_{(0)}(\widetilde{W},\d_n^{-1/2}\chi_\alpha^{-1},1/2-s)/L^{lin}(\pi^\vee,\d_n^{-1/2}\chi_\alpha^{-1},1/2-s)$ is an element of $Hom_{P_n}(W(\pi^\vee,\theta^{-1}),\chi_\alpha|.|^s)$. We can write:

$$\Psi(\widetilde{W},\d_n^{-1/2}\chi_\alpha^{-1},1/2-s)/L^{lin}(\pi^{\vee},\d_n^{-1/2}\chi_\alpha^{-1},1/2-s)$$
\begin{equation} \label{summ'}=\int_{K_n}\Lambda_s(\pi^{\vee}(k)\widetilde{W})\int_{F^*}
 \widehat{\phi}^\theta(t l_n(k))|t|^{n(1/2-s)} d^*t dk.\end{equation}

The second member of the equality is actually a finite sum: $$\sum_i \lambda
 _i \Lambda_s(\pi^{\vee}(k_i)\widetilde{W}))\int_{F^*}
 \widehat{\phi}^\theta(t l_n(k_i))|t|^{n(1/2-s)} d^*t,$$ 
where the $\lambda_i$'s are positive constants and the $k_i$'s are elements of $K_n$ independent of $s$.\\
Notice that for
 $Re(s)<1/2$, the integral $$\int_{F^*} \widehat{\phi}^\theta(t l_n(k_i))|t|^{n(1/2-s)} d^* t$$ is absolutely convergent, and defines a holomorphic function.
So we have an equality (Equality (\ref{summ'})) of analytic functions (actually of polynomials in
 $q^{-s}$), hence it is true for all $s$ with $Re(s)<1/2$. For $s=0$, writing $m$ for $n/2$, we get:
 $$\Psi(\widetilde{W},\widehat{\phi}^\theta,\d_n^{-1/2}\chi_\alpha^{-1},1/2)/L(\pi^\vee,\d_n^{-1/2}\chi_\alpha^{-1},1/2)=\!\!
 \int_{K_n}\!\!\!\!\!\Lambda_0
 (\pi^{\vee}(k)\widetilde{W})\int_{F^*} \!\!\!\widehat{\phi}^\theta(t l_n(k))|t|^m d^* tdk.$$

But $\Lambda_0$ is a $(P_n^\sigma,\chi_\alpha)$-invariant linear form on
 $W(\pi^{\vee},\theta^{-1})$, so it follows by hypothesis that it is actually $(H_n,\chi_\alpha)$-invariant.\\
Finally $$\Psi(\widetilde{W},\widehat{\phi}^\theta,\d_n^{-1/2}\chi_\alpha^{-1},1/2)/L(\pi^{\vee},\d_n^{-1/2}\chi_\alpha^{-1},1/2)=
 \Lambda_0(\widetilde{W})\int_{K_n}\int_{F^*}
 \widehat{\phi}^\theta(t l_n(k))|t|^m d^*t dk$$ which is equal for a good normalisation $dk$, to:
$$ \Lambda_0(\widetilde{W})\int_{P_n^\sigma\backslash
 H_n}\widehat{\phi}^\theta(l_n(h))d_{\mu}h$$ where $d_{\mu}$ is up to
 scalar the unique $|det(\ )|^{-1}$ invariant measure on
 $P_n^\sigma\backslash H_n$.
 But as we have $$\int_{P_n^\sigma\backslash
 H_n}\widehat{\phi}^\theta(\eta_n(h))d_{\mu}h=
 \int_{F^m}\widehat{\phi}^\theta(x)dx=\phi(0),$$
we deduce from the functional equation that
 $\Psi(W,\phi,\chi_\alpha,0)/L(\pi,\chi_\alpha,0)=0$ whenever $\phi(0)$ is equal to $0$.\\
As one can choose $W$, and $\phi$ vanishing at zero, such that
  $\Psi(W,\phi,\chi_\alpha,0)$ is the constant function equal to $1$ (see the proof of Proposition \ref{Ldef}),
 the factor $L(\pi,\chi_\alpha,s)$ has a pole at zero, which must be
 exceptional.
\end{proof}

\begin{cor}\label{unitex}
If $n$ is even, $\pi$ is unitary, and $\alpha$ is a character of $F^*$, then $\pi$ is $(H_n,\chi_\alpha^{-1})$-distinguished if and only if 
$L^{lin}(\pi,\chi_\alpha,s)$ has an exceptional pole at zero.
\end{cor}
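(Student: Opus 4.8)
The assertion is an equivalence, and I would prove the two implications separately; the unitarity of $\pi$ is used only for the implication ``$(H_n,\chi_\alpha^{-1})$-distinguished $\Rightarrow$ exceptional pole''. First, suppose $L^{lin}(\pi,\chi_\alpha,s)$ has an exceptional pole at $s_0=0$. Since $n$ is even the character $\chi_n$ of $H_n$ is trivial, so Definition \ref{defpolex} specialised at $s_0=0$ produces a nonzero element of $Hom_{H_n}(\pi,\chi_\alpha^{-1})$, that is, $\pi$ is $(H_n,\chi_\alpha^{-1})$-distinguished; this half needs nothing about $\pi$ beyond being of Whittaker type.

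For the remaining implication, assume $\pi$ is $(H_n,\chi_\alpha^{-1})$-distinguished. Since $\chi_\alpha^{-1}$ is trivial on $Z_n\subset H_n$, the central character $c_\pi$ is trivial, and $\pi$, being unitary, is in particular generic. By Proposition \ref{suffexceptionnel} it is enough to show that $Hom_{P_n^\sigma}(\pi^\vee,\chi_\alpha)$ has dimension at most one. I would recognise this space inside Lemma \ref{hom1}: as $n$ is even one has $\mu_n=\d_n$, and a short computation of characters of $H_n$ gives the identity $|.|^{-s+1/2}\chi_{\alpha^{-1}|.|^{-1/2}}^{-1}\mu_n^{-1/2}=\chi_\alpha$ when $s=1/2$. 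Applying Lemma \ref{hom1} to the representation $\pi^\vee$, the character $\alpha^{-1}|.|^{-1/2}$ of $F^*$ and $s=1/2$, the space $Hom_{P_n^\sigma}(\pi^\vee,\chi_\alpha)$ is one-dimensional unless there are $k\in\{1,\dots,n-1\}$ and an $(n-k)$-exponent $c$ of $\pi^\vee$ satisfying $c(\w)=q^{k/2}\alpha(\w)^{\e_k}$, with $\e_k=0$ for $k$ even and $\e_k=1$ for $k$ odd; passing to absolute values, this forces $Re(c)=-k/2+\e_k Re(\alpha)$.

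It remains to exclude such an exponent using that $\pi^\vee$ is unitary and generic. I would write $\pi^\vee$, by the classification of unitary generic representations, as $\delta_1\times\dots\times\delta_p\times(\delta_1'|.|^{s_1}\times{\delta_1'}^\vee|.|^{-s_1})\times\dots\times(\delta_q'|.|^{s_q}\times{\delta_q'}^\vee|.|^{-s_q})$ with $\delta_i,\delta_j'$ unitary discrete series and $0<s_j<1/2$. By Proposition \ref{derwhittaker} an $(n-k)$-exponent $c$ of $\pi^\vee$ is the central character of a product of derivatives of these factors, a representation of $G_k$. For a unitary discrete series $\delta$ and any $l$ with $\delta^{(l)}\neq 0$ the central character of $\delta^{(l)}$ has non-negative real part, so each derivative factor $\sigma$ occurring contributes to $Re(c)$ a real number equal to $0$ when $\sigma$ is trivial and strictly greater than $-\frac{1}{2}\dim(\sigma)$ otherwise (twisting a piece by $|.|^{\pm s_j}$, $|s_j|<\frac{1}{2}$, shifts its contribution by less than $\frac{1}{2}\dim(\sigma)$). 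Summing, $Re(c)>-k/2$ whenever $k\geq 1$; in particular $Re(c)\neq -k/2+\e_k Re(\alpha)$ as soon as $\e_k Re(\alpha)\leq 0$, which covers the case of trivial $\alpha$ needed for the main theorem and, more generally, all $\alpha$ with $Re(\alpha)\leq 0$. The case $Re(\alpha)>0$ (which forces $k$ odd) is the delicate one and is where I expect the real work to lie: there one must invoke, beyond the mere absolute-value bound, the extra structure of a unitary $(H_n,\chi_\alpha)$-distinguished representation provided by Theorem \ref{distgen} together with the classification of distinguished unitary discrete series, and compare the full equality $c(\w)=q^{k/2}\alpha(\w)$ rather than only its modulus. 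Once the exceptional exponent is excluded, $Hom_{P_n^\sigma}(\pi^\vee,\chi_\alpha)$ is one-dimensional, Proposition \ref{suffexceptionnel} yields the exceptional pole of $L^{lin}(\pi,\chi_\alpha,s)$ at zero, and the proof is complete.
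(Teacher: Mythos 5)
Your proof of the ``exceptional pole $\Rightarrow$ distinguished'' direction is correct and matches the paper; that direction is immediate from Definition \ref{defpolex} since $\chi_n$ is trivial for $n$ even. Your reduction of the converse to the hypothesis of Proposition \ref{suffexceptionnel}, and your identification (via Lemma \ref{hom1} applied to $\pi^\vee$ with the character $\alpha^{-1}|.|^{-1/2}$ at $s=1/2$) of when $Hom_{P_n^\sigma}(\pi^\vee,\chi_\alpha)$ can fail to be one-dimensional, are also correct and somewhat more explicit than the paper, which runs the same computation directly through Proposition \ref{iso} and the Bernstein--Zelevinsky filtration. The exponent bound $Re(c)>-k/2$ for $1\leq k\leq n-1$ is the key input in both arguments; the paper cites Bernstein's criterion 7.4 of \cite{B} rather than unwinding the unitary classification as you do, but the bound is the same.

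The genuine gap is exactly the one you flag: your bound only excludes the problematic exponent when $\e_k Re(\alpha)\leq 0$, and you leave the case $Re(\alpha)>0$, $k$ odd unresolved, gesturing at Theorem \ref{distgen} and the classification of distinguished discrete series without carrying it out. The paper closes this case by a much shorter device that you do not use: it first invokes Lemma \ref{dualdist}, which says that $(H_n,\chi_\alpha^{-1})$-distinction is equivalent to $(H_n,\chi_\alpha)$-distinction for $n$ even, to reduce to a single sign of $Re(\alpha)$; with that reduction in place the central-character comparison in the $Hom$-space computation has a definite sign and the exceptional exponent is excluded uniformly, with no residual case to treat. So the missing idea in your write-up is precisely the use of Lemma \ref{dualdist} to normalise $Re(\alpha)$, which renders your ``delicate case'' moot.

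Two smaller points. First, ``being unitary, is in particular generic'' is not correct as a general statement; what is true, and what is implicitly used here, is that a unitary representation of Whittaker type is irreducible, hence generic. Second, you are careful to phrase the needed bound as $\dim Hom_{P_n^\sigma}(\pi^\vee,\chi_\alpha)\leq 1$, which is literally what Proposition \ref{suffexceptionnel} asks for; the paper instead establishes $\dim Hom_{P_n^\sigma}(\pi,\chi_\alpha^{-1})\leq 1$. After the $Re(\alpha)$ normalisation these are interchangeable in the argument (the derivative bound holds for both $\pi$ and $\pi^\vee$, both being unitary), but it is worth noting that the two spaces are \emph{a priori} different, and that the sign of $Re(\alpha)$ for which each computation goes through is opposite; without the Lemma \ref{dualdist} reduction your version of the computation simply does not cover all $\alpha$.
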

\begin{proof} As $\pi$ is $(H_n,\chi_\alpha^{-1})$-distinguished if and only if $\pi$ is $(H_n,\chi_\alpha)$-distinguished according to 
Lemma \ref{dualdist}, we can suppose that $Re(\alpha)$ is $\geq 0$. According to the criterion 7.4 of \cite{B}, the $(n-k)$-exponents of $\pi$ are of real part $>-k/2$, except the central character 
of the highest derivative $\pi^{(n)}=\1$. Let $\rho$ be a representation of $G_k$ with central character of real part $>-k/2$, we have 
$$Hom_{P_{n}^\sigma}((\Phi^+)^{n-k-1}\Psi^+(\rho),\chi_\alpha^{-1})$$ $$\simeq 
Hom_{P_{k+1}^\sigma}(\Psi^+(\rho), \chi_\alpha^{-1}\chi_{k+1}^{1/2})\simeq Hom_{G_k^\sigma}(|.|^{1/2}\rho, \chi_\alpha^{-1}\mu_{k}^{1/2}).$$ 
This last space is reduced to zero as soon as $Re(\alpha)\geq 0$. Proposition \ref{BZfiltration} implies that in this case, 
the space $Hom_{P_{n}^\sigma}(\pi,\chi_\alpha^{-1})$ is of dimension $\leq 1$, and we can apply Proposition \ref{suffexceptionnel}.
\end{proof}

As we know that if a discrete series representation of $G_n$ is $\chi_\alpha$-distinguished, the it is unitary and $n$ is even, we botain the following.

\begin{cor}\label{expolediscrete}
Let $\D$ be a discrete series of $G_n$, and $\alpha$ be a character of $F^*$, then $\pi$ is $(H_n,\chi_\alpha^{-1})$-distinguished if and only if 
$L^{lin}(\pi,\chi_\alpha,s)$ has an exceptional pole at zero.
\end{cor}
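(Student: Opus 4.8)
The plan is to deduce this from Corollary \ref{unitex}, after checking that on either side of the claimed equivalence the integer $n$ is forced to be even and the discrete series $\D$ is forced to be unitary; once we are in that situation the two statements coincide. The only external ingredient needed is the fact recalled at the beginning of Section \ref{classification}, taken from \cite{M2012.2}: a discrete series of $G_n$ is $(M,\chi)$-distinguished, for a maximal Levi $M$ and a character $\chi$ of $M$, precisely when $n$ is even and $M$ is conjugate to $H_n$.

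For the direct implication, suppose $\D$ is $(H_n,\chi_\alpha^{-1})$-distinguished. The quoted statement from \cite{M2012.2} forces $n$ to be even. To obtain unitarity I would restrict a nonzero element of $Hom_{H_n}(\D,\chi_\alpha^{-1})$ to the center $Z_n$, which is contained in $H_n$: since $n$ is even, $\chi_\alpha$ is trivial on $Z_n$, so the central character $c_\D$ is trivial, and a discrete series with trivial central character is square-integrable. Thus $\D$ is a unitary generic representation of $G_n$ with $n$ even, and Corollary \ref{unitex} gives that $L^{lin}(\D,\chi_\alpha,s)$ has an exceptional pole at zero.

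For the converse, suppose $L^{lin}(\D,\chi_\alpha,s)$ has an exceptional pole at zero. By Definition \ref{defpolex} this yields a nonzero element of $Hom_{H_n}(\D,\chi_\alpha^{-1}\chi_n^{-1/2})$. Since $\chi_n$ equals $\d_n=\chi_{|.|}$ for $n$ odd and is trivial for $n$ even, the character $\chi_\alpha^{-1}\chi_n^{-1/2}$ is again of the form $\chi_\beta$ for a character $\beta$ of $F^*$, so $\D$ is $(M,\chi)$-distinguished for the maximal Levi $M=H_n$; by the result from \cite{M2012.2} this again forces $n$ to be even, whence $\chi_n$ is trivial and $\D$ is $(H_n,\chi_\alpha^{-1})$-distinguished. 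The only delicate points are bookkeeping ones — keeping track of the half-integral twist $\chi_n^{1/2}$ built into the integral $\Psi$, and invoking the non-distinction statement for odd $n$ on both sides to control the parity — so no real obstacle arises: the corollary is essentially a repackaging of Corollary \ref{unitex} together with the classification input from \cite{M2012.2}.
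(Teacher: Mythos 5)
Your proposal is correct and follows the same route the paper intends: the paper's own "proof" is the one-sentence remark preceding the corollary (that a $\chi_\alpha$-distinguished discrete series is forced to be unitary with $n$ even), and you have simply spelled out the details — the parity constraint via the non-distinction result from \cite{M2012.2}, the central-character argument giving unitarity, and the direct reading of Definition \ref{defpolex} for the converse — before invoking Corollary \ref{unitex}. The bookkeeping you flag (the $\chi_n^{1/2}$ twist and the odd-$n$ exclusion on both sides) is handled correctly, so there is nothing missing.
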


Now we give a consequence of Theorem \ref{distgen} in terms of $L$-functions.

\begin{prop}\label{translation}
Let $\pi=\D_1\times \dots \D_t$ be a generic representation of $G_n$, with $t\geq 2$, and $\alpha$ a character of $F^*$ with $Re(\alpha)\in [-1/2,0]$ 
such that the factor $L^{lin}_{rad(ex)}(\pi,\chi_\alpha,s)$ has a pole at $s_0$. Then we are in one of the following situations:
        \begin{description}
        \item{1.} There are $(i,j)\in \{1,\dots,t\}$, with $i\neq j$, such that 
          the factors $L_{rad(ex)}^{lin}(\D_i,\chi_\alpha,s)$ and $L_{rad(ex)}^{lin}(\D_j,\chi_\alpha,s)$ have $s_0$ as a common pole. 
	\item{2.} There are $(i,j,k,l)\in \{1,\dots,t\}$, with $\{i,j\}\neq \{k,l\}$,such that $L_{rad(ex)}(\D_i,\D_j,2s)$ and 
   $L_{rad(ex)}(\D_k,\D_l,2s)$ have $s_0$ as a common pole. 
        \item{3.} There are $(i,j,k)\in \{1,\dots,t\}$, and $i\neq j$, such that the factors $L_{rad(ex)}(\D_i,\D_j,2s)$ and 
        $L_{rad(ex)}^{lin}(\D_k,\chi_\alpha,s)$ have $s_0$ as a common pole.
        \item{4.} There are $(i,j)\in \{1,\dots,t\}$, and $i\neq j$, then $L_{rad(ex)}^{lin}(\D_i,\chi_\alpha,s)$ and $L(\alpha\otimes\D_j,s+1/2)$ have $s_0$ as a common pole. 
        \item{5.} There are $(i,j,k)\in \{1,\dots,t\}$, and $i\neq j$, the two factors $L_{rad(ex)}(\D_i,\D_j,2s)$ and $L(\alpha\otimes\D_k,s+1/2)$ 
      have $s_0$ as a common pole.
        \item{6.} The integer $t$ equals $2$, and the factor $L_{rad(ex)}(\D_1,\D_2,2s)$ has a pole at $s_0$.
        \end{description}
\end{prop}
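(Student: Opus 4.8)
The plan is to reinterpret the hypothesis as a distinction statement, reduce to $s_0=0$ by an unramified twist, apply the classification of distinguished generic representations (Theorem \ref{distgen}), and then read off cases 1--6 by translating each structural feature of the classification back into a pole of an $L$-factor.

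First I would invoke Proposition \ref{radexlin} and the definition of exceptional poles (Definition \ref{defpolex}): a pole of $L^{lin}_{rad(ex)}(\pi,\chi_\alpha,s)$ at $s_0$ is an exceptional pole of $L^{lin}(\pi,\chi_\alpha,s)$, hence produces a nonzero element of $Hom_{H_n}(\pi,\chi_\alpha^{-1}\chi_n^{-1/2}|.|^{-s_0})$. Next I would replace each $\D_i$ by $|.|^{s_0}\D_i$. Since $L^{lin}(|.|^{s_0}\pi,\chi_\alpha,s)=L^{lin}(\pi,\chi_\alpha,s+s_0)$, and likewise each of the factors $L^{lin}_{rad(ex)}(\D_i,\chi_\alpha,\cdot)$, $L_{rad(ex)}(\D_i,\D_j,2\cdot)$ and $L(\alpha\otimes\D_j,\cdot+1/2)$ has its poles shifted by $s_0$ under this twist, both hypothesis and conclusion are compatible with it, so I may assume $s_0=0$. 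After this reduction the distinguishing character $\chi_\alpha^{-1}\chi_n^{-1/2}$ is $\chi_{\alpha^{-1}}$ when $n$ is even and $\chi_{\alpha^{-1}|.|^{-1/2}}$ when $n$ is odd, so $\pi$ is $(H_n,\chi_\beta)$-distinguished with $Re(\beta)$ in $[0,1/2]$ (even case) or $[-1/2,0]$ (odd case); here the assumption $Re(\alpha)\in[-1/2,0]$ is precisely what is needed for $\beta$ to lie in the range where the classification is available, in both parities.

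I would then feed $\pi$ into Theorem \ref{distgen}. When $n$ is even this gives, after a reordering, an integer $r$, pairs $\D_{2j}=\D_{2j-1}^\vee$ for $1\le j\le r$, and the remaining $\D_i$ individually $(H_{n_i},\chi_\beta)$-distinguished; when $n$ is odd it gives $\pi=\pi'\times\D_{i_0}$ with $\D_{i_0}$ the character $\alpha^{-1}|.|^{-1/2}$ and $\pi'$ of even rank, $(H_{n-1},\chi_{\alpha^{-1}})$-distinguished, to which the even case then applies. Now each feature becomes a pole at $s_0$: a pair $\D_{i+1}=\D_i^\vee$ forces, by Proposition \ref{exceptionalpaires}, an exceptional pole of $L(\D_i,\D_{i+1},w)$ at $w=0$, hence by Proposition \ref{radexpaires} a pole of $L_{rad(ex)}(\D_i,\D_{i+1},2s)$ at $s=0$; an individually $(H_{n_i},\chi_\beta)$-distinguished $\D_i$ forces, by Corollary \ref{expolediscrete} together with Proposition \ref{radexlin}, a pole of $L^{lin}_{rad(ex)}(\D_i,\chi_\alpha,s)$ at $s=0$; and for the character constituent, $\alpha\otimes\D_{i_0}=|.|^{-1/2}$, so $L(\alpha\otimes\D_{i_0},s+1/2)=L(\1,s)$ visibly has a pole at $s=0$. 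A short case analysis, using $t\ge2$, then yields the statement: $r\ge2$ gives case 2; $r=1$ with $t\ge3$ gives case 3; $r=1$ with $t=2$ gives case 6; $r=0$ gives case 1; and in the odd case, applying the even classification to $\pi'$ and pairing its output with the pole of $L(\alpha\otimes\D_{i_0},s+1/2)$ gives case 4 when $\pi'$ supplies an individually distinguished segment, and case 5 when it supplies a pair ($t\ge2$ guarantees $\pi'$ is non-empty).

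The only substantive inputs are Theorem \ref{distgen}, Corollary \ref{expolediscrete}, and Propositions \ref{exceptionalpaires}, \ref{radexpaires}, \ref{radexlin}, all already available, so I expect the real work to be bookkeeping: matching $\alpha$ against $\alpha^{-1}$ through duality, tracking the half-integral shift $|.|^{-1/2}$ in the odd case, and checking that the twist by $|.|^{s_0}$ moves every relevant factor consistently. The point at which care is genuinely required --- and which I would regard as the main obstacle --- is verifying that the hypothesis $Re(\alpha)\in[-1/2,0]$ exactly keeps the distinguishing character $\chi_\beta$ inside the validity range of Theorem \ref{distgen} in both parities, and that the list of cases 1--6 is exhaustive for all configurations the classification can produce when $t\ge2$.
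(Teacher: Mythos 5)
Your proof is correct, and it fills in exactly the route the paper sketches (the proposition is stated as a ``consequence of Theorem~\ref{distgen} in terms of $L$-functions'' with no proof written out). You correctly pass from a pole of $L^{lin}_{rad(ex)}$ at $s_0$ to a nonzero element of $Hom_{H_n}(\pi,\chi_\alpha^{-1}\chi_n^{-1/2}|.|^{-s_0})$ via Proposition~\ref{radexlin} and Definition~\ref{defpolex}, reduce to $s_0=0$ by the unramified twist $\pi\mapsto |.|^{s_0}\pi$ (under which all the relevant $L$-factors shift consistently), verify that $\chi_\alpha^{-1}\chi_n^{-1/2}$ falls in the range $[0,1/2]$ (even) or $[-1/2,0]$ (odd) where Theorem~\ref{distgen} applies, and translate the resulting pairs $\D_{i+1}=\D_i^\vee$, individually distinguished $\D_i$, and (in the odd case) the character $\D_{i_0}=\alpha^{-1}|.|^{-1/2-s_0}$ into poles of $L_{rad(ex)}(\D_i,\D_j,2s)$, $L^{lin}_{rad(ex)}(\D_i,\chi_\alpha,s)$, and $L(\alpha\otimes\D_{i_0},s+1/2)$ respectively, using Propositions~\ref{exceptionalpaires}, \ref{radexpaires} and Corollary~\ref{expolediscrete}; the case analysis using $t\geq 2$ is exhaustive and lands in cases 1--6 as claimed.
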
 

Before ending this section with a factorisation of $L^{lin}(\pi,\chi_\alpha,s)$ in terms of the exceptional factors 
of the derivatives of $\pi$, we state 
as a lemma the following integration formula, which will be used in the next proposition.

\begin{LM}\label{integralformula}
Let $n\geq k$ be positive integers, let $B_{n,k}^{-,\sigma}$ be the subgroup of $B_{n-k}^-$, of elements $b$ such 
that the matrix $diag(I_k,b)$ belongs to $H_n$, let $\mathcal{M}_{n-k,k}^\sigma$ be the subgroup of $\mathcal{M}(n,k,F)$ of elements 
$m$ such that $\begin{pmatrix} I_k & \\m & I_{n-k} \end{pmatrix}$ belongs to $H_n$. Let $f$ be a positive
 measurable (with respect to the up to scaling unique right invariant measure) function on $N_n\backslash G_n$, then, 
for good normalisations of (right invariant) Haar measures, one has:
$$\int_{N_n^\sigma\backslash H_n} f(h)dh= \int_{B_{n,k}^{-,\sigma}}\int_{\mathcal{M}_{n-k,k}^\sigma}
\int_{N_k^\sigma\backslash H_k} f\begin{pmatrix} h & \\m & b \end{pmatrix}|h|^{(k-n)/2}\chi_k^{1/2}(h)\chi_n^{-1/2}(h)dm dh db$$
\end{LM}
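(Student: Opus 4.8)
The plan is to prove the formula by induction on $n-k$, the case $n=k$ being immediate (both sides reduce to $\int_{N_n^\sigma\backslash H_n}f$, the weight and the $m,b$-integrations being trivial). For the inductive step it suffices to treat $k=n-1$, that is, to establish the one-step identity
$$\int_{N_n^\sigma\backslash H_n} f(h)\,dh=\int_{B_{n,n-1}^{-,\sigma}}\int_{\mathcal{M}_{1,n-1}^\sigma}\int_{N_{n-1}^\sigma\backslash H_{n-1}} f\begin{pmatrix} h' & \\ m & b\end{pmatrix}|h'|^{-1/2}\chi_{n-1}^{1/2}(h')\chi_n^{-1/2}(h')\,dm\,dh'\,db,$$
and then to iterate it from level $n$ down to level $k$. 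When iterated, the matrices $\begin{pmatrix} h' & 0\\ m & b\end{pmatrix}$ produced at successive levels nest into a single block matrix $\begin{pmatrix} h & 0\\ m & b\end{pmatrix}$ with $h\in H_k$, $m\in\mathcal{M}_{n-k,k}^\sigma$ and $b\in B_{n,k}^{-,\sigma}$ — precisely the generic element of $P^-_{(k,1,\ldots,1)}\cap H_n$ ($n-k$ ones) written through its Levi decomposition — and the one-step Jacobians multiply; by transitivity of restriction of characters their product, evaluated at $h\in H_k$, telescopes to $\prod_{j=k+1}^{n}|h|^{-1/2}\chi_{j-1}^{1/2}(h)\chi_j^{-1/2}(h)=|h|^{(k-n)/2}\chi_k^{1/2}(h)\chi_n^{-1/2}(h)$, which is the asserted weight.

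For the one-step identity I would argue via a big-cell decomposition. Let $P=P_{(n-1,1)}$ with opposite $P^-$. The set $N_nP^-$ is Zariski open and dense in $G_n$, with $N_n\cap P^-=N_{n-1}$, so the decomposition $g=np$ ($n\in N_n$, $p\in P^-$) is unique up to $N_{n-1}$; since the diagonal torus $A_n$ lies in $H_n\cap N_nP^-$, the complement of $N_nP^-$ meets $H_n$ in a proper Zariski-closed subset, hence in a null set for the right-invariant measure on $N_n^\sigma\backslash H_n$. Applying the involution $\sigma:g\mapsto\e_n g\e_n$ to $g=np$ and invoking uniqueness forces $n,p\in H_n$ whenever $g\in H_n$; therefore $H_n\cap N_nP^-=N_n^\sigma(P^-)^\sigma$ with $N_n^\sigma\cap(P^-)^\sigma=N_{n-1}^\sigma$, so $N_n^\sigma\backslash(H_n\cap N_nP^-)\cong N_{n-1}^\sigma\backslash(P^-)^\sigma$. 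Writing $(P^-)^\sigma$ through its Levi decomposition identifies it with the set of products $h'\cdot m\cdot b$ with $h'\in H_{n-1}$, $m\in\mathcal{M}_{1,n-1}^\sigma$ and $b\in B_{n,n-1}^{-,\sigma}$, which furnishes the coordinates on the right-hand side.

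It remains to match the measures. The right-$H_n$-invariant measure on $N_n^\sigma\backslash H_n$ restricts, on the open dense piece, to a right-$(P^-)^\sigma$-invariant measure on $N_{n-1}^\sigma\backslash(P^-)^\sigma$, hence (by uniqueness) to its right-invariant measure up to a positive constant. In the coordinates $(h',m,b)$ this differs from $dm\,dh'\,db$ by a positive character, which can depend only on $h'$ since $m$- and $b$-translations act by shifts. One evaluates this character on the split torus $A_n\subset H_{n-1}$: since the bottom row of an $n\times n$ matrix lies entirely in one $\mathrm{GL}$-block of $H_n\simeq G_{\lceil n/2\rceil}\times G_{\lfloor n/2\rfloor}$, the computation reduces to the classical row-integration formula $\int_{N_r\backslash G_r}$ in that block, and a direct computation of the corresponding $\mathrm{Ad}$-character yields $|h'|^{-1/2}\chi_{n-1}^{1/2}(h')\chi_n^{-1/2}(h')$. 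All equalities hold up to positive constants, which is harmless since the statement is only for ``good normalisations'' of the Haar measures.

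I expect the main obstacle to be the parity bookkeeping: the precise description of $\mathcal{M}_{1,n-1}^\sigma$ and $B_{n,n-1}^{-,\sigma}$ (which entries survive the $\sigma$-condition depends on the parity of $n$), and above all pinning down that the modulus character is exactly $|h'|^{-1/2}\chi_{n-1}^{1/2}(h')\chi_n^{-1/2}(h')$ — i.e. which of $\chi_{n-1},\chi_n$ appears and with which sign — which one controls using the conventions for $\chi_n$, $\mu_n$ and the relations ${\chi_n}_{|H_{n-1}}=\mu_{n-1}$, ${\mu_n}_{|H_{n-1}}=\chi_{n-1}$ recorded earlier. Everything else is the standard behaviour of Haar measure under a big-cell decomposition together with transitivity of restriction, which makes the telescoping in the first paragraph go through cleanly.
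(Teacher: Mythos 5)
The paper states this lemma without proof, so there is no paper argument to compare against; I will evaluate your proposal on its own terms. Your strategy — induct on $n-k$, prove the one-step case by a big-cell decomposition, then telescope the Jacobians — is viable and does arrive at the correct weight, but two steps are not yet airtight. (i) The decomposition $g=np$, $n\in N_n$, $p\in P^-$, is unique only up to $N_{n-1}=N_n\cap P^-$, as you note; applying $\sigma$ therefore only gives $\sigma(n)=n\nu$, $\sigma(p)=\nu^{-1}p$ for some $\nu\in N_{n-1}$ with $\sigma(\nu)=\nu^{-1}$, and one must still solve $\nu=\mu\,\sigma(\mu)^{-1}$ in $N_{n-1}$ — a nonabelian $H^1$-vanishing — before one can choose $\sigma$-fixed representatives. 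The clean fix is to decompose instead as $g=up'$ with $u\in U_n=N_{(n-1,1)}$ and $p'\in P^-$: since $U_n\cap P^-=\{1\}$ this is genuinely unique, so $\sigma$-fixedness of $g$ does force $u\in U_n^\sigma$, $p'\in(P^-)^\sigma$, and one still gets $N_n^\sigma\backslash(H_n\cap N_nP^-)\cong N_{n-1}^\sigma\backslash(P^-)^\sigma$.

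(ii) The telescoping is not merely "transitivity of restriction of characters": the step-$j$ modulus $\theta_j=|\cdot|^{-1/2}\chi_{j-1}^{1/2}\chi_j^{-1/2}$ is a character of $H_{j-1}$, and at the next iteration it is evaluated at $\begin{pmatrix}h''&0\\ m'&b'\end{pmatrix}$, i.e.\ at $diag(h'',b')$ up to a unipotent. For the accumulated Jacobian to depend only on the innermost $h\in H_k$, as the lemma asserts, you need $\theta_j(diag(I_{j-2},b'))=1$. This does hold — restricted to $H_{j-1}$ one has $\chi_{j-1}\chi_j^{-1}=\delta_{j-1}^{(-1)^j}$, and $\delta_{j-1}(diag(h'',b'))=|b'|^{(-1)^j}\delta_{j-2}(h'')$, so the $|b'|^{-1/2}$ from $|h'|^{-1/2}$ is cancelled by the $|b'|^{+1/2}$ coming from $\delta_{j-1}^{(-1)^j/2}$ — but it is a computation your write-up silently assumes and should record. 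A route that sidesteps both points: use $H_n\simeq G_{\lceil n/2\rceil}\times G_{\lfloor n/2\rfloor}$ and $N_n^\sigma\simeq N_{\lceil n/2\rceil}\times N_{\lfloor n/2\rfloor}$ from the outset, apply in each block the $GL$ identity $\int_{N_r\backslash G_r}f=\int_{B_{r-l}^-}\int_{\mathcal{M}(r-l,l)}\int_{N_l\backslash G_l}f\begin{pmatrix}g'&0\\ m&b\end{pmatrix}|g'|^{l-r}\,dg'\,dm\,db$, and verify in the four parity cases that $|g_1|^{-(p-p')}|g_2|^{-(q-q')}$ (with $p=\lceil n/2\rceil$, $p'=\lceil k/2\rceil$, and so on) equals $|h|^{(k-n)/2}\chi_k^{1/2}(h)\chi_n^{-1/2}(h)$; this avoids any bookkeeping of intermediate $b$'s.
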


We have the analogue of Proposition \ref{factLpaires} for the factor $L^{lin}$. 

\begin{prop}\label{factL}
Let $\pi=\D_1\times \dots \times \D_t$ be a generic representation, and $\alpha$ be a character of $F^*$. If $\pi$ has only completely reducible derivatives, with irreducible components of the form $\D_1^{(a_1)}\times \dots \times \D_t^{(a_t)}$. For each $k$ between $0$ and $n$, we write 
$\pi^{(k)}=\oplus_{i_k}\pi_{i_k}^{(n-k)}$, where, the $G_{k}$-modules $\pi_{i_k}^{(n-k)}$ are the irreducible components of 
$\pi^{(n-k)}$, then one has:
$$L^{lin}_{(0)}(\pi,\chi_\alpha,s)=\vee_{i_k,k\leq n-1} L^{lin}_{ex} (\pi_{i_k}^{(n-k)},\chi_\alpha,s),$$ hence 
$$L^{lin}(\pi,\chi_\alpha,s)=\vee_{i_k,k\leq n} L^{lin}_{ex} (\pi_{i_k}^{(n-k)},\chi_\alpha,s).$$
\end{prop}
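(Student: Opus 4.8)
Here is a plan for the proof.

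The plan is to follow the argument of the proof of Theorems 2.1 and 2.2 of \cite{CP} (i.e.\ the analogue for pairs, Proposition \ref{factLpaires}), replacing Rankin--Selberg integrals of pairs by the Bump--Friedberg integrals of Definition \ref{integrals}, and to proceed by induction on $n$. First I would reduce the second identity to the first. Since $\pi$ is generic, hence irreducible, one has $\pi^{(0)}=\pi$, so the $k=n$ term on the right-hand side of the second identity is $L^{lin}_{ex}(\pi,\chi_\alpha,s)$. On the other hand, $L^{lin}(\pi,\chi_\alpha,s)=L^{lin}_{rad(ex)}(\pi,\chi_\alpha,s)\,L^{lin}_{(0)}(\pi,\chi_\alpha,s)$ by the very definition of $L^{lin}_{rad(ex)}$, and by Proposition \ref{radexlin} the poles of $L^{lin}_{rad(ex)}(\pi,\chi_\alpha,s)$ are exactly the exceptional poles of $L^{lin}(\pi,\chi_\alpha,s)$, simple in $L^{lin}_{rad(ex)}$, while $L^{lin}_{ex}(\pi,\chi_\alpha,s)$ carries their full multiplicity in $L^{lin}(\pi,\chi_\alpha,s)$; comparing multiplicities at every pole one checks $L^{lin}(\pi,\chi_\alpha,s)=\vee(L^{lin}_{ex}(\pi,\chi_\alpha,s),L^{lin}_{(0)}(\pi,\chi_\alpha,s))$, so the second identity indeed follows from the first. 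It then remains to prove the first identity, which I would do by double divisibility; the case $n=1$ is trivial, both sides being $1$.

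For the divisibility of $\vee_{i_k,k\leq n-1}L^{lin}_{ex}(\pi_{i_k}^{(n-k)},\chi_\alpha,s)$ by... rather: for the statement that each $L^{lin}_{ex}(\pi_{i_k}^{(n-k)},\chi_\alpha,s)$ divides $L^{lin}_{(0)}(\pi,\chi_\alpha,s)$, I would fix $k\in\{1,\dots,n-1\}$ and an irreducible component $\rho=\pi_{i_k}^{(n-k)}$ of the $G_k$-module $\pi^{(n-k)}$ (generic by Proposition \ref{mirabolicrestriction}). Given $W'\in W(\rho,\theta)$, the last assertion of Proposition \ref{mirabolicrestriction} supplies $W\in W(\pi,\theta)$, lying in the relevant Bernstein--Zelevinsky submodule, which projects to $W'$, together with $\phi'\in\sm_c(F^k)$ with $\phi'(0)\neq 0$ and $W(diag(g,I_{n-k}))\phi'(\eta_k g)=W'(g)\phi'(\eta_k g)|g|^{(n-k)/2}$. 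Applying the integration formula of Lemma \ref{integralformula} (with $n$ replaced by $n-1$) to $\Psi_{(0)}(W,\chi_\alpha,s)$ and peeling off to level $k$, the integrations along $B_{n-1,k}^{-,\sigma}$ and $\mathcal{M}_{n-1-k,k}^\sigma$ converge and contribute only Laurent polynomials in $q^{\pm s}$ (using the compact-support statement of Proposition \ref{mirabolicrestriction} for elements of that submodule, Proposition \ref{DL}, and smoothness of $W$), while, after substituting the displayed identity, the remaining $N_k^\sigma\backslash H_k$-integral is a Laurent-polynomial multiple of $\Psi(W',\phi',\chi_\alpha,s)$, the powers of $|\cdot|$ and the characters $\chi_k,\mu_k$ in Definition \ref{integrals} being exactly what absorbs the factor $|g|^{(n-k)/2}$ and the twists produced by Lemma \ref{integralformula} (so that no shift in $s$ appears). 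Choosing $W$ concentrated so that this Laurent-polynomial factor is a unit, and letting $W'$ and the right translates of $\phi'$ vary, the resulting integrals span the fractional ideal $I(\rho,\chi_\alpha)$, generated by $L^{lin}(\rho,\chi_\alpha,s)$; hence $L^{lin}(\rho,\chi_\alpha,s)$, and a fortiori its divisor $L^{lin}_{ex}(\rho,\chi_\alpha,s)$, divides $L^{lin}_{(0)}(\pi,\chi_\alpha,s)$. Taking the lcm over $k$ and $i_k$ gives one divisibility.

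For the reverse divisibility I would use the finite filtration of $W(\pi,\theta)_{|P_n}$ from Proposition \ref{BZfiltration} and argue by induction from its bottom piece upward. For $W$ in the bottom piece, the restriction of $W$ to the mirabolic is concentrated enough that $\Psi_{(0)}(W,\chi_\alpha,s)$ is a Laurent polynomial, hence divides anything. For $W$ in a higher piece, corresponding to the derivative $\pi^{(n-k)}$ with $k\geq 1$, the compact-support statement of Proposition \ref{mirabolicrestriction} and Lemma \ref{integralformula} express $\Psi_{(0)}(W,\chi_\alpha,s)$, modulo the contribution of the strictly lower pieces (handled by the induction on the filtration), as a finite sum of integrals $P(q^{\pm s})\,\Psi(W',\phi',\chi_\alpha,s)$ with $W'\in W(\rho,\theta)$, $\rho$ an irreducible component of $\pi^{(n-k)}$, and $\phi'\in\sm_c(F^{[(k+1)/2]})$; such an integral divides $L^{lin}(\rho,\chi_\alpha,s)=\vee(L^{lin}_{ex}(\rho,\chi_\alpha,s),L^{lin}_{(0)}(\rho,\chi_\alpha,s))$, and by the induction on $n$ applied to $\rho$ — a generic representation of $G_k$ whose derivatives are again completely reducible of the prescribed shape, those of $\pi^{(n-k)}$ being among the $\pi^{(n-k')}$ with $k'<k\leq n-1$ — this divides $\vee_{i_{k'},k'\leq n-1}L^{lin}_{ex}(\pi_{i_{k'}}^{(n-k')},\chi_\alpha,s)$. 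Hence $\Psi_{(0)}(W,\chi_\alpha,s)$ divides the right-hand side for every $W$, so $L^{lin}_{(0)}(\pi,\chi_\alpha,s)$ does too, and combining with the two previous paragraphs the proposition follows. The only genuinely technical step, common to both divisibilities, is this ``peeling'' computation: carrying out the change of variables of Lemma \ref{integralformula} on $\Psi_{(0)}(W,\chi_\alpha,s)$, checking convergence of the auxiliary integrations to Laurent polynomials, and matching normalisations so that the surviving integral is, without any shift in $s$, (a Laurent-polynomial multiple of) the Bump--Friedberg integral of the derivative — all while tracking the systematic even/odd bifurcation in the definitions. This is the faithful counterpart of the corresponding step in \cite{CP} and involves no new idea, only careful bookkeeping.
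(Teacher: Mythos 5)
Your overall plan is the right one and matches the paper: reduce the second identity to the first via $L^{lin}=L^{lin}_{rad(ex)}\,L^{lin}_{(0)}$ and Proposition \ref{radexlin}, then prove the first by double divisibility, using Proposition \ref{mirabolicrestriction}, Lemma \ref{integralformula}, Lemma 9.2 of \cite{JPS}, and induction on $n$, all by transposing the proof of Theorems 2.1/2.2 of \cite{CP}. The reduction of the second identity to the first, as you describe it, is essentially the paper's argument.

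However, your first divisibility step (each $L^{lin}_{ex}(\pi_{i_k}^{(n-k)},\chi_\alpha,s)$ divides $L^{lin}_{(0)}(\pi,\chi_\alpha,s)$) has a genuine gap. You try to ``peel'' $\Psi_{(0)}(W,\chi_\alpha,s)$ down to level $k$ via Lemma \ref{integralformula} and then substitute the identity $W(\mathrm{diag}(g,I_{n-k}))\phi'(\eta_k g)=W'(g)\phi'(\eta_k g)|g|^{(n-k)/2}$ to obtain $\Psi(W',\phi',\chi_\alpha,s)$, hence an inclusion of ideals $I(\rho,\chi_\alpha)\subseteq I_{(0)}(\pi,\chi_\alpha)$. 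But after integrating out $b$ and $m$ there is no $\phi'$-cutoff left in the inner $N_k^\sigma\backslash H_k$-integral: it runs over all of $H_k$, while the relation from Proposition \ref{mirabolicrestriction} between $W$ and $W'$ holds \emph{only} on the support of a sufficiently small $\phi'$. So the peeling cannot be turned into $\Psi(W',\phi',\chi_\alpha,s)$ as claimed, and the alleged ideal inclusion is not established (nor is it needed; the proposition only asserts divisibility of $L^{lin}_{ex}$). What the paper does instead is a pole-by-pole dichotomy that your proof is missing: starting from an exceptional pole $s_0$ of order $d$, it writes, for $\phi''$ a small enough characteristic function, $\Psi_{(n-k-1)}(W,\chi_\alpha,s)=\Psi(W',\phi'',\chi_\alpha,s)+\int W(\mathrm{diag}(h,I_{n-k}))(1-\phi''(l_k(h)))\cdots dh$, observes that \emph{both} the full integral $\Psi_{(n-k-1)}(W,\chi_\alpha,s)$ and the $(1-\phi'')$-remainder lie in $I_{(0)}(\pi,\chi_\alpha)$ (the former by Lemma 9.2 of \cite{JPS} combined with Lemma \ref{integralformula}, the latter after one more peeling step), and concludes that at least one of them inherits the pole of order $\geq d$, hence so does $L^{lin}_{(0)}(\pi,\chi_\alpha,s)$. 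The same $\phi$/$(1-\phi)$-splitting, together with the choice of the \emph{smallest} $k$ at which a given pole order is achieved and a decomposition $W=\sum_i W_i$ with $W_i$ projecting to a fixed irreducible summand of $\pi^{(n-k)}$, is the engine of the reverse divisibility; your description of that step as ``induction from the bottom piece upward, modulo lower pieces'' is not well-posed, since the Bernstein--Zelevinsky filtration of $W(\pi,\theta)_{|P_n}$ has successive \emph{quotients} isomorphic to $(\Phi^+)^{k-1}\Psi^+\pi^{(k)}$, not direct summands, so ``modulo the lower pieces'' does not define a decomposition of $W$ itself.
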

\begin{proof}
Let's first deduce the second equality from the first one. One must show that 
$$\vee_{i_k,k\leq n} L^{lin}_{ex} (\pi_{i_k}^{(n-k)},\chi_\alpha,s)=L_{rad(ex)}^{lin}(\pi,\chi_\alpha,s)L_{(0)}^{lin}(\pi,\chi_\alpha,s),$$ assuming 
$$L_{(0)}(\pi,\chi_\alpha,s)=\vee_{i_k,k\leq n-1} L^{lin}_{ex} (\pi_{i_k}^{(n-k)},\chi_\alpha,s).$$
Hence we want to prove $L_{rad(ex)}^{lin}(\pi,s)L_{(0)}^{lin}(\pi,\chi_\alpha,s)=L_{ex}^{lin}(\pi,\chi_\alpha,s)\vee L_{(0)}^{lin}(\pi,\chi_\alpha,s)$, or 
equivalently $$L_{rad(ex)}^{lin}(\pi,\chi_\alpha,s)=\frac{L_{ex}^{lin}(\pi,\chi_\alpha,s)\vee L_{(0)}^{lin}(\pi,\chi_\alpha,s)}{L_{(0)}^{lin}(\pi,\chi_\alpha,s)}.$$ 
Both sides have simple poles according to Proposition \ref{radexlin}, and they obviously have the same poles according to the same proposition, but as they are Euler factors, they are equal.\\
Now we prove \begin{equation}\label{ppcm} L_{(0)}(\pi,\chi_\alpha,s)=\vee_{i_k,k\leq n-1} L^{lin}_{ex} (\pi_{i_k}^{(n-k)},\chi_\alpha,s).\end{equation} 
Let $s_0$ be a pole of order $d$ of the factor $L^{lin}_{ex} (\pi_{i_k}^{(n-k)},s)$, it certainly occurs as a pole of arder $d$ of an integral $\Psi(W',\phi',\chi_\alpha,s)$, for $W'$ in $W(\pi_{i_k}^{(n-k)},\theta)$ and 
$\phi'$ in $\sm_c(F^{[(k+1)/2]})$ which does not vanish at zero. In fact we can choose $\phi'$ such that $\phi'(0)=1$, and we can even replace $\phi'$ by the characteristic 
function $\phi''$ of a neighbourhood of zero small enough, because $s_0$ occurs as a pole of order $<d$ of $\Psi(W',\phi'-\phi'',\chi_\alpha,s)$. We notice that the support of $\phi''$ can be taken as small as needed. Then, according 
to Proposition \ref{mirabolicrestriction}, there is $W$ in $W(\pi,\theta)_k\subset W(\pi,\theta)$ such that 
$$\Psi(W',\phi'',\chi_\alpha,s)=\int_{N_k^\sigma \backslash H_k} W(diag(h,I_{n-k}))\phi''(l_k(h))\chi_\alpha(h)\chi_k(h)|h|^{s+(k-n)/2}dh.$$ 
Suppose that $s_0$ is a pole of order at least $d$ of the integral 
$$\Psi_{(n-k-1)}(W,\chi_\alpha,s)=\int_{N_k^\sigma \backslash H_k} W(diag(h,I_{n-k}))\chi_\alpha(h)\chi_k(h)|h|^{s+(k-n)/2}dh.$$ 
This integral 
is equal to  $\Psi_{(0)}(W_1,\chi_\alpha,s)$ for some $W_1$ in $W(\pi,\theta)$, as a consequence of Lemma 9.2. of \cite{JPS} and Lemma 
\ref{integralformula}, hence $s_0$ is a pole of order at least $d$ of $L_{(0)}^{lin}(\pi,\chi_\alpha,s)$. Otherwise it is a pole of order at least $d$ 
of $$\int_{N_k^\sigma \backslash H_k} W(diag(h,I_{n-k}))(1-\phi''(l_k(h)))\chi_\alpha(h)\chi_k(h)|h|^{s+(k-n)/2}dh.$$ But as the map $1-\phi''$ vanishes at zero, 
this implies, thanks to the Iwasawa decomposition $G_k=P_kZ_k K_k$, as $W(diag(pzk,I_{n-k}))$ vanishes for $|z|>>0$, 
that this integral is a $\C[q^{\pm s}]$-combination of integrals of the form 
 $$\int_{N_{k-1}^\sigma \backslash H_{k-1}} W'(diag(hk_0,I_{n-k}))\chi_\alpha(h)\chi_{k-1}(h)|h|^{s+(k-n-1)/2}dh$$ 
$$= \int_{N_{k-1}^\sigma \backslash H_{k-1}} W''(diag(h,I_{n+1-k}))\chi_\alpha(h)\chi_{k-1}(h)|h|^{s+(k-n-1)/2}dh.$$
 for some $k_0$ in $K_k$ (see Equation 
(\ref{iwaz})), and $W''=\rho(diag(k_0,I_{n-k}))W'$. Again, by Lemma 9.2. of \cite{JPS} and Lemma 
\ref{integralformula}, this integral is of the form $\Psi_{(0)}(W_2,\chi_\alpha,s)$ for some $W_2$ in $W(\pi,\theta)$. This proves that the right hand side divides the left hand side in equality (\ref{ppcm}).\\
 It remains to show that any pole of order $d$ of $L_{(0)}^{lin}(\pi,\chi_\alpha,s)$, occurs as a pole of order $d$ of some factor $L^{lin}_{ex} (\pi_{i_k}^{(n-k)},\chi_\alpha,s)$, for some $k$ and $i_k$. We show this by induction on $n$. Let $s_0$ be a pole of order $d$ of $L_{(0)}^{lin}(\pi,s)$, it is a pole of order $d$ in an integral $$\Psi_{(0)}(W,\chi,s)=\int_{N_{n-1}^\sigma\backslash H_{n-1}} W(diag(h,1))\chi_\alpha(h)\chi_{n-1}(h)|h|^{s-1/2}dh$$ for some 
 $W$ in $W(\pi,\theta)$. Let $k$ be the smallest integer such that $s_0$ is a pole of order at least $d$ of an integral of the form 
 $$\Psi_{(n-k-1)}(W,\chi_\alpha,s)=\int_{N_{k}^\sigma\backslash H_{k}} W(diag(h,I_{n-k}))\chi_\alpha(h)\chi_k(h)|h|^{s-(n-k)/2}dh$$ 
 for some $W$ in $W(\pi,\theta)$. Then for any $\phi$ in $\sm_{c}(F^{[(k+1)/2]})$ whith $\phi(0)=1$, and $W$ in $W(\pi,\theta)$, the order of $s_0$ in  
 $$\int_{N_{k}^\sigma\backslash H_{k}} W(diag(h,I_{n-k}))(1-\phi(l_k(h)))\chi_\alpha(h)\chi_k(h)|h|^{s-(n-k)/2}dh $$ 
 is $<d$, because we already saw that one could express such an integral as a $\C[q^{\pm s}]$-combination of integrals of the form 
 $\Psi_{(n-k-2)}(W',\chi_\alpha,s)$. Hence let $W$ be such that $s_0$ is a pole of order $\geq d$ of $\Psi_{(n-k-1)}(W,\chi_\alpha,s)$, it is thus a pole of order $\geq d$ of the integral $$\int_{N_{k}^\sigma\backslash H_{k}} W(diag(h,I_{n-k}))\phi(l_k(h))\chi_\alpha(h)\chi_k(h)|h|^{s-(n-k)/2}dh $$ for nay 
 $\phi$ with $\phi(0)=1$. 
Write $\pi^{(n-k)}=\pi_1^{(n-k)} \oplus \dots \oplus \pi_l^{(n-k)}$ a decomposition of $\pi^{(n-k)}$ into a sum of simple factors, then $W$ can be written as 
$W_1+\dots+W_t$, with each $W_i$ projecting on some $W'_i$ in $W(\pi_i^{(n-k)},\theta)$ via the surjection $\pi\rightarrow \pi^{(n-k)}$. 
There must be $i$ such that $s_0$ is a pole of order at least $d$ of 
$\Psi_{(n-k-1)}(W_i,\chi_\alpha,s)$, hence a pole of order at least $d$ of $$\int_{N_{k}^\sigma\backslash H_{k}} W_i(diag(h,I_{n-k}))\phi(l_k(h))\chi_\alpha(h)\chi_k(h)|h|^{s-(n-k)/2}dh $$ for any $\phi$ with $\phi(0)=1$. Taking $\phi$ the characteristic function of a small enough neighbourhood of zero, and applying Proposition \ref{mirabolicrestriction}, the integral 
$$\int_{N_{k}^\sigma\backslash H_{k}} W_i(diag(h,I_{n-k}))\phi(l_k(h))\chi_\alpha(h)\chi_k(h)|h|^{s-(n-k)/2}dh$$ is equal to $\Psi(W'_i,\phi,\chi_\alpha,s)$, hence 
$\Psi(W'_i,\phi,\chi_\alpha,s)$ has a pole of order at least $d$ at $s_0$, and $L(\pi_i^{(n-k)},\chi_\alpha,s)$ as well. If $s_0$ is exceptional, we are done, 
otherwise it is a pole of the factor $L_{(0)}(\pi_i^{(n-k)},\chi_\alpha,s)$, and we conclude by induction, as the irreducible factors of the derivatives of 
$\pi_i^{(n-k)}$ appear amongst those of the derivatives of $\pi$.
\end{proof}

\subsection{Rationality of the Rankin-Selberg integrals under deformation}\label{rationalintegral}

We fix a representation $\pi=\D_1\times \dots \times \D_t$ of Whittaker type of $G_n$, and $\alpha$ a character of $F^*$. If 
$u=(u_1,\dots,u_t)$ is an element of $\mathcal{D}^t$, we recall that $\pi_u$ is the representation 
$$\pi_u=|.|^{u_1}\D_1\times\dots\times|.|^{u_1}\D_t.$$
We recall from section $3$ of \cite{CP}, that to any $f$ in the space $V_\pi$ of $\pi$, and any $u$ in $\mathcal{D}^t$, one can 
associate an element $W_{f,u}=W_{f_u}$ (i.e. which depends on $f_u$, see Section \ref{Bernstein}) in $W(\pi_u,\theta)$, such that if $g$ 
belongs to $G_n$, then $u\mapsto W_{f,u}(g)$ belongs to 
$\C[q^{\pm u}]$. Moreover, the map $f_u\mapsto W_{f,u}$ is a $G_n$-equivariant linear map, which is an isomorphism between $V_{\pi_u}$ and 
$W(\pi_u,\theta)$ whenever and $\pi_u$ is of Langlands' type. 
We define $W_\pi^{(0)}$ the complex vector space generated by the functions $(u,g)\mapsto W_{f_u}(gg')$ for $g'\in G_n$ and $W\in W(\pi,\theta)$. 
It is shown in \cite{CP} that the action of the group $G_n$ on $W_\pi^{(0)}$ by right translation is a smooth representation, and we denote 
by $W_{\pi,(0)}$ the space of restrictions of functions of $W_{\pi}^{(0)}$ to $P_n$. We denote by $\mathcal{P}_0$ the vector subspace of 
$\mathbb{C}[q^{\pm{u}}]$ consisting of all Laurent polynomials of the form $u\mapsto W(u,I_n)$ for some $W\in W_{\pi,(0)}$. We 
recall Proposition 3.1 of \cite{CP}.

\begin{prop}\label{incl}
Let $\pi$ be a representation of Whittaker type of $G_n$, the complex vector space $W_{\pi,(0)}$ defined above contains 
the space $\sm_c(N_n\backslash P_n, \mathcal{P}_0, \theta)$.
\end{prop}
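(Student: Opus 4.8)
The plan is to run the Bernstein--Zelevinsky filtration of Proposition \ref{BZfiltration} directly on the smooth $P_n$-module $W_{\pi,(0)}$, mirroring the non-deformed situation recalled in Remark \ref{Kir}. There, for a single representation $\sigma$ of Whittaker type, $W(\sigma,\theta)_{|P_n}$ contains $\sm_c(N_n\backslash P_n,\theta)=ind_{N_n}^{P_n}(\theta)$ as the bottom graded piece of its $P_n$-filtration; the present statement is the ``universal'' version over $\C[q^{\pm u}]$, the scalar bottom piece being replaced by $\sm_c(N_n\backslash P_n,\mathcal{P}_0,\theta)$.

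First I would note that $W_{\pi,(0)}$, being the restriction to $P_n$ of the smooth $G_n$-representation $W_\pi^{(0)}$, is itself a smooth $P_n$-module, so Proposition \ref{BZfiltration} applies to $\tau:=W_{\pi,(0)}$ and furnishes a $P_n$-submodule $\tau_n\subset\tau$ — the bottom piece — together with a $P_n$-isomorphism $\tau_n\simeq(\Phi^+)^{n-1}\Psi^+(\tau^{(n)})$. Then I would make the two relevant functorial computations explicit. On the one hand, since $P_1$ is the trivial group, $\Psi^+$ sends a vector space $V$ (viewed as a trivial $G_0$-module) to $V$ with trivial $P_1$-action, and iterating the definition of $\Phi^+$ gives the canonical identification $(\Phi^+)^{n-1}\Psi^+(V)=\sm_c(N_n\backslash P_n,V,\theta)$ inside $ind_{N_n}^{P_n}(\theta)\otimes_\C V$; the case $V=\C$ is precisely the computation underlying Remark \ref{Kir}. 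On the other hand, the evaluation map $W\mapsto(u\mapsto W(u,I_n))$ from $\tau$ to $\C[q^{\pm u}]$ is, by the definitions of $W_\pi^{(0)}$ and $\mathcal{P}_0$, a linear surjection onto $\mathcal{P}_0$, and it transforms under $N_n$ by $\theta$ because $W(u,np')=\theta(n)W(u,p')$; hence, using the isomorphism $Hom_{N_n}(\tau,\theta)\simeq\tau^{(n)}$ recalled before Proposition \ref{BZfiltration}, it factors through $\tau\twoheadrightarrow\tau^{(n)}$ and produces a surjection $\tau^{(n)}\twoheadrightarrow\mathcal{P}_0$, which splits because the target is a complex vector space.

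Applying the exact functor $(\Phi^+)^{n-1}\Psi^+$, which moreover commutes with tensoring by a fixed complex vector space, to this split surjection gives a split $P_n$-surjection $\tau_n\simeq(\Phi^+)^{n-1}\Psi^+(\tau^{(n)})\twoheadrightarrow(\Phi^+)^{n-1}\Psi^+(\mathcal{P}_0)=\sm_c(N_n\backslash P_n,\mathcal{P}_0,\theta)$. The last point to check is that, once transported through the explicit models above, this surjection is nothing but ``regard $W\in\tau_n\subset W_{\pi,(0)}$ as a function of $(u,p)$'': indeed the isomorphism $\tau_n\simeq(\Phi^+)^{n-1}\Psi^+(\tau^{(n)})$ is the one under which $W$ corresponds to $p\mapsto[\,\text{image of }\rho(p)W\text{ in }\tau^{(n)}\,]$, so that composing with $\tau^{(n)}\twoheadrightarrow\mathcal{P}_0$ gives $p\mapsto(u\mapsto(\rho(p)W)(u,I_n))=(u\mapsto W(u,p))$. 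Surjectivity of this ``identity map'' onto $\sm_c(N_n\backslash P_n,\mathcal{P}_0,\theta)$ then says exactly that every element of $\sm_c(N_n\backslash P_n,\mathcal{P}_0,\theta)$ already lies in $\tau_n$, hence in $W_{\pi,(0)}$, which is the assertion.

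The main obstacle is not the abstract structure but the bookkeeping ensuring everything is uniform in $u$ and compatible with the function-space realization: one must verify that the submodule $\tau_n$ produced abstractly by Proposition \ref{BZfiltration} consists precisely of the functions in $W_{\pi,(0)}$ compactly supported modulo $N_n$ with support bounded independently of $u$, that the two functorial identifications above are compatible with these concrete models, and that the filtration — stated for an ordinary $P_n$-module — interacts correctly with the $\C[q^{\pm u}]$-structure of $W_{\pi,(0)}$. A more hands-on alternative avoiding the module-level filtration would be to fix a generic $u_0$, realize a $P_n$-spanning family of $\sm_c(N_n\backslash P_n,\theta)$ inside the bottom piece of $W(\pi_{u_0},\theta)_{|P_n}$, lift the corresponding vectors to flat sections $f\in V_\pi$, and control, uniformly in $u$, the support modulo $N_n$ of the restrictions to $P_n$ of the functions $W_{f,u}$; in that approach the difficulty becomes producing a single compact-open subgroup of $P_n$ that works for all $u$ simultaneously, which is where the asymptotics of Proposition \ref{DL} and the definition of $\mathcal{P}_0$ come in.
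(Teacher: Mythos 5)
The paper offers no proof of this statement: it simply quotes Proposition 3.1 of \cite{CP}. Your route is a structural alternative, and I believe it is correct in substance. Running the Bernstein--Zelevinsky filtration of Proposition \ref{BZfiltration} on $\tau=W_{\pi,(0)}$, identifying the bottom piece $\tau_n\simeq(\Phi^+)^{n-1}\Psi^+(\tau^{(n)})=\sm_c(N_n\backslash P_n,\tau^{(n)},\theta)$, and then pushing along the evaluation surjection $\tau^{(n)}\twoheadrightarrow\mathcal{P}_0$ is tidy, and in fact yields the stronger conclusion $\tau_n=\sm_c(N_n\backslash P_n,\mathcal{P}_0,\theta)$. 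However, the whole argument rests on the one claim you state but do not justify: that the abstract isomorphism $\tau_n\simeq(\Phi^+)^{n-1}\Psi^+(\tau^{(n)})$ is, concretely, the Kirillov map $W\mapsto\bigl(p\mapsto[\rho(p)W]\bigr)$. That is where the real content lives: one must check that a $W$ in $\tau_n$ has compact support on $P_n$ mod $N_n$ uniformly in $u$, that this map is injective on $\tau_n$, and that the modulus normalizations built into $\Phi^{\pm}$ only multiply functions by locally constant, left-$N_n$-invariant powers of $|det|$ and hence only rescale, so preserve, $\sm_c(N_n\backslash P_n,\mathcal{P}_0,\theta)$. These are the same uniformity-in-$u$ issues the hands-on variant you sketch at the end must confront directly via the essential vector, Remark \ref{Kir} and the asymptotics of Proposition \ref{DL} — and that hands-on variant is, as far as I can tell, what \cite{CP} actually do. So: right idea, arguably a cleaner packaging than the original, but as written the key identification is asserted rather than carried out and needs to be made explicit for the proof to be complete.
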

 
We are going to show that the integrals $\Psi(W_{f,u},\phi,\chi_\alpha,s)$ are rational functions in $q_F^{-u}$ and $q_F^{-s}$. We 
first state a corollary of the preceding 
proposition, which will allow us to get a unique solution of some linear system in the proof of the next theorem.

\begin{cor}\label{normal}
Let $(f_\beta)_{\beta\in B}$ be a basis of $V_\pi$ indexed by a countable set $B$. Let $P_0$ be a nonzero element of $\mathcal{P}_0$, then there is a map 
$\phi_0$ in $\mathcal{C}_c^\infty (F^{[(n+1)/2]})$, a finite subset $F_B$ of $B$, and there are $g_\beta$'s in $G_n$, such that $$\Psi(\sum_{\beta\in F_B} \pi_u(g_\beta)W_{f_\beta,u},\phi_0,\chi_\alpha,s)=P_0(q^{\pm u}).$$
\end{cor}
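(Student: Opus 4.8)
The plan is to adapt the argument from the proof of Proposition~\ref{Ldef} (which produces the constant function $1$ inside the ideal $I(\pi,\chi_\alpha)$), starting this time from a deformed Whittaker function whose restriction to $P_n$ is a suitable ``bump'' valued in $\mathcal{P}_0$; the existence of such a function is exactly what Proposition~\ref{incl} guarantees. Fix a nonzero $P_0\in\mathcal{P}_0$. I would first choose a compact open subgroup $U$ of $P_n$, small enough that $U\cap N_n\subset\ker\theta$ and that on the image of $U\cap P_n^\sigma$ modulo $N_n^\sigma$ the characters $\chi_\alpha$ and $\mu_n^{1/2}$ are trivial and every determinant is a unit. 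Given a scalar $c>0$ to be fixed below, let $W_0$ be the function on $P_n$ supported on $N_nU$ with $W_0(nu)=c^{-1}\theta(n)P_0$ for $n\in N_n$, $u\in U$; it is well defined, smooth, compactly supported modulo $N_n$, and (since $\mathcal{P}_0$ is a $\C$-vector space) $\mathcal{P}_0$-valued, hence $W_0\in\sm_c(N_n\backslash P_n,\mathcal{P}_0,\theta)$. By Proposition~\ref{incl} it lies in $W_{\pi,(0)}$, so it is the restriction to $P_n$ of some $W\in W_\pi^{(0)}$, and for every $u\in\mathcal{D}^t$ the slice $W(u,\cdot)$ belongs to $W(\pi_u,\theta)$.

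Next I would feed $W(u,\cdot)$ and a well-chosen $\phi_0$ into the Iwasawa-type identity~(\ref{iwaz}). Taking $r$ large enough that $W(u,\cdot)$, $\chi_\alpha$ and $c_\pi\alpha^{\e_n}$ are invariant, respectively trivial, at level $r$, and setting $\phi_0$ to be the characteristic function of $l_n(K_{n,r}^\sigma)$, one argues exactly as in the proof of Proposition~\ref{Ldef}: the inner $F^*$-integral in~(\ref{iwaz}) degenerates to the volume of $1+\w^r\o$, the outer integral over $K_n^\sigma$ contributes only a positive constant, and what remains is a positive multiple of $\int_{N_n^\sigma\backslash P_n^\sigma}W_0(u,p)\chi_\alpha(p)\mu_n^{1/2}(p)|p|^{s-1/2}\,dp$, an integral over the compact support modulo $N_n^\sigma$ of $W_0|_{P_n^\sigma}$, a small neighbourhood of $I_n$. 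On that neighbourhood $\chi_\alpha\mu_n^{1/2}\equiv 1$ and $|p|\equiv 1$, so the whole expression equals $c_1 W_0(u,I_n)=c_1 c^{-1}P_0(q^{\pm u})$ with $c_1>0$ a volume constant independent of $s$ and of $u$; choosing $c=c_1$ normalises this to $P_0(q^{\pm u})$. No meromorphic continuation is needed here: for this $\phi_0$ every integral involved runs over a compact domain and is a finite sum, so $\Psi(W(u,\cdot),\phi_0,\chi_\alpha,s)=P_0(q^{\pm u})$ is a genuine identity of elements of $\C[q^{\pm u}]$.

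It then remains to rewrite $W$ in the shape demanded by the statement. By definition $W_\pi^{(0)}$ is spanned by the functions $(u,g)\mapsto W_{f,u}(gg')=(\pi_u(g')W_{f,u})(g)$ for $f\in V_\pi$ and $g'\in G_n$; expressing $W$ as a finite such combination, expanding every $f$ occurring in the basis $(f_\beta)_{\beta\in B}$, and using the linearity of $f\mapsto W_{f,u}$, one collects everything into a finite sum $\sum_{\beta\in F_B}\pi_u(g_\beta)W_{f_\beta,u}$ with $F_B\subset B$ finite and $g_\beta\in G_n$, which is the required form. The only genuinely delicate step is the collapse of the $K_n^\sigma$-integral above, i.e. verifying that with these choices of $r$ and $\phi_0$ the Iwasawa integral~(\ref{iwaz}) reduces to a positive multiple of the $N_n^\sigma\backslash P_n^\sigma$-integral with an $s$-independent (and here also $u$-independent) constant; but this is precisely the computation already carried out in the proof of Proposition~\ref{Ldef}, and since $u\mapsto W_{f,u}(g)$ is regular on $\mathcal{D}^t$ and the degenerate integral is a finite sum, the extra parameter $u$ introduces no new difficulty.
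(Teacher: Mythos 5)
Your proposal is correct and follows essentially the same route as the paper's own proof: invoke Proposition~\ref{incl} to manufacture a $W\in W_\pi^{(0)}$ whose restriction to $P_n\times\mathcal{D}^t$ is a small $\theta$-bump times $P_0$, choose $\phi_0$ as the characteristic function of $l_n(K_{n,r}^\sigma)$, collapse the Iwasawa identity~(\ref{iwaz}) exactly as in Proposition~\ref{Ldef} to land on a positive multiple of $P_0(q^{\pm u})$, and finally unwind $W$ through the definition of $W_\pi^{(0)}$ into a finite sum $\sum_{\beta\in F_B}\pi_u(g_\beta)W_{f_\beta,u}$. The extra details you supply (the explicit bump construction on $N_nU$, the normalisation by $c$, the remark that the integral is over a compact domain so no analytic continuation is needed) are all compatible with and merely flesh out the paper's more condensed argument.
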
 
\begin{proof}
Let $W$ belong to $W_{\pi}^{(0)}$, such that the restriction of $W$ to $P_n\times \mathcal{D}^t$ is given by $f(p)P_0(q^{\pm u})$, for $f$ in $\sm_c(N_n\backslash P_n, \theta)$ the characteristic function of a sufficiently small subgroup of $G_n$ with Iwahori decomposition; this is possible according to Proposition \ref{incl}. Now choose $\phi_0$ the characteristic function of $l_n(K_{n,r}^\sigma)$ for 
$r$ large enough, when we apply Equation (\ref{iwaz}) to $\Psi(W_u,\phi_0,\chi_\alpha,s)$ for $W_u:g\mapsto W(u,g)$, we deduce that 
$\Psi(W_u,\phi_0,\chi_\alpha,s)$ is equal to $P_0(q^{\pm u})$, up to multiplying $\phi_0$ by a positive real. Finally, by definition of $W_\pi^{(0)}$, there is a finite subset $F_B$ of $B$, and there are $g_\beta$'s in $G_n$, such that $W(u,g)=\sum_{\beta\in F_A} \pi_u(g_\beta)W_{f_\beta,u}(g)$ for all $u$ and $g$, and this concludes the proof. 
\end{proof}
 
 As often in this paper, we follow \cite{CP}, and apply Theorem \ref{Ber} to prove the rationality of the Rankin-Selberg integrals with respect to $u$.
 
\begin{thm}\label{rationality}
Let $f$ belong to the space of a representation $\pi$ of Whittaker type of $G_n$, $\phi$ belong to $C_c^{\infty}(F^{[(n+1)/2]})$, and $\alpha$ be a character of $F^*$. The integral
 $\Psi(W_{f,u},\phi,\chi_\alpha,s)$ is a rational function in $q^{-u}$ and $q^{-s}$.
\end{thm}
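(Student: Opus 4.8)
The plan is to deduce this from Bernstein's principle (Theorem \ref{Ber}), following the strategy of Section 3 of \cite{CP}, working on the torus $\mathcal{X}=\mathcal{D}^{t}\times\mathcal{D}$, which is nonsingular and irreducible with ring of regular functions $\C[q^{\pm u},q^{\pm s}]$.

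First I would fix a single model for all the deformed representations. By the flat section formalism recalled in Section \ref{Bernstein}, the restrictions $f_c=f_{|K_n}$ furnish a model $\mathcal{F}_\pi$ common to all the $\pi_u$, on which the $G_n$-action is polynomial in $q^{\pm u}$, and $f_c\mapsto W_{f,u}$ is a $G_n$-equivariant map to $W(\pi_u,\theta)$ with $u\mapsto W_{f,u}(g)$ in $\C[q^{\pm u}]$ for every $g$. I take $V=\mathcal{F}_\pi\otimes_\C\sm_c(F^{[(n+1)/2]})$, of countable dimension, with $H_n$ acting diagonally (on the second factor by the right-translation action defined just before Proposition \ref{Ldef}). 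For $d=(u,s)\in\mathcal{X}$ I consider the linear system $\Xi_d$ over $V$ consisting of the pairs
$$\Big(\pi_u(h)f_c\otimes(h\cdot\phi)-\chi_\alpha(h)^{-1}\chi_n(h)^{-1/2}|h|^{-s}\,f_c\otimes\phi,\ 0\Big),\qquad h\in H_n,$$
as $f_c,\phi$ run over bases of the two factors, together with one normalization pair $(f^{0}_c\otimes\phi_0,\ P_0(q^{\pm u}))$, where $P_0$ is a fixed nonzero element of $\mathcal{P}_0$ and $(f^{0},\phi_0)$ is produced by Corollary \ref{normal} so that $\Psi(W_{f^{0},u},\phi_0,\chi_\alpha,s)=P_0(q^{\pm u})$ identically in $(u,s)$. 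Since $|h|^{-s}$ lies in $\C[q^{\pm s}]$ and $\pi_u(h)f_c$ lies in $\C[q^{\pm u}]\otimes_\C\mathcal{F}_\pi$, the family $\{\Xi_d\}_{d\in\mathcal{X}}$ is polynomial in the sense of Theorem \ref{Ber}.

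Next I would exhibit a nonempty subset $\Omega\subset\mathcal{X}$, open for the complex topology, on which $\Xi_d$ has a unique solution. Any solution of the quasi-invariance part of $\Xi_d$ is an element of $Hom_{H_n}(\pi_u\otimes\sm_c(F^{[(n+1)/2]}),\chi_\alpha^{-1}\chi_n^{-1/2}|.|^{-s})$, and by Lemma \ref{hom1} applied to $\pi_u$, together with Proposition \ref{derwhittaker}, this space has dimension $\leq 1$ outside the finitely many hypersurfaces of $\mathcal{X}$ cut out by the relations $c_{k,i_k}(\w)=q^{ks+\e_k/2}\alpha(\w)^{-\e_k}$; since the $(n-k)$-exponents $c_{k,i_k}$ of $\pi_u$ are those of $\pi$ multiplied by suitable monomials in the $q^{u_i}$, these form a proper closed subset of $\mathcal{X}$. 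On the other hand, Proposition \ref{CV} applied to $\pi_u$ shows that $\Psi(W_{f,u},\phi,\chi_\alpha,s)$ converges absolutely, and depends holomorphically on $(u,s)$, on the region where $Re(s)$ exceeds a certain affine function of $Re(u_1),\dots,Re(u_t)$; a change of variables in the integral defining $\Psi$ (using that $l_n(hh')$ is the right translate of $l_n(h)$ by the block of $h'$ entering the action on $\sm_c(F^{[(n+1)/2]})$) shows that on that region the functional $\lambda_d:f_c\otimes\phi\mapsto\Psi(W_{f,u},\phi,\chi_\alpha,s)$ satisfies the quasi-invariance equations of $\Xi_d$, and it satisfies the normalization by the choice of $(f^{0},\phi_0)$. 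Taking $\Omega$ to be the intersection of this convergence region with the complements of those hypersurfaces and of $\{P_0=0\}$ yields a nonempty complex-open set on which $\lambda_d$ is nonzero (by the normalization), hence spans the at most one-dimensional solution space of the quasi-invariance equations, and is therefore the unique solution of $\Xi_d$.

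By Theorem \ref{Ber} there is then a unique $\lambda(d)\in V_{\mathcal{M}}^*$ with $\mathcal{M}=\C(q^{\pm u},q^{\pm s})$ solving $\Xi$ over $\mathcal{M}$ and agreeing with $\lambda_d$ on $\Omega$; evaluating at $f_c\otimes\phi$ gives an element of $\mathcal{M}$ that coincides with $\Psi(W_{f,u},\phi,\chi_\alpha,s)$ on the nonempty open set $\Omega$, hence everywhere the latter is defined (the absolutely convergent integral, and then its meromorphic continuation in $s$ from Proposition \ref{CV}), which is the asserted rationality. I expect the main point requiring care to be the nonemptiness of $\Omega$: one must check that the convergence inequalities of Proposition \ref{CV} for $\pi_u$ can be met while remaining off the exceptional loci of Lemma \ref{hom1} for $\pi_u$, and the latter is ensured by the monomial dependence of the $(n-k)$-exponents of $\pi_u$ on the $q^{u_i}$ via Proposition \ref{derwhittaker}, which makes those loci proper closed subvarieties of $\mathcal{X}$.
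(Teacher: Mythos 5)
Your proposal is correct and follows essentially the same route as the paper's proof: the same auxiliary space $V=\mathcal{F}_\pi\otimes\sm_c(F^{[(n+1)/2]})$, the same polynomial family of linear systems consisting of the $H_n$-quasi-invariance relations together with the normalization from Corollary \ref{normal}, uniqueness of solutions off finitely many hypersurfaces supplied by Lemma \ref{hom1}, convergence on a half-space supplied by Proposition \ref{CV}, and then Bernstein's Theorem \ref{Ber}. The only (minor, and welcome) refinement you add is to explicitly remove the zero locus of $P_0$ when forming $\Omega$; the paper leaves this implicit.
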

\begin{proof}
We use notations of Section \ref{Bernstein}. Let $V$ be the vector space $\mathcal{F}_{\pi}\otimes \sm_c(F^{[(n+1)/2]})$, let $f_{\beta,c}$ be a basis of $\mathcal{F}_\pi$, corresponding to the basis $f_\beta$ of $V_\pi$, and let $(\phi_\gamma)_{\gamma\in C}$ be a basis of $\sm_c(F^{[(n+1)/2]})$. We choose $F_B$, the $g_\beta$'s, $P_0$ and $\phi_0$ as in Lemma \ref{normal}. We denote by 
$d$ a general element $(u,s)$ in $\mathcal{D}^{t+1}$. 
For a fixed $d=(u,s)$ in 
$\mathcal{D}^{t+1}$, we define the systems 
$$\Xi'_d = \left\lbrace \begin{array}{lc} (\pi_u(h)\pi_u(g_i)f_{\beta,c}\otimes \rho(h)\phi_\gamma - \chi_\alpha^{-1}(h)\chi_n^{-1}(h)|h|^{-s} \pi_u(g_i) f_{\beta,c} \otimes \phi_\gamma,0), \\ \beta\in B, \gamma \in C, h\in H_n, g_i \in G_n   
\end{array} \right\rbrace$$ and 
$$E_d=\left\lbrace (\sum_{\beta\in F_B}  \pi_u(g_{\beta}) f_{\beta,c} \otimes \phi_0, P_0(q^{\pm{u}})) \right\rbrace $$
of elements of $ V\times \C$. We denote by $\Xi_d$ the union of those two systems.\\ 
 Now, thanks to Lemma \ref{CV}, there are linear affine forms $L_1,\dots, L_r$ on $\mathcal{D}^{t}$, such that when $(u,s)$ satisfies 
 $Re(s)>Re(L_i(u))$, the factor $\Psi(W_{f,u},\phi,\chi_\alpha,s)$ 
is defined by an absolutely convergent integral, for all $f$ in $V_\pi$, and all $\phi$ in $\sm_c(F^{[(n+1)/2]})$.\\
Moreover, thanks to Lemma \ref{hom1}, there are affine linear forms $L'_1,\dots,L'_{r'}$ on $\mathcal{D}^t$, such that outside the finite number of hyperplanes $H_1=\{q^s=q^{L_1(u)}\},\dots,H_{r'}=\{q^s=q^{L_{r'}(u)}\}$ of $\mathcal{D}^{t+1}$, the space of solutions of the system $\Xi'_d$ is of dimension $1$.\\
This implies, that on the intersection $\Omega$ of the half-spaces $Re(s)>Re(L_i(u))$, and of the complementary of the $H_j$'s in $\mathcal{D}^{t+1}$, the system 
$\Xi_d$ has a unique solution which we denote by $J_d:f_u\otimes \phi \mapsto \Psi (W_{f,u},\phi,\chi_\alpha,s)$. We can now apply Bernstein's Theorem (Theorem \ref{Ber}), which says that there is a unique solution $I\in V_{\C(\mathcal{D}^{t+1})}^*$ of $\Xi$, and that $I(d)=J_d$ on $\Omega$. Put in another way, for $(u,s)$ in $\Omega$, we have $$I(u,s)(W_{f,u},\phi,\chi_\alpha,s) =\Psi (W_{f,u},\phi,\chi_\alpha,s),$$ and the left side belongs to $\C(q^{\pm u},q^{\pm s})$. Now fix a $u$, then for $Re(s)$ greater than the real $Max_{i,j}\{Re(L_i(u)),Re(L'_j(u))\}$, we have $I(u,s)(W_{f,u},\phi,\chi_\alpha,s) =\Psi (W_{f,u},\phi,\chi_\alpha,s)$, and both are rational functions 
$q^{-s}$, hence they are equal this open set of $\mathcal{D}$, hence on $\mathcal{D}$. We conclude that $\Psi (W_{f,u},\phi,\chi_\alpha,s)=I(u,s)(W_{f,u},\phi,\chi_\alpha,s)$ on $\mathcal{D}^{t+1}$, and this concludes the proof.
\end{proof}

Thanks to the functional equation (Proposition \ref{fctequation}), we have the following corollary.

\begin{cor}\label{gammarational}
Let $\pi$ be a representation of $G_n$ of Whittaker type, and $\alpha$ be a character of $F^*$, then the factor 
$\gamma(\pi_u,\chi_\alpha,\theta,s)$ belongs to $\C(\mathcal{D}^{t+1})$.
\end{cor}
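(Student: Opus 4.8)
The plan is to realise $\gamma^{lin}(\pi_u,\chi_\alpha,\theta,s)$ as a quotient of two Bump--Friedberg integrals — one attached to $\pi_u$, the other to the Whittaker type representation $\widetilde{\pi_u}$ — and to apply the rationality theorem (Theorem \ref{rationality}) on both sides. Fix a basis $(f_\beta)_{\beta\in B}$ of $V_\pi$. By Corollary \ref{normal} there are a finite subset $F_B\subset B$, elements $g_\beta\in G_n$, a function $\phi_0\in\sm_c(F^{[(n+1)/2]})$ and a nonzero Laurent polynomial $P_0\in\C[q^{\pm u}]$ such that, setting $W^\star_u:=\sum_{\beta\in F_B}\pi_u(g_\beta)W_{f_\beta,u}\in W(\pi_u,\theta)$, one has $\Psi(W^\star_u,\phi_0,\chi_\alpha,s)=P_0(q^{\pm u})$. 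Since $\pi_u$ is of Whittaker type, the functional equation of Proposition \ref{fctequation} applies to it, and as both of its sides are linear in the Whittaker function we may take $W=W^\star_u$, $\phi=\phi_0$ there; dividing by $P_0(q^{\pm u})$ gives
$$\gamma^{lin}(\pi_u,\chi_\alpha,\theta,s)=\frac{\Psi(\widetilde{W^\star_u},\chi_\alpha^{-1}\d_n^{-1/2},\widehat{\phi_0}^{\theta},1/2-s)}{P_0(q^{\pm u})}$$
wherever $P_0(q^{\pm u})\neq 0$. As $\gamma^{lin}(\pi_u,\chi_\alpha,\theta,s)$ is, for each $u$, a rational function of $q^{-s}$, it will follow that it lies in $\C(\mathcal{D}^{t+1})$ once we know that the numerator on the right does.

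To see this, note that $\chi_\alpha^{-1}\d_n^{-1/2}=\chi_{\alpha^{-1}|.|^{-1/2}}$ is again a character of $H_n$ of the shape $\chi_\beta$ with $\beta$ a character of $F^*$, that $\widehat{\phi_0}^\theta\in\sm_c(F^{[(n+1)/2]})$, and that $\widetilde{W^\star_u}\in W(\widetilde{\pi_u},\theta^{-1})$. Since $\widetilde\pi=\D_t^\vee\times\dots\times\D_1^\vee$ is of Whittaker type and $\widetilde{\pi_u}\simeq(\widetilde\pi)_{u'}$ with $u'=(-u_t,\dots,-u_1)$ — so that $u\mapsto u'$ is an algebraic automorphism of $\mathcal{D}^t$ — the whole theory of Section \ref{The Rankin-Selberg integrals}, and in particular Theorem \ref{rationality}, applies verbatim to $\widetilde\pi$, to the additive character $\theta^{-1}$, and to the character $\chi_\beta$. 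Moreover, because $W^\star_u(g)$ is a Laurent polynomial in $q^{\pm u}$ for every $g\in G_n$, so is $\widetilde{W^\star_u}(g)=W^\star_u(w^t\!g^{-1})$; transported through the isomorphism $\widetilde{\pi_u}\simeq(\widetilde\pi)_{u'}$, the family $\widetilde{W^\star_u}$ thus belongs, in the variable $u'$, to the space $W^{(0)}_{\widetilde\pi}$ associated with $\widetilde\pi$ and $\theta^{-1}$ (this is checked exactly as in Section 3 of \cite{CP}). Using in addition that $\Psi(-,\cdot,\chi_\beta,s')$ is $\C[q^{\pm u'}]$-linear in its first argument, Theorem \ref{rationality} (extended to $W^{(0)}_{\widetilde\pi}$ by this linearity) shows that $\Psi(\widetilde{W^\star_u},\chi_{\alpha^{-1}|.|^{-1/2}},\widehat{\phi_0}^\theta,s')$ is rational in $q^{\pm u'}$ and $q^{\pm s'}$; composing with the automorphism $u\mapsto u'$ of $\mathcal{D}^t$ and with the substitution $s'=1/2-s$, the numerator above is rational in $q^{\pm u}$ and $q^{\pm s}$, and the corollary follows.

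The one step that is not purely bookkeeping is identifying $\widetilde{W^\star_u}$, up to the linear reparametrisation $u\mapsto(-u_t,\dots,-u_1)$, as an element of $W^{(0)}_{\widetilde\pi}$, so that the machinery of Theorem \ref{rationality} is available on the dual side. Everything else — linearity of the integrals in the Whittaker vector, the identity $\chi_\alpha^{-1}\d_n^{-1/2}=\chi_{\alpha^{-1}|.|^{-1/2}}$, and the fact that Section \ref{The Rankin-Selberg integrals} is insensitive to replacing $\theta$ by $\theta^{-1}$ — is routine.
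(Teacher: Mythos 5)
Your proof is correct and fills in, carefully, exactly what the paper leaves implicit: the paper simply cites Theorem~\ref{rationality} and Proposition~\ref{fctequation} and states the corollary, without checking that the dual-side integral $\Psi(\widetilde{W^\star_u},\chi_\alpha^{-1}\d_n^{-1/2},\widehat{\phi_0}^{\theta},1/2-s)$ is itself rational in $(q^{\pm u},q^{\pm s})$. You supply this missing verification by transporting the problem to $\widetilde\pi$ through the reparametrisation $u\mapsto u'=(-u_t,\dots,-u_1)$, observing $\chi_\alpha^{-1}\d_n^{-1/2}=\chi_{\alpha^{-1}|.|^{-1/2}}$, and then invoking Theorem~\ref{rationality} on the dual side (extended to $W_{\widetilde\pi}^{(0)}$ by linearity, which is indeed legitimate since the linear form $I\in V^*_{\C(\mathcal{D}^{t+1})}$ constructed in its proof can be evaluated on any $\C[\mathcal{D}^{t+1}]$-polynomial family $u\mapsto \pi_u(g')f_c\otimes\phi$ of vectors, not just on $f_c\otimes\phi$). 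This is the same route the paper intends; you have merely made the bookkeeping explicit. One cosmetic remark: the sentence ``As $\gamma^{lin}(\pi_u,\chi_\alpha,\theta,s)$ is, for each $u$, a rational function of $q^{-s}$, it will follow \dots'' is not needed for the conclusion --- once the numerator is in $\C(\mathcal{D}^{t+1})$ and $P_0\in\C[q^{\pm u}]$ is nonzero, the quotient already lies in $\C(\mathcal{D}^{t+1})$ and agrees with $\gamma^{lin}(\pi_u,\chi_\alpha,\theta,s)$ off the vanishing locus of $P_0$, which is what the corollary asserts.
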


\section{The equality of the Rankin-Selberg factor and the Galois factor}\label{egalitegalois}

\subsection{The inductivity relation for representations in general position}

In this section, we show that if $\pi=\D_1\times\dots\times\D_t$ is a representation of $G_n$ of Whittaker type, $\alpha$ is a character of 
$F^*$ with $Re(\alpha)\in[-1/2,0]$, and for $u$ in $\mathcal{D}^t$ in general position 
with respect to $(\pi,\chi_\alpha)$ (see Definition \ref{generalpos}), one has the relation 
$$L^{lin}(\pi_u,\chi_\alpha,s)=\prod_{1\leq i<j \leq t}L(|.|^{u_i}\D_i, |.|^{u_j}\D_j,2s)\prod_{k=1}^t L^{lin}(|.|^{u_k}\D_k,\chi_\alpha,s).$$

We now fix $\pi=\D_1\times\dots\times\D_t$ a representation of Whittaker type of $G_n$, with $t\geq 2$, for the
 rest of this section. We recall that for each $i$, the discrete series is a segment $[\rho_i,\dots,|.|^{l_i-1}\rho_i]$, with 
$\rho_i$ a cuspidal representation of $G_{r_i}$ and $\sum_{i=1} l_ir_i= n$.

\begin{df}\label{generalpos}
Let $\pi=\D_1\times\dots\D_t$ be a representation of Whittaker type of $G_n$, we say the $u=(u_1,\dots,u_t)\in \mathcal{D}^t$ is in 
general position with respect to $(\pi,\chi_\alpha)$, if:
\begin{description}
        \item{1.} For every sequence of non negative integers $(a_1,\dots,a_t)$ such that the representation 
$$|.|^{u_1}\D_1^{(a_1)}\times\dots\times|.|^{u_1}\D_t^{(a_t)}$$ is nonzero, this representation is irreducible.
	\item{2.} If $(a_1,\dots,a_t)$ and $(b_1,\dots,b_t)$ are two different sequences of positive integers, such that 
     $\sum_{i=1}^t a_i=\sum_{i=1}^t b_i$, and such that the representations $|.|^{u_1}\D_1^{(a_1)}\times\dots\times|.|^{u_t}\D_t^{(a_t)}$ and 
  $|.|^{u_1}\D_1^{(a_1)}\times\dots\times|.|^{u_t}\D_t^{(a_t)}$
 are non zero, then these two representations have distinct central characters.
	\item{3.} If $(i,j)\in \{1,\dots,t\}$, and $i\neq j$, then $L^{lin}(|.|^{u_i}\D_i,\chi_\alpha,s)$ and $L^{lin}(|.|^{u_j}\D_j,\chi_\alpha,s)$ have no common poles. 
	\item{4.} If $(i,j,k,l)\in \{1,\dots,t\}$, and $\{i,j\}\neq \{k,l\}$, then the factors $L(|.|^{u_i}\D_i,|.|^{u_j}\D_j,2s)$ and 
   $L(|.|^{u_k}\D_k,|.|^{u_l}\D_l,2s)$ have no common poles. 
        \item{5.} If $(i,j,k)\in \{1,\dots,t\}$, and $i\neq j$, the two Euler factors $L(|.|^{u_i}\D_i,|.|^{u_j}\D_j,2s)$ and $L^{lin}(|.|^{u_k}\D_k,\chi_\alpha,s)$ 
      have no common poles.
        \item{6.} If $(i,j)\in \{1,\dots,t\}$, and $i\neq j$, then $L^{lin}(|.|^{u_i}\D_i,\chi_\alpha,s)$ and $L(\alpha\otimes \D_j|.|^{u_j},s+1/2)$ have no common poles. 
        \item{7.} If $(i,j,k)\in \{1,\dots,t\}$, and $i\neq j$, then the Euler factors $L(|.|^{u_i}\D_i,|.|^{u_j}\D_j,2s)$ and $L(\alpha\otimes \D_k|.|^{u_k},s+1/2)$ 
      have no common poles.
        \item{8.} If $t=2$, and $\D_2$ is equal to $|.|^e\D_1^\vee$ for some complex number $e$, then 
 the space $$Hom_{P_n^\sigma}(|.|^{(u_1-u_2-e)/2}\D_1\times |.|^{(u_2-u_1+e)/2}\D_1^\vee,\chi_\alpha^{-1})$$ is of dimension at most $1$.
\end{description} 
\end{df}

We now check that outside a finite numbers of hyperplanes in $u$, the representation $\pi_u$ is in general position.

\begin{prop}
 Let $\pi$ be as above, the elements $u$ in $\mathcal{D}^t$ that are not in general position in with respect to $(\pi,\chi_\alpha)$ belong to a finite number of 
affine hyperplanes.
\end{prop}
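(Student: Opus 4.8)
The plan is to go through the eight conditions of Definition~\ref{generalpos} one by one and, for each, to show that the set of $u\in\mathcal{D}^t$ violating it is contained in a finite union of affine hyperplanes; since there are finitely many conditions and, within each, only finitely many relevant tuples of indices and of derivative degrees, taking the union of all these exceptional loci finishes the proof. Two elementary observations do most of the work. First, because $W(|.|^u\D,\theta)=|.|^uW(\D,\theta)$ for any segment $\D$, the integral definitions of the local factors give $L^{lin}(|.|^u\D,\chi_\alpha,s)=L^{lin}(\D,\chi_\alpha,s+u)$, $L(|.|^a\D,|.|^b\D',2s)=L(\D,\D',2s+a+b)$ and $L(\alpha\otimes\D|.|^u,s+1/2)=L(\alpha\otimes\D,s+1/2+u)$; since each of these is an Euler factor it has finitely many poles, and the poles of the twisted factor depend affine-linearly on the twisting parameters. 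Second, in $\mathcal{D}$ the equation $q^{-s}=\beta$ with $\beta\neq0$ has a unique solution, so the statement ``$s_0$ is a common pole of two such factors'' translates into finitely many affine-linear conditions on $u$.

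For condition~1 I would note that by Proposition~\ref{derwhittaker} there are only finitely many sequences $(a_1,\dots,a_t)$ for which $|.|^{u_1}\D_1^{(a_1)}\times\dots\times|.|^{u_t}\D_t^{(a_t)}$ is nonzero, and by the irreducibility criterion of Proposition~\ref{fourretout} such a representation fails to be irreducible precisely when two of its constituent segments are linked; writing $\D_i^{(a_i)}=[|.|^{j_i}\rho_i,\dots,|.|^{l_i-1}\rho_i]$, two twisted segments $|.|^{u_i}\D_i^{(a_i)}$ and $|.|^{u_j}\D_j^{(a_j)}$ can be linked only if $\rho_i,\rho_j$ are unramified twists of one another, and then linkage forces $u_i-u_j$ into an explicit finite set, so condition~1 fails only on finitely many hyperplanes $\{u_i-u_j=c\}$. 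For condition~2, the central character of $|.|^{u_1}\D_1^{(a_1)}\times\dots\times|.|^{u_t}\D_t^{(a_t)}$, viewed as a character of $F^*$, is of the form $\chi_{(a)}\prod_i|.|^{(n_i-a_i)u_i}$ with $n_i=l_ir_i$ and $\chi_{(a)}$ a fixed character; if $(a)\neq(b)$ with $\sum_ia_i=\sum_ib_i$ then the exponents $(n_i-b_i)-(n_i-a_i)=a_i-b_i$ are not all zero, so the two central characters either never agree (when $\chi_{(a)}/\chi_{(b)}$ is ramified) or agree exactly on the proper affine hyperplane $\sum_i(a_i-b_i)u_i=c$ for a suitable $c$; finitely many pairs give finitely many hyperplanes.

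Conditions~3--7 are uniform: each asks that two Euler factors chosen from $L^{lin}(|.|^{u_i}\D_i,\chi_\alpha,s)$, $L(|.|^{u_i}\D_i,|.|^{u_j}\D_j,2s)$ and $L(\alpha\otimes\D_k|.|^{u_k},s+1/2)$ have no pole in common. By the first observation the poles of these three kinds of factor lie (in $\mathcal{D}$) at $\beta_l-u_i$, $(\gamma_m-u_i-u_j)/2$ and $\delta_p-1/2-u_k$ respectively, for finitely many fixed $\beta_l,\gamma_m,\delta_p$; equating two of them yields an affine equation in $u$ whose linear part is $u_i-u_j$ for condition~3, $u_i+u_j-u_k-u_l$ for condition~4, $u_i+u_j-2u_k$ for conditions~5 and~7, and $u_j-u_i$ for condition~6. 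In each case the standing index hypotheses ($i\neq j$, resp. $\{i,j\}\neq\{k,l\}$) guarantee that this linear form is not identically zero, so each equation defines a proper affine hyperplane, and there are only finitely many of them.

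The main obstacle is condition~8, which I would treat as follows. It is vacuous unless $t=2$ and $\D_2\cong|.|^e\D_1^\vee$, in which case it is a condition on $a:=(u_1-u_2-e)/2$. Re-running the argument of the proof of Lemma~\ref{hom1} (the Bernstein--Zelevinsky filtration of the $P_n$-module $|.|^a\D_1\times|.|^{-a}\D_1^\vee$ together with Proposition~\ref{iso}), the space $Hom_{P_n^\sigma}(|.|^a\D_1\times|.|^{-a}\D_1^\vee,\chi_\alpha^{-1})$ carries a filtration whose bottom piece is one-dimensional and whose remaining pieces, indexed by $k\in\{1,\dots,n-1\}$ and by the constituents of the derivative $(|.|^a\D_1\times|.|^{-a}\D_1^\vee)^{(n-k)}$, vanish unless the central character of the relevant constituent equals a prescribed character of $Z_k\simeq F^*$ depending only on $k$, $n$ and $\alpha$, not on $a$. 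Each such constituent has the form $|.|^a\D_1^{(b_1)}\times|.|^{-a}(\D_1^{(b_2)})^\vee$ with $b_1+b_2=n-k$, and using $(\D_1^\vee)^{(b)}=(\D_1^{(b)})^\vee$ its central character at $\w$ is a fixed scalar times $q^{(b_1-b_2)a}$. When $b_1\neq b_2$ this genuinely depends on $a$, so the vanishing condition is a single non-trivial affine equation in $a$, excluding finitely many values; when $b_1=b_2$ the constituent is of the form $\tau\times\tau^\vee$, its central character is trivial, $k=n-2b_1$ is even, and the vanishing condition would read $q^{k/2}=1$, impossible for $k\geq1$ — so the ``symmetric'' derivative constituents never contribute, uniformly in $a$. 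Hence $Hom_{P_n^\sigma}(|.|^a\D_1\times|.|^{-a}\D_1^\vee,\chi_\alpha^{-1})$ is at most one-dimensional outside finitely many hyperplanes $\{u_1-u_2=c\}$, and combining with conditions~1--7 the proposition follows. The genuinely delicate point, as just indicated, is this observation that the symmetric derivative constituents drop out (otherwise condition~8 would risk failing on all of $\mathcal{D}^t$ rather than on a hypersurface); everything else is the bookkeeping of the affine dependence on $u$ of poles of Euler factors and of central characters.
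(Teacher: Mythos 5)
Your plan and its execution follow the paper's proof very closely: you run through the eight conditions of Definition \ref{generalpos} one at a time, and for conditions 1--7 the translations into finitely many affine equations in $u$ are essentially the paper's own (the paper simply writes out the resulting equations more explicitly, e.g.\ $|.|^{u_i+k}\rho_i=|.|^{u_j+l}\rho_j$ for condition 1 and $q^{m_im_j(u_j-u_i)}c_{\D_j^{(b_j)}}^{m_i}c_{\D_i^{(a_i)}}^{-m_j}(\w)=1$ for condition 3, whereas you phrase them via the observation that poles of twisted Euler factors shift affine-linearly in $u$). You also correctly single out condition 8 as the genuinely delicate one and attack it, as the paper does, by filtering the $P_n$-module $|.|^a\D_1\times|.|^{-a}\D_1^\vee$ and comparing central characters via Proposition \ref{iso}.

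There is, however, a concrete error in your treatment of condition 8. The step ``using $(\D_1^\vee)^{(b)}=(\D_1^{(b)})^\vee$'' is false. With the conventions of Proposition \ref{derwhittaker}, write $\D_1=[\rho,\dots,|.|^{l-1}\rho]$; then for $b=ir$ one has $(\D_1^\vee)^{(ir)}=[|.|^{1-l+i}\rho^\vee,\dots,\rho^\vee]$ while $(\D_1^{(ir)})^\vee=[|.|^{1-l}\rho^\vee,\dots,|.|^{-i}\rho^\vee]$; these differ by the twist $|.|^{i}$. Consequently the ``symmetric'' derivative constituent (your $b_1=b_2=ir$) is not of the form $\tau\times\tau^\vee$: it is $\sigma\times|.|^{i}\sigma^\vee$ with $\sigma=|.|^a\D_1^{(ir)}$, and its central character is $t\mapsto|t|^{ri(l-i)}$, which is not trivial for $1\leq i<l$. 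So your assertion ``its central character is trivial'' and the conclusion you draw from it (``the vanishing condition would read $q^{k/2}=1$'') are both wrong as stated. What saves the argument — and what the paper actually argues — is that this nontrivial central character $|t|^{ri(l-i)}$ has strictly positive real part, while the character that Proposition \ref{iso} forces on the $G_k^\sigma$ side has strictly negative real part; so the Hom space still vanishes for all $a$, just not for the reason you gave. Your final conclusion that the symmetric constituents drop out uniformly in $a$ is therefore correct, but the justification you state for this ``genuinely delicate point'' is not, and should be repaired along the lines above.
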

\begin{proof}
If the first condition is not satisfied. If $|.|^{u_1}\D_1^{(a_1)}\times\dots\times|.|^{u_1}\D_t^{(a_t)}$ is nonzero, then $\sum_{i=1}^ta_i\leq n$, hence 
there are a finite number of non negative sequences $(a_1,\dots,a_t)$ such that 
$|.|^{u_1}\D_1^{(a_1)}\times\dots\times|.|^{u_1}\D_t^{(a_t)}$ is nonzero.
If $|.|^{u_1}\D_1^{(a_1)}\times\dots\times|.|^{u_1}\D_t^{(a_t)}$ is reducible, this means that $|.|^{u_i}\D_i^{(a_i)}$ is linked to 
$|.|^{u_j}\D_j^{(a_j)}$ for some $i<j$. Hence there are two cuspidal representations $\rho_i$ and $\rho_j$, and two integers 
$k\in \{-l_i,\dots, l_i\}$ and $l\in \{-l_j,\dots, l_j\}$ such that 
$|.|^{u_i+k}\rho_i=|.|^{u_j+l}\rho_j$, hence $u$ belongs to the proper hyperplane $q^{-r(u_i-u_j+k-l)}=c_{\rho_i}^{-1}c_{\rho_j}(\w)$, where $r=r_i=r_j$. 
Hence $u$ belongs to a finite numbers of hyperplanes if $1.$ is not satisfied.\\
If the second condition is not satisfied, the central characters of $|.|^{u_1}\D_1^{(a_1)}\times\dots\times|.|^{u_1}\D_t^{(a_t)}$ and 
  $|.|^{u_1}\D_1^{(b_1)}\times\dots\times|.|^{u_1}\D_t^{(b_t)}$ are equal for two different sequences of nonnegative integers 
$(a_i)$ and $(b_i)$ such that $\sum_i a_i=\sum_i b_i$ such that these representations are nonzero. Again the sequences $(a_i)$ and $(b_i)$ 
belong to the finite set of sequences of non negative integers satisfying $\sum_i a_i\leq n$. 
Let $i$ be an element such that $a_i\neq b_i$, for example $a_i<b_i$, the equality of the central characters at $\w$ gives the relation: 
$q^{(b-a_i)u_i}\l=b(u_1,\dots,u_{i-1},u_{i+1},\dots,u_t),$ for $\l=c_{\D_i^{(b_i)}}(c_{\D_i^{(a_i)}})^{-1}(\w)$, and 
$b(u_1,\dots,u_{i-1},u_{i+1},\dots,u_t)=\prod_{j\neq i}c_{|.|^{u_j}\D_j^{(a_j)}}c_{|.|^{u_j}\D_j^{(b_j)}}^{-1}(\w)$. 
Hence $u$ belongs to a finite numbers of hyperplanes if $2.$ is not satisfied.\\
If condition 3. is not satisfied, then $L^{lin}(|.|^{u_i}\D_i,\chi_\alpha,s)$ and $L^{lin}(|.|^{u_j}\D_j,\chi_\alpha,s)$ have a common pole for some $i\neq j$. This implies 
that $L^{lin}_{ex}(|.|^{u_i}\D_i^{(a_i)},\chi_\alpha,s)$ and $L^{lin}_{ex}(|.|^{u_j}\D_j^{(b_i)},\chi_\alpha,s)$ 
have a common pole $s_0$ for some non negative $a_i$ and $b_j$ according to Proposition \ref{factL}. Thus 
$|.|^{u_j}\D_j^{(b_i)}$ and $|.|^{ u_i}\D_i^{(a_i)} $ are $(H_n,\chi_\alpha^{-1}|.|^{-s_0})$-distinguished according to Proposition \ref{radexlin}. But if $m_i\leq n$ and $m_j\leq n$ 
are the positive integers such that  $|.|^{u_j}\D_j^{(b_j)}$ and $|.|^{ u_i}\D_i^{(a_i)} $ are respectively representations of $G_{m_i}$ and $G_{m_j}$ (which must be even as they correspond to distinguished discrete series), 
then this gives the equation $q^{m_im_j(u_j-u_i)}c_{\D_j^{(b_j)}}^{m_i}c_{\D_i^{(a_i)}}^{-m_j}(\w)=1$ (as $\chi_\alpha$ restricted to 
$Z_{m_i}$ and $Z_{m_j}$ is trivial). Hence $u$ belongs to a finite numbers of hyperplanes 
if $3.$ is not satisfied.\\ 
If condition $4.$ is not satisfied, the factors $L(|.|^{u_i}\D_i,|.|^{u_j}\D_j,2s)$ and 
$L(|.|^{u_k}\D_k,|.|^{u_l}\D_l,2s)$ have a common pole for some $\{i,j\}\neq \{k,l\}$. According to Proposition \ref{factLpaires}, this implies 
that $L_{ex}(|.|^{u_i}\D_i^{(a_i)},|.|^{u_j}\D_j^{(a_j)},2s)$ and $L_{ex}(|.|^{u_k}\D_k^{(a_k)},|.|^{u_l}\D_l^{(a_l)},2s)$ have a common pole 
$s_0$ for some positive integers $a_i, a_j, a_k$ and $a_l$, which in turn implies $\D_j^{(a_j)}=|.|^{u_i-u_j-2s_0}(\D_i^{(a_i)})^\vee$ and 
$\D_l^{(a_l)}=|.|^{u_k-u_l-2s_0}(\D_k^{(a_k)})^\vee$ thanks to Proposition \ref{radexpaires}. Let $m_{i,j}\leq n$ and $m_{l,k}\leq n$ 
be the positive integers such that $\D_i^{(a_i)}$ and $\D_j^{(a_j)}$ are representations of $G_{m_{i,j}}$, and 
$\D_k^{(a_k)}$ and $\D_l^{(a_l)}$ are representations of $G_{m_{k,l}}$, we obtain the equation 
$$q^{m_{i,j}m_{k,l}(u_i-u_j-u_k+u_l)}(c_{\D_k^{(a_k)}}c_{\D_l^{(a_l)}})^{m_{i,j}}(\w)(c_{\D_i^{(a_i)}}c_{\D_j^{(a_j)}})^{-m_{k,l}}(\w) =1.$$
Hence $u$ belongs to a finite numbers of hyperplanes 
if $4.$ is not satisfied.\\ 
In a similar manner to $3.$ and $4.$, one proves that $u$ belongs to a finite numbers of hyperplanes 
if either $5.$, $6.$ or $7.$is not satisfied.\\
The last case $8.$ needs to be treated more carefully. We will actually check that except for a finite numbers of $s\in \mathcal{D}$, the space 
$Hom_{P_n^\sigma}(|.|^s\D_1\times |.|^{-s}\D_1^\vee,\chi_\alpha^{-1})$ is of dimension at most $1$; this will imply the result. Let's omit the index $1$ in the following, to simplify notations. According to Proposition 
\ref{derwhittaker}, the restriction to $P_n$ of the representation $|.|^s\D\times |.|^{-s}\D^\vee$ has a filtration by the $P_n$-modules 
$$(\Phi^+)^{(a+b)r-1}\Psi^+(|.|^s[|.|^a\rho,\dots,|.|^{l-1}\rho]\times |.|^{-s}[|.|^{1-l+b}\rho^\vee,\dots,\rho^\vee])$$ for $a+b\leq 2l$. In general, 
one has an isomorphism between the space 
$$Hom_{P_n^\sigma}((\Phi^+)^{(a+b)r-1}\Psi^+(|.|^s[|.|^a\rho,\dots,|.|^{l-1}\rho]\times |.|^{-s}[|.|^{1-l+b}\rho^\vee,\dots,\rho^\vee]),\chi_\alpha^{-1})$$ and 
$$Hom_{G_{n-(a+b)r}^\sigma}(|.|^s[|.|^a\rho,\dots,|.|^{l-1}\rho]\times |.|^{-s}[|.|^{1-l+b}\rho^\vee,\dots,\rho^\vee],\chi_\alpha^{-1}\mu_{(a+b)r}|.|^{-1/2}),$$ 
 according to Proposition \ref{iso}, where $\mu_{(a+b)r}=\d_{n-(a+b)r}^{-1/2}$ when $(a+b)r$ is even, and $\mu_{(a+b)r}$ is trivial when $(a+b)r$ is odd.
We deduce that for $a+b=2l$, the vector space of invariant linear forms $Hom_{P_n^\sigma}((\Phi^+)^{(a+b)r-1}\Psi^+(\1),\chi_\alpha^{-1})$ is of dimension at most $1$. 
If $a=b<l$, the central character of $$|.|^s[|.|^a\rho,\dots,|.|^{l-1}\rho]\times |.|^{-s}[|.|^{1-l+a}\rho^\vee,\dots,\rho^\vee]$$ is 
equal to $t\mapsto |t|^{2r(l-a)}$, hence
 $$Hom_{G_{n-(2a)r}^\sigma}(|.|^s[|.|^a\rho,\dots,|.|^{l-1}\rho]\times |.|^{-s}[|.|^{1-l+a}\rho^\vee,\dots,\rho^\vee],\chi_\alpha^{-1}\mu_{2ar}|.|^{-1/2})$$ 
is reduced to zero because $$t\mapsto \chi_\alpha^{-1}\mu_{2ar}(tI_{n-2ar})|t|^{(2ar-n)/2}=|t|^{(2ar-n)/2}$$ is different 
from  $t\mapsto |t|^{2r(l-a)}$ (they have respectively negative and positive real parts). If $a\neq b$, the central character of 
$$|.|^s[|.|^a\rho,\dots,|.|^{l-1}\rho]\times |.|^{-s}[|.|^{1-l+b}\rho^\vee,\dots,\rho^\vee]$$ is equal to 
$t\mapsto |t|^{sr(b-a)}c_{\rho}^{b-a}|t|^{r(a+\dots+l-1+(1-l+b)+\dots+-1)}$, hence 
$$q^{-sr(b-a)}c_{\rho}^{b-a}(\w)q^{-r(a+\dots+l-1+(1-l+b)+\dots+-1)}=\chi_\alpha^{-1}\mu_{(a+b)r}(\w I_{n-(a+b)r})q^{(n-(a+b)r)/2},$$
which is possible only for a finite number of $s\in \mathcal{D}$. This concludes the proof.
\end{proof}

The interest of considering representations in general position, is that the $L$ 
function $L^{lin}$ is easy to compute for those representations. From now on, untill the end of this pargraph, 
the character $\alpha$ of $F^*$ will satisfy $Re(\alpha)\in [-1/2,0]$, so that we can use Proposition \ref{translation}.

\begin{prop}\label{radexgeneral}
Let $u$ be in general position with respect to $(\pi,\chi_\alpha)$, and $\pi_u$ the representation $\D_1|.|^{u_1}\times \dots \times \D_t|.|^{u_t}$. One has 
$L^{lin}_{rad(ex)}(\pi_u,\chi_\alpha,s)=1$, except if $t=2$, in which case
 $L^{lin}_{rad(ex)}(\pi_u,\chi_\alpha,s)=L_{rad(ex)}(\D_1|.|^{u_1},\D_2|.|^{u_2} ,2s)$.
\end{prop}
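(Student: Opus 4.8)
The plan is to read off the exceptional poles of $L^{lin}(\pi_u,\chi_\alpha,s)$ from Proposition~\ref{translation} and to rebuild the factor $L^{lin}_{rad(ex)}$ from them. First I would recall that, by Proposition~\ref{radexlin}, $L^{lin}_{rad(ex)}(\pi_u,\chi_\alpha,s)$ is an Euler factor whose poles are simple and are exactly the exceptional poles of $L^{lin}(\pi_u,\chi_\alpha,s)$, and likewise $L_{rad(ex)}(\D_1|.|^{u_1},\D_2|.|^{u_2},2s)$ has simple zeros in $q^{-s}$; so it is enough to match the two pole sets. Then I would apply Proposition~\ref{translation} to $\pi_u=\D_1|.|^{u_1}\times\dots\times\D_t|.|^{u_t}$ (legitimate since $t\geq 2$ and $Re(\alpha)\in[-1/2,0]$): any pole $s_0$ of $L^{lin}_{rad(ex)}(\pi_u,\chi_\alpha,s)$ must lie in one of the six situations listed there. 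Using that $L_{rad(ex)}$ divides $L$ and $L^{lin}_{rad(ex)}$ divides $L^{lin}$, conditions $3,4,5,6,7$ of Definition~\ref{generalpos} rule out situations $1,2,3,4,5$ respectively. Hence, if $t\geq 3$, no such pole can occur, so the Euler factor $L^{lin}_{rad(ex)}(\pi_u,\chi_\alpha,s)$ is pole-free and equals $1$; and if $t=2$ only situation $6$ survives, so every pole of $L^{lin}_{rad(ex)}(\pi_u,\chi_\alpha,s)$ is a pole of $L_{rad(ex)}(\D_1|.|^{u_1},\D_2|.|^{u_2},2s)$, which gives the divisibility $L^{lin}_{rad(ex)}(\pi_u,\chi_\alpha,s)\mid L_{rad(ex)}(\D_1|.|^{u_1},\D_2|.|^{u_2},2s)$.

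It will then remain, for $t=2$, to establish the reverse divisibility. If $L_{rad(ex)}(\D_1|.|^{u_1},\D_2|.|^{u_2},2s)=1$ there is nothing to do; otherwise it has a pole at some $s_0$, which by Proposition~\ref{radexpaires} is an exceptional pole of $L(\D_1|.|^{u_1},\D_2|.|^{u_2},2s)$, and by Proposition~\ref{exceptionalpaires} this forces $\D_2|.|^{u_2}\simeq|.|^{-2s_0}(\D_1|.|^{u_1})^{\vee}$; in particular $\D_2=|.|^{e}\D_1^{\vee}$ for a suitable $e$, so condition $8$ of Definition~\ref{generalpos} is relevant. I would then set $\Pi=|.|^{(u_1-u_2-e)/2}\D_1\times|.|^{(u_2-u_1+e)/2}\D_1^{\vee}$; a direct computation of exponents, using the relation $\D_2|.|^{u_2}\simeq|.|^{-2s_0}(\D_1|.|^{u_1})^{\vee}$, gives $|.|^{s_0}\pi_u=\Pi$, hence $L^{lin}(\pi_u,\chi_\alpha,s)=L^{lin}(\Pi,\chi_\alpha,s-s_0)$, so it suffices to show $L^{lin}(\Pi,\chi_\alpha,s)$ has an exceptional pole at $0$. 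Writing $\D_1=|.|^{a}\D^{0}$ with $\D^{0}$ a unitary discrete series, one has $\Pi=|.|^{\tau}\D^{0}\times|.|^{-\tau}(\D^{0})^{\vee}$ with $\tau=(u_1-u_2-e)/2+a$, and $\Pi$ is irreducible by condition $1$ of Definition~\ref{generalpos}. Proposition~\ref{distinduites} then shows $\Pi$ is $(H_n,\chi_\alpha^{-1})$-distinguished, while the analogue of condition $8$ supplies that $Hom_{P_n^{\sigma}}(\Pi,\chi_\alpha)$ is at most one-dimensional (note $\Pi$ is self-dual, so this is the input $Hom_{P_n^{\sigma}}(\Pi^{\vee},\chi_\alpha)$ that Proposition~\ref{suffexceptionnel} asks for); applying Proposition~\ref{suffexceptionnel} then yields an exceptional pole of $L^{lin}(\Pi,\chi_\alpha,s)$ at $0$, hence of $L^{lin}(\pi_u,\chi_\alpha,s)$ at $s_0$. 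Combined with the divisibility already obtained and simplicity of all the poles, the equality follows.

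The hard part, which I expect to be the main obstacle, is exactly the multiplicity-one input in the last step: Proposition~\ref{suffexceptionnel} requires $Hom_{P_n^{\sigma}}(\Pi^{\vee},\chi_\alpha)$ to be at most one-dimensional, whereas condition $8$ of Definition~\ref{generalpos} is phrased with $\chi_\alpha^{-1}$, and the transpose-inverse involution does not stabilise $P_n^{\sigma}$ (it sends it into $P_n^{-}\cap H_n$), so one cannot simply identify the two. I would get around this by re-running, with the character $\chi_\alpha$ in place of $\chi_\alpha^{-1}$, the analysis of $Hom_{P_n^{\sigma}}$ of the successive quotients $(\Phi^{+})^{k}\Psi^{+}(\cdot)$ of the Bernstein--Zelevinsky filtration of $\Pi$ that is used to verify condition $8$ in the preceding proposition (via Proposition~\ref{iso} and the description of the derivatives in Proposition~\ref{derwhittaker}); that argument is insensitive to replacing $\chi_\alpha$ by $\chi_\alpha^{-1}$, so for $u$ in general position $Hom_{P_n^{\sigma}}(\Pi,\chi_\alpha)$ is at most one-dimensional as well. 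A secondary, purely bookkeeping issue is that when $s\mapsto 2s$ is two-to-one on $\mathcal{D}$ one must also exhibit the companion exceptional pole of $L^{lin}(\pi_u,\chi_\alpha,s)$; this is handled by the same argument applied to the twist of $\D^{0}$ by the unramified quadratic character of $F^{*}$, which is again a unitary discrete series.
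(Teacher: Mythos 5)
Your proof follows essentially the same route as the paper's: dispatch all but the $t=2$ case using Proposition~\ref{translation} together with conditions $3$--$7$ of general position; and for $t=2$, deduce $\D_2|.|^{u_2}\simeq|.|^{-2s_0}(\D_1|.|^{u_1})^\vee$ from Propositions~\ref{radexpaires} and~\ref{exceptionalpaires}, invoke Proposition~\ref{distinduites} for distinction of $|.|^{s_0}\pi_u$, and then Proposition~\ref{suffexceptionnel} to produce the exceptional pole.

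What you add is a genuinely useful correction. You correctly observe that condition $8$ of Definition~\ref{generalpos}, as literally stated, bounds $\dim Hom_{P_n^\sigma}(\Pi,\chi_\alpha^{-1})$, whereas Proposition~\ref{suffexceptionnel} requires $\dim Hom_{P_n^\sigma}(\Pi^\vee,\chi_\alpha)\le 1$; and since $\Pi$ is self-dual as a $G_n$-representation, the latter is $\dim Hom_{P_n^\sigma}(\Pi,\chi_\alpha)$, not $\dim Hom_{P_n^\sigma}(\Pi,\chi_\alpha^{-1})$. You are also right that the transpose-inverse involution sends $P_n^\sigma$ into the \emph{opposite} mirabolic intersected with $H_n$, so one cannot identify the two spaces by that route. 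The paper's proof silently conflates the two; your fix --- rerun the verification of condition $8$ in the ``general position is generic'' proposition with $\chi_\alpha$ in place of $\chi_\alpha^{-1}$, noting the central-character computations via Proposition~\ref{iso} and Proposition~\ref{derwhittaker} are insensitive to $\alpha\leftrightarrow\alpha^{-1}$ --- is exactly what is needed, and it is correct (the ``bad'' set of $u$ is still contained in finitely many hyperplanes). In effect, condition $8$ should be read as holding for both $\chi_\alpha$ and $\chi_\alpha^{-1}$, which costs nothing for the proposition asserting genericity of the condition.

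Your ``secondary'' concern about $s\mapsto 2s$ being two-to-one on $\mathcal{D}$ is a non-issue: the converse argument is applied to \emph{each} pole $s_0$ of $s\mapsto L_{rad(ex)}(\D_1|.|^{u_1},\D_2|.|^{u_2},2s)$ individually, and both points of a given fibre of $s\mapsto 2s$ are such poles. Your remedy (pass to the twist by the unramified quadratic character) does no harm, but it is already subsumed by letting $s_0$ range over all poles. Apart from that small piece of overkill, the argument is sound and, on the $\chi_\alpha$ versus $\chi_\alpha^{-1}$ point, more scrupulous than the paper's own proof.
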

\begin{proof}
If $t\geq 3$, then $L_{rad(ex)}(\pi_u,\chi_\alpha,s)=1$ as an immediate consequence of Proposition \ref{translation}. If $t=2$, and 
$L^{lin}_{rad(ex)}(\pi_u,\chi_\alpha,s)$ has a pole at $s_0$, because $u$ is in general position, Proposition \ref{translation} implies that 
$L_{rad(ex)}(|.|^{u_1}\D_1,|.|^{u_2}\D_2,2s)$ has a pole at $s_0$. Conversely, if $L_{rad(ex)}(|.|^{u_1}\D_1,|.|^{u_2}\D_2,2s)$ has a pole at $s_0$, 
then $|.|^{u_2}\D_2\simeq |.|^{-2s_0-u_1}\D_1^\vee$, i.e. 
$\D_2\simeq |.|^{e}\D_1^\vee$ with $e=-2s_0-u_1-u_2$, and $|.|^{s_0}\pi_u$ is $(H_{n},\chi_\alpha^{-1})$-distinguished according 
to Proposition \ref{distinduites}. 
But according to condition $8.$ of Definition \ref{generalpos}, this implies that the space
 $$Hom_{P_{n}^{\sigma}}(|.|^{s_0}\pi_u,\chi_\alpha^{-1})$$ is of dimension $\leq 1$. The representation 
$|.|^{u_2}\D_2\times |.|^{u_2}\D_2^\vee$ being generic, Proposition \ref{suffexceptionnel} implies that 
$L^{lin}_{rad(ex)}(|.|^{s_0}\pi_u,\chi_\alpha,s)$ has a pole at $0$, i.e. that $L^{lin}_{rad(ex)}(\pi_u,\chi_\alpha,s)$ has a pole at $s_0$. Hence 
both Euler factors $L^{lin}_{rad(ex)}(\pi_u,\chi_\alpha,s)$ and $L_{rad(ex)}(\D_1|.|^{u_1},\D_2|.|^{u_2} ,2s)$ have the same poles, which are simple according to Propositions \ref{radexlin} and Proposition \ref{radexpaires}, hence the two factors are equal.
\end{proof}

Proposition \ref{factL} has the following consequences.

\begin{prop}\label{division}
Let $u$ be in general position with respect to $(\pi,\chi_\alpha)$ be in general position. Let $(a_i)_{i=1\dots t}$ and
$(b_i)_{i=1\dots t}$ be two sequences of non negative integers, such that $b_i\geq a_i$, and $\D_i^{(a_i)}$ and $\D_i^{(b_i)}$ are nonzero for all 
$i$ in $\{1,\dots,t\}$. Then the Euler factor $L^{lin}(|.|^{u_1}\D_1^{(b_1)}\times \dots \times |.|^{u_t}\D_t^{(b_t)},\chi_\alpha,s)$ divides 
$L^{lin}(|.|^{u_1}\D_1^{(a_1)}\times \dots \times |.|^{u_t}\D_t^{(a_t)},\chi_\alpha,s)$.
\end{prop}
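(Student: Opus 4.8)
The plan is to read off the result from the factorisation of $L^{lin}$ through exceptional factors of derivatives given in Proposition \ref{factL}. Write $\sigma_a=|.|^{u_1}\D_1^{(a_1)}\times\dots\times|.|^{u_t}\D_t^{(a_t)}$ and $\sigma_b=|.|^{u_1}\D_1^{(b_1)}\times\dots\times|.|^{u_t}\D_t^{(b_t)}$; these are representations of Whittaker type, and the point is simply that $\sigma_b$ together with all its derivatives occurs among the derivatives of $\sigma_a$, so the least common multiple computing $L^{lin}(\sigma_b,\chi_\alpha,s)$ is taken over a subfamily of the one computing $L^{lin}(\sigma_a,\chi_\alpha,s)$.

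First I would describe these derivatives explicitly. Since $\D_i^{(a_i)}$ and $\D_i^{(b_i)}$ are nonzero, Proposition \ref{derwhittaker} forces $a_i=c_ir_i$ and $b_i=d_ir_i$ with $0\le c_i\le d_i\le l_i$, and identifies $\D_i^{(a_i)}|.|^{u_i}$ with the segment $[|.|^{u_i+c_i}\rho_i,\dots,|.|^{u_i+l_i-1}\rho_i]$ (or with $\1$ when $c_i=l_i$). Hence for any $e_i\ge 0$ with $\D_i^{(b_i+e_i)}\neq 0$ the $e_i$-th derivative of the segment $\D_i^{(b_i)}|.|^{u_i}$ is $\D_i^{(b_i+e_i)}|.|^{u_i}$, and likewise for $\sigma_a$. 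Applying Proposition \ref{derwhittaker} to the products $\sigma_a$ and $\sigma_b$, the $k$-th derivative of $\sigma_a$ (respectively $\sigma_b$) has a filtration whose nonzero factors are exactly the representations $\sigma_{(c)}:=|.|^{u_1}\D_1^{(c_1)}\times\dots\times|.|^{u_t}\D_t^{(c_t)}$ with $\D_i^{(c_i)}\neq 0$ and $c_i\ge a_i$ (respectively $c_i\ge b_i$) for all $i$ and with $\sum_i c_i$ prescribed.

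Then I would apply Proposition \ref{factL} to $\sigma_a$ and to $\sigma_b$ separately. Its hypotheses hold here: by condition $1.$ of Definition \ref{generalpos} each $\sigma_{(c)}$ is irreducible, so $\sigma_a$ and $\sigma_b$ are generic, and by condition $2.$ any two filtration factors of a fixed derivative have distinct central characters, hence are non-isomorphic; a finite length module whose composition factors are pairwise distinguished by their central characters is semisimple, so every derivative of $\sigma_a$ and of $\sigma_b$ is completely reducible with irreducible constituents the $\sigma_{(c)}$ above. Proposition \ref{factL} then yields
$$L^{lin}(\sigma_a,\chi_\alpha,s)=\bigvee_{c\,\ge\, a}L^{lin}_{ex}(\sigma_{(c)},\chi_\alpha,s)\qquad\text{and}\qquad L^{lin}(\sigma_b,\chi_\alpha,s)=\bigvee_{c\,\ge\, b}L^{lin}_{ex}(\sigma_{(c)},\chi_\alpha,s),$$
the joins running over tuples $c$ with $\D_i^{(c_i)}\neq 0$ for all $i$. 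As $b_i\ge a_i$ for all $i$, the index set of the second join is contained in that of the first, so the denominator of the second Euler factor divides that of the first; by the convention that $1/P$ divides $1/Q$ when $P$ divides $Q$, this is exactly the asserted divisibility. The only step here that is not purely formal is the complete reducibility of the derivatives needed to invoke Proposition \ref{factL}, but this is precisely what conditions $1.$ and $2.$ of Definition \ref{generalpos} were designed to guarantee.
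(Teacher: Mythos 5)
Your proof is correct and follows essentially the same route as the paper's: identify the filtration factors of the derivatives of $\sigma_a$ and $\sigma_b$ via Proposition \ref{derwhittaker}, observe that those of $\sigma_b$ form a subfamily of those of $\sigma_a$, and conclude by the lcm formula of Proposition \ref{factL}. Your explicit verification that conditions $1.$ and $2.$ of Definition \ref{generalpos} supply the genericity and complete reducibility hypotheses of Proposition \ref{factL} is left implicit in the paper but is exactly the intended justification.
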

\begin{proof}
Let $\D_i'$ be equal to $|.|^{u_i}\D_i$ for all $i$ between $1$ and $t$. Let $\pi_a$ be the representation $\D_1'^{(a_1)}\times \dots \times \D_t'^{(a_t)}$, 
and $\pi_b$ be the representation 
$\D_1'^{(b_1)}\times \dots \times \D_t'^{(b_t)}$. The irreducible factors of the derivatives of $\pi_a$ are the nonzero representations 
of the form $\D_1'^{(c_1)}\times \dots \times \D_t'^{(c_t)}$, with $c_i\geq a_i$, and the irreducible factors of the derivatives of 
$\pi_b$ are the nonzero representations of the form $\D_1'^{(d_1)}\times \dots \times \D_t'^{(d_t)}$, with $d_i\geq b_i$. 
Hence the proposition is a consequence of Proposition \ref{factL}.
\end{proof}

Proposition \ref{radexgeneral}, \ref{factL} and \ref{division} imply immediatly.

\begin{prop}\label{factmieux}
Let $\pi_u=|.|^{u_1}\D_1\times \dots \times |.|^{u_t}\D_t$ be in general position. For $i$ in $\{1,\dots,t\}$, and $a_i$ be the 
smallest positive integer such that $\D_i^{(a_i)}\neq 0$, then $L^{lin}(\pi_u,\chi_\alpha,s)$ is equal to 
$$\vee_{i=1}^t L^{lin}(|.|^{u_1}\D_1\times \dots \times |.|^{u_{i-1}}\D_{i-1} \times |.|^{u_i}\D_i^{(a_i)} 
\times |.|^{u_{i+1}}\D_{i+1} \times \dots  \times |.|^{u_{t}}\D_t,\chi_\alpha,s),$$ if $t\geq 3$, and 
$L^{lin}(\pi_u,\chi_\alpha,s)$ is equal to the product of $L_{rad(ex)}(\D_1|.|^{u_1},\D_2|.|^{u_2} ,2s)$ with 
$$L^{lin}(|.|^{u_1}\D_1^{(a_1)}\times |.|^{u_{2}}\D_2,\chi_\alpha,s)\vee L^{lin}(|.|^{u_1}\D_1\times |.|^{u_{2}}\D_2^{(a_2)},\chi_\alpha,s)$$ 
if $t=2$.
\end{prop}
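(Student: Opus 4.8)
The strategy is to combine the three quoted propositions, which is precisely what they are designed for. Write $L^{lin}(\pi_u,\chi_\alpha,s)=L^{lin}_{rad(ex)}(\pi_u,\chi_\alpha,s)\,L^{lin}_{(0)}(\pi_u,\chi_\alpha,s)$ as in Proposition \ref{radexlin}. Proposition \ref{radexgeneral} already evaluates the first factor: it is $1$ when $t\geq 3$ and $L_{rad(ex)}(\D_1|.|^{u_1},\D_2|.|^{u_2},2s)$ when $t=2$. So everything reduces to proving
$$L^{lin}_{(0)}(\pi_u,\chi_\alpha,s)=\vee_{i=1}^{t} L^{lin}(\sigma_i,\chi_\alpha,s),\qquad \sigma_i=|.|^{u_1}\D_1\times\dots\times|.|^{u_i}\D_i^{(a_i)}\times\dots\times|.|^{u_t}\D_t.$$

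Since $u$ is in general position, conditions $1$ and $2$ of Definition \ref{generalpos} together with Proposition \ref{derwhittaker} show that $\pi_u$, and likewise each $\sigma_i$, is a generic representation all of whose derivatives are completely reducible, with irreducible components exactly the nonzero representations $\tau_{\underline{c}}=|.|^{u_1}\D_1^{(c_1)}\times\dots\times|.|^{u_t}\D_t^{(c_t)}$ (using the identity $(\D_i^{(a_i)})^{(b)}=\D_i^{(a_i+b)}$ of Proposition \ref{derwhittaker}, the components of the derivatives of $\sigma_i$ are exactly those $\tau_{\underline{c}}$ with $c_i\geq a_i$); here $\underline{c}=(c_1,\dots,c_t)$ and each $a_i$ is positive. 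Proposition \ref{factL} then says that $L^{lin}_{(0)}(\pi_u,\chi_\alpha,s)$ is the lcm of the $L^{lin}_{ex}(\tau_{\underline{c}},\chi_\alpha,s)$ over the $\underline{c}$ with $\sum_j c_j\geq 1$, and that for each $i$ the factor $L^{lin}(\sigma_i,\chi_\alpha,s)$ is the lcm of the $L^{lin}_{ex}(\tau,\chi_\alpha,s)$ over the components $\tau$ of the derivatives of $\sigma_i$, including $\sigma_i$ itself.

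For the divisibility ``$\vee_i L^{lin}(\sigma_i,\chi_\alpha,s)$ divides $L^{lin}_{(0)}(\pi_u,\chi_\alpha,s)$'' I note that every component $\tau$ of a derivative of $\sigma_i$ is of the form $\tau_{\underline{c}}$ with $c_i\geq a_i\geq 1$, hence with $\sum_j c_j\geq 1$, so $L^{lin}_{ex}(\tau,\chi_\alpha,s)$ occurs among the factors whose lcm is $L^{lin}_{(0)}(\pi_u,\chi_\alpha,s)$; taking lcm, $L^{lin}(\sigma_i,\chi_\alpha,s)$, and hence $\vee_i L^{lin}(\sigma_i,\chi_\alpha,s)$, divides $L^{lin}_{(0)}(\pi_u,\chi_\alpha,s)$. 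Conversely, given $\underline{c}$ with $\sum_j c_j\geq 1$, choose $i$ with $c_i\geq 1$; minimality of $a_i$ forces $c_i\geq a_i$, so Proposition \ref{division} applied to the sequences $(0,\dots,a_i,\dots,0)$ and $\underline{c}$ shows that $L^{lin}(\tau_{\underline{c}},\chi_\alpha,s)$ divides $L^{lin}(\sigma_i,\chi_\alpha,s)$, whence $L^{lin}_{ex}(\tau_{\underline{c}},\chi_\alpha,s)$ divides $\vee_i L^{lin}(\sigma_i,\chi_\alpha,s)$. Taking the lcm over all such $\underline{c}$ yields the reverse divisibility. All the factors involved being Euler factors, these two divisibilities give the displayed equality, and multiplying by $L^{lin}_{rad(ex)}(\pi_u,\chi_\alpha,s)$ via Proposition \ref{radexgeneral} produces the two asserted formulas according to whether $t\geq 3$ or $t=2$.

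The only points requiring a little care, which I would make explicit, are that the components of the derivatives of $\sigma_i$ are again components of derivatives of $\pi_u$ (this is the identity $(\D_i^{(a_i)})^{(b)}=\D_i^{(a_i+b)}$ from Proposition \ref{derwhittaker}, after the twist by $|.|^{u_i}$), and that $\sigma_i$ satisfies the hypotheses of Proposition \ref{factL}, i.e.\ is generic with completely reducible derivatives whose components are products of derivatives of its constituent segments --- both inherited from $\pi_u$ being in general position. I do not expect any genuine obstacle here: the argument is a bookkeeping manipulation of the three factorisations of Propositions \ref{radexgeneral}, \ref{factL} and \ref{division} inside the divisibility lattice of Euler factors.
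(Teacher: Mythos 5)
Your proposal is correct and is precisely the argument the paper intends: the paper's proof of Proposition \ref{factmieux} is the single sentence ``Proposition \ref{radexgeneral}, \ref{factL} and \ref{division} imply immediately,'' and you have spelled out exactly how those three propositions (together with the decomposition $L^{lin}=L^{lin}_{rad(ex)}\cdot L^{lin}_{(0)}$ from Proposition \ref{radexlin}) combine to give the two claimed formulas, including the observation that a nonzero $c_i\geq 1$ forces $c_i\geq a_i$ by minimality, and the inheritance of complete reducibility of derivatives by the $\sigma_i$ from conditions 1 and 2 of general position.
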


We finally obtain the following theorem for representations in general position.

\begin{thm}\label{Lgeneral}
Let $u$ be in general position with respect to $(\pi,\chi_\alpha)$. Then 
$$L^{lin}(\pi_u,\chi_\alpha,s)=\prod_{1\leq i<j \leq t}L(|.|^{u_i}\D_i, |.|^{u_j}\D_j,2s)\prod_{k=1}^t L^{lin}(|.|^{u_k}\D_k,\chi_\alpha,s).$$
\end{thm}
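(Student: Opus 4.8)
The plan is to argue by induction on $n$, using Proposition \ref{factmieux} to express $L^{lin}(\pi_u,\chi_\alpha,s)$ through the analogous factors attached to representations of strictly smaller general linear groups, and then to recombine these via the divisibility relations and the genericity conditions of Definition \ref{generalpos}. When $t=1$ the statement is empty, since $\pi_u=|.|^{u_1}\D_1$ and the product over pairs is trivial; this serves as the base case. So assume $n\geq 2$ and that the theorem holds for all representations of Whittaker type of $G_{n'}$ with $n'<n$.

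For each $i$ let $a_i$ be the smallest positive integer with $\D_i^{(a_i)}\neq 0$; by Proposition \ref{derwhittaker} one has $a_i=r_i$, and $\D_i^{(a_i)}$ is either the shorter discrete series $[|.|\rho_i,\dots,|.|^{l_i-1}\rho_i]$ (when $l_i\geq 2$) or the trivial representation of $G_0$ (when $\D_i$ is cuspidal). Write $\pi_u^{[i]}$ for $|.|^{u_1}\D_1\times\dots\times|.|^{u_i}\D_i^{(a_i)}\times\dots\times|.|^{u_t}\D_t$, a representation of $G_{n-r_i}$. The first step is to check that $u$ is again in general position with respect to $(\pi^{[i]},\chi_\alpha)$: conditions 1 and 2 are inherited because every component of every derivative of $\pi_u^{[i]}$ occurs among the components of the derivatives of $\pi_u$ (by transitivity of the Bernstein--Zelevinsky derivatives and Proposition \ref{derwhittaker}), while conditions 3--7 are inherited from those for $\pi_u$ by means of the divisibilities $L^{lin}(|.|^{u_i}\D_i^{(a_i)},\chi_\alpha,s)\mid L^{lin}(|.|^{u_i}\D_i,\chi_\alpha,s)$ (Proposition \ref{factL}) and $L(|.|^{u_j}\D_j,|.|^{u_i}\D_i^{(a_i)},2s)\mid L(|.|^{u_j}\D_j,|.|^{u_i}\D_i,2s)$ (Proposition \ref{divisionpaires}); condition 8 only intervenes when $\pi^{[i]}$ has exactly two factors, and is handled by the same kind of estimate. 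Consequently, by the inductive hypothesis, $L^{lin}(\pi_u^{[i]},\chi_\alpha,s)=\prod_{j<k}L(\widehat\D_j,\widehat\D_k,2s)\prod_{k}L^{lin}(\widehat\D_k,\chi_\alpha,s)$, where $\widehat\D_k=|.|^{u_k}\D_k$ for $k\neq i$ and $\widehat\D_i=|.|^{u_i}\D_i^{(a_i)}$ (factors indexed by a vanished $\widehat\D_i=\1$ being equal to $1$).

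Suppose first $t\geq 3$. Proposition \ref{factmieux} gives $L^{lin}(\pi_u,\chi_\alpha,s)=\vee_{i=1}^{t}L^{lin}(\pi_u^{[i]},\chi_\alpha,s)$. By the previous paragraph and the two divisibilities, each $L^{lin}(\pi_u^{[i]},\chi_\alpha,s)$ is a product of divisors of pairwise distinct factors of $\prod_{1\leq j<k\leq t}L(|.|^{u_j}\D_j,|.|^{u_k}\D_k,2s)\prod_{k}L^{lin}(|.|^{u_k}\D_k,\chi_\alpha,s)$, hence divides it, and therefore so does the least common multiple. Conversely, any pole $s_0$, of order $d$, of a factor $L(|.|^{u_j}\D_j,|.|^{u_k}\D_k,2s)$ (resp. of $L^{lin}(|.|^{u_k}\D_k,\chi_\alpha,s)$) occurs with the same order $d$ in $L^{lin}(\pi_u^{[i]},\chi_\alpha,s)$ for any $i\notin\{j,k\}$ (resp. any $i\neq k$): indeed that factor survives untouched in the product expansion of $L^{lin}(\pi_u^{[i]},\chi_\alpha,s)$, and conditions 3--7 of general position — together with the fact that the factors built from $\widehat\D_i$ divide those built from $|.|^{u_i}\D_i$ — forbid any other factor of that product from contributing to $s_0$. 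Since both sides are Euler factors with the same poles and the same orders, they coincide.

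Finally, for $t=2$, Proposition \ref{factmieux} reads $L^{lin}(\pi_u,\chi_\alpha,s)=L_{rad(ex)}(|.|^{u_1}\D_1,|.|^{u_2}\D_2,2s)\cdot\big(L^{lin}(\pi_u^{[1]},\chi_\alpha,s)\vee L^{lin}(\pi_u^{[2]},\chi_\alpha,s)\big)$, where $\pi_u^{[i]}$ has one or two factors and $L^{lin}(\pi_u^{[i]},\chi_\alpha,s)$ is computed by induction as above. Using $L(|.|^{u_1}\D_1,|.|^{u_2}\D_2,2s)=L_{rad(ex)}(|.|^{u_1}\D_1,|.|^{u_2}\D_2,2s)\,L_{(0)}(|.|^{u_1}\D_1,|.|^{u_2}\D_2,2s)$ (Proposition \ref{radexpaires}) and Proposition \ref{factLpaires}, one checks — exactly as in the case $t\geq 3$, by matching poles and orders with the help of conditions 3--7 and the divisibilities — that $L^{lin}(\pi_u^{[1]},\chi_\alpha,s)\vee L^{lin}(\pi_u^{[2]},\chi_\alpha,s)$ equals $L_{(0)}(|.|^{u_1}\D_1,|.|^{u_2}\D_2,2s)\,L^{lin}(|.|^{u_1}\D_1,\chi_\alpha,s)\,L^{lin}(|.|^{u_2}\D_2,\chi_\alpha,s)$, and multiplying by $L_{rad(ex)}(|.|^{u_1}\D_1,|.|^{u_2}\D_2,2s)$ yields the asserted formula. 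The main obstacle throughout is the pole-by-pole bookkeeping in the last two paragraphs: one must make sure that the genericity hypotheses really do isolate every factor of the target product inside a single summand of the least common multiple, and that the reduced data $\pi^{[i]}$ genuinely fall under the inductive hypothesis, in particular with respect to condition 8 when they have two factors.
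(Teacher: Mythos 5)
Your proof follows essentially the same route as the paper's: induction on $n$, the lcm decomposition from Proposition \ref{factmieux}, the divisibilities of Propositions \ref{division} and \ref{divisionpaires}, and the coprimality conditions of Definition \ref{generalpos} to turn the lcm into a product, with the $t=2$ case handled via Propositions \ref{radexpaires} and \ref{factLpaires}. The only difference is cosmetic — the paper makes the $n=2$ base case explicit while you absorb it into the trivial $t=1$ case, and you are slightly more explicit than the paper about why general position is inherited by the $\pi^{[i]}$ — so this matches.
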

\begin{proof}
We prove this by induction on $n$. As $t\geq 2$, we have $n\geq 2$. For $n=2$, 
we are necessarily in the situation where $\pi_u=|.|^{u_1}\chi_1 \times |.|^{u_2}\chi_2$ for some characters $\chi_1$ and $\chi_2$ of $F^*$.
In this case, $L^{lin}_{rad(ex)}(\pi_u,\chi_\alpha,s)= L_{rad(ex)}(|.|^{u_1}\chi_1,|.|^{u_2}\chi_2,2s)=L(|.|^{u_1}\chi_1,|.|^{u_2}\chi_2,2s)$ according to Proposition \ref{radexgeneral}, and because $L(|.|^{u_1}\chi_1,|.|^{u_2}\chi_2,s)=L(|.|^{u_1+u_2}\chi_1\chi_2,s)$ has at most one simple pole, which 
is exceptional. Moreover, we have the equalities 
$$L^{lin}(|.|^{u_1}\chi_1^{(1)}\times |.|^{u_{2}}\chi_2,\chi_\alpha,s)= L^{lin}(|.|^{u_{2}}\chi_2,\chi_\alpha,s)$$ and 
$$L^{lin}(|.|^{u_1}\chi_1\times |.|^{u_{2}}\chi_2^{(1)},\chi_\alpha,s)= L^{lin}(|.|^{u_{1}}\chi_1,\chi_\alpha,s).$$ The result then follows from Proposition \ref{factmieux}.\\
If $t\geq 3$, according to Proposition \ref{factmieux}, one has 
$$L^{lin}(\pi_u,\chi_\alpha,s)$$ 
$$=\vee_{i=1}^t L^{lin}(|.|^{u_1}\D_1\times \dots \times |.|^{u_{i-1}}\D_{i-1} \times |.|^{u_i}\D_i^{(a_i)} \times 
|.|^{u_{i+1}}\D_{i+1} \times \dots  \times |.|^{u_{t}}\D_t,\chi_\alpha,s).$$ 
By induction hypothesis, the factor 
$$L^{lin}(|.|^{u_1}\D_1\times \dots \times |.|^{u_{i-1}}\D_{i-1} \times |.|^{u_i}\D_i^{(a_i)} \times |.|^{u_{i+1}}\D_{i+1} \times \dots  \times |.|^{u_{t}}\D_t,\chi_\alpha,\chi_\alpha,s)$$ is equal to the product of
$$\prod_{k<l,k\neq i, l\neq i} L(|.|^{u_k}\D_k,|.|^{u_l}\D_l,2s)\prod_{p\neq i} L^{lin}(|.|^{u_p}\D_p,\chi_\alpha,s)$$ and 
$$L^{lin}(|.|^{u_i}\D_i^{(a_i)},\chi_\alpha,s)\prod_{k'<i} L(|.|^{u_{k'}}\D_{k'},|.|^{u_i}\D_i^{(a_i)},2s)
\prod_{i<l'} L(|.|^{u_i}\D_i^{(a_i)},|.|^{u_{l'}}\D_{l'},2s).$$
This last product divides $$L^{lin}(|.|^{u_i}\D_i,\chi_\alpha,s)\prod_{k'<i} L(|.|^{u_{k'}}\D_{k'},|.|^{u_i}\D_i,2s)
\prod_{i<l'} L(|.|^{u_i}\D_i,|.|^{u_{l'}}\D_{l'},2s)$$ according to Propositions \ref{division} and \ref{divisionpaires}. 
Now, as $u$ is in general with respect to $(\pi,\chi_\alpha)$, according to Definition \ref{generalpos}, the factors 
$L(|.|^{u_i}\D_i,|.|^{u_j}\D_j,2s)$ and  $L(|.|^{u_k}\D_k,|.|^{u_l}\D_l,2s)$ are coprime when $\{i,j\}\neq \{k,l\}$, the factors 
$L(|.|^{u_i}\D_i,|.|^{u_j}\D_j,2s)$ and $L^{lin}(|.|^{u_k}\D_k,\chi_\alpha,s)$ are coprime 
when $i\neq j$, and the factors $L^{lin}(|.|^{u_k}\D_k,\chi_\alpha,s)$ and $L^{lin}(|.|^{u_l}\D_l,\chi_\alpha,s)$ as well when $k\neq l$. Hence when $t\geq 3$, the lcm 
 $$\vee_{i=1}^t L^{lin}(|.|^{u_1}\D_1\times \dots \times |.|^{u_{i-1}}\D_{i-1} \times 
|.|^{u_i}\D_i^{(a_i)} \times |.|^{u_{i+1}}\D_{i+1} \times \dots  \times |.|^{u_{t}}\D_t,\chi_\alpha,s)$$ is equal to 
 $$\prod_{1\leq i<j \leq t}L(|.|^{u_i}\D_i , |.|^{u_j}\D_j,2s)\prod_{k=1}^t L^{lin}(|.|^{u_k}\D_k,\chi_\alpha,s),$$ 
which is what we want.\\
If $t=2$, by induction, we have 
$$L^{lin}(|.|^{u_1}\D_1^{(a_1)}\times |.|^{u_{2}}\D_2,\chi_\alpha,s)$$ 
$$=L^{lin}(|.|^{u_1}\D_1^{(a_1)},\chi_\alpha,s)L^{lin}(|.|^{u_{2}}\D_2,\chi_\alpha,s)L(|.|^{u_1}\D_1^{(a_1)},|.|^{u_{2}}\D_2,2s)$$
and 
$$L^{lin}(|.|^{u_1}\D_1\times |.|^{u_{2}}\D_2^{(a_2)},\chi_\alpha,s)$$ 
$$=L^{lin}(|.|^{u_1}\D_1,\chi_\alpha,s)L^{lin}(|.|^{u_{2}}\D_2^{(a_2)},\chi_\alpha,s)L(|.|^{u_1}\D_1,|.|^{u_{2}}\D_2^{(a_2)},2s).$$
Again, as $u$ is in general position, and because of Propositions \ref{division} and \ref{divisionpaires}, 
the lcm of these two factors is the product of $L^{lin}(|.|^{u_1}\D_1,\chi_\alpha,s)L^{lin}(|.|^{u_{2}}\D_2,\chi_\alpha,s)$ with 
$$L(|.|^{u_1}\D_1^{(a_1)},|.|^{u_{2}}\D_2,2s)\vee 
L(|.|^{u_1}\D_1,|.|^{u_{2}}\D_2^{(a_2)},2s),$$ 
which is equal to 
$$L^{lin}(|.|^{u_1}\D_1,\chi_\alpha,s)L^{lin}(|.|^{u_{2}}\D_2,\chi_\alpha,s)L_{(0)}(|.|^{u_1}\D_1,|.|^{u_{2}}\D_2,2s)$$ 
according to Proposition \ref{factLpaires}. Finally we obtain that $L^{lin}(\pi_u,\chi_\alpha,s)$ is equal to 
$$L_{rad(ex)}(\D_1|.|^{u_1}\!\!\!,\D_2|.|^{u_2}\!\!\!,2s)
L^{lin}(|.|^{u_1}\D_1,\chi_\alpha,s)L^{lin}(|.|^{u_{2}}\D_2,\chi_\alpha,s)L_{(0)}(|.|^{u_1}\D_1,|.|^{u_{2}}\D_2,2s)$$ 
$$=L^{lin}(|.|^{u_1}\D_1,\chi_\alpha,s)L^{lin}(|.|^{u_{2}}\D_2,\chi_\alpha,s)L(|.|^{u_1}\D_1,|.|^{u_{2}}\D_2,2s)$$ 
according to Proposition \ref{factmieux}, which is what we wanted.
\end{proof}

\subsection{The inductivity relation for irreducible representations}\label{inductivity-irreducible}

We recall from \cite{Sil} that any irreducible representation $\tau$ of $G_n$, is the unique irreducible quotient of a unique representation $\pi$ 
of Langlands' type. By definition, we set $L^{lin}(\tau,s)=L^{lin}(\pi,s)$. 
We extend the relation $L^{lin}(\pi,\chi_\alpha,s)=\prod_{1\leq i<j \leq t}L(\D_i,\D_j,2s)\prod_{k=1}^t L^{lin}(\D_k,\chi_\alpha,s)$ 
to representations of Langlands' type, for $-1/2\leq Re(\alpha) \leq 0$.
We fix $\pi=\D_1\times \dots \times \D_t$ a representation of $G_n$ of Whittaker type for the 
beginning of this section, and take a character $\alpha$ of real part in $[-1/2,0]$. The representation $\pi$ will become of Langlands' type
 when we need it.
We first prove the following Lemma:

\begin{LM}\label{lmpolynomial}
Let $f$ belong to $V_{\pi}$, and $\alpha$ a character of $F^*$ with $Re(\alpha)\in [-1/2,0]$. The ratio 
$$\frac{\Psi(W_{f,u},\phi,\chi_\alpha,s)}{\prod_{1\leq i<j \leq t}L(|.|^{u_i}\D_i, |.|^{u_j}\D_j,2s)
\prod_{k=1}^t L^{lin}(|.|^{u_k}\D_k,\chi_\alpha,s)}$$ belongs to 
$\C[q^{\pm u}, q^{\pm s}]$.
\end{LM}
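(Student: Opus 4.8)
The approach follows \cite{CP}. Set $D(u,s)=\prod_{1\leq i<j\leq t}L(|.|^{u_i}\D_i,|.|^{u_j}\D_j,2s)\prod_{k=1}^{t}L^{lin}(|.|^{u_k}\D_k,\chi_\alpha,s)$; this is a product of Euler factors, hence a nowhere‑vanishing element of $\C(q^{\pm u},q^{\pm s})$, and by Theorem \ref{rationality} the ratio $Q(u,s)=\Psi(W_{f,u},\phi,\chi_\alpha,s)/D(u,s)$ is a well‑defined element of $\C(q^{\pm u},q^{\pm s})$. The goal is to show $Q$ has no poles, i.e.\ that it lies in $\C[q^{\pm u},q^{\pm s}]$. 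The first input is that for $u$ outside the finite union of affine hyperplanes off which $\pi_u$ is in general position with respect to $(\pi,\chi_\alpha)$ (Definition \ref{generalpos}), Theorem \ref{Lgeneral} gives $L^{lin}(\pi_u,\chi_\alpha,s)=D(u,s)$, while $W_{f,u}\in W(\pi_u,\theta)$ forces $\Psi(W_{f,u},\phi,\chi_\alpha,s)\in I(\pi_u,\chi_\alpha)=L^{lin}(\pi_u,\chi_\alpha,s)\,\C[q^{\pm s}]$ by Proposition \ref{Ldef}; hence $Q(u,\cdot)\in\C[q^{\pm s}]$ for all such $u$.

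Now write $Q=N/M$ in lowest terms in the unique factorization domain $\C[q^{\pm u},q^{\pm s}]$, and suppose for contradiction that $M$ is not a monomial; pick an irreducible non‑monomial factor $M_1$ of $M$. Writing $D=1/\prod_iP_i$, one has $\Psi(W_{f,u},\phi,\chi_\alpha,s)=N/(M\prod_iP_i)$, and since every irreducible factor of $\prod_iP_i$ has nonzero exponent in $q^{-s}$ (the denominators of $L(|.|^{u_i}\D_i,|.|^{u_j}\D_j,2s)$ and of $L^{lin}(|.|^{u_k}\D_k,\chi_\alpha,s)$ are, by the explicit form recalled in Section \ref{sctLpaires} and by Proposition \ref{Ldef}, products of binomials of the shape $1-c\,q^{\ell(u)-2s}$, resp.\ $1-c\,q^{\ell(u)-s}$), it follows that $M_1$ survives in the reduced denominator of $\Psi$, so $\Psi$ has a pole along the hypersurface $\{M_1=0\}$. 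If $M_1$ has positive degree in $q^{-s}$, then for $u_0$ outside a proper subvariety (vanishing of the relevant resultant and of the leading $q^{-s}$‑coefficients of $N$ and $M_1$) and still in general position, $Q(u_0,\cdot)=N(u_0,\cdot)/M(u_0,\cdot)$ genuinely has a pole in $q^{-s}$, contradicting the first paragraph.

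It remains to exclude the case $M_1\in\C[q^{\pm u}]$, in which $\Psi$ has a pole along a hypersurface $\{M_1(q^{\pm u})=0\}$ not involving $s$. Restrict to a generic affine line $u=u(z)$ through a generic smooth point $u^{\ast}$ of $\{M_1=0\}$, transversal to it; then $z\mapsto\Psi(W_{f,u(z)},\phi,\chi_\alpha,s_0)$ has a pole at $z=0$ for generic $s_0$. On the other hand, by Proposition \ref{CV} the integral defining $\Psi(W_{f,u(z)},\phi,\chi_\alpha,s)$ converges absolutely once $Re(s)$ exceeds a bound that depends affine‑linearly on the real parts of the $(n-k)$‑exponents of $\pi_{u(z)}$, hence a bound that is uniform for $z$ in a small disc about $0$; since for each fixed $h$ the function $z\mapsto W_{f,u(z)}(h)$ is a Laurent polynomial in $q^{\pm u(z)}$ and the relevant majorizations are uniform for $z$ in that disc, choosing $s_0$ generic with $Re(s_0)$ large exhibits $\Psi(W_{f,u(z)},\phi,\chi_\alpha,s_0)$ near $z=0$ as a convergent integral of a function holomorphic in $z$, so it is holomorphic at $z=0$ — a contradiction. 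Therefore $M$ is a monomial and $Q\in\C[q^{\pm u},q^{\pm s}]$.

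The main obstacle is precisely the holomorphy statement used in the last step: for fixed $f$ and $\phi$ and $Re(s)$ large, the Rankin–Selberg integral is holomorphic in the inducing parameter $u$, locally uniformly. This rests on the convergence estimates underlying Proposition \ref{CV} together with the fact that, as $u$ ranges over a bounded region, both the monomials occurring in $W_{f,u}$ and the gauges dominating them stay uniformly controlled. Granting it, the factorization for representations in general position (Theorem \ref{Lgeneral}) and the definition of $L^{lin}$ do the rest.
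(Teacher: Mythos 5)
Your argument is essentially the paper's proof, with the paper's single geometric step (the zero set of the denominator must be a union of the $H_i\times\mathcal{D}$, which is ruled out because the absolutely convergent integral makes the ratio finite for $\mathrm{Re}(s)$ large) unpacked into your explicit two‑case analysis. Your treatment of the case $M_1\in\C[q^{\pm u}]$ — restricting to a transversal line $u(z)$ and invoking uniform majorizations from Proposition~\ref{CV} to obtain holomorphy in $z$ — is a correct and more careful elaboration of the sentence the paper gives for this step, and you rightly identify the uniform‑in‑$u$ convergence as the point that deserves scrutiny. One small remark: the parenthetical about the factors of $\prod_i P_i$ carrying a power of $q^{-s}$ is not needed to conclude that $M_1$ survives in the reduced denominator of $\Psi$; that is immediate from $\gcd(N,M)=1$.
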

\begin{proof}
According to Theorem \ref{rationality}, the ratio in question is an element of $\C(q^{\pm u}, q^{\pm s})$. For fixed $u$ in general position with respect to $(\pi,\chi_\alpha)$ (i.e. outside of some hyperplanes $H_1,\dots,H_r$ of $\mathcal{D}^t$), it belongs to 
$\C[q^{\pm s}]$, in particular, it has no poles. Let $P$ be the denominator of $$\frac{\Psi(W_{f,u},\phi,\chi,s)}{\prod_{1\leq i<j \leq t}L(|.|^{u_i}\D_i\times |.|^{u_j}\D_j,2s)\prod_{k=1}^t L^{lin}(|.|^{u_k}\D_k,s)},$$ then the set $Z(P)$ of zeroes of $P$ is contained in the union 
$\bigcup_i H_i\times \mathcal{D}$. If $P$ was not a unit of $\C[q^{\pm u},q^{\pm s}]$, $Z(P)$ would be a hypersurface of $\mathcal{D}^{t+1}$, and one of its irreducible components would be 
$H_i\times \mathcal{D}$ for some $i$. This cannot be the case, as for any fixed $u$, for $s$ large enough, 
the fraction $\Psi(W_{f,u},\phi,\chi_\alpha,s)$ is defined by an absolutely convergent integral, hence is holomorphic, and the inverse of $\prod_{1\leq i<j \leq t}L(|.|^{u_i}\D_i\times |.|^{u_j}\D_j,2s)\prod_{k=1}^t L^{lin}(|.|^{u_k}\D_k,s)$ is polynomial, hence without poles. This concludes the proof.
\end{proof}

This has two corollaries, the first being straightforward.

\begin{cor}\label{divides}
Let $\alpha$ be a character of $F^*$ with $Re(\alpha)\in [-1/2,0]$, then $L^{lin}(\pi,\chi_\alpha,s)$ divides the factor 
$$\prod_{1\leq i<j \leq t}L(\D_i,\D_j,2s)\prod_{k=1}^t L^{lin}(\D_k,\chi_\alpha,s).$$
\end{cor}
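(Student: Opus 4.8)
The plan is to deduce this at once from Lemma \ref{lmpolynomial} specialised to $u=0$. First I would recall, from Proposition \ref{Ldef}, that the fractional ideal $I(\pi,\chi_\alpha)$ of $\C[q^{\pm s}]$ spanned by the integrals $\Psi(W,\phi,\chi_\alpha,s)$, for $W\in W(\pi,\theta)$ and $\phi\in\sm_c(F^{[(n+1)/2]})$, admits $L^{lin}(\pi,\chi_\alpha,s)$ as a generator. Next I would observe that $f\mapsto W_{f,0}$ is, up to a scalar, the unique (by Proposition \ref{fourretout}) nonzero element of $Hom_{G_n}(\pi,Ind_{N_n}^{G_n}(\theta))$, hence has image the whole Whittaker model $W(\pi,\theta)$; therefore the integrals $\Psi(W_{f,0},\phi,\chi_\alpha,s)$, with $f\in V_\pi$ and $\phi\in\sm_c(F^{[(n+1)/2]})$, already span $I(\pi,\chi_\alpha)$.

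Then I would apply Lemma \ref{lmpolynomial} with $u=0$ (its hypothesis $Re(\alpha)\in[-1/2,0]$ being exactly the one assumed here): for every $f\in V_\pi$ and every $\phi\in\sm_c(F^{[(n+1)/2]})$ the quotient
$$\frac{\Psi(W_{f,0},\phi,\chi_\alpha,s)}{\prod_{1\leq i<j\leq t}L(\D_i,\D_j,2s)\prod_{k=1}^t L^{lin}(\D_k,\chi_\alpha,s)}$$
lies in $\C[q^{\pm s}]$. Writing $\prod_{1\leq i<j\leq t}L(\D_i,\D_j,2s)\prod_{k=1}^t L^{lin}(\D_k,\chi_\alpha,s)=1/Q(q^{-s})$ with $Q$ a polynomial satisfying $Q(0)=1$ (each factor being of this form, after the substitution $s\mapsto 2s$ in the pair $L$-factors), this says that $Q(q^{-s})\,\Psi(W_{f,0},\phi,\chi_\alpha,s)\in\C[q^{\pm s}]$ for all test data, that is, $I(\pi,\chi_\alpha)\subseteq\bigl(1/Q(q^{-s})\bigr)\C[q^{\pm s}]$. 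Since $L^{lin}(\pi,\chi_\alpha,s)$ generates $I(\pi,\chi_\alpha)$ and, being an Euler factor, is of the form $1/P(q^{-s})$ with $P(0)=1$, this inclusion forces $P\mid Q$, which by the divisibility convention recalled at the beginning of Section \ref{sctLpaires} is precisely the assertion that $L^{lin}(\pi,\chi_\alpha,s)$ divides $\prod_{1\leq i<j\leq t}L(\D_i,\D_j,2s)\prod_{k=1}^t L^{lin}(\D_k,\chi_\alpha,s)$.

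There is essentially no obstacle in this argument: the only points that deserve a word are that $f\mapsto W_{f,0}$ indeed exhausts $W(\pi,\theta)$, so that specialising Lemma \ref{lmpolynomial} at $u=0$ produces a generating family for $I(\pi,\chi_\alpha)$, and the purely formal passage from ``the ratio is a Laurent polynomial for all test data'' to the paper's divisibility convention for Euler factors. Both are routine once Lemma \ref{lmpolynomial} is in hand, which is why the statement is only a corollary.
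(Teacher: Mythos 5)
Your argument is correct and is exactly what the paper has in mind when it calls this corollary ``straightforward'' from Lemma \ref{lmpolynomial}: specialise the lemma at $u=0$, note that the integrals $\Psi(W_{f,0},\phi,\chi_\alpha,s)$ range over a generating set of $I(\pi,\chi_\alpha)$ as $f$ and $\phi$ vary, and convert ``the ratio is a Laurent polynomial for all test data'' into the paper's divisibility convention for Euler factors. The one point you flag as needing a word --- that $f\mapsto W_{f,0}$ is nonzero, hence by the one-dimensionality of $Hom_{G_n}(\pi,Ind_{N_n}^{G_n}\theta)$ has image all of $W(\pi,\theta)$ --- is indeed the only thing to check; it follows from Proposition \ref{incl} together with the observation used in the proof of Proposition \ref{egalgamma} that $\mathcal{P}_0$ contains an element not vanishing at $u=0$, so this is consistent with the paper's own infrastructure.
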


 We introduce the following notation: if $R$ is a integral domain, with field of fractions $F(R)$, and $r$ and $r'$ belong to 
$F(R)$, we write $r \sim r'$ if $r$ and $r'$ are equal up to a unit of $R$.

\begin{prop}\label{egalgamma} Let $\alpha$ be a character of $F^*$ with $Re(\alpha)\in [-1/2,0]$, we have 
$$\gamma^{lin}(\pi_u,\chi_\alpha,\theta,s)\sim \prod_{1\leq i<j \leq t}
\gamma(|.|^{u_i}\D_i,|.|^{u_j}\D_j,\theta,2s)\prod_{k=1}^t \gamma^{lin}(|.|^{u_k}\D_k,\chi_\alpha,\theta,s)$$ in $\C(\mathcal{D}^{t+1})$, 
hence 
$$\gamma^{lin}(\pi,\chi_\alpha,\theta,s)\sim \prod_{1\leq i<j \leq t}
\gamma(\D_i,\D_j,\theta,2s)\prod_{k=1}^t \gamma^{lin}(\D_k,\chi_\alpha,\theta,s)$$ in $\C(\mathcal{D})$. 
\end{prop}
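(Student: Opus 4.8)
The plan is to establish the first relation in $\C(\mathcal{D}^{t+1})$ and obtain the second by specialising at $u=0$. Both sides lie in $\C(\mathcal{D}^{t+1})$: for $\gamma^{lin}(\pi_u,\chi_\alpha,\theta,s)$ this is Corollary \ref{gammarational}, while each factor on the right --- $\gamma(|.|^{u_i}\D_i,|.|^{u_j}\D_j,\theta,2s)$ and $\gamma^{lin}(|.|^{u_k}\D_k,\chi_\alpha,\theta,s)$ --- is, with the $\D_i$ fixed, a rational function of the relevant $q^{-u}$'s and $q^{-s}$, by the standard rationality of pair gamma factors and by Corollary \ref{gammarational} again. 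The right-hand side, being a finite product of non-zero rational functions in $q^{-s}$ depending regularly on the twists, is regular and non-vanishing at $u=0$; the unit of $\C[q^{\pm u},q^{\pm s}]$ relating the two sides (once the first relation is proved) is a non-zero monomial, hence also regular and non-vanishing at $u=0$. Thus the left-hand side is regular at $u=0$, where it equals $\gamma^{lin}(\pi_0,\chi_\alpha,\theta,s)=\gamma^{lin}(\pi,\chi_\alpha,\theta,s)$, and the second relation follows from the first.

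To prove the first relation it suffices, by density, to prove it for $u$ outside a finite union of affine hyperplanes. I take $u$ simultaneously in general position with respect to $(\pi,\chi_\alpha)$ and such that the corresponding twist of the dual $\widetilde\pi\simeq\D_t^\vee\times\dots\times\D_1^\vee$ is in general position with respect to $(\widetilde\pi,\chi_{\alpha^{-1}|.|^{-1/2}})$; both conditions hold off finitely many hyperplanes. Here one uses that $\chi_\alpha^{-1}\d_n^{-1/2}=\chi_{\alpha^{-1}|.|^{-1/2}}$ (and likewise on each $H_{n_k}$), and that $Re(\alpha^{-1}|.|^{-1/2})=-Re(\alpha)-1/2\in[-1/2,0]$ when $Re(\alpha)\in[-1/2,0]$, so that both pairs fall within the scope of Theorem \ref{Lgeneral}. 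Applying that theorem to $\pi_u$ gives
$$L^{lin}(\pi_u,\chi_\alpha,s)=\prod_{1\leq i<j\leq t}L(|.|^{u_i}\D_i,|.|^{u_j}\D_j,2s)\prod_{k=1}^t L^{lin}(|.|^{u_k}\D_k,\chi_\alpha,s),$$
and applying it to $\widetilde{\pi_u}$ with spectral variable $1/2-s$, together with $\widetilde{|.|^{u_i}\D_i}=|.|^{-u_i}\D_i^\vee$ and the symmetry of pair $L$-factors, gives
$$L^{lin}(\widetilde{\pi_u},\chi_\alpha^{-1}\d_n^{-1/2},1/2-s)=\prod_{1\leq i<j\leq t}L(\widetilde{|.|^{u_i}\D_i},\widetilde{|.|^{u_j}\D_j},1-2s)\prod_{k=1}^t L^{lin}(\widetilde{|.|^{u_k}\D_k},\chi_\alpha^{-1}\d_{n_k}^{-1/2},1/2-s).$$

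Substituting these two expressions into $\gamma^{lin}(\pi_u,\chi_\alpha,\theta,s)=\e^{lin}(\pi_u,\chi_\alpha,\theta,s)L^{lin}(\widetilde{\pi_u},\chi_\alpha^{-1}\d_n^{-1/2},1/2-s)/L^{lin}(\pi_u,\chi_\alpha,s)$, the quotient factors as a product over $i<j$ of the ratios $L(\widetilde{|.|^{u_i}\D_i},\widetilde{|.|^{u_j}\D_j},1-2s)/L(|.|^{u_i}\D_i,|.|^{u_j}\D_j,2s)$ --- each equal to $\gamma(|.|^{u_i}\D_i,|.|^{u_j}\D_j,\theta,2s)$ up to the unit $\e(|.|^{u_i}\D_i,|.|^{u_j}\D_j,\theta,2s)$ of $\C[q^{\pm s}]$ by Proposition \ref{eqfctpaires} --- times a product over $k$ of the ratios $L^{lin}(\widetilde{|.|^{u_k}\D_k},\chi_\alpha^{-1}\d_{n_k}^{-1/2},1/2-s)/L^{lin}(|.|^{u_k}\D_k,\chi_\alpha,s)$ --- each equal to $\gamma^{lin}(|.|^{u_k}\D_k,\chi_\alpha,\theta,s)$ up to the unit $\e^{lin}(|.|^{u_k}\D_k,\chi_\alpha,\theta,s)$ by Proposition \ref{fctequation}. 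Absorbing the leftover $\e$'s and $\e^{lin}$'s into a single unit of $\C[q^{\pm s}]$ yields the relation for all $u$ in the dense open set, hence the relation $\sim$ in $\C(\mathcal{D}^{t+1})$, and then at $u=0$ the relation in $\C(\mathcal{D})$.

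The main obstacle I anticipate is the verification that ``general position'' is preserved under the duality $\D_i\mapsto\D_i^\vee$, $u_i\mapsto -u_i$ combined with the substitution $s\mapsto 1/2-s$, so that Theorem \ref{Lgeneral} genuinely applies to $\widetilde{\pi_u}$ for generic $u$; and the bookkeeping that the restriction of $\d_n^{-1/2}$ to each $H_{n_k}$ is exactly $\d_{n_k}^{-1/2}$, so that each individual quotient is precisely $\gamma^{lin}(|.|^{u_k}\D_k,\chi_\alpha,\theta,s)$ and not some twist of it. These are routine compatibility checks, but they are where care is needed.
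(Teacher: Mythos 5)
Your reduction to $u$ in general position (simultaneously for $(\pi,\chi_\alpha)$ and for the dual $(\widetilde\pi,\chi_{\alpha^{-1}|.|^{-1/2}})$), together with the double application of Theorem~\ref{Lgeneral}, follows the paper's strategy, and the bookkeeping issues you flag at the end are indeed routine. But the final inference contains a genuine gap: you pass from ``for each $u$ in a dense open set, $\gamma^{lin}(\pi_u,\chi_\alpha,\theta,s)\big/\prod\gamma\prod\gamma^{lin}$ is a unit $c_u(s)$ of $\C[q^{\pm s}]$'' to ``the ratio is a unit of $\C[\mathcal{D}^{t+1}]$.'' That implication is false in general: the rational function $1-q^{u}$ restricts, for every $u\neq0$, to a nonzero constant, hence a unit of $\C[q^{\pm s}]$, yet it is visibly not a unit of $\C[q^{\pm u},q^{\pm s}]$. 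What you actually need is that the ``leftover'' factor
$$\e^{lin}(\pi_u,\chi_\alpha,\theta,s)\Big/\Bigl(\textstyle\prod_{i<j}\e(|.|^{u_i}\D_i,|.|^{u_j}\D_j,\theta,2s)\prod_k\e^{lin}(|.|^{u_k}\D_k,\chi_\alpha,\theta,s)\Bigr)$$
is a unit of $\C[\mathcal{D}^{t+1}]$, i.e.\ a monomial in all the variables simultaneously; but while the $\e$-factors of pairs are standardly monomial in $(u,s)$, nothing in Proposition~\ref{fctequation} gives this for $\e^{lin}(|.|^{u_k}\D_k,\chi_\alpha,\theta,s)$ (which is only produced as a unit of $\C[q^{\pm s}]$ for each \emph{fixed} representation), nor for $\e^{lin}(\pi_u,\chi_\alpha,\theta,s)$.

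This is precisely the non-trivial content of the paper's proof. The paper introduces $\e_0^{lin}(\pi_u,\chi_\alpha,\theta,s)$ as the ratio $\gamma^{lin}(\pi_u,\chi_\alpha,\theta,s)\,L_1^{lin}(\pi_u,\chi_\alpha,s)\big/L_1^{lin}(\widetilde{\pi_u},\chi_\alpha^{-1}\d_n^{-1/2},1/2-s)$, which equals $\e^{lin}(\pi_u,\chi_\alpha,\theta,s)$ at points of general position. It then shows in two steps that $\e_0^{lin}$ is a unit of $\C[\mathcal{D}^{t+1}]$: first, using the rewritten functional equation~(\ref{nine}) together with Lemma~\ref{lmpolynomial}, Proposition~\ref{incl}, Corollary~\ref{normal} and Remark~\ref{Kir}, it argues that the denominator of $\e_0^{lin}$ must be a unit, so $\e_0^{lin}\in\C[\mathcal{D}^{t+1}]$; second, applying the functional equation twice gives $\e_0^{lin}(\pi_u,\chi_\alpha,\theta,s)\,\e_0^{lin}(\widetilde{\pi_u},\chi_\alpha^{-1}\d_n^{-1/2},\theta^{-1},1/2-s)=\pm1$, first off finitely many hyperplanes and then identically, forcing $\e_0^{lin}$ to be a monomial $\lambda q^{\sum k_iu_i+as}$. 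Only with this in hand does the equality of $\gamma$-ratios with $\e$-ratios yield the $\sim$ in $\C(\mathcal{D}^{t+1})$, after which your specialisation at $u=0$ goes through. Your proof should be supplemented by this (or some equivalent) argument that the absorbed $\e$-quotient is a monomial in $(u,s)$.
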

\begin{proof}
Let $\e_0^{lin}(\pi_u,\chi_\alpha,\theta,s)$ be the element of $\C(\mathcal{D}^{t+1})$ defined by the relation 
$$\gamma^{lin}(\pi_u,\chi_\alpha,\theta,s)/\e_0^{lin}(\pi_u,\chi_\alpha,\theta,s)$$ 
$$=\frac{\prod_{1\leq i<j \leq t}
L(|.|^{-u_j}\D_j^\vee,|.|^{-u_i}\D_i^\vee,1-2s)\prod_{k=1}^t L^{lin}(|.|^{-u_k}\D_k^\vee,\d_n^{-1/2}\chi_\alpha^{-1},1/2-s)}{\prod_{1\leq i<j \leq t}L(|.|^{u_i}\D_i,|.|^{u_j}\D_j,2s)\prod_{k=1}^t L^{lin}(|.|^{u_k}\D_k,\chi_\alpha,s)}.$$ 
This can be rewritten, for any $f$ in $\mathcal{F}_\pi$, as the equality: 
$$\frac{\Psi(\widetilde{W_{f,u}},\hat{\phi}^\theta,\d_n^{-1/2}\chi_\alpha^{-1},1/2-s)}{\prod_{k=1}^t L^{lin}(|.|^{-u_k}\D_k^\vee,\d_n^{-1/2}\chi_\alpha^{-1},1/2-s)\prod_{1\leq i<j \leq t}L(|.|^{-u_j}\D_j^\vee,|.|^{-u_i}\D_i^\vee,1-2s)}$$
\begin{equation}\label{nine}=\frac{\e_0^{lin}(\pi_u,\chi_\alpha,\theta,s)\Psi(W_{f,u},\phi,\chi_\alpha,s)}{\prod_{k=1}^t L^{lin}(|.|^{u_k}\D_k,\chi_\alpha,s)\prod_{1\leq i<j \leq t}L(|.|^{u_i}\D_i,|.|^{u_j}\D_j,2s)}.\end{equation}
If $u$ is in general position with respect to $(\pi,\chi_\alpha)$, then the ratio $\e_0^{lin}(\pi_u,\chi_\alpha,\theta,s)$ is equal to $\e^{lin}(\pi_u,\chi_\alpha,\theta,s)$, in particular $\e_0^{lin}(\pi_u,\chi_\alpha,\theta,s)$ has no poles for $u$ in general position. First, we notice that this implies that $\e_0^{lin}(\pi_u,\chi_\alpha,\theta,s)$ actually belongs to $\C[\mathcal{D}^{t+1}]$. Indeed, let $P$ be its denominator, reasonning as in proof of Lemma \ref{lmpolynomial}, if $P$ is not a unit of $\C[\mathcal{D}^{t+1}]$, the set $Z(P)$ necessarily contains a subset of the form $H\times\mathcal{D}$, for 
$H$ an affine hyperplane of $\mathcal{D}^{t}$. But Equation (\ref{nine}) implies that $Z(P)$, and hence $H\times\mathcal{D}$, is a subset of 
the zeroes of the polynomial $$\frac{\Psi(W_{f,u},\phi,\chi_\alpha,s)}{\prod_{k=1}^t L^{lin}(|.|^{u_k}\D_k,\chi_\alpha,s)\prod_{1\leq i<j \leq t}L(|.|^{u_i}\D_i,|.|^{u_j}\D_j,2s)}$$ for any $f$ and $\phi$. In particular, according to Proposition \ref{incl} and its corollary, for any $P_0$ in 
$\mathcal{P}_0$, the hyperplane $H\times\mathcal{D}$ is a subset of the zeores of $$\frac{P_0(q^{\pm u})}{\prod_{k=1}^t L^{lin}(|.|^{u_k}\D_k,\chi_\alpha,s)\prod_{1\leq i<j \leq t}L(|.|^{u_i}\D_i,|.|^{u_j}\D_j,2s)}.$$
 But by definition of $\mathcal{P}_0$, and 
thanks to Remark \ref{Kir}, for any fixed $u$, there is $P_0$ in $\mathcal{P}_0$ such that $P_0(q^{\pm u})=1$. For fixed $u$, this would implie 
that $H\times \mathcal{D}$ is a subset of the zeroes of $$\frac{1}{\prod_{k=1}^t L^{lin}(|.|^{u_k}\D_k,\chi_\alpha,s)\prod_{1\leq i<j \leq t}L(|.|^{u_i}\D_i,|.|^{u_j}\D_j,2s)},$$ which is impossible as 
for any fixed $u$, this polynomial is nonzero as a polynomial of $q^{-s}$. This thus implies that $P$ is a unit, and 
that $\e_0^{lin}(\pi_u,\chi_\alpha,\theta,s)$ belongs to $\C[\mathcal{D}^{t+1}]$.\\
Now, if $u$ is in general position with respect to $(\pi,\chi_\alpha)$ and $(\tilde{\pi},\d_n^{-1/2}\chi_\alpha^{-1})$, we have:
$$\e_0^{lin}(\pi_u,\chi_\alpha,\theta,s)=\e^{lin}(\pi_u,\chi_\alpha,\theta,s)$$ and 
$$\e_0^{lin}(\widetilde{\pi_u},\d_n^{-1/2}\chi_\alpha^{-1},\theta^{-1},1/2-s)=\e^{lin}(\widetilde{\pi_u},\d_n^{-1/2}\chi_\alpha^{-1},\theta,1/2-s).$$
Applying twice the functional equation, we obtain that $$\e^{lin}(\pi_u,\chi_\alpha,\theta,s)\e^{lin}(\widetilde{\pi_u},\d_n^{-1/2}\chi_\alpha^{-1},\theta^{-1},1/2-s)$$ is equal to 
the constant function $1$ or $-1$ (independantly of $u$ and $s$). Hence $$\e_0^{lin}(\pi_u,\chi_\alpha,\theta,s)\e_0^{lin}(\widetilde{\pi_u},\d_n^{-1/2}\chi_\alpha^{-1},\theta^{-1},1/2-s)$$ equals $1$ or $-1$, for $u$ outside 
a finite number of hyperplanes, so it is always valid as an equality of polynomials. In particular, $\e_0^{lin}(\pi_u,\chi_\alpha,\theta,s)$ 
is a unit of $\C[\mathcal{D}^{t+1}]$, of the form $\lambda q^{\sum_i k_iu_i+as}$ for integers $k_1,\dots,k_t,a$, and $\lambda$ in $\C^*$. 
We conclude thanks to the relation 
$$\frac{\gamma^{lin}(\pi_u,\chi_\alpha,\theta,s)}{\prod_{1\leq i<j \leq t}
\gamma(|.|^{u_i}\D_i,|.|^{u_j}\D_j,\theta,2s)\prod_{k=1}^t \gamma^{lin}(|.|^{u_k}\D_k,\chi_\alpha,\theta,s)}$$ 
$$=\frac{\e_0^{lin}(\pi_u,\chi_\alpha,\theta,s)}{\prod_{1\leq i<j \leq t}
\e(|.|^{u_i}\D_i,|.|^{u_j}\D_j,\theta,2s)\prod_{k=1}^t \e^{lin}(|.|^{u_k}\D_k,\chi_\alpha,\theta,s)}.$$ 
\end{proof}

\begin{LM}\label{divisioninduite}
Let $\pi_1$ and $\pi_2$ be two representations of Whittaker type of $G_{n_1}$ and $G_{n_2}$ respectively, and $\alpha$ be a character of $F^*$, then the 
factor $L^{lin}(\pi_2,\chi_\alpha,s)$ divides the factor $L^{lin}(\pi_1\times \pi_2,\chi_\alpha,s)$.
\end{LM}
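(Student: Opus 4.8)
The plan is to prove the equivalent statement $I(\pi_2,\chi_\alpha)\subseteq I(\pi_1\times\pi_2,\chi_\alpha)$ of fractional ideals of $\C[q^{\pm s}]$: since these ideals are generated by the Euler factors $L^{lin}(\pi_2,\chi_\alpha,s)$ and $L^{lin}(\pi_1\times\pi_2,\chi_\alpha,s)$, this containment is exactly the asserted divisibility (with the convention on ``divides'' fixed before Proposition \ref{dfLpaires}). By Proposition \ref{Ldef} it suffices to check that each generating integral $\Psi(W_2,\phi_2,\chi_\alpha,s)$, for $W_2\in W(\pi_2,\theta)$ and $\phi_2\in\sm_c(F^{[(n_2+1)/2]})$, is a $\C[q^{\pm s}]$-combination of integrals $\Psi(W,\phi,\chi_\alpha,s)$ attached to $\pi_1\times\pi_2$. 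Writing $n=n_1+n_2$, one reduces (after possibly replacing $\pi_1\times\pi_2$ by $\pi_2\times\pi_1$, whose factor $L^{lin}$ is the same) to placing $\pi_2$ in the first block.

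The main step is then to realize the $\pi_2$-integrals inside the $\pi_2\times\pi_1$-integrals. By Proposition \ref{derwhittaker}, the $n_1$-th Bernstein--Zelevinsky derivative $(\pi_2\times\pi_1)^{(n_1)}$ is a $G_{n_2}$-module admitting $\pi_2\simeq\1\times\pi_2$ among its subquotients --- the term in which $\pi_1$ is replaced by its top derivative $\pi_1^{(n_1)}=\1$ --- and, the factors being ordered with $\pi_2$ first, this copy of $\pi_2$ occurs as a submodule. Picking an irreducible submodule $\rho$ of it, Proposition \ref{mirabolicrestriction} (its converse half, with $k=n_2$) lifts any $W_2\in W(\rho,\theta)$ to a Whittaker function $W\in W(\pi_2\times\pi_1,\theta)_{n_2}$ with $W(diag(g,I_{n_1}))\phi'(\eta_{n_2}g)=W_2(g)\phi'(\eta_{n_2}g)|g|^{n_1/2}$, for $\phi'$ the characteristic function of a small neighbourhood of $0$ in $F^{n_2}$, and with $W(diag(g,a,1))$ of compact support in $a\in A_{n_1-1}$. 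Feeding such a $W$, together with a $\phi\in\sm_c(F^{[(n+1)/2]})$ chosen to match $\phi_2$ on the relevant coordinates and to be supported near $l_n$ of the cell $diag(H_{n_2},I_{n_1})$, into $\Psi(W,\phi,\chi_\alpha,s)$ and applying the integration formula of Lemma \ref{integralformula} with $k=n_2$, the outer integrations over $B_{n,n_2}^{-,\sigma}$ and $\mathcal{M}_{n_1,n_2}^\sigma$ collapse to constants by the two support statements just quoted, while the powers of $|h|$ and the characters $\chi_{n_2},\chi_n$ produced by Lemma \ref{integralformula} cancel against the factor $|g|^{n_1/2}$ coming from the mirabolic restriction; what is left is exactly $\int_{N_{n_2}^\sigma\backslash H_{n_2}}W_2(h)\phi_2(l_{n_2}(h))\chi_\alpha(h)\chi_{n_2}(h)^{1/2}|h|^s\,dh=\Psi(W_2,\phi_2,\chi_\alpha,s)$, up to a unit of $\C[q^{\pm s}]$. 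This gives $\Psi(W_2,\phi_2,\chi_\alpha,s)\in I(\pi_2\times\pi_1,\chi_\alpha)$ when $W_2$ lies in the Whittaker model of an irreducible submodule of $\pi_2$, and a downward induction on the number $q$ of discrete series factors of $\pi_2$ (the case $q=1$ being the irreducible case just treated, the case $q>1$ handled by writing $\pi_2=\pi_2'\times\D$ and invoking transitivity of divisibility) then covers all of $W(\pi_2,\theta)$. No hypothesis on $Re(\alpha)$ is used anywhere, so the statement holds for an arbitrary character $\alpha$.

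The hard part will be the bookkeeping together with the last two reductions: verifying that the relevant copy of $\pi_2$ really is a \emph{submodule} (not merely a subquotient) of $(\pi_2\times\pi_1)^{(n_1)}$, so that Proposition \ref{mirabolicrestriction} --- stated only for irreducible submodules of a derivative --- can be brought to bear; confirming that $L^{lin}$ is insensitive to the order of the factors (or, alternatively, running the argument for $\pi_1\times\pi_2$ directly by lifting through the quotient $(\pi_1\times\pi_2)^{(n_1)}\twoheadrightarrow\pi_2$ rather than through a submodule); and keeping exact track of the modulus characters $\chi_{n_2},\chi_n$, the exponents of $|h|$, and the auxiliary functions $\phi',\phi_2,\phi$ so that Lemma \ref{integralformula} returns $\Psi(W_2,\phi_2,\chi_\alpha,s)$ on the nose rather than multiplied by a spurious function of $s$.
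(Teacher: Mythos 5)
Your proposal takes a genuinely different, and considerably more involved, route than the paper, and it has gaps that the paper's tool of choice is specifically designed to avoid.

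The paper's proof is short: it records (from the proof of Proposition \ref{factL}, via Lemma \ref{integralformula} and Lemma 9.2 of \cite{JPS}) that the ``truncated'' integral $\Psi_{(n_1-1)}(W,\chi_\alpha,s)=\int_{N_{n_2}^\sigma\backslash H_{n_2}}W(diag(h,I_{n_1}))\chi_\alpha\chi_{n_2}(h)|h|^{s-n_1/2}dh$ lies in $I(\pi_1\times\pi_2,\chi_\alpha)$ for every $W$ in $W(\pi_1\times\pi_2,\theta)$, and then invokes Proposition 9.1 of \cite{JPS}: for \emph{every} $W_2\in W(\pi_2,\theta)$ and \emph{every} $\phi\in\sm_c(F^{n_2})$ there is $W\in W(\pi_1\times\pi_2,\theta)$ with $W(diag(h,I_{n_1}))=W_2(h)\phi(\eta_{n_2}h)|h|^{n_1/2}$ for all $h\in G_{n_2}$. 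Plugging this into $\Psi_{(n_1-1)}$ gives $\Psi(W_2,\phi',\chi_\alpha,s)$ on the nose. That single propagation lemma replaces everything you are trying to build through derivatives.

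Your route, by contrast, relies on Proposition \ref{mirabolicrestriction}, which is strictly weaker than what is needed at exactly the points you flag as ``hard.'' First, it applies only to \emph{irreducible} submodules $\rho$ of a derivative, so the lift it produces lands you with $W_2$ running only over $W(\rho,\theta)$ for a single irreducible $\rho\subset\pi_2$, not over all of $W(\pi_2,\theta)$ when $\pi_2$ is reducible; and your ``downward induction'' does not repair this, since writing $\pi_2=\pi_2'\times\D$ and invoking transitivity gives divisibility of $L^{lin}(\D,\chi_\alpha,s)$ into $L^{lin}(\pi_1\times\pi_2,\chi_\alpha,s)$, not of $L^{lin}(\pi_2,\chi_\alpha,s)$. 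Second, Proposition \ref{mirabolicrestriction} only asserts the existence of \emph{some} $\phi'$ with $\phi'(0)\neq 0$ making the identity $W(diag(g,I_{n_1}))\phi'=W'(g)\phi'|g|^{n_1/2}$ hold, whereas JPS Proposition 9.1 gives it for an \emph{arbitrary} test function; without this freedom your step ``$\phi$ chosen to match $\phi_2$ on the relevant coordinates'' cannot produce every generating integral $\Psi(W_2,\phi_2,\chi_\alpha,s)$ of $I(\pi_2,\chi_\alpha)$. Third, the assertion that $\pi_2\simeq\pi_2^{(0)}\times\pi_1^{(n_1)}$ sits inside $(\pi_2\times\pi_1)^{(n_1)}$ as a \emph{submodule} (rather than merely a subquotient in the Bernstein--Zelevinsky filtration) is stated without justification. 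Finally, the reordering $L^{lin}(\pi_1\times\pi_2,\chi_\alpha,s)=L^{lin}(\pi_2\times\pi_1,\chi_\alpha,s)$ is also left unproved and not needed in the paper's argument, which works with $\pi_2$ in the lower-right block directly. The lemma is correct, but your argument as given does not establish it; the clean fix is to replace the derivative machinery with the propagation statement (Proposition 9.1 of \cite{JPS}) as the paper does.
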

\begin{proof}
Let $\pi=\pi_1\times \pi_2$. We saw in the proof of Proposition \ref{factL}, that for $W$ in $W(\pi,\theta)$, the integral 
$$\Psi_{(n_1-1)}(W,\chi_\alpha,\theta)=\int_{N_{n_2}^\sigma\backslash H_{n_2}} 
W(diag(h,I_{n_1}))\chi_\alpha\chi_{n_2}(h)|h|^{s-n_1/2}dh$$ belongs to the ideal 
$I(\pi,\chi_\alpha)$. But according to Proposition 9.1. of \cite{JPS}, for every $(W_2,\phi)$ in $W(\pi_2,\theta)\times \sm_c(F^{n_2})$, there is 
$W$ in $W(\pi,\theta)$, such that for $h$ in $G_{n_2}$, we have the equality $W(diag(h,I_{n_1}))=W_2(h)\phi(\eta_{n_2}h)|h|^{n_1/2}$. For this $W$, the integral 
$\Psi_{(n_1-1)}(W,\chi_\alpha,s)$ becomes $\Psi(W_2,\phi',\chi_\alpha,s)$, for 
$$\phi'(t_1,t_2,\dots,t_{[(n_2+1)/2]})=\phi(t_1,0,t_2,0,\dots).$$ 
As every $\phi'$ in $\sm_c(F^{[(n_2+1)/2]})$ can be obtained this way, this proves that 
$I(\pi_2,\chi_\alpha)\subset I(\pi,\chi_\alpha)$, which is what we wanted.
\end{proof}

For representations of Langlands' type, the preceding results implies the relation we are seeking for. If $\pi=\D_1\times \dots \times \D_t$ is a representation of Whittaker type of $G_n$, we will denote by $L_1^{lin}(\pi,\chi_\alpha,s)$ the factor 
$$L_1^{lin}(\pi,\chi_\alpha,s)= \prod_{k=1}^t L^{lin}(\D_k,\chi_\alpha,s)\prod_{1\leq i<j \leq t}L(\D_i,\D_j,2s).$$

\begin{thm}\label{indLanglands }
If $\pi$ is a representation of $G_n$ of Langlands' type, and $\alpha$ is a character of $F^*$ with $-1/2\leq Re(\alpha)\leq 0$. We have $L^{lin}(\pi,\chi_\alpha,s)=L_1^{lin}(\pi,\chi_\alpha,s)$.
\end{thm}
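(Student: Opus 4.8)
The plan is to combine the two divisibility directions with a $\gamma$-factor argument. First, one direction is already essentially done: by Corollary \ref{divides}, the Euler factor $L^{lin}(\pi,\chi_\alpha,s)$ divides $L_1^{lin}(\pi,\chi_\alpha,s)=\prod_{k=1}^t L^{lin}(\D_k,\chi_\alpha,s)\prod_{1\le i<j\le t}L(\D_i,\D_j,2s)$. So it remains to prove the reverse divisibility, and since both sides are Euler factors, equality will follow. The reverse divisibility should be extracted from the functional equation together with Proposition \ref{egalgamma}, which gives $\gamma^{lin}(\pi,\chi_\alpha,\theta,s)\sim\prod_{1\le i<j\le t}\gamma(\D_i,\D_j,\theta,2s)\prod_{k=1}^t\gamma^{lin}(\D_k,\chi_\alpha,\theta,s)$ in $\C(\mathcal{D})$.

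Here is how I would run the argument. Write $\gamma^{lin}(\pi,\chi_\alpha,\theta,s)=\e^{lin}(\pi,\chi_\alpha,\theta,s)L^{lin}(\tilde\pi,\chi_\alpha^{-1}\d_n^{-1/2},1/2-s)/L^{lin}(\pi,\chi_\alpha,s)$, and similarly expand each factor on the right-hand side. Cancelling the $\e$-units (which are monomials in $q^{\pm s}$, so they do not affect zeros or poles), Proposition \ref{egalgamma} gives an equality up to a unit
\begin{equation*}
\frac{L^{lin}(\tilde\pi,\chi_\alpha^{-1}\d_n^{-1/2},1/2-s)}{L^{lin}(\pi,\chi_\alpha,s)}\sim\frac{\prod_{i<j}L(\tilde\D_j,\tilde\D_i,1-2s)\prod_k L^{lin}(\tilde\D_k,\chi_\alpha^{-1}\d_n^{-1/2},1/2-s)}{\prod_{i<j}L(\D_i,\D_j,2s)\prod_k L^{lin}(\D_k,\chi_\alpha,s)}.
\end{equation*}
Now $\tilde\pi=\tilde\D_t\times\cdots\times\tilde\D_1$ is again of Langlands' type after reordering, so Corollary \ref{divides} applied to $\tilde\pi$ (with the twisted character $\chi_\alpha^{-1}\d_n^{-1/2}$, whose real part condition must be checked, or handled by the $\alpha\mapsto\alpha^{-1}$ symmetry noted after Theorem \ref{distgen}) shows that $L^{lin}(\tilde\pi,\chi_\alpha^{-1}\d_n^{-1/2},1/2-s)$ divides the numerator on the right. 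Combined with the displayed relation and the fact that $L^{lin}(\pi,\chi_\alpha,s)$ divides the denominator (Corollary \ref{divides}), comparing the two fractions forces $L_1^{lin}(\pi,\chi_\alpha,s)$ to divide $L^{lin}(\pi,\chi_\alpha,s)$: any pole of $L_1^{lin}(\pi,\chi_\alpha,s)$ at $s_0$ that were not a pole of $L^{lin}(\pi,\chi_\alpha,s)$ would, after the cancellations, produce a pole of the left-hand fraction at $s_0$ not accounted for on the right, or an uncancelled zero, contradicting the $\sim$-equality once one also invokes the dual divisibility to control the numerators. Since $L^{lin}(\pi,\chi_\alpha,s)\mid L_1^{lin}(\pi,\chi_\alpha,s)$ already, the two Euler factors coincide.

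The main obstacle is making the pole-counting in the last step genuinely rigorous rather than heuristic: one must show that after substituting the explicit $\gamma$-quotients into Proposition \ref{egalgamma}, every pole of $L_1^{lin}(\pi,\chi_\alpha,s)$ survives as a pole of $L^{lin}(\pi,\chi_\alpha,s)$, which requires knowing that no spurious cancellation occurs between the numerator factors $L(\tilde\D_j,\tilde\D_i,1-2s)$, $L^{lin}(\tilde\D_k,\chi_\alpha^{-1}\d_n^{-1/2},1/2-s)$ and the denominator factors. The clean way to do this is the deformation trick already used throughout: introduce the parameter $u\in\mathcal{D}^t$, prove the identity for $\pi_u$ with $u$ in general position using Theorem \ref{Lgeneral} and Proposition \ref{egalgamma} in the $(t+1)$-variable ring $\C(\mathcal{D}^{t+1})$ where the $\gamma$-relation is an exact $\sim$-equality, and then specialise $u\to 0$, using Lemma \ref{lmpolynomial} (which says $\Psi(W_{f,u},\phi,\chi_\alpha,s)/L_1^{lin}(\pi_u,\chi_\alpha,s)$ is a genuine polynomial, hence has no poles) to conclude that $L_1^{lin}(\pi,\chi_\alpha,s)$ divides $L^{lin}(\pi,\chi_\alpha,s)$ by a limiting/semicontinuity argument on orders of poles. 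This is the step I expect to require the most care, since it is where the passage from ``generic $u$'' to ``$u=0$'' can a priori lose poles, and one needs the polynomiality in Lemma \ref{lmpolynomial} precisely to prevent that.
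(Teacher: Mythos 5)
Your proposal correctly isolates the two main external ingredients the paper uses --- Corollary \ref{divides} (giving $L^{lin}(\pi,\chi_\alpha,s)\mid L_1^{lin}(\pi,\chi_\alpha,s)$) and Proposition \ref{egalgamma} --- and you correctly sense that the hard part is the converse divisibility. But neither of the two mechanisms you propose actually supplies it.

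First, the $\gamma$-relation alone only gives the following: writing $L^{lin}(\pi,\chi_\alpha,s)=P(q^{-s})\,L_1^{lin}(\pi,\chi_\alpha,s)$ and $L^{lin}(\tilde\pi,\chi_\alpha^{-1}\d_n^{-1/2},1/2-s)=\tilde P(q^{-s})\,L_1^{lin}(\tilde\pi,\chi_\alpha^{-1}\d_n^{-1/2},1/2-s)$ for polynomials $P,\tilde P$ with constant term $1$, Proposition \ref{egalgamma} says exactly $P\sim\tilde P$. This is perfectly compatible with $P$ and $\tilde P$ both being nontrivial, provided they share their zeroes: they cancel in the ratio $L(\tilde\pi)/L(\pi)$. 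So the ``produce an uncancelled pole or zero'' step is not merely heuristic; as stated it is false. Your deformation fix is also insufficient. Lemma \ref{lmpolynomial}, specialised at $u=0$ (where the map $f\mapsto W_{f,0}$ is onto $W(\pi,\theta)$ precisely because $\pi$ is of Langlands' type), recovers exactly Corollary \ref{divides} and nothing more. To conclude $L^{lin}=L_1^{lin}$ at $u=0$ one would need some $(f,\phi)$ for which $\Psi(W_{f,0},\phi,\chi_\alpha,s)/L_1^{lin}(\pi,\chi_\alpha,s)$ is a unit; but the pairs $(f_u,\phi_u)$ that witness this for generic $u$ may degenerate as $u\to 0$, and there is no semicontinuity principle in the paper (nor in general) that prevents the ideal $\{P_{f,\phi}(0,\cdot)\}$ from being strictly smaller than the generic one. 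This is precisely the phenomenon of poles ``escaping'' at specialisation.

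What the paper actually does, and what your plan is missing, is an \emph{induction on $t$} combined with Lemma \ref{divisioninduite} and a disjoint-half-plane argument that uses the Langlands ordering in an essential way. Lemma \ref{divisioninduite} yields $L^{lin}(\pi_2,\chi_\alpha,s)\mid L^{lin}(\pi,\chi_\alpha,s)$ with $\pi_2=\D_2\times\cdots\times\D_t$, and (on the dual side) $L^{lin}(\tilde\pi_3,\chi_\alpha^{-1}\d_n^{-1/2},1/2-s)\mid L^{lin}(\tilde\pi,\chi_\alpha^{-1}\d_n^{-1/2},1/2-s)$ with $\pi_3=\D_1\times\cdots\times\D_{t-1}$. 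Applying the induction hypothesis to $\pi_2$ and $\tilde\pi_3$, one computes $PQ$ and $\tilde P\tilde Q$ explicitly as inverses of products of local $L$-factors whose poles are confined, by the ordering $r_1\geq\cdots\geq r_t$ of central-character real parts, to the half-planes $\{Re(s)\leq(-r_1-r_t)/2\}$ and $\{Re(s)\geq(1-r_1-r_t)/2\}$ respectively. These half-planes are disjoint, so the zeroes of $P$ and $\tilde P$ lie in disjoint regions; combined with $P\sim\tilde P$ this forces $P=\tilde P=1$. This step --- not the deformation --- is where the hypothesis that $\pi$ is of Langlands' type is used, and it is the genuine missing idea in your proposal.
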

\begin{proof}
We write $\pi=\D_1\times \dots \times \D_t$, and we denote by $r_i$ the the central character of $\D_i$, so that $r_i\geq r_{i+1}$. It is a consequence of 
the corollary of Theorem 2.3 of \cite{JPS} that $L(\D_i,\D_j,2s)$ has its poles contained in the region $\{Re(s)\leq (-r_i-r_j)/2\}$, and of Corollary 
\ref{CVdiscr}, that $L^{lin}(\D_k,\chi_\alpha,s)$ has its poles contained in the region $\{Re(s)\leq -r_k\}$ (because $-1/2\leq Re(\alpha)$). Similarly, 
$L(\D_j^\vee,\D_i^\vee,1-2s)$ has its poles contained in the region $\{Re(s)\geq (1-r_i-r_j)/2\}$, and $L^{lin}(\D_k^\vee,\chi_\alpha^{-1}\d_n^{-1/2},1/2-s)$ has its poles contained in the region $\{Re(s)\geq 1/2-r_k\}$ (because $-1/2\leq Re(\alpha^{-1}|.|^{-1/2})$).\\
We now prove by induction on $t$ the equality $L^{lin}(\pi,\chi_\alpha,s)=L_1^{lin}(\pi,\chi_\alpha,s)$. There is nothing to prove 
when $t=1$. We suppose that it is true for $t-1$, and prove it for $t$.\\
We write $\pi_2=\D_2\times \dots \times \D_t$ and $\pi_3=\D_1\times \dots \times \D_{t-1}$. By Corollary \ref{divides}, there are two 
elements $P$ and $\tilde{P}$ of $\C[X]$ with constant term $1$, 
such that $$L_1^{lin}(\pi,\chi_\alpha,s)=P(q^{-s})^{-1}L^{lin}(\pi,\chi_\alpha,s)$$ and 
$$L_1^{lin}(\tilde{\pi},\chi_\alpha^{-1}\d_n^{-1/2},1/2-s)=\tilde{P}(q^{-s})^{-1}L^{lin}(\tilde{\pi},\chi_\alpha^{-1}\d_n^{-1/2},1/2-s).$$ 
According to Proposition 
\ref{egalgamma}, we moreover know that $\tilde{P}\sim P$, hence it will be sufficient to know that the set of zeroes of 
$P$ and $\tilde{P}$ are disjoint to conclude.  
 By Lemma \ref{divisioninduite}, there are two elements $Q$ and $\tilde{Q}$ of $\C[X]$ with constant term $1$, such that 
 $$L^{lin}(\pi,\chi_\alpha,s)=Q(q^{-s})^{-1}L^{lin}(\pi_2,\chi_\alpha,s)$$ and 
 $$L^{lin}(\tilde{\pi},\chi_\alpha^{-1}\d_n^{-1/2},1/2-s)=\tilde{Q}(q^{-s})^{-1}L^{lin}(\tilde{\pi_3},\chi_\alpha^{-1}\d_n^{-1/2},1/2-s).$$ By induction hypothesis, we know that $$L^{lin}(\pi_2,\chi_\alpha,s)=L_1^{lin}(\pi_2,\chi_\alpha,s)$$ and $$L^{lin}(\tilde{\pi_3},\chi_\alpha^{-1}\d_n^{-1/2},s)=L_1^{lin}(\tilde{\pi_3},\chi_\alpha^{-1}\d_n^{-1/2},s).$$
We thus have the relation $P(q^{-s})Q(q^{-s})=L_1^{lin}(\pi_2,\chi_\alpha,s)/L_1^{lin}(\pi,\chi_\alpha,s)$, i.e. 
$$P(q^{-s})Q(q^{-s})=L^{lin}(\D_1,s)^{-1}\prod_{j\geq 2} L(\D_1,\D_j,2s)^{-1}.$$
This implies that 
$P(q^{-s})$ has its zeroes in the region $$\{Re(s)\leq (-u_1-u_t)/2\}.$$
Similarly, 
$$\tilde{P}(q^{-s})\tilde{Q}(q^{-s})=L^{lin}(\D_t^\vee,\chi_\alpha^{-1}\d_n^{-1/2},1/2-s)^{-1}\prod_{i\leq t-1} L(\D_{t}^\vee,\D_i^\vee,1-2s)^{-1}$$
implies that $\tilde{P}(q^{-s})$ has its zeroes in the region $$\{Re(s)\geq (1-u_1-u_t)/2\}.$$
This ends the proof.
\end{proof}

\subsection{The equality for discrete series}
Let $\phi=\phi_1\oplus \dots \oplus \phi_t$ be a semi-simple representation of the Weil-Deligne group $W'_F$, with $\phi_i$ of dimension 
$n_i$, and $\phi$ of dimension $n=n_1+\dots+n_t$. Let $\D_i$ be the discrete series representation of $G_{n_i}$ such that 
$\phi_i=\phi(\D_i)$, we can always assume, up to re-ordering the $\phi_i$'s (which doesn't change $\phi$), that 
$\D_1\times \dots \times \D_t$ is of Langlands' type. In this case, if $\tau$ is the unique irreducible quotient of 
$\pi=\D_1\times \dots \times \D_t$, then $\phi(\tau)=\phi$. Now, thanks to the equality 
$$\wedge^2(\phi_1\oplus \dots \oplus \phi_t)= \oplus_{k=1}^t \wedge^2(\phi_k) \oplus_{1\leq i<j \leq t} \phi_i\otimes \phi_j,$$
if we denote by $L^{lin}(\phi,s)$ the factor $L(\phi,s+1/2)L(\phi,\wedge^2,2s)$, we have the equality 
$$L^{lin}(\phi,s)=\prod_{k=1}^t L^{lin}(\phi_k,s) \prod_{1\leq i<j\leq t}L(\phi_i\otimes \phi_j,2s).$$
As it is a consequence of Langlands' correspondance for $G_n$ that 
$L(\phi_i\otimes \phi_j,s)=L(\D_i\times \D_j,s)$, to prove the equality $L^{lin}(\phi(\tau),s)=L^{lin}(\phi,s)$ for any irreducible representation 
$\tau$ of $G_n$, it is enough, according to Section \ref{inductivity-irreducible}, to show this equality for discrete series representations. 
We will be concerned with the proof of this result in this last section.\\ 

 We start with the following corollary of Proposition \ref{factL}.

\begin{cor}\label{factLdiscr}
 Let $\D$ be a discrete series of $G_n$, and $r$ the minimal positive integer, such that $\D^{(r)}\neq 0$, 
 then one has $L^{lin}_{(0)}(\D,s)=L^{lin}(\D^{(r)},s)$.
\end{cor}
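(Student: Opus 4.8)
The plan is to read off both sides from Proposition \ref{factL}, using the explicit shape of the derivatives of a discrete series given by Proposition \ref{derwhittaker}. Write $\D=[\rho,\dots,|.|^{k-1}\rho]$ with $\rho$ a cuspidal representation of $G_{r_0}$, so $n=kr_0$. By Proposition \ref{derwhittaker} the derivative $\D^{(l)}$ is zero unless $r_0\mid l$, in which case $\D^{(ir_0)}=[|.|^i\rho,\dots,|.|^{k-1}\rho]$ for $1\le i\le k-1$ and $\D^{(kr_0)}=\1$; in particular the least positive integer $r$ with $\D^{(r)}\neq 0$ is $r=r_0$, the module $\D^{(r)}=\D^{(r_0)}$ is again a discrete series (of $G_{n-r_0}$), and every derivative of $\D$ (or of $\D^{(r)}$) is irreducible, hence completely reducible. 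Thus Proposition \ref{factL} applies both to $\D$ and to $\D^{(r)}$.

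Applying that proposition to $\D$ and using that each $\D^{(l)}$ is irreducible, one gets that $L^{lin}_{(0)}(\D,s)$ is the least common multiple of the factors $L^{lin}_{ex}(\D^{(l)},s)$ over the levels $1\le l\le n$ with $\D^{(l)}\neq 0$ --- the level-$0$ term $\D^{(0)}=\D$ being exactly the one omitted in passing from $L^{lin}$ to $L^{lin}_{(0)}$. Applying it instead to the discrete series $\D^{(r)}$ of $G_{n-r}$ gives that $L^{lin}(\D^{(r)},s)$ is the least common multiple of the factors $L^{lin}_{ex}((\D^{(r)})^{(l')},s)$ over $0\le l'\le n-r$ with $(\D^{(r)})^{(l')}\neq 0$. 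It therefore suffices to show that these two families of nonzero representations coincide.

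The one substantive point is the identity $(\D^{(r_0)})^{(jr_0)}=\D^{((j+1)r_0)}$ for $0\le j\le k-1$ (with the convention $\D^{(kr_0)}=\1$), together with the fact that $(\D^{(r_0)})^{(l')}=0$ unless $r_0\mid l'$. This follows from Proposition \ref{derwhittaker} applied to the discrete series $\D^{(r_0)}=[|.|\rho,\dots,|.|^{k-1}\rho]$, which has length $k-1$: its $j$-th derivative is $[|.|^{j+1}\rho,\dots,|.|^{k-1}\rho]=\D^{((j+1)r_0)}$ for $j\le k-2$, and $(\D^{(r_0)})^{((k-1)r_0)}=\1=\D^{(kr_0)}$. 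Consequently the family $\{(\D^{(r)})^{(l')}:l'\ge 0,\ (\D^{(r)})^{(l')}\neq 0\}$ equals $\{\D^{(l)}:l\ge r_0,\ \D^{(l)}\neq 0\}$, which in turn equals $\{\D^{(l)}:l\ge 1,\ \D^{(l)}\neq 0\}$ because $\D^{(l)}=0$ for $0<l<r_0$. Hence the two least common multiples agree and $L^{lin}_{(0)}(\D,s)=L^{lin}(\D^{(r)},s)$. There is no real difficulty beyond keeping track of the index ranges in Proposition \ref{factL} ($L^{lin}_{(0)}$ omits only the level-$0$ derivative) and the harmless observation that $\D^{(n)}=\1$ lives on the trivial group $G_0$ and contributes the trivial factor on both sides; this is the exact analogue, in the Bump--Friedberg setting, of the corresponding reduction for Rankin--Selberg $L$-factors of pairs carried out in \cite{CP}.
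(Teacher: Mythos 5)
Your proof is correct and follows essentially the same route as the paper: the paper also derives the corollary from Proposition \ref{factL} together with the explicit form of the derivatives $\D^{(lr)}$ of a segment. Where the paper invokes "a simple induction," you instead observe directly that the two lcm's from Proposition \ref{factL} range over the very same family $\{\D^{(l)}:l\ge r,\ \D^{(l)}\neq 0\}$ because $(\D^{(r)})^{(jr)}=\D^{((j+1)r)}$ and $\D^{(l)}=0$ for $0<l<r$ --- a cleaner phrasing of the same argument.
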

\begin{proof} Indeed, we recall that all the nonzero derivatives of $\D$ are irreducible. More precisely, if we write 
$\D=St_k(\rho)= [|.|^{(1-k)/2}\rho,\dots,|.|^{(k-1)/2}\rho]$, they are of the form 
$\D^{(lr)}= [|.|^{(2l+1-k)/2}\rho,\dots,|.|^{(k-1)/2}\rho]=|.|^{l/2}St_{k-l}(\rho)$ for $l$ between $1$ and $k$, now the claim is a 
straight-forward consequence of Proposition \ref{factL} and a simple induction.\end{proof}

We recall the following standard resut about $L$ functions of special representations.

\begin{prop}\label{Ldiscr}
Let $k$ and $r$ be two positive integers, let $\rho$ be a cuspidal representation of $G_r$, and let $\D=St_k(\rho)$. Then 
$L(\phi(\D),s)=L(\phi(\rho|.|^{(k-1)/2}),s)$, in particular $L(\phi(\D),s)=L(\phi(\D^{(r)}),s)$.
\end{prop}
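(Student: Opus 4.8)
The plan is to derive this directly from the local Langlands correspondence $\phi$ together with the definition of the Artin $L$-factor of a Weil--Deligne representation of the form $\tau\times Sp(k)$ recalled in Section \ref{prelim}; no analysis is involved, only an unwinding of definitions.

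First I would recall that $\D=St_k(\rho)=[|.|^{(1-k)/2}\rho,\dots,|.|^{(k-1)/2}\rho]$ is the Steinberg-type discrete series attached to the cuspidal representation $\rho$ of $G_r$, with $n=kr$, and invoke the standard property of the local Langlands correspondence of Harris-Taylor and Henniart that $\phi(\D)=\phi(St_k(\rho))=\phi(\rho)\times Sp(k)$, i.e. the indecomposable $n$-dimensional Weil--Deligne representation supported on the irreducible $W_F$-representation $\phi(\rho)$. This is the same input that underlies the identity $L(\phi_i\otimes\phi_j,s)=L(\D_i\times\D_j,s)$ used in Subsection \ref{inductivity-irreducible}. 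Granting it, the definition of the $L$-factor of $\tau\times Sp(k)$ in Section \ref{prelim} --- which only retains the contribution of the line of monodromy invariants of $Sp(k)$, on which $W_F$ acts through the character $|.|^{(k-1)/2}$ --- yields $L(\phi(\D),s)=L(\phi(\rho),s+(k-1)/2)$.

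On the other hand $\phi$ is compatible with twists by characters of $F^*$ (matched with characters of $W_F$ by local class field theory), so $\phi(\rho|.|^{(k-1)/2})=\phi(\rho)\otimes|.|^{(k-1)/2}$, and twisting a semisimple representation of $W_F$ by $|.|^a$ amounts to the substitution $s\mapsto s+a$ in its Artin factor; hence $L(\phi(\rho|.|^{(k-1)/2}),s)=L(\phi(\rho),s+(k-1)/2)$. Comparing the two expressions gives $L(\phi(\D),s)=L(\phi(\rho|.|^{(k-1)/2}),s)$, which is the first assertion (trivial when $k=1$). For the last assertion, when $k\geq 2$ Proposition \ref{derwhittaker} gives $\D^{(r)}=|.|^{1/2}St_{k-1}(\rho)=St_{k-1}(\rho|.|^{1/2})$, again a Steinberg-type discrete series, whose endpoint cuspidal representation $(\rho|.|^{1/2})|.|^{(k-2)/2}=\rho|.|^{(k-1)/2}$ coincides with that of $\D$; applying the equality just proved to $\D^{(r)}$ in place of $\D$ gives $L(\phi(\D^{(r)}),s)=L(\phi(\rho|.|^{(k-1)/2}),s)=L(\phi(\D),s)$.

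There is essentially no obstacle: the whole statement reduces to definitions. The one point requiring a little care is the identification $\phi(St_k(\rho))=\phi(\rho)\times Sp(k)$, the correct bookkeeping of the shift $s\mapsto s+(k-1)/2$, and the elementary observation that the ``endpoint'' of the segment $\D^{(r)}$ is the same as that of $\D$, so that both $L$-factors land on the same twist of $\phi(\rho)$.
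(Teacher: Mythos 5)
The paper states this proposition without proof, labeling it a ``standard result,'' and your unwinding of the Langlands correspondence together with the definition of the Artin factor of $\tau\times Sp(k)$ is exactly the intended argument; both assertions, including the reduction of the second to the first via the identification $\D^{(r)}=St_{k-1}(\rho|.|^{1/2})$ sharing the same endpoint $\rho|.|^{(k-1)/2}$, check out. One point worth flagging: the paper's Section~2 literally defines $L(\tau\times Sp(k),s)$ to be $L(\tau,s+k/2)$, which must be a typo for $L(\tau,s+(k-1)/2)$ (take $k=1$: the shift must vanish since $Sp(1)$ is trivial); your proof silently uses the corrected shift $(k-1)/2$, which is what makes the computation $L(\phi(\D),s)=L(\phi(\rho),s+(k-1)/2)=L(\phi(\rho|.|^{(k-1)/2}),s)$ go through, whereas the definition as printed would render the proposition false.
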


We will need Theorem 3.1 and results discussed in section 5 and 6 of \cite{M2012.2} (namely Theorem 4.3 of \cite{K}, Theorem 1.1 of \cite{KR}, 
Proposition 4.3 of \cite{S}), which we summarise here.

\begin{thm}\label{discr}
Let $\D$ be a unitary discrete series representation of $G_n$, it cannot be $H_n$-distinguished when $n$ is odd, and when $n$ is even, 
it is $H_n$-distinguished if and only if it has a Shalika model. In the case $n$ even, we write 
$\D$ as $St_k(\rho)$, where $\rho$ is a cuspidal representation of $G_r$, with $n=kr$. When $k$ is odd, $\D$ is distinguished if and only if 
$L(\phi(\rho),\wedge^2,s)$ has a pole at zero, and  when $k$ is even, $\D$ is distinguished if and only if 
$L(\phi(\rho),Sym^2,s)$ has a pole at zero.
\end{thm}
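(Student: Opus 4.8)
The plan is to assemble the three cited ingredients, since Theorem~\ref{discr} is a synthesis of known results rather than a genuinely new statement. First, the impossibility of $H_n$-distinction for a unitary discrete series $\D$ of $G_n$ with $n$ odd requires nothing beyond what is recalled at the start of Section~\ref{classification} from \cite{M2012.2}: a discrete series of $G_n$ is $(M,\chi)$-distinguished for a maximal Levi subgroup $M$ only when $n$ is even and $M\simeq H_n$, and in that case $\chi$ must be trivial on the relevant centers. So that part is immediate.

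Second, for $n$ even I would record the equivalence between $H_n$-distinction of $\D$ and the existence of a Shalika model for $\D$. One direction — a Shalika model implies $H_n$-distinction — is already contained in the proof of Proposition~\ref{distinduites}: one applies the argument of Proposition~3.1 of \cite{FJ}, which is available for any irreducible representation $\pi$ with $Hom_{S_n}(\pi,\theta)\neq 0$ thanks to Lemma~6.1 of \cite{JR}. The converse — $H_n$-distinction implies a Shalika model — is exactly the relation between linear periods and Shalika periods for discrete series, which is Theorem~3.1 of \cite{M2012.2}; I would simply cite it. Together these give: for $n$ even, $\D$ is $H_n$-distinguished if and only if $\D$ has a Shalika model.

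Third, I would reduce the Shalika-model condition for $\D=St_k(\rho)$ to a condition on the cuspidal datum $\rho$ (a cuspidal representation of $G_r$ with $n=kr$). By Theorem~1.1 of \cite{KR}, together with Theorem~4.3 of \cite{K} and Proposition~4.3 of \cite{S}, the generalised Steinberg $St_k(\rho)$ admits a Shalika model if and only if either $k$ is odd (forcing $r$ even, as $n$ is even) and $\rho$ itself admits a Shalika model, or $k$ is even and $\rho$ is ``$Sym^2$-distinguished'' in the sense relevant to \cite{KR}. It then remains to translate these conditions on $\rho$ into pole statements for $L$-functions: for a unitary cuspidal $\rho$, the existence of a Shalika model is equivalent to $L(\phi(\rho),\wedge^2,s)$ having a pole at $0$, by the Jacquet--Shalika theory of the exterior-square $L$-function (see \cite{JS} and \cite{JR}); and the companion condition in the even-$k$ case is that $L(\phi(\rho),Sym^2,s)$ has a pole at $0$. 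This parity dichotomy matches the fact, used in the Galois-parameter corollary of Theorem~\ref{distgen}, that $Sp(k)$ is symplectic when $k$ is even and orthogonal when $k$ is odd.

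The main obstacle is the bookkeeping in this third step: aligning the parity dichotomy of \cite{KR} with the $\wedge^2$ versus $Sym^2$ pole conditions, carrying out the passage from ``$\rho$ has a Shalika model'' to ``$L(\phi(\rho),\wedge^2,s)$ has a pole at $0$'' (which rests on the cuspidal case of the link between linear/Shalika periods and exterior-square poles), and checking that the normalisations of all the $L$-factors involved agree. Once these identifications are in place, the statement follows by quoting the cited theorems.
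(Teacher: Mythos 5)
Your proposal is correct and matches the paper's treatment: the paper gives no proof of Theorem~\ref{discr}, merely introducing it as a compilation of Theorem~3.1 and Sections~5--6 of \cite{M2012.2} (which in turn draw on Theorem~4.3 of \cite{K}, Theorem~1.1 of \cite{KR}, and Proposition~4.3 of \cite{S}), and you assemble exactly those ingredients with the same logical partition (vanishing for $n$ odd, linear period $\Leftrightarrow$ Shalika model for $n$ even via Prop.~3.1 of \cite{FJ} plus Theorem~3.1 of \cite{M2012.2}, and the $\wedge^2/Sym^2$ pole dichotomy via \cite{K,KR,S}). One minor slip: the \cite{JS} you invoke for the cuspidal exterior-square step is, in this paper's bibliography, the Jacquet--Shalika paper on Whittaker models of induced representations, not their exterior-square paper, so that citation key does not point where you intend.
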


We will use the following lemma.

\begin{LM}\label{simple}
Let $\rho$ be a cuspidal representation of $G_r$, then $L(\phi(\rho),Sym^2,s)$ and $L(\phi(\rho),\wedge^2,s)$ both have simple poles. The function $L(\phi(\rho),\wedge^2,s)$ is equal to $1$ when $r$ is odd.
\end{LM}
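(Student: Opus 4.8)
The plan is to move everything to the Galois side. Write $\tau=\phi(\rho)$; since $\rho$ is cuspidal, $\tau$ is an irreducible $r$-dimensional representation of $W_F$ (with trivial $SL(2,\C)$-part), and $\wedge^2(\tau)$, $Sym^2(\tau)$ are the two direct summands of $\tau\otimes\tau$, which is semisimple because $\tau$ is. For the simple-pole assertion I would use the factorisation
$$L(\phi(\rho),\wedge^2,s)\,L(\phi(\rho),Sym^2,s)=L(\tau\otimes\tau,s)=L(\rho\times\rho,s),$$
the last equality being the compatibility of the Langlands correspondence with $L$-factors of pairs. By Theorem 8.2 of \cite{JPS} the factor $L(\rho\times\rho,s)$ of a pair of cuspidals is either $1$ or a product of simple factors $(1-q^{-s}\beta_i)^{-1}$ at distinct points, so it has only simple poles; since each of $L(\phi(\rho),\wedge^2,s)$ and $L(\phi(\rho),Sym^2,s)$ is an Euler factor dividing it, they have only simple poles too. (Alternatively, one may decompose $\wedge^2(\tau)$ into irreducible $W_F$-subrepresentations $\sigma_i$: only the $\sigma_i$ that are unramified characters contribute poles, each a single simple one, and an unramified character $\mu$ occurs in $\tau\otimes\tau$ with multiplicity $\leq 1$ by Schur's lemma applied to $\tau\otimes\nu$, where $\nu^2=\mu^{-1}$ is unramified, hence with multiplicity $\leq 1$ in $\wedge^2(\tau)$.)

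For the vanishing of $L(\phi(\rho),\wedge^2,s)$ when $r$ is odd I would show $(\wedge^2\tau)^{I_F}=0$, which forces the factor to be $1$. First, by Clifford theory for the normal subgroup $I_F$: letting $\sigma$ be an irreducible $I_F$-constituent of $\tau|_{I_F}$ and $W_\sigma=\langle I_F,Fr^l\rangle$ its stabiliser, $\tau$ is induced from the $\sigma$-isotypic part of $\tau|_{W_\sigma}$; since $W_\sigma/I_F\simeq\Z$ is free, $\sigma$ extends to $W_\sigma$ and the multiplicity space carries a genuine representation of $\Z$, which must be one-dimensional because $\tau$ is irreducible. Hence $\tau|_{I_F}=\bigoplus_{j=0}^{l-1}{}^{Fr^j}\!\sigma$ is multiplicity free, the ${}^{Fr^j}\!\sigma$ are pairwise non-isomorphic, and $ld=r$ with $d=\dim\sigma$, so both $l$ and $d$ are odd. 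Then
$$(\wedge^2\tau)^{I_F}=\bigoplus_{j}\bigl(\wedge^2({}^{Fr^j}\!\sigma)\bigr)^{I_F}\ \oplus\ \bigoplus_{i<j}\bigl({}^{Fr^i}\!\sigma\otimes{}^{Fr^j}\!\sigma\bigr)^{I_F}.$$
Each term of the first sum is nonzero only if ${}^{Fr^j}\!\sigma$ is self-dual of symplectic type, impossible since $d$ is odd. A term $({}^{Fr^i}\!\sigma\otimes{}^{Fr^j}\!\sigma)^{I_F}$ of the second sum is nonzero only if ${}^{Fr^j}\!\sigma\simeq({}^{Fr^i}\!\sigma)^\vee={}^{Fr^i}(\sigma^\vee)$, i.e. $\sigma^\vee\simeq{}^{Fr^{\,j-i}}\!\sigma$; applying contragredient again gives ${}^{Fr^{\,2(j-i)}}\!\sigma\simeq\sigma$, so $2(j-i)\equiv0\pmod l$, and as $l$ is odd this gives $j\equiv i\pmod l$, contradicting $i<j$. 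Thus $(\wedge^2\tau)^{I_F}=0$ and $L(\phi(\rho),\wedge^2,s)=1$.

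The main obstacle is this last (odd $r$) step: it genuinely relies on the multiplicity-one property of $\tau|_{I_F}$ — equivalently the vanishing of the relevant Schur multiplier, which comes from $W_F/I_F$ being procyclic hence free — since a constituent occurring with odd multiplicity $m\geq 3$ and of orthogonal type would make the summand $\wedge^2(\C^m)\otimes Sym^2(\sigma)$ contribute nonzero $I_F$-invariants. I would also flag the asymmetry in the statement: for odd $r$ the factor $L(\phi(\rho),Sym^2,s)$ need not be $1$ (it is nontrivial exactly when $\rho$ is orthogonally self-dual, in which case the self-dual constituents contribute to $(Sym^2\tau)^{I_F}$), which is why only the simple-pole property, and not triviality, is asserted for the symmetric square. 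An alternative route for the odd case would be to cite directly the known computation of the local exterior-square $L$-factor of a cuspidal of $GL_r$ for $r$ odd from the Jacquet--Shalika theory.
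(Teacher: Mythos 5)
Your proof is correct, but the route you take for the odd-$r$ part is substantially heavier than what the paper does. For the simple-pole assertion you and the paper are essentially doing the same thing: the pole order of $L(\phi(\rho)\otimes\phi(\rho),s)$ at $s_0$ equals the multiplicity with which a fixed unramified character occurs in $\phi(\rho)\otimes\phi(\rho)$, which is $\dim\operatorname{Hom}_{W_F}(\phi(\rho)^\vee\otimes\mu,\phi(\rho))\leq 1$ by Schur's lemma (your parenthetical alternative is exactly this; citing JPS 8.2 plus the compatibility of $\phi$ with pairs is a legitimate shortcut but imports more than needed).

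For the vanishing when $r$ is odd, however, the paper argues in one line: a pole of $L(\phi(\rho),\wedge^2,s)$ at $s_0$ means $\mathbf 1$ occurs in $\wedge^2\bigl(|.|^{s_0/2}\phi(\rho)\bigr)$, i.e.\ there is a nonzero $W_F$-invariant alternating form on the space of the irreducible representation $|.|^{s_0/2}\phi(\rho)$; its radical is an invariant subspace, so by irreducibility the form is nondegenerate symplectic, forcing $r$ even. You instead show $(\wedge^2\tau)^{I_F}=0$ directly by Clifford theory: multiplicity-freeness of $\tau|_{I_F}$ (via $H^2(\Z,\C^*)=0$), oddness of $l$ and $d=\dim\sigma$, and the $2(j-i)\equiv 0\ (\mathrm{mod}\ l)$ parity argument. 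That argument is correct, and it has the merit of making visible exactly \emph{where} the hypothesis is used and why multiplicity one is essential (your remark about a hypothetical constituent of odd multiplicity $\geq 3$ is apt). But all of this is bypassed by the symplectic-form observation, which uses nothing beyond Schur's lemma applied once more. In short: the paper's proof of the second assertion is a corollary of irreducibility alone; yours re-derives the stronger structural fact $(\wedge^2\tau)^{I_F}=0$, which is more information than needed. Your flag about the asymmetry with $\mathrm{Sym}^2$ is a correct and worthwhile observation, though not required for the lemma.
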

\begin{proof}
In this case, $\phi(\rho)$ is an irreducible representation of the Weil group $W_F$ of dimension $r$. Let $Fr$ be a geometric Frobenius element of 
$Gal_F(\bar{F})$. By definition, the factor 
$L(\phi(\rho)\otimes \phi(\rho),s)$ equals $1/det([I_d- q^{-s}\phi(\rho)(Fr)\otimes \phi(\rho)(Fr)]_{|V^{I_F}})$, where $V^{I_F}$ is the subspace of 
$\phi(\rho)\otimes \phi(\rho)$ fixed by the inertia subgroup $I_F$ of $W_F$. Moreover, 
we have $$L(\phi(\rho)\otimes \phi(\rho),s)=L(\phi(\rho),\wedge^2,s)L(\phi(\rho),Sym^2,s),$$ hence we just need to show that 
$L(\phi(\rho)\otimes \phi(\rho),s)$ has simple poles. To say that $s_0$ is a pole of order of $d$ of this factor, amounts to say that $\1_{W_F}$ occurs with multiplicity $d$ in $|.|^{s_0}\phi(\rho)\otimes \phi(\rho)$, i.e. that the space 
$Hom_{W_F}(\rho^\vee,|.|^{s_0}\rho)$ is of dimension $d$. As $\rho$ is irreducible, this dimension is necessarily $\leq 1$. Finally, the complex number 
$s_0$ is a pole of $L(\phi(\rho),\wedge^2,s)$ if and only if $\wedge^2(|.|^{s_0/2}\phi(\rho))$ contains $\1_{W_F}$, so its contragredient as well, hence 
there is a (necessarily by irreducibility of $\phi(\rho)$) non degenerate symplectic form on the space of $|.|^{s_0/2}\phi(\rho)$, and $r$ must be even in this case. When $r$ is odd, $L(\phi(\rho),\wedge^2,s)$ is thus necessarily $1$.
\end{proof}

Now Theorem \ref{discr} has the following consequence.

\begin{prop}\label{radexdiscr}
Let $k$ and $r$ be two positive integers, let $\rho$ be a cuspidal representation of $G_r$, and let $\D=St_k(\rho)$ be 
a discrete series of $G_n$, for $n=kr$.\\
If $k$ is odd, we have $L^{lin}_{rad(ex)}(\D,s)=L(\phi(\rho),\wedge^2,2s)$.\\
If $k$ is even, we have $L^{lin}_{rad(ex)}(\D,s)=L(\phi(\rho),Sym^2,2s)$.
\end{prop}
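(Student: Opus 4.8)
The plan is to determine the exceptional poles of $L^{lin}(\D,s)$ explicitly. Recall from Proposition \ref{radexlin} that $L^{lin}_{rad(ex)}(\D,s)$ is an Euler factor all of whose poles are simple and are precisely the exceptional poles of $L^{lin}(\D,s)$; writing $E$ for this finite set of exceptional poles, we therefore have $L^{lin}_{rad(ex)}(\D,s)=\prod_{s_0\in E}(1-q^{s_0-s})^{-1}$, and it is enough to identify $E$. If $n$ is odd, then $k$ and $r$ are both odd; by the remark opening Section \ref{classification} no discrete series of $G_n$ is $(H_n,\chi)$-distinguished for any character $\chi$ of $H_n$ when $n$ is odd, so (by Definition \ref{defpolex}) $L^{lin}(\D,s)$ has no exceptional pole, $L^{lin}_{rad(ex)}(\D,s)=1$, while $L(\phi(\rho),\wedge^2,s)=1$ by Lemma \ref{simple} since $r$ is odd, so $L(\phi(\rho),\wedge^2,2s)=1$ as well and the identity holds. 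From now on assume $n=kr$ even.

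To compute $E$ in the even case, I would first record the shift $L^{lin}(|.|^{s_0}\D,s)=L^{lin}(\D,s+s_0)$: since $n$ is even the character $\chi_n$ occurring in the integrals of Definition \ref{integrals} is trivial, and replacing $\D$ by $|.|^{s_0}\D$ multiplies every Whittaker function by $h\mapsto|h|^{s_0}$, so $\Psi(W,\phi,\1_{H_n},s)$ for $|.|^{s_0}\D$ equals $\Psi(W',\phi,\1_{H_n},s+s_0)$ for $\D$ (with $W\leftrightarrow W'$ the corresponding Whittaker functions); as $\phi$ is unchanged this also carries the exceptional pole of $L^{lin}(\D,s)$ at $s_0$ to the exceptional pole of $L^{lin}(|.|^{s_0}\D,s)$ at $0$, and conversely. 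Since $|.|^{s_0}\D=St_k(|.|^{s_0}\rho)$ is again a discrete series, Corollary \ref{expolediscrete} gives $s_0\in E$ if and only if $St_k(|.|^{s_0}\rho)$ is $H_n$-distinguished. An $H_n$-distinguished discrete series has trivial central character, hence is unitary, so $|.|^{s_0}\rho$ is then unitary cuspidal and Theorem \ref{discr} applies: $St_k(|.|^{s_0}\rho)$ is $H_n$-distinguished exactly when $L(\phi(|.|^{s_0}\rho),\wedge^2,w)$ has a pole at $w=0$ (if $k$ is odd), resp. $L(\phi(|.|^{s_0}\rho),Sym^2,w)$ has a pole at $w=0$ (if $k$ is even). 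By compatibility of the Langlands correspondence with unramified twists, $\phi(|.|^{s_0}\rho)=|.|^{s_0}\phi(\rho)$, hence $\wedge^2(|.|^{s_0}\phi(\rho))=|.|^{2s_0}\wedge^2(\phi(\rho))$, $Sym^2(|.|^{s_0}\phi(\rho))=|.|^{2s_0}Sym^2(\phi(\rho))$, and $L(\phi(|.|^{s_0}\rho),\wedge^2,w)=L(\phi(\rho),\wedge^2,w+2s_0)$ (and likewise for $Sym^2$). A pole at $w=0$ of the left-hand side is thus a pole of $L(\phi(\rho),\wedge^2,w)$ at $w=2s_0$, so $E=\{\,s_0\in\mathcal{D}:L(\phi(\rho),\wedge^2,w)\text{ has a pole at }w=2s_0\,\}$ when $k$ is odd, and similarly with $Sym^2$ when $k$ is even.

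It then remains to match $\prod_{s_0\in E}(1-q^{s_0-s})^{-1}$ with $L(\phi(\rho),\wedge^2,2s)$ (for $k$ odd), resp. $L(\phi(\rho),Sym^2,2s)$ (for $k$ even). Writing $L(\phi(\rho),\wedge^2,w)=\prod_i(1-b_iq^{-w})^{-1}$, Lemma \ref{simple} tells us the poles are simple, so the $b_i$ are pairwise distinct, and the poles of $L(\phi(\rho),\wedge^2,w)$ are the $w_i$ with $q^{w_i}=b_i$. Then $L(\phi(\rho),\wedge^2,2s)=\prod_i(1-b_iq^{-2s})^{-1}=\prod_i(1-\beta_iq^{-s})^{-1}(1+\beta_iq^{-s})^{-1}$ with $\beta_i^2=b_i$, whose poles are the $s_0$ with $q^{2s_0}=b_i$, equivalently $2s_0=w_i$ in $\mathcal{D}$; these are simple, since a coincidence $2s_0=w_i=w_j$ with $i\neq j$ would contradict distinctness of the $b_i$. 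Hence the poles of $L(\phi(\rho),\wedge^2,2s)$ are exactly the elements of $E$, each simple, so $L(\phi(\rho),\wedge^2,2s)=\prod_{s_0\in E}(1-q^{s_0-s})^{-1}=L^{lin}_{rad(ex)}(\D,s)$; the same argument with $Sym^2$ disposes of $k$ even.

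The substantive ingredient — the classification of $H_n$-distinguished discrete series in terms of symmetric and exterior square $L$-functions — is exactly Theorem \ref{discr}, imported from \cite{K}, \cite{KR} and \cite{S}; so the only real work consists of the bookkeeping forced by computing modulo $\frac{2i\pi}{Ln(q)}\Z$ (where $s\mapsto 2s$ is two-to-one) and the routine compatibilities of unramified twisting with Whittaker models and with the local Langlands correspondence. I expect this bookkeeping — and in particular checking that no extra multiplicities appear when passing from $L(\phi(\rho),\wedge^2,w)$ to $L(\phi(\rho),\wedge^2,2s)$ — to be the only point requiring care, there being no genuine difficulty beyond it.
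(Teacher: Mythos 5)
Your proposal is correct and follows essentially the same route as the paper's own proof: translate ``$s_0$ is an exceptional pole of $L^{lin}(\D,s)$'' to ``$0$ is an exceptional pole of $L^{lin}(|.|^{s_0}\D,s)$'' via the unramified-twist shift, invoke Corollary \ref{expolediscrete} (the paper cites the equivalent Corollary \ref{unitex}) together with Theorem \ref{discr} to turn that into a pole condition on $L(\phi(\rho),\wedge^2,w)$ (resp.\ $Sym^2$) at $w=2s_0$, and then conclude by observing that both Euler factors have the same simple poles. The only cosmetic differences are that you make the shift $L^{lin}(|.|^{s_0}\D,s)=L^{lin}(\D,s+s_0)$ explicit and spell out the factorisation $1-b_iq^{-2s}=(1-\beta_iq^{-s})(1+\beta_iq^{-s})$ to confirm that no multiplicities are introduced by the substitution $w\mapsto 2s$; the paper handles this tacitly by combining Proposition \ref{radexlin} with Lemma \ref{simple}.
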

\begin{proof}
We recall from Corollary \ref{unitex}, that when $n$ is even, the factor $L^{lin}_{rad(ex)}(\D,s)$ has a pole at $s=s_0$ if and only if $|.|^{s_0}\D$ is $H_n$-distinguished. When $k$ is odd, and $r$ is odd, applying Theorem \ref{discr}, we have $L^{lin}_{rad(ex)}(\D,s)=1$, but the factor 
$L(\phi(\rho),\wedge^2,s)$ equals $1$ as well according to Lemma \ref{simple}. When $k$ is odd, and $r$ is even, applying Theorem \ref{discr}, we see that $|.|^{s_0}\D$ is $H_n$-distinguished if and only if 
$L(|.|^{s_0}\phi(\rho),\wedge^2,s)$ has a pole at $s=0$. Now because of the relation 
$$L(|.|^{s_0}\phi(\rho),\wedge^2,s)=L(\phi(\rho),\wedge^2,s+2s_0),$$ we deduce that $L_{rad(ex)}(\D,s)$ has a pole at $s_0$
 if and only if $L(\phi(\rho),\wedge^2,2s)$ has a pole at $s_0$. But from Proposition \ref{radexlin} and Lemma \ref{simple}, the factors 
$L_{rad(ex)}(\D,s)$ and $L(\phi(\rho),\wedge^2,2s)$ have simple poles, hence are equal. The case $k$ even is similar.
\end{proof}

\begin{prop}\label{rec}
 Let $\D$ be a discrete series representation of $G_n$, and let $\rho$ be the cuspidal representation of $G_r$, such that 
 $\D=St_k(\rho)$, with $n=kr$.\\
If $k$ is odd, then one has $L(\phi(\D),\wedge^2,s)=L(\phi(\rho),\wedge^2,s) L(\phi(\D^{(r)}),\wedge^2,s)$.\\
If $k$ is even, then one has $L(\phi(\D),\wedge^2,s)=L(\phi(\rho),Sym^2,s) L(\phi(\D^{(r)}),\wedge^2,s)$.
\end{prop}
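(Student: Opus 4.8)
The plan is to prove both identities directly on the Galois side, by decomposing the exterior square of the Weil--Deligne parameter into $SL(2,\C)$-isotypic pieces.

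First I would record what the relevant parameters are. Writing $\sigma=\phi(\rho)$ (an $r$-dimensional irreducible representation of $W_F$), the parameter of $\D=St_k(\rho)$ is $\phi(\D)=\sigma\times Sp(k)$; and since $\D^{(r)}=|.|^{1/2}St_{k-1}(\rho)$ (as recalled in the proof of Corollary~\ref{factLdiscr}) and the Langlands correspondence is compatible with twists, one has $\phi(\D^{(r)})=(\sigma|.|^{1/2})\times Sp(k-1)$. Thus it suffices to express $L(\phi(\D),\wedge^2,s)$ and $L(\phi(\D^{(r)}),\wedge^2,s)$ as explicit finite products of shifted factors $L(\wedge^2\sigma,s+c)$ and $L(Sym^2\sigma,s+c')$, and to compare them.

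Next I would apply the plethysm identity $\wedge^2(V\otimes W)=(\wedge^2V\otimes Sym^2W)\oplus(Sym^2V\otimes\wedge^2W)$ to the representation $\sigma\times Sp(k)$ of $W'_F$ (with $V=\sigma$ inflated from $W_F$ and $W=Sp(k)$ inflated from $SL(2,\C)$), and likewise to $Sp(k-1)$:
$$\wedge^2(\sigma\times Sp(k))=(\wedge^2\sigma\times Sym^2Sp(k))\oplus(Sym^2\sigma\times\wedge^2Sp(k)),$$
$$\wedge^2((\sigma|.|^{1/2})\times Sp(k-1))=((\wedge^2\sigma)|.|\times Sym^2Sp(k-1))\oplus((Sym^2\sigma)|.|\times\wedge^2Sp(k-1)).$$
Then I use the Clebsch--Gordan decomposition, in the form $Sym^2Sp(m)=\bigoplus_{i=0}^{\lfloor(m-1)/2\rfloor}Sp(2m-1-4i)$ and $\wedge^2Sp(m)=\bigoplus_{i=1}^{\lfloor m/2\rfloor}Sp(2m+1-4i)$, together with the definition of the $L$-factor of a Weil--Deligne representation ($L(\tau\times Sp(m),s)=L(\tau,s+(m-1)/2)$, cf. Proposition~\ref{Ldiscr}) and $L(\tau|.|^{1/2},s)=L(\tau,s+1/2)$, to turn the two displays into explicit products over $i$ of factors $L(\wedge^2\sigma,s+c)$ and $L(Sym^2\sigma,s+c')$. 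The point is that the overall twist by $|.|^{1/2}$ shifts the arguments coming from $Sp(k-1)$ exactly so that the list of arguments appearing in $L(\phi(\D^{(r)}),\wedge^2,s)$ coincides, term for term, with the list appearing in $L(\phi(\D),\wedge^2,s)$ up to a single missing factor: the contribution of the bottom summand $Sp(1)$ of $Sym^2Sp(k)$ when $k$ is odd, which is $L(\wedge^2\sigma,s)=L(\phi(\rho),\wedge^2,s)$, and the contribution of the bottom summand $Sp(1)$ of $\wedge^2Sp(k)$ when $k$ is even, which is $L(Sym^2\sigma,s)=L(\phi(\rho),Sym^2,s)$. Dividing gives the two claimed formulas.

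The only delicate step is this last one: the bookkeeping of which $Sp$-summands occur in $Sym^2$ and $\wedge^2$ of $Sp(k)$ versus $Sp(k-1)$ — the ranges of $i$ differ by exactly one in precisely one of the two plethysm summands, according to the parity of $k$ — and checking that the uniform twist by $|.|^{1/2}$ aligns the two argument-lists so that everything telescopes down to the single surviving factor at argument $s$. No input beyond the elementary representation theory of $SL(2,\C)$ and the definition of the $L$-factor is needed; in particular, when $r=\dim\rho$ is odd one has $L(\phi(\rho),\wedge^2,s)=1$ by Lemma~\ref{simple}, but the identity above holds formally and requires no separate treatment in that case.
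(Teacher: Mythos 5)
Your proposal is correct, and it follows essentially the same route as the paper's proof: both sides of the identity are expanded as explicit products of shifted factors $L(\phi(\rho),\wedge^2,\cdot)$ and $L(\phi(\rho),Sym^2,\cdot)$, and one observes that the twist by $|.|^{1/2}$ aligns every term of the expansion for $\phi(\D^{(r)})$ with a term of the expansion for $\phi(\D)$, leaving a single surviving factor at argument $s$, namely $L(\phi(\rho),\wedge^2,s)$ when $k$ is odd and $L(\phi(\rho),Sym^2,s)$ when $k$ is even. The only difference in presentation is that the paper cites the product expansion
$$L(\phi(\D),\wedge^2,s)=\prod_{i=0}^{[(k-1)/2]}L(\phi(\rho),\wedge^2,s+k-2i-1)\prod_{j=0}^{[k/2-1]}L(\phi(\rho),Sym^2,s+k-2j-2)$$
directly from Proposition 6.2 of \cite{M2012.2}, whereas you re-derive it from scratch via the plethysm $\wedge^2(V\otimes W)=(\wedge^2V\otimes Sym^2W)\oplus(Sym^2V\otimes\wedge^2W)$ and the Clebsch--Gordan decompositions of $Sym^2 Sp(m)$ and $\wedge^2 Sp(m)$. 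Your Clebsch--Gordan ranges and the identification of the missing bottom $Sp(1)$-summand (in $Sym^2 Sp(k)$ for $k$ odd, in $\wedge^2 Sp(k)$ for $k$ even) are all correct, so this buys a self-contained argument at the cost of inlining what the paper delegates to a reference; the telescoping step at the end is identical.
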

\begin{proof}
 We recall the equality 
$$L(\phi(\D),\wedge^2,s)=\prod_{i=0}^{[(k-1)/2]}L(\phi(\rho),\wedge^2,s+k-2i-1)\prod_{j=0}^{[k/2-1]}L(\phi(\rho),Sym^2,s+k-2j-2)$$ from 
Proposition 6.2 of \cite{M2012.2}. If $k=2m+1$, then we have
\begin{eqnarray}\label{step}L(\phi(\D),\wedge^2,s)\end{eqnarray}
$$=\prod_{i=0}^{m}L(\phi(\rho),\wedge^2,s+2(m-i))\prod_{j=0}^{m-1}L(\phi(\rho),Sym^2,s+2(m-j)-1).$$
Then, $\D^{(r)}=|.|^{1/2}St_{2m}(\rho)$, hence we have:
\begin{eqnarray}\label{stepp} L(\phi(\D^{(r)}),\wedge^2,s)\end{eqnarray}
$$=\prod_{i=0}^{m-1}L(\phi(|.|^{1/2}\rho),\wedge^2,s+2(m-i)-1)\prod_{j=0}^{m-1}L(\phi(|.|^{1/2}\rho),Sym^2,s+2(m-j)-2)$$
$$=\prod_{i=0}^{m-1}L(\phi(\rho),\wedge^2,s+2(m-i))\prod_{j=0}^{m-1}L(\phi(\rho),Sym^2,s+2(m-j)-1).$$
Comparing Equations (\ref{step}) and (\ref{stepp}), we obtain the wanted equality. The case $k$ even is similar.
\end{proof}

Now we can prove.

\begin{thm}\label{egaldiscr}
Let $\D$ be a discrete series representation of $G_n$, then 
$$L^{lin}(\D,s)= L(\phi(\D),s+1/2)L(\phi(\D),\wedge^2,2s).$$
\end{thm}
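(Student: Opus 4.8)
The strategy is an induction on the length $k$ of the discrete series $\D = St_k(\rho)$, where $\rho$ is a cuspidal representation of $G_r$ with $n=kr$. The base case $k=1$ is that of a cuspidal representation, which I would treat separately. For the inductive step, I would split the Euler factor $L^{lin}(\D,s)=L^{lin}_{(0)}(\D,s)\,L^{lin}_{rad(ex)}(\D,s)$ using Proposition \ref{radexlin}, and identify each of the two pieces using the tools assembled in the final section: Corollary \ref{factLdiscr} gives $L^{lin}_{(0)}(\D,s)=L^{lin}(\D^{(r)},s)$, and Proposition \ref{radexdiscr} gives $L^{lin}_{rad(ex)}(\D,s)=L(\phi(\rho),\wedge^2,2s)$ if $k$ is odd and $L(\phi(\rho),Sym^2,2s)$ if $k$ is even.

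First I would recall that $\D^{(r)}=|.|^{1/2}St_{k-1}(\rho)$ is itself (a twist of) a discrete series of $G_{(k-1)r}$ of strictly smaller length, so the induction hypothesis applies to it. One subtlety: the induction hypothesis as literally stated concerns $St_{k-1}(\rho)$, and I need it for $|.|^{1/2}St_{k-1}(\rho)$; this is harmless since $L^{lin}(|.|^{1/2}\sigma,s)=L^{lin}(\sigma,s+1)$ and $L(\phi(|.|^{1/2}\sigma),s+1/2)=L(\phi(\sigma),s+3/2)$ and $L(\phi(|.|^{1/2}\sigma),\wedge^2,2s)=L(\phi(\sigma),\wedge^2,2s+1)$, so the shift propagates consistently through the identity $L^{lin}(\sigma,s)=L(\phi(\sigma),s+1/2)L(\phi(\sigma),\wedge^2,2s)$. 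Applying this, $L^{lin}_{(0)}(\D,s)=L^{lin}(\D^{(r)},s)=L(\phi(\D^{(r)}),s+1/2)L(\phi(\D^{(r)}),\wedge^2,2s)$.

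Next I would assemble the pieces on the Galois side. Proposition \ref{Ldiscr} gives $L(\phi(\D),s)=L(\phi(\D^{(r)}),s)$, which handles the $L(\phi(\D),s+1/2)$ factor directly. For the exterior-square factor, Proposition \ref{rec} gives exactly $L(\phi(\D),\wedge^2,s)=L(\phi(\rho),\wedge^2,s)L(\phi(\D^{(r)}),\wedge^2,s)$ when $k$ is odd, and with $Sym^2$ in place of the first $\wedge^2$ when $k$ is even. Substituting $2s$ for $s$ and combining:
\begin{align*}
L^{lin}(\D,s)&=L^{lin}_{(0)}(\D,s)\,L^{lin}_{rad(ex)}(\D,s)\\
&=L(\phi(\D^{(r)}),s+1/2)\,L(\phi(\D^{(r)}),\wedge^2,2s)\cdot L(\phi(\rho),\wedge^2,2s)\\
&=L(\phi(\D),s+1/2)\,L(\phi(\D),\wedge^2,2s)
\end{align*}
in the $k$ odd case, and identically with $Sym^2(\phi(\rho))$ replacing $\wedge^2(\phi(\rho))$ in the $k$ even case — which is precisely what Proposition \ref{rec} delivers in that case. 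The base case $k=1$: here $\D=\rho$ is cuspidal, $\D^{(r)}=\1$ so $L^{lin}_{(0)}(\rho,s)=1$ by Corollary \ref{factLdiscr} (a cuspidal representation has no lower derivatives contributing), and $L^{lin}_{rad(ex)}(\rho,s)=L(\phi(\rho),\wedge^2,2s)$ by Proposition \ref{radexdiscr} with $k=1$ odd; meanwhile $L(\phi(\rho),\wedge^2,s)$ is the whole exterior-square factor and $L(\phi(\rho),s+1/2)$ must be shown to be trivial, which follows because $\phi(\rho)$ is an irreducible representation of $W_F$ of dimension $\geq 1$, hence (for $\dim\geq 2$, i.e. $r\geq 2$) has no $I_F$-fixed vectors unless... — actually here one should be slightly careful, and the cleanest route is to invoke the expression of $L^{lin}(\rho,s)$ via Corollary \ref{factLdiscr} and \ref{radexlin} together with the fact that $L^{lin}_{rad(ex)}$ has simple poles matching those of $L(\phi(\rho),\wedge^2,2s)$, plus $L(\phi(\rho),s+1/2)=1$ when $r>1$ and a direct check when $r=1$.

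**Main obstacle.** The genuinely delicate point is not the combinatorial bookkeeping above but making sure the normalizations and half-integer shifts line up — in particular that the "$2s$" in $L^{lin}_{rad(ex)}(\D,s)=L(\phi(\rho),\wedge^2,2s)$, the "$s+1/2$" governing the standard factor, and the shift introduced by $\D^{(r)}=|.|^{1/2}St_{k-1}(\rho)$ are all mutually consistent, so that the telescoping in Proposition \ref{rec} matches the telescoping of $L^{lin}_{(0)}$ term by term. The second point requiring care is the base case $k=1$, where one must separately verify $L(\phi(\rho),s+1/2)=1$, i.e. that a cuspidal $\rho$ of $G_r$ with $r\geq 2$ contributes no standard $L$-factor pole — immediate since $\phi(\rho)$ is irreducible of dimension $r\geq 2$ and thus has trivial inertia invariants — together with the trivial case $r=1$ where $\D=\chi$ is a character and the identity $L^{lin}(\chi,s)=L(\chi,s+1/2)$ (with $L(\phi(\chi),\wedge^2,2s)=1$) can be checked by hand from Definition \ref{integrals}.
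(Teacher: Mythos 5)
Your proposal is correct and follows the paper's proof essentially line for line: the same induction on the segment length $k$, the same decomposition $L^{lin}(\D,s)=L^{lin}_{(0)}(\D,s)\,L^{lin}_{rad(ex)}(\D,s)$, the same invocations of Corollary \ref{factLdiscr}, Proposition \ref{radexdiscr}, Proposition \ref{Ldiscr}, and Proposition \ref{rec}, and the same handling of the base case by splitting into $r=1$ (Tate integral) and $r\geq 2$ (compact support of cuspidal Whittaker functions on $P_n$ giving $L^{lin}_{(0)}(\rho,s)=1$). The ``subtlety'' you flag about applying the induction hypothesis to $|.|^{1/2}St_{k-1}(\rho)$ rather than $St_{k-1}(\rho)$ is not actually an issue: the induction is over the segment length of an arbitrary discrete series, and $\D^{(r)}=|.|^{1/2}St_{k-1}(\rho)=St_{k-1}(|.|^{1/2}\rho)$ is itself a discrete series of length $k-1$, so the hypothesis applies directly without any twist-tracking.
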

\begin{proof}
We are going to prove the theorem by induction on $k$. We start with $k=1$, hence $\D=\rho$. 
In this case, there are two different cases, if $r=1$, then $\rho$ is a character $\chi$ of $F^*$. The integrals $\Psi(W,\phi,s)$ are just integrals 
of the form $\int_{F^*}\phi(t)|t|^{s+1/2}d^*t$ with $\phi$ in $\sm_c(F)$, hence $L^{lin}(\chi,s)=L(\chi,s+1/2)=L(\chi,s+1/2)L(\chi,\wedge^2,2s)$.\\
When $r\geq 2$, the $L$-factor $L_{(0)}(\rho,s)$ is equal to $1$, because as the elements of $W(\rho,\theta)$ restricted to $P_n$, 
have compact support mod $N_n$, the integrals $\Psi_{(0)}(W,s)$ which define $L_{(0)}(\rho,s)$ have no poles. 
Hence we have \begin{eqnarray}\label{step'}L^{lin}(\rho,s)=  L^{lin}_{rad(ex)}(\rho,s)= L(\phi(\rho),s+1/2)L^{lin}_{rad(ex)}(\rho,s)\end{eqnarray}
 because $L(\phi(\rho),s+1/2)$ is then equal to $1$.
 But according to Proposition \ref{radexdiscr}, the factor $L^{lin}_{rad(ex)}(\rho,s)$ is equal to $L(\phi(\rho),\wedge^2,2s)$, 
 and this proves the wanted equality.\\
Now we do the induction step. Suppose that the theorem is true for $k-1$. We do the case $k$ odd, the case $k$ even being similar.\\
Thanks to Corollary \ref{factLdiscr}, we have $L_{(0)}^{lin}(\D,s)= L^{lin}(\D^{(r)},s)$ which is thus equal to 
$$L(\phi(\D^{(r)}),s+1/2)L(\phi(\D^{(r)}),\wedge^2,2s)$$ by induction hypothesis. Proposition \ref{Ldiscr} then gives the equality 
$$L_{(0)}^{lin}(\D,s)=L(\phi(\D),s+1/2)L(\phi(\D^{(r)}),\wedge^2,2s),$$ so that 
$$L^{lin}(\D,s)=L_{rad(ex)}(\D,s)L_{(0)}^{lin}(\D,s)=L(\phi(\rho),\wedge^2,2s) L(\phi(\D),s+1/2)L(\phi(\D^{(r)}),\wedge^2,2s)$$ 
thanks to Proposition \ref{radexdiscr}. Applying Proposition \ref{rec}, we can replace the product of factors 
$L(\phi(\rho),\wedge^2,2s) L(\phi(\D^{(r)}),\wedge^2,2s)$ 
by the factor $L(\phi(\D),\wedge^2,2s)$, to finally obtain $$L^{lin}(\D,s)=L(\phi(\D),s+1/2)L(\phi(\D),\wedge^2,2s).$$
\end{proof}

As explained in the beginning of this last section, this has the following corollary.

\begin{cor}\label{egalfinal}
Let $\tau$ be an irreducible representation of $G_n$, then 
$$L^{lin}(\tau,s)=L(\phi(\tau),s+1/2)L(\phi(\tau),2s).$$
\end{cor}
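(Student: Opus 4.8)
The plan is to assemble the statement from the factorisations already established, exactly along the lines indicated at the opening of this subsection. Write $\tau$ as the unique irreducible quotient of a representation $\pi=\D_1\times\dots\times\D_t$ of Langlands' type, so that $\phi(\tau)=\phi(\D_1)\oplus\dots\oplus\phi(\D_t)$, and set $\phi_i=\phi(\D_i)$. By definition $L^{lin}(\tau,s)=L^{lin}(\pi,s)$, where $L^{lin}(\pi,s)$ means $L^{lin}(\pi,\1_{H_n},s)$; thus the relevant character $\alpha$ is trivial, with real part $0$ lying in $[-1/2,0]$, which is precisely what allows us to invoke the results of Section \ref{egalitegalois}.

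First I would apply the inductivity relation for representations of Langlands' type (Theorem \ref{indLanglands }) with $\alpha$ trivial, which gives
$$L^{lin}(\pi,s)=\prod_{k=1}^t L^{lin}(\D_k,s)\prod_{1\leq i<j\leq t}L(\D_i,\D_j,2s).$$
Next, Theorem \ref{egaldiscr} identifies each discrete-series factor, namely $L^{lin}(\D_k,s)=L(\phi(\D_k),s+1/2)L(\phi(\D_k),\wedge^2,2s)$, while the Langlands correspondence for pairs gives $L(\D_i,\D_j,s)=L(\phi_i\otimes\phi_j,s)$. Substituting both,
$$L^{lin}(\pi,s)=\prod_{k=1}^t L(\phi_k,s+1/2)L(\phi_k,\wedge^2,2s)\prod_{1\leq i<j\leq t}L(\phi_i\otimes\phi_j,2s).$$

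Finally I would use the decomposition $\wedge^2(\phi_1\oplus\dots\oplus\phi_t)=\bigoplus_{k=1}^t\wedge^2(\phi_k)\ \oplus\ \bigoplus_{1\leq i<j\leq t}\phi_i\otimes\phi_j$, together with the additivity $L(\rho\oplus\rho',s)=L(\rho,s)L(\rho',s)$ of Artin $L$-factors, to recognise the right-hand side above as $L(\phi(\tau),s+1/2)L(\phi(\tau),\wedge^2,2s)$, which is the asserted equality. There is essentially no obstacle left at this final stage: all the genuine difficulties — the classification of $\chi_\alpha$-distinguished generic representations (Theorem \ref{distgen}), the rationality of the deformed Rankin-Selberg integrals (Theorem \ref{rationality}), the inductivity relation itself, and the discrete-series case obtained via the comparison of linear and Shalika periods together with the main result of \cite{KR} — have already been carried out; the only things to keep straight are the bookkeeping of the shift $2s$ on the exterior-square and pair factors versus the shift $s+1/2$ on the standard factor, and the observation that taking $\alpha$ trivial is legitimate since its real part $0$ lies in the range where both Theorem \ref{indLanglands } and the preceding results apply.
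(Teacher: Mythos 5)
Your argument is correct and is precisely the one the paper has in mind: it is the reduction sketched at the opening of Section~\ref{egalitegalois}.3 (reorder so that $\D_1\times\dots\times\D_t$ is of Langlands' type, apply Theorem~\ref{indLanglands } with $\alpha=\1$ which is legitimate since $\mathrm{Re}(\alpha)=0\in[-1/2,0]$, substitute Theorem~\ref{egaldiscr} and $L(\D_i,\D_j,s)=L(\phi_i\otimes\phi_j,s)$, then reassemble using the decomposition of $\wedge^2$ of a direct sum and additivity of Artin factors), which the paper then invokes tersely with ``as explained in the beginning of this last section.'' One cosmetic point: the displayed conclusion of Corollary~\ref{egalfinal} omits the $\wedge^2$ in the second factor, which is evidently a typographical slip; your proof correctly supplies $L(\phi(\tau),\wedge^2,2s)$.
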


I would like to thank N. Cheurfa for fruitful conversations.

\end{document}